\newcommand{\Z}{\mathbb{Z}}
\newdimen\R
\newtheorem*{rep@theorem}{\rep@title}
\newcommand{\newreptheorem}[2]{%
\newenvironment{rep#1}[1]{%
 \def\rep@title{#2 \ref{##1}}%
 \begin{rep@theorem}}%
 {\end{rep@theorem}}}
\newtheorem*{rep@prop}{\rep@title}
\newcommand{\newrepprop}[2]{%
\newenvironment{rep#1}[1]{%
 \def\rep@title{#2 \ref{##1}}%
 \begin{rep@prop}}%
 {\end{rep@prop}}}
\newcommand{\Real}{\mathbb{R}}
\newcommand{\Orb}{\mathcal{O}}
\newcommand{\ur}{{\rm ur}}
\numberwithin{equation}{section}
\newtheorem{Theorem}{Theorem}[section]
\newtheorem{Corollary}[Theorem]{Corollary}
\newtheorem{Lemma}[Theorem]{Lemma}
\newtheorem{Proposition}[Theorem]{Proposition}
\newtheorem{Conjecture}[Theorem]{Conjecture}
 { \theoremstyle{definition}
\newtheorem{Definition}[Theorem]{Definition}
\newtheorem{Example}[Theorem]{Example}
\newtheorem{Remark}[Theorem]{Remark} }
\begin{document}
\allowdisplaybreaks

\newcommand{\arXivNumber}{2003.13872}

\renewcommand{\thefootnote}{}

\renewcommand{\PaperNumber}{138}

\FirstPageHeading

\ShortArticleName{Snake Graphs from Triangulated Orbifolds}

\ArticleName{Snake Graphs from Triangulated Orbifolds\footnote{This paper is a~contribution to the Special Issue on Cluster Algebras. The full collection is available at \href{https://www.emis.de/journals/SIGMA/cluster-algebras.html}{https://www.emis.de/journals/SIGMA/cluster-algebras.html}}}

\Author{Esther BANAIAN and Elizabeth KELLEY}

\AuthorNameForHeading{E.~Banaian and E.~Kelley}

\Address{School of Mathematics, University of Minnesota, Minneapolis, MN 55455, USA}
\Email{\href{mailto:banai003@umn.edu}{banai003@umn.edu}, \href{mailto:kell1642@umn.edu}{kell1642@umn.edu}}

\ArticleDates{Received March 31, 2020, in final form December 08, 2020; Published online December 17, 2020}

\Abstract{We give an explicit combinatorial formula for the Laurent expansion of any arc or closed curve on an unpunctured triangulated orbifold. We do this by extending the snake graph construction of Musiker, Schiffler, and Williams to unpunctured orbifolds. In the case of an ordinary arc, this gives a combinatorial proof of positivity to the generalized cluster algebra from this orbifold.}

\Keywords{generalized cluster algebra; cluster algebra; orbifold; snake graph}

\Classification{05E15; 05C70; 16S99}

{\small \tableofcontents}

\renewcommand{\thefootnote}{\arabic{footnote}}
\setcounter{footnote}{0}

\section{Introduction}\label{sec:Introduction}

We extend the snake graph construction of Musiker, Schiffler, and Williams \cite{MSW} and subsequent work of Musiker and Williams \cite{MW} from ordinary cluster algebras from surfaces to generalized cluster algebras from unpunctured orbifolds in order to give a combinatorial proof of positivity for this subclass of generalized cluster algebras. Our work is motivated by, but does not use, unpublished work of Gleitz and Musiker \cite{Gleitz-Musiker}. This paper is a continuation of our previous work~\cite{Banaian-Kelley} and contains both proofs of those results and further results.

Sections \ref{sec:ordinaryClusterAlgebras} and \ref{sec:generalizedClusterAlgebras} give the relevant background information. In Section~\ref{sec:ordinaryClusterAlgebras}, we review the ordinary cluster algebra case. This includes basic definitions and an explanation of the snake graph construction for ordinary cluster algebras from surfaces. In Section~\ref{sec:generalizedClusterAlgebras}, we discuss the analogous definitions for generalized cluster algebras, orbifolds, and laminations on orbifolds.

The discussion of our construction begins in Section~\ref{Sec:Construction}. In this section, we explain how snake graphs can be constructed from triangulations of unpunctured orbifolds and state the following cluster expansion formula:

\begin{Theorem}\label{Thm:GenSnake}
Let $\mathcal{O} = (S,M,Q)$ be an unpunctured orbifold with triangulation $T$ and $\mathcal{A}$ be the corresponding generalized cluster algebra with principal coefficients with respect to $\Sigma_T = (\mathbf{x}_T,\mathbf{y}_T,B_T)$. For an ordinary arc $\gamma$ with generalized snake graph $G_{T,\gamma}$, the Laurent expansion of $x_\gamma$ with respect to $\Sigma_T$ is
\[ [x_\gamma]^{\mathcal{A}}_{\Sigma_T} = \frac{1}{\operatorname{cross}(T,\gamma)} \sum_P x(P)y(P), \]
where the summation is indexed by perfect matchings of $G_{T,\gamma}$.
\end{Theorem}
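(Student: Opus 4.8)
The plan is to follow the inductive strategy of Musiker--Schiffler--Williams, carrying out an induction on the crossing number $\operatorname{cross}(T,\gamma)$ and reducing at each stage via an exchange (skein) relation to arcs with strictly fewer crossings. The ordinary-surface case of this argument is already available through the work of \cite{MSW} and \cite{MW}, so the real content is verifying that each ingredient survives the passage to orbifolds, where arcs may cross \emph{pending} arcs encircling orbifold points and the associated mutations are governed by polynomial rather than binomial exchange relations.

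First I would dispose of the base cases. When $\gamma$ is an arc of the triangulation $T$ the generalized snake graph is trivial and the formula reads off the corresponding cluster variable; when $\gamma$ crosses $T$ exactly once through an ordinary arc, the snake graph is a single tile whose two perfect matchings reproduce the two terms of the ordinary exchange relation after clearing the denominator $\operatorname{cross}(T,\gamma)$. The genuinely new base case is when $\gamma$ crosses a single pending arc around an orbifold point of order $d$: here the corresponding tile of the generalized snake graph must admit exactly $d+1$ perfect matchings, and one must check directly that $\sum_P x(P)\, y(P)$, weighted by the coefficients $\mathbf{y}_T$, reproduces the $(d+1)$-term generalized exchange polynomial attached to that orbifold point. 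Establishing this local dictionary between matchings and generalized-exchange monomials is the combinatorial heart of the construction.

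For the inductive step I would select a crossing of $\gamma$ with an arc $\tau$ of $T$ and resolve it, expressing $x_\gamma$ through the appropriate skein relation as a $\mathbb{Z}[\mathbf{y}_T]$-combination of products $x_{\gamma_1}x_{\gamma_2}$ (augmented by orbifold terms when $\tau$ is pending), each factor crossing $T$ fewer times than $\gamma$. Applying the inductive hypothesis to each factor, the claim then follows from a grafting/splitting lemma asserting that the perfect matchings of $G_{T,\gamma}$ are in weight-preserving bijection with the matchings of the smaller snake graphs produced by the resolution, so that both sides of the skein relation agree monomial by monomial. I expect the main obstacle to lie precisely in the orbifold contributions: because each pending-arc crossing enlarges a tile and replaces a binomial by a polynomial relation, one must verify that the grafting bijection remains compatible with these enlarged tiles, that the $y$-weights correctly track the generalized coefficients, and that no matchings are over- or under-counted when $\gamma$ winds past several orbifold points. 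Checking this compatibility, rather than the formal induction itself, is where the substantive work will be.
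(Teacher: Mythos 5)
Your proposal takes a different route from the paper (the paper does not run a skein-resolution induction on snake graphs: it lifts $\gamma$ to a triangulated polygon $\widetilde{S_\gamma}$, applies the known Musiker--Schiffler--Williams expansion there, and transports the formula through an algebra homomorphism $\phi_\gamma$, the induction on $e(\gamma,T)$ entering only to prove $\phi_\gamma(x_{\widetilde{\gamma}})=x_\gamma$ in Proposition~\ref{Proposition5}), but the problem is not the difference of route: two of your steps contain genuine errors. The first is your base case, which misdescribes the algebra. In the generalized cluster algebras from unpunctured orbifolds, the exchange polynomial of a pending arc at an orbifold point of order $p$ is the reciprocal quadratic $z = 1 + \lambda_p u + u^2$ with $\lambda_p = 2\cos(\pi/p)$, so flipping a pending arc gives a \emph{three}-term relation $x_\gamma x_\rho = Y_1 x_{\beta_1}^2 + Y_0\lambda_p x_{\beta_1}x_{\beta_2} + Y_{-1}x_{\beta_2}^2$ for every $p$; there is no $(p+1)$-term exchange polynomial and no tile with $p+1$ perfect matchings. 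The order $p$ enters only through the coefficient $\lambda_p$ and, for winding generalized arcs, through the Chebyshev weights $U_\ell(\lambda_p)$ placed on edges. (Note also that an ordinary arc crosses a pending arc an even number of times, which is exactly why the construction uses a hexagonal tile covering the two consecutive crossings; for an ordinary arc this tile degenerates to a two-tile snake graph with exactly three matchings, matching the trinomial relation.) So the ``local dictionary'' you propose to verify is a dictionary for a relation that does not occur in this class of algebras.

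The second gap is in your inductive step, which assumes that resolving a crossing produces arcs each crossing $T$ \emph{strictly} fewer times. That is precisely what fails on an orbifold: Lemma~\ref{lem:QuadAndBigon} only guarantees $e(\alpha_i,T)\le e(\gamma,T)$, and when the arc crossed at the middle intersection is pending, one side of the resolving quadrilateral is itself a pending arc with $e(\alpha_i,T) = e(\gamma,T)$, so a naive induction on crossing number does not close. The paper's induction closes only because it is staged: for each value of $d$, pending arcs with $d$ crossings are handled first via the bigon half of Lemma~\ref{lem:QuadAndBigon} (whose bounding arcs cross $T$ fewer than $d/2$ times), and standard arcs with $d$ crossings may then cite the pending case for the same $d$. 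Your proposal would need this same two-stage structure, and in addition it rests on a grafting/splitting lemma identifying matchings of $G_{T,\gamma}$ with matchings of the resolved snake graphs; for generalized snake graphs with hexagonal tiles and $U_\ell(\lambda_p)$-weighted edges no such lemma exists in the literature, and proving it would be a substantial theorem in its own right. The paper never needs it: by construction $G_{\widetilde{T},\widetilde{\gamma}}$ and $G_{T,\gamma}$ are the same unlabeled graph, and $\phi_\gamma$ simply relabels the matching generating function, which is how Theorem~\ref{Thm:GenSnake+} is deduced in Section~\ref{sec:ClusterExpansionPf}.
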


The proof of this cluster expansion formula follows the same structure as the proof given by Musiker, Schiffler, and Williams~\cite{MSW} for their cluster expansion formula for ordinary cluster algebras from surfaces. Given an arc without self-intersections, which we refer to as an \emph{ordinary arc}, on a triangulated orbifold we lift to a cover $\widetilde{S}_\gamma$ which is a triangulated polygon without internal triangles. A cluster expansion formula for this type of triangulated polygon was already known from the work of Musiker and Schiffler~\cite{Musiker-Schiffler} prior to~\cite{MSW}. We denote the ordinary cluster algebra associated to $\widetilde{S}_\gamma$ as $\widetilde{\mathcal{A}}_\gamma$ and construct a homomorphism $\phi_\gamma$ from $\widetilde{\mathcal{A}}_\gamma$ to the generalized cluster algebra $\mathcal{A}$ associated with the original triangulated orbifold. We then verify our cluster expansion formula by applying $\phi_\gamma$ to the expansion obtained in the lift. Sections~\ref{sec:lift} through~\ref{sec:orbifoldLaminations} contain the material required to prove Theorem~\ref{Thm:GenSnake}. Section~\ref{sec:lift} discusses the lift $\widetilde{S}_\gamma$, Section~\ref{sec:QuadBigon} establishes quadrilateral and bigon lemmas for orbifolds, and then Sections~\ref{Sec:mapSection} through~\ref{sec:orbifoldLaminations} discuss the algebra homomorphism $\phi_\gamma$. The material from these sections is then pulled together in Section~\ref{sec:ClusterExpansionPf} to give a proof of Theorem~\ref{Thm:GenSnake}.

In Section \ref{sec:universalSnakeGraph}, we define a new object called a \emph{universal snake graph}, $UG_n$, which can be used to recover both ordinary snake graphs and our generalized snake graphs and allows us to simplify the arguments and calculations of Musiker and Williams \cite{MW}. In Lemma~\ref{PMUniversalSnake}, we describe the poset of perfect matchings of $UG_n$, and highlight some of its interesting properties, including that the poset is isomorphic to $B_n$. In Theorem \ref{thm:upperright}, we show how to obtain the weighted sum of perfect matchings of $UG_n$ from a matrix product.

We then turn to generalized arcs and closed curves. Similarly to Musiker--Williams \cite{MW}, we associate cluster algebra elements to these arcs and curves via the following definitions.

\begin{Definition}\label{def:genArc}
Let $\mathcal{O} = (S,M,Q)$ be an unpunctured orbifold with triangulation~$T$ and~$\mathcal{A}$ be the corresponding generalized cluster algebra with principal coefficients with respect to $\Sigma_T = (\mathbf{x}_T,\mathbf{y}_T,B_T)$. Let $\gamma$ be a generalized arc with generalized snake graph $G_{T,\gamma}$.
\begin{itemize}\itemsep=0pt
 \item If $\gamma$ has a contractible kink, then $X_{\gamma,T} = -X_{\bar{\gamma},T}$ where $\bar{\gamma}$ is $\gamma$ with this kink removed.
 \item Otherwise, we define \[X_{\gamma,T} = \frac{1}{\operatorname{cross}(T,\gamma)} \sum_P x(P)y(P). \]
\end{itemize}
\end{Definition}

\begin{Definition}\label{def:closedcurve}
Let $\mathcal{O} = (S,M,Q)$ be an unpunctured orbifold with triangulation~$T$ and~$\mathcal{A}$ be the corresponding generalized cluster algebra with principal coefficients with respect to $\Sigma_T = (\mathbf{x}_T,\mathbf{y}_T,B_T)$. Let $\gamma$ be a closed curve with generalized band graph $G_{T,\gamma}$.
\begin{itemize}\itemsep=0pt
 \item If $\gamma$ is a contractible loop, $X_{\gamma,T} = -2$.
 \item If $\gamma$ is isotopic to a curve which bounds a disk containing a unique orbifold point, then $X_{\gamma,T} = 2\cos(\pi/p):= \lambda_p$ where $p$ is the order of the orbifold ponit in this disk.
 \item Otherwise, we define \[X_{\gamma,T} = \frac{1}{\operatorname{cross}(T,\gamma)} \sum_P x(P)y(P), \]
 where the sum is over \emph{good matchings} of $G_{T,\gamma}$.
\end{itemize}
\end{Definition}

\looseness=-1 To verify that these are sensible definitions, in Section \ref{sec:$M$-path} we describe another way to build a~cluster algebra element from an arbitrary arc or curve, building on the work of \cite{MW}. We break the arc (or curve) into smaller \emph{elementary steps}, each of which has a corresponding $2 \times 2$ matrix, and then consider the appropriately ordered product of these matrices. In Theorem~\ref{Thm:ArcsAndGraphs} of Section~\ref{sec:StandardMPath}, we verify that the matrix formulation is consistent with the expansion formula from the snake graph. The advantage of the matrix viewpoint is that we are able to prove Proposition~\ref{Prop:OrbSkein} in Section~\ref{sec:Skein}, which verifies that these cluster algebra elements satisfy the relations given by applying the skein relation to intersections and self-intersections. We also highlight a three-term skein relation, Proposition~\ref{prop:OrbSkeinThreeTerm}, from an intersection of pending arcs which resembles the generalized exchange polynomials in the generalized cluster algebras we consider.

Finally, in Section \ref{sec:RelateToPuncture} we explore the relationship between punctures and orbifold points and show that some of the results of~\cite{MSW} and~\cite{MW} for punctures can be recovered by treating a~puncture as an orbifold point of order infinity.

\section{Ordinary cluster algebras}\label{sec:ordinaryClusterAlgebras}

Cluster algebras were originally introduced in 2000 by Fomin and Zelevinsky~\cite{Fomin-Zelevinsky-I} to provide a concrete combinatorial framework for studying dual canonical bases and total positivity in semisimple groups. Cluster algebras are commutative rings with a distinguished family of ge\-nerators, called \emph{cluster variables}, which occur in distinguished collections of $n$-element subsets $\{x_1, \dots, x_n \}$ called \emph{clusters}. Clusters can be obtained from each other by an involutive process called \emph{mutation}, which replaces a single cluster variable with a unique cluster variable which was not present in the original cluster. Through repeated mutation, one can recover the complete set of cluster variables.

Algebraically, mutation is given by a binomial \emph{exchange relations}. The coefficients in these binomial exchange relations come from a choice of \emph{coefficient variables}. Each cluster $\{ x_1, \dots, x_n \}$ has an associated set of coefficients $\{y_1, \dots, y_n \}$, which are also related to each other via mutation. For more technical definitions, one good source is \cite[Section~2]{Fomin-Zelevinsky-IV}.

Cluster algebras have a variety of notable properties -- these famously include the \emph{Laurent phenomenon}.

\begin{Theorem}[{\cite[Theorem 3.1]{Fomin-Zelevinsky-I}}]
Let $\mathcal{A}$ be an arbitrary cluster algebra. The cluster variables $x_1, \dots, x_n$ can be expressed in terms of any cluster of~$\mathcal{A}$ as a Laurent polynomial with coefficients in~$\mathbb{ZP}$.
\end{Theorem}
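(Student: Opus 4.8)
The plan is to fix an arbitrary cluster $\{x_1,\dots,x_n\}$, treat its seed as the root of the $n$-regular tree $\mathbb{T}_n$ that indexes all seeds, and prove by induction on the distance in $\mathbb{T}_n$ that every cluster variable lies in the Laurent ring $\mathcal{L}=\mathbb{ZP}\big[x_1^{\pm 1},\dots,x_n^{\pm 1}\big]$. Since each cluster variable is obtained from the root by a finite chain of mutations, it is enough to understand how a single mutation interacts with Laurentness. The obstruction is visible at once: mutation in direction $k$ replaces $x_k$ by $\big(M_1+M_2\big)/x_k$, where $M_1,M_2$ are monomials in the remaining cluster variables weighted by coefficients from $\mathbb{P}$, so a chain of mutations divides repeatedly by cluster variables that are, a priori, arbitrary Laurent polynomials. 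The entire content of the theorem is that these repeated divisions never create a denominator other than a monomial in $x_1,\dots,x_n$.

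To control this, I would use the \emph{caterpillar lemma} of Fomin and Zelevinsky, which localizes the global claim to a bounded configuration of seeds. Rather than track a whole geodesic from the root to a distant seed, one isolates, at each ``bend'' of the path, three consecutive seeds together with the two distinct mutation directions $i\neq j$ occurring there. The first half of the argument shows, after inverting the exchange polynomials, that any cluster variable is a Laurent polynomial in the variables of \emph{either} of two adjacent clusters; the caterpillar structure then lets one descend step by step and upgrade this to Laurentness in the single root cluster. The induction is engineered so that the hypotheses carried along the path --- Laurentness together with suitable coprimality --- are exactly strong enough to be reproduced after one more mutation.

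The crux, and the step I expect to be the main obstacle, is the coprimality argument that legitimizes each division. Working in the unique factorization domain $\mathcal{L}$, a mutation produces an element of the form $P/x_k$ with $P\in\mathcal{L}$, and one must prove $x_k\mid P$ so that no spurious irreducible factor remains. This reduces to showing that the two cluster variables obtained by mutating the root in the directions $i$ and $j$ share no common non-monomial factor in $\mathcal{L}$; their coprimality forces any surviving denominator to be a unit times a monomial, and the binomial shape of the exchange relation then guarantees divisibility by precisely the intended variable. Verifying these coprimality relations is the delicate heart of the proof, and it is here that the binomial form of the exchange relations and the fact that the coefficients lie in the semifield $\mathbb{P}$ --- hence are units in $\mathbb{ZP}$ --- are used decisively.
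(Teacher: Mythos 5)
The paper does not prove this statement at all: it is quoted purely as background, with the proof deferred to the citation \cite{Fomin-Zelevinsky-I}, so there is no in-paper argument to compare yours against. Your sketch is a faithful outline of Fomin--Zelevinsky's original proof of exactly this theorem --- rooting the tree $\mathbb{T}_n$ at the chosen cluster, inducting on distance via the caterpillar lemma, and resting the whole weight on the coprimality/divisibility analysis of first-mutation variables in the Laurent ring --- so you have reconstructed the standard route rather than found a new one. The one point to flag is your assertion that $\mathcal{L}=\mathbb{ZP}\big[x_1^{\pm 1},\dots,x_n^{\pm 1}\big]$ is a unique factorization domain: this is true when $\mathbb{P}$ is a tropical semifield (the principal-coefficients setting used throughout the present paper), but fails for an arbitrary semifield (e.g., one with a divisible multiplicative group), so in full generality the coprimality hypotheses must be formulated and verified without invoking unique factorization --- for instance via primality of the relevant exchange binomials --- which is how the argument in \cite{Fomin-Zelevinsky-I} is arranged.
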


The Laurent phenomenon becomes even more compelling with the addition of the \emph{positivity property}.

\begin{Conjecture}[{cf.\ \cite[Section 3]{Fomin-Zelevinsky-I}}]
The coefficients of these Laurent polynomials are in fact \emph{non-negative}.
\end{Conjecture}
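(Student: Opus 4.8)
The plan is to prove positivity not for an arbitrary cluster algebra at once, but for the cluster algebras arising from triangulated surfaces and orbifolds, by exhibiting each cluster variable as a manifestly positive generating function. The guiding idea is that if every Laurent monomial occurring in the expansion of a cluster variable can be identified with the combinatorial object it literally counts, then the coefficients, being cardinalities of finite sets, are automatically non-negative integers.

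Concretely, I would proceed as follows. First, fix a triangulation $T$ of the orbifold $\mathcal{O}=(S,M,Q)$ and, to each ordinary arc $\gamma$, associate the generalized snake graph $G_{T,\gamma}$. Second, invoke the cluster expansion formula of Theorem~\ref{Thm:GenSnake}, which expresses
\[ [x_\gamma]^{\mathcal{A}}_{\Sigma_T} = \frac{1}{\operatorname{cross}(T,\gamma)} \sum_P x(P)y(P), \]
the sum being over perfect matchings $P$ of $G_{T,\gamma}$. Third, observe that each summand $x(P)y(P)$ is a single Laurent monomial with coefficient $+1$ and that $\operatorname{cross}(T,\gamma)$ is itself a monomial; hence, after clearing this denominator, the coefficient of any given Laurent monomial is exactly the number of matchings $P$ producing it. Being the cardinality of a finite set of matchings, this coefficient is a non-negative integer, which is precisely the positivity assertion for $x_\gamma$. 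Since the cluster variables generate $\mathcal{A}$ and products of positive Laurent polynomials remain positive, positivity then propagates to the whole algebra.

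The main obstacle is not this final bookkeeping but the establishment of the combinatorial model itself, namely the proof that the Laurent expansion really is the matching generating function. For surfaces this is the content of \cite{MSW}; in the orbifold setting it requires lifting $\gamma$ to a cover $\widetilde{S}_\gamma$ that is an internal-triangle-free triangulated polygon, transporting the known expansion there along the homomorphism $\phi_\gamma$, and controlling the behaviour of the construction at the orbifold points via the quadrilateral and bigon lemmas. I expect the delicate points to be verifying that $\phi_\gamma$ is well defined on cluster variables and that the lift correctly matches crossings with matchings; once these are in place, non-negativity is immediate.

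Finally, I would emphasise the scope of this argument: it settles the conjecture only for the surface/orbifold class, precisely where a snake-graph model is available. For a genuinely arbitrary cluster algebra no such combinatorial model exists in general, and one must instead appeal to theta-function and scattering-diagram techniques; that route lies outside the combinatorial framework developed here, so within this paper the achievable statement is positivity for the generalized cluster algebras coming from unpunctured orbifolds.
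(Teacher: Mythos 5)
Your plan coincides with the paper's own treatment of this statement. Note first that the paper states it only as a \emph{Conjecture} (cited from Fomin--Zelevinsky) and never proves it for arbitrary cluster algebras; what the paper actually proves is exactly the restricted claim you end with, namely positivity for the generalized cluster algebras arising from unpunctured triangulated orbifolds, and it does so by the route you describe: establish the snake-graph expansion of Theorem~\ref{Thm:GenSnake} (via the lift $\widetilde{S_\gamma}$, the homomorphism $\phi_\gamma$, and the quadrilateral/bigon lemma of Section~\ref{sec:QuadBigon}), after which positivity is read off from the matching sum. Your assessment of where the real work lies -- in proving the expansion formula, not in the final bookkeeping -- is also accurate.

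One step of your bookkeeping is, however, wrong as stated, though easily repaired. In the generalized setting the summands $x(P)y(P)$ are \emph{not} all monomials with coefficient $+1$: tiles coming from pending arcs carry edge labels $\lambda_p x_\rho$ and, more generally, $U_k(\lambda_p)\,x_\rho$ (Section~\ref{Sec:Construction}); see for instance the term $\mu\, y_2 x_2 x_a$ in the paper's expansion of $x_{\gamma_1}$. Consequently the coefficient of a given Laurent monomial is a sum over matchings of products of Chebyshev evaluations, not the cardinality of a set of matchings, and non-negativity is not automatic from counting. It requires the additional fact that $U_k(\lambda_p)\ge 0$ in the relevant range $0< k \le p-1$, which the paper obtains from Lemma~\ref{lem:ChebyPeriodic} together with the geometric interpretation in Lemma~\ref{lem:polygonlength}, and invokes explicitly in the Corollary following Theorem~\ref{Thm:ArcsAndGraphs}. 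With that one lemma inserted, your argument is the paper's. Finally, your closing remark about ``propagation to the whole algebra'' is both unnecessary and false as stated: the conjecture concerns the cluster variables expressed in any cluster, and arbitrary elements of $\mathcal{A}$ (for example, differences of cluster monomials) certainly need not have non-negative expansions.
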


Positivity was conjectured in Fomin and Zelevinsky's original paper \cite{Fomin-Zelevinsky-I} and later verified in a variety of cases, including: skew-symmetric cluster algebras \cite{Lee-Schiffler}, cluster algebras of surface type \cite{MSW, Schiffler, Schiffler-Thomas}, acyclic (quantum) cluster algebras \cite{Caldero-Reineke, Efimov,Kimura-Qin, Qin}, bipartite cluster algebras \cite{Nakajima}, and cluster algebras of geometric type~\cite{GHKK}. Note that this last case encompasses all of the prior cases and is the most general setting in which a proof of the positivity conjecture is known.

Throughout, we will use the term \emph{ordinary cluster algebra} to refer to this original definition due to Fomin and Zelevinsky.

\subsection{Cluster algebras from surfaces}

In 2008, Fomin, Shapiro, and D.~Thurston showed that a subset of ordinary cluster algebras can be modeled by triangulations of bordered surfaces with marked points~\cite{FST-triangulatedSurfaces}. Marked points that appear on the interior of the surface are called \emph{punctures}. We work primarily with unpunctured surfaces, but will note some connections to punctures in Section~\ref{sec:RelateToPuncture}. For unpunctured surfaces, we quickly highlight some relevant portions of Fomin, Shapiro, and D.~Thurston's construction and refer the reader to Section~2 of their paper for a much more detailed explanation.

\begin{Definition}An \emph{arc} $\gamma$ on a surface $(S,M)$ is a non-self-intersecting curve in $S$ with endpoints in $M$ that is otherwise disjoint from $M$ and $\partial S$. Curves that are contractible onto $\partial S$ or that cut out an unpunctured monogon or bigon are not considered arcs. Arcs are considered up to isotopy class.
\end{Definition}

Because arcs are considered up to isotopy, we have to be somewhat careful when thinking about the idea of intersections of arcs. Let $\gamma$ and $\gamma'$ be arbitrary arcs on $S$ and $\alpha$ and $\alpha'$ denote arbitrary representatives of their isotopy classes. Then the \emph{crossing number} $e(\gamma,\gamma')$ is the minimal number of crossings of each possible choice of $\alpha$ and $\alpha'$. Two arcs $\gamma$ and $\gamma'$ are considered \emph{compatible} if $e(\gamma,\gamma') = 0.$ Similarly, if $T = \{\tau_1,\ldots,\tau_n\}$ is a triangulation of a surface, we define $e(\gamma,T) = \sum_{i=1}^n e(\gamma, \tau_i)$.

\begin{Definition}
An \emph{ideal triangulation} $T$ of a surface is a maximal collection of pairwise compatible arcs (and boundary arcs).
\end{Definition}

Surfaces with ideal triangulations provide a useful combinatorial tool for studying certain cluster algebras, via the correspondence described in the following theorem.

\begin{Theorem}[Fomin--Thurston \cite{FT-II}]\label{SurfaceCA}
Given a surface with marked points, $(S,M)$, there exists a unique cluster algebra $\mathcal{A} = \mathcal{A}(S,M)$ with the following properties: $(1)$~the seeds are in bijection with tagged triangulations of $(S,M)$;
$(2)$~the cluster variables are in bijection with tagged arcs in $(S,M)$; and $(3)$~the cluster variable $x_\gamma$ corresponding to arc $\gamma$ is given by the lambda length of $\gamma$, in terms of some initial triangulation.
\end{Theorem}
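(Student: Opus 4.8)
The plan is to fix an initial ideal triangulation $T_0$ of $(S,M)$ and build the cluster algebra from the ground up, then match its combinatorial and geometric structure against triangulations and arcs. First I would recall the Fomin--Shapiro--Thurston assignment of a signed adjacency matrix $B_{T_0}$ to $T_0$, whose entries record, with sign, how the arcs of $T_0$ sit inside its triangles; this defines an initial seed and hence a cluster algebra $\mathcal{A}$ (taking the coefficient pattern appropriate to the geometric-type setting, with boundary segments as frozen variables). The central computation, which I would carry out next, is that a \emph{flip} of $T$ at an arc $\tau_i$---replacing $\tau_i$ by the unique other diagonal $\tau_i'$ of the quadrilateral it bisects---produces a triangulation $T'$ with $B_{T'} = \mu_i(B_T)$. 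This synchronizes flips of triangulations with mutations of seeds and is the backbone of properties $(1)$ and $(2)$.

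The main obstacle is that this flip-equals-mutation dictionary breaks down in the presence of \emph{self-folded triangles}: an arc enclosed in a self-folded triangle cannot be flipped as an ordinary ideal arc, so the graph of ideal triangulations is not regular of degree $n$ and cannot model the exchange graph directly. To repair this I would pass to \emph{tagged} arcs and tagged triangulations, where every arc admits a flip and the flip graph becomes $n$-regular with the correct connectivity. Establishing $(1)$ then amounts to showing that the map sending the initial tagged triangulation $T_0$ to the initial seed extends to a graph isomorphism from the tagged flip graph onto the exchange graph of $\mathcal{A}$; this follows by transporting the local flip-mutation compatibility along paths and checking it respects the relations (the pentagon and commuting-square relations among flips matching those among mutations). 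Property $(2)$ follows once $(1)$ is in place: each cluster variable occurs as a coordinate of some seed, hence names a tagged arc, and conversely every tagged arc lies in some tagged triangulation; the crux is well-definedness, namely that the variable attached to an arc is independent of the ambient triangulation, which is exactly the statement that two seeds sharing an arc assign it the same variable.

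For property $(3)$ I would invoke Penner's decorated Teichm\"uller theory. To each point of decorated Teichm\"uller space and each arc $\gamma$ one associates a \emph{lambda length} $\lambda(\gamma)$, and for the four sides and two diagonals of an ideal quadrilateral these satisfy the \emph{Ptolemy relation}, which has precisely the shape of a cluster exchange relation with trivial coefficients. I would then show that the lambda lengths of the arcs of a fixed triangulation are algebraically independent and serve as global coordinates, so that setting $x_\gamma := \lambda(\gamma)$ defines an algebra homomorphism from $\mathcal{A}$ into the ring of functions on decorated Teichm\"uller space intertwining mutation with the Ptolemy relation; matching the boundary lambda lengths upgrades this to the full coefficient system.

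Finally, uniqueness is essentially formal: a cluster algebra is determined up to isomorphism by any one of its seeds, and the surface data fixes the initial seed $(\mathbf{x}_{T_0}, B_{T_0})$ up to relabeling, so any cluster algebra satisfying $(1)$--$(3)$ must coincide with $\mathcal{A}$. I expect the genuine difficulty to be concentrated in the tagged-arc bookkeeping: verifying that flips and mutations stay synchronized through self-folded triangles and across the tagging conventions, and that the flip graph has the connectivity needed to match the exchange graph, is the technical heart of the argument, whereas the Ptolemy/exchange comparison and the uniqueness statement are comparatively routine once this combinatorial dictionary is secured.
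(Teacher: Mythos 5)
This statement is quoted background in the paper: it is Fomin--Thurston's theorem, cited from \cite{FT-II}, and the paper itself gives no proof of it, so there is no internal argument to compare yours against. Measured against the actual Fomin--Shapiro--Thurston/Fomin--Thurston development, your sketch reconstructs the correct overall strategy: signed adjacency matrices from triangulations, the flip-equals-mutation computation, tagged arcs and tagged triangulations to repair the failure of flips at self-folded triangles, and Penner's lambda lengths with the Ptolemy relation for part $(3)$.

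There is, however, a genuine gap in how you deduce $(1)$ and $(2)$. You claim $(1)$ follows by ``transporting the local flip--mutation compatibility along paths,'' and that $(2)$ ``follows once $(1)$ is in place,'' with the only crux being well-definedness. Transporting local compatibility along paths only produces a surjection (a covering-type map) from the tagged flip graph onto the exchange graph; to upgrade this to an isomorphism you must show that distinct tagged triangulations yield distinct seeds, and for $(2)$ you must additionally show that distinct tagged arcs yield distinct cluster variables. Neither injectivity statement is formal, and neither follows from the combinatorial dictionary alone. In Fomin--Thurston's proof this is exactly what the lambda-length realization supplies: lambda lengths of the arcs in a fixed triangulation are algebraically independent, and distinct tagged arcs have distinct lambda-length functions on decorated Teichm\"uller space, so cluster variables attached to different arcs can be separated by evaluating at suitable hyperbolic structures. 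Consequently the logical order in the real proof is the reverse of yours: the geometric realization (your part $(3)$) is established first, and the bijections $(1)$--$(2)$ are deduced from it, rather than $(3)$ being layered on top of an already-secured combinatorial dictionary. Your instinct that the ``tagged-arc bookkeeping'' is the technical heart is right, but as written the proposal cannot close the injectivity step without invoking the geometry it defers to the end.
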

 In this dictionary, mutation of cluster variables in $\mathcal{A}$ is equivalent to flipping arcs in the triangulation~$T$. Each arc $\tau$ in $T$ looks locally like the diagonal in a quadrilateral. Thus, the result of flipping $\tau$ in $T$ is a new triangulation, $T' = (T - \{\tau\}) \cup \{\tau' \}$ where $\tau'$ is the unique other diagonal of the quadrilateral surrounding $\gamma$.

\subsection{Snake graphs}\label{section:snakeGraphs}

Snake graphs were defined by Musiker, Schiffler, and Williams in order to give explicit combinatorial formulas for the cluster variables in any cluster algebra from a surface \cite{MSW}. In doing so, they offered an alternate and combinatorial proof of positivity for cluster algebras from surfaces. For any fixed arc $\gamma$ and triangulation $T$ of a surface $(S,M)$, they construct a graph $G_{T,\gamma}$ by gluing together \emph{tiles} that encode the local geometry at each intersection between $\gamma$ and arcs of the triangulation. The formula for the expansion of $x_\gamma$ with respect to the cluster corresponding to $T$ is then given in terms of perfect matchings of $G_{T,\gamma}$. We briefly review their construction for unpunctured surfaces, but refer the interested reader to Section 4 of their paper for the complete construction and many examples.

Let $(S,M)$ be a bordered surface with triangulation $T$ and $\gamma$ be an ordinary arc (i.e., one without self-intersections) in $S$ but not in $T$. Fix an orientation on $\gamma$ and let $s$ and $t$ denote the starting and ending points of $\gamma$, respectively. Denote the intersection points of $\gamma$ and $T$ as $s = p_0, p_1, \dots, p_{d+1} = t$, in order, and let $\tau_{i_j}$ be the arc in $T$ containing $p_j$. Denote the ideal triangles on either side of $\tau_{i_j}$ as $\Delta_{j-1}$ and $\Delta_j$.

Each intersection $p_j$ is associated with a square tile $G_j$ formed by gluing copies of~$\Delta_{j-1}$ and~$\Delta_j$ along the edge labeled $\tau_{i_j}$. This can be done either so that both triangles have orientation matching~$\Delta_{j-1}$ and~$\Delta_j$ or both triangles have opposite orientation, hence there are two valid planar embeddings of each~$G_j$. The tile $G_j$ is said to have \emph{relative orientation} $\operatorname{rel}(G_j) = +1$ if the orientation of its triangles match the ideal triangles on $S$ and $\operatorname{rel}(G_j) = -1$ otherwise. Two of the edges of the triangle $\Delta_j$ are labeled $\tau_{i_j}$ and $\tau_{i_{j+1}}$; label the remaining edge as $\tau_{[\gamma_j]}$.

{\sloppy The graph $G_{T,\gamma}$ is then formed by gluing together subsequent tiles $G_{1}, \dots, G_{d}$ in order. Ti\-les~$G_j$ and~$G_{j+1}$ are glued along the edge labeled $\tau_{[\gamma_j]}$ after choosing planar embeddings~$\widetilde{G}_j$ and~$\widetilde{G}_{j+1}$ of the tiles such that $\operatorname{rel}\big(\widetilde{G}_j\big) \neq \operatorname{rel}\big(\widetilde{G}_{j+1}\big)$. Gluing together all~$d$ tiles gives a~graph~$\bar{G}_{T,\gamma}$. The graph $G_{T,\gamma}$ can then be obtained from $\bar{G}_{T,\gamma}$ by removing the diagonal edge from each tile.

}

Before stating Musiker, Schiffler, and Williams' expansion formula, we need to review the following definitions and notation.

\begin{Definition}[{\cite[Definition 4.4]{MSW}}]
For an ordinary arc $\gamma$ crossing the sequence of arcs $\tau_{i_1},\dots,\tau_{i_d}$ in $T$, the \emph{crossing monomial} of $\gamma$ with respect to $T$ is defined as
\begin{gather*}
\operatorname{cross}(T,\gamma) = \prod_{j=1}^d x_{\tau_{i_j}}.
\end{gather*}
\end{Definition}

\begin{Definition}[{\cite[Definition 4.5]{MSW}}]
A perfect matching $P$ of a snake graph $G$ which uses edges labeled $\tau_{i_1}, \dots, \tau_{i_k}$ has \emph{weight} $x(P) = x_{\tau_{i_1}}\cdots x_{\tau_{i_k}}$.
\end{Definition}

\begin{Definition}[{\cite[Definition 4.6]{MSW}}]\looseness=-1
$G_{T,\gamma}$ has exactly two perfect matchings that include only boundary edges; these are referred to as the \emph{minimal} and \emph{maximal} matchings of $G_{T,\gamma}$. The distinction between the two depends on the relative orientation of $G_{T,\gamma}$; if $\operatorname{rel}(G_{T,\gamma}) = 1$ (respectively, $-1$), define $e_1$ and~$e_2$ to be the edges that are immediately counterclockwise (respectively, clockwise) from the diagonal. The \emph{minimal matching}, $P_{-}$, is defined to be the unique perfect matching that includes only boundary edges and does not include $e_1$ or $e_2$. The \emph{maximal mat\-ching} $P_{+}$ is the complementary perfect matching on boundary edges that includes~$e_1$ and~$e_2$.
\end{Definition}

The cluster expansion formula from snake graphs involves the symmetric difference of an arbitrary perfect matching $P$ with $P_{-}$, denoted $P \ominus P_{-}$. The edges of $P \ominus P_{-}$ are always a~subgraph of $G_{T,\gamma}$ composed of potentially disjoint cycles, which encloses a finite set of tiles~$\{ G_{i_j} \}_{j \in J}$.

\begin{Definition}[{\cite[Definition 4.8]{MSW}}]
Let $T = \{ \tau_1, \dots, \tau_n \}$ be an ideal triangulation of an unpunctured surface $(S,M)$ and $\gamma$ be an ordinary arc on $(S,M)$. Let $P$ be a perfect matching of $G_{T,\gamma}$ such that $P \ominus P_{-}$ encloses the set of tiles $\{ G_{i_j} \}_{j \in J}$. The \emph{height monomial} of $P$ is
\begin{align*}
y(P) = \prod_{k=1}^n h_{\tau_{k}}^{m_k},
\end{align*}
where $m_k$ is the number of tiles in $\{G_{i_j} \}_{j \in J}$ with diagonal labeled $\tau_{i_j}$ and $h_{\tau_k} = y_{\tau_k}$.
\end{Definition}

Now, we are prepared to state the formula for obtaining Laurent expansions from snake graphs.

\begin{Theorem}[{\cite[Theorem 4.9]{MSW}}]\label{theorem:clusterExpansion_ordinary}
Let $(S,M)$ be a bordered surface with triangulation~$T$, $\mathcal{A}$~be the corresponding cluster algebra with principal coefficients, and~$\gamma$ be an ordinary arc on~$S$. Then $x_\gamma$ can be written as a Laurent expansion in terms of the initial cluster variables as
\[ x_\gamma = \frac{1}{\operatorname{cross}(T,\gamma)}\sum_P x(P)y(P), \]
where the sum ranges across all perfect matchings $P$ of the snake graph $G_{T,\gamma}$.
\end{Theorem}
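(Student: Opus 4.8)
The plan is to reduce the statement to a case that is already understood --- arcs in a triangulated polygon --- by passing to a suitable unfolding of the arc, and then to transport the known formula back to the surface through an algebra homomorphism. This isolates all of the genuinely combinatorial content into a single comparison of matchings and coefficient data, and it is the same strategy the present paper will later imitate in the orbifold setting.

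First I would construct the lift. Given the ordinary arc $\gamma$ crossing $\tau_{i_1}, \dots, \tau_{i_d}$ and passing through the triangles $\Delta_0, \dots, \Delta_d$, I would glue $\Delta_0, \dots, \Delta_d$ in sequence along the crossed edges to obtain a triangulated polygon $\widetilde{S}_\gamma$ with triangulation $\widetilde{T} = \{\widetilde{\tau}_{i_1}, \dots, \widetilde{\tau}_{i_d}\}$, in which $\gamma$ lifts to an arc $\widetilde{\gamma}$ joining two corners and crossing each $\widetilde{\tau}_{i_j}$ exactly once. Because each tile of a snake graph depends only on the two triangles adjacent to a crossed arc, and these are preserved by the gluing, the snake graph $G_{\widetilde{T},\widetilde{\gamma}}$ agrees tile-for-tile with $G_{T,\gamma}$. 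Hence perfect matchings of the two graphs correspond bijectively, and after identifying each diagonal $\widetilde{\tau}$ with the arc $\tau$ it covers, the edge-weights $x(\widetilde{P})$ and $x(P)$ agree.

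Next I would invoke the Musiker--Schiffler expansion formula for the triangulated polygon, which expresses $x_{\widetilde{\gamma}}$ in the cluster algebra $\widetilde{\mathcal{A}}$ of $\widetilde{S}_\gamma$ with principal coefficients as $\frac{1}{\operatorname{cross}(\widetilde{T},\widetilde{\gamma})}\sum_{\widetilde{P}} x(\widetilde{P})y(\widetilde{P})$. I would then define the covering homomorphism $\phi\colon \widetilde{\mathcal{A}} \to \mathcal{A}$ on generators by $\phi(x_{\widetilde{\tau}}) = x_\tau$, with the polygon's coefficient variables $y_{\widetilde{\tau}}$ sent to the corresponding $y_\tau$ of the surface. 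Under $\phi$ the monomials $x(\widetilde{P})$ map to $x(P)$ via the snake-graph identification, while several distinct diagonals $\widetilde{\tau}$ may cover the same arc $\tau_k$; collecting these collapses $y(\widetilde{P})$ to $\prod_k y_{\tau_k}^{m_k}$, which is exactly the height monomial $y(P)$ on the surface, with $m_k$ counting the tiles whose diagonal is labeled $\tau_k$.

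The main obstacle I expect is precisely this last comparison: establishing that $\phi$ is a well-defined algebra homomorphism that intertwines mutation, so that a flip in the polygon lifts a flip in the surface and, in particular, $\phi(x_{\widetilde{\gamma}}) = x_\gamma$. This forces one to check that the symmetric-difference-with-$P_-$ recipe defining the height monomial is compatible with $\phi$, that the relative-orientation conventions match between the polygon and the surface so that the same matching is designated minimal on each side, and that the collapsing of coefficient variables is consistent with the principal-coefficient data. Once $\phi$ is shown to intertwine the two cluster structures, pushing the Musiker--Schiffler polygon formula forward along $\phi$ yields the stated formula, the polygon case serving as the known input to the argument.
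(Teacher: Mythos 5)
Your proposal is correct and takes essentially the same approach as the paper: this statement is quoted from Musiker--Schiffler--Williams, and both their proof and the paper's own orbifold generalization (Sections~\ref{sec:lift}--\ref{sec:ClusterExpansionPf}) proceed exactly as you describe --- lift $\gamma$ to a triangulated polygon $\widetilde{S_\gamma}$, invoke the Musiker--Schiffler expansion formula there, and push it forward along an algebra homomorphism $\phi_\gamma$ that identifies snake graphs tile-for-tile and collapses the coefficient variables. The obstacle you flag, namely that $\phi_\gamma$ is a well-defined homomorphism with $\phi_\gamma(x_{\widetilde{\gamma}})=x_\gamma$ and compatible coefficient data, is precisely what the paper resolves via the quadrilateral lemma (Lemma~\ref{lem:QuadAndBigon}), induction on $e(\gamma,T)$, and the shear-coordinate comparison of Lemma~\ref{phimapsys}.
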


Subsequently, Musiker and Williams extended the snake graph construction to handle \emph{generalized arcs}, which may contain self-intersections, and closed curves \cite{MW}. Suppose $\gamma$ is now a generalized arc. If $\gamma$ is a contractible loop, then Musiker and Williams define $x_\gamma = 0$. If $\gamma$ contains a contractible kink and $\overline{\gamma}$ denotes the corresponding arc with the kink removed, then they define $x_\gamma = (-1)x_{\overline{\gamma}}$. Musiker and Williams then show that Theorem \ref{theorem:clusterExpansion_ordinary} holds for generalized arcs. For closed curves, they define a corresponding cluster algebra element using a slight modification of snake graphs called \emph{band graphs}. For details, see \cite[Section~3]{MW}.

The set of perfect matchings of a snake graph has a natural poset structure. Describing this structure makes use of \emph{twists}, i.e. local moves on a perfect matching $P$ where the horizontal edges of a single tile are replaced with the vertical edges of that tile, or vice versa. Musiker, Schiffler, and Williams~\cite{MSW-bases} establish the following result about that poset structure, building on previous work by Propp on the poset structure of perfect matchings of bipartite planar graphs~\cite{Propp}.

\begin{Theorem}[{\cite[Theorem 5.2]{MSW-bases}}]\label{Thm:PMPoset}
Consider the set of all perfect matchings of a snake graph $G$ and construct a graph whose vertices are labeled by these perfect matchings and which has an edge between two vertices if and only if the matchings corresponding to those vertices are obtainable from each other by a single twist. An edge corresponding to twisting a tile with diagonal edge~$\tau_j$ is labeled~$y_j$. This graph is the Hasse diagram of a distributive lattice, with minimal element~$P_{-}$, which is graded by the degree of the height monomials associated with each matching.
\end{Theorem}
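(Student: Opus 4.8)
The plan is to place the snake graph $G := G_{T,\gamma}$ in the setting of perfect matchings of bipartite planar graphs and to identify the matching poset with a lattice of height functions, using Propp's theory. First I would record the two structural facts that make this theory applicable. The graph $G$ is built by gluing square tiles along edges, each tile is a $4$-cycle, and gluing two bipartite graphs along a single edge preserves both a fixed $2$-coloring of the vertices and a planar embedding; hence $G$ is bipartite and planar. Fix such a $2$-coloring once and for all.

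Next I would set up the comparison with the reference matching $P_-$. For any perfect matching $P$, every vertex has degree $0$ or $2$ in the symmetric difference $P \ominus P_-$, so $P \ominus P_-$ is a disjoint union of cycles, each bounding a union of tiles; I would define the tiles \emph{enclosed} by $P \ominus P_-$ exactly as in the height-monomial construction. Encoding $P$ by the height function $h_P$ that records, at each tile, the signed number of enclosing cycles (the sign dictated by the bipartite coloring) embeds the set of perfect matchings into the integer-valued functions on tiles. The key local fact to verify is that a single twist at the tile with diagonal $\tau_j$ changes $h_P$ by $\pm 1$ at that one tile and fixes every other value: replacing the two horizontal edges of a tile by its two vertical edges alters $P \ominus P_-$ only along the boundary of that tile. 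Granting Propp's result that the image is closed under pointwise minimum and maximum, the matching poset is a distributive lattice, and the twist graph is precisely its Hasse diagram.

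It remains to pin down the minimal element and the grading. Since $h_{P_-}$ is identically zero and every other legal height function is pointwise nonnegative, $P_-$ is the unique minimal element. For the grading, the rank of a matching equals the total height $\sum_i h_P(G_i)$, which counts the enclosed tiles with multiplicity and so equals the degree of the height monomial $y(P)$; each covering edge is a single twist at some $\tau_j$, which changes the enclosed-tile count by one and multiplies $y(P)$ by a single factor $y_j$, matching the stated edge labeling. Thus the lattice is graded by $\deg y(P)$, as claimed.

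The main obstacle is the closure statement underlying the appeal to Propp, namely that the legal height functions are exactly the order ideals of the tile-dependency poset $\mathcal{Q}$ (so that the lattice is $J(\mathcal{Q})$ via Birkhoff), equivalently that pointwise meets and joins of legal height functions are again legal. This is where the specific \emph{thin} geometry of snake graphs enters: each tile is glued to each of its at most two neighbors along exactly one edge, so a twist at one tile can only lock or unlock the admissible twists at its immediate neighbors. I would argue by induction on the number of tiles, analyzing this neighbor dependency to show that the set of twisted tiles relative to $P_-$ is always an order ideal of $\mathcal{Q}$ and that every order ideal is realized by a unique matching; once this local dependency is understood, closure under pointwise meet and join is routine and distributivity follows formally.
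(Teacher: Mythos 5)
There is nothing in the paper to compare your argument against: the paper does not prove Theorem~\ref{Thm:PMPoset} at all, but imports it verbatim from \cite[Theorem~5.2]{MSW-bases}, noting only that it builds on Propp's theory of perfect matchings of bipartite planar graphs~\cite{Propp}. Measured against that cited route, your reconstruction is the same one: snake graphs are bipartite and planar, matchings are encoded by height functions relative to $P_-$, a twist is a unit change of height at a single tile, and Propp's theorem supplies the distributive lattice whose Hasse diagram is the twist graph. You are, however, inconsistent about the status of the closure result: you first invoke it (``granting Propp's result\dots'') and later declare it the main obstacle requiring a new induction. If you allow yourself to cite \cite{Propp} as a black box --- which is exactly what \cite{MSW-bases} does, and what the attribution in the theorem statement licenses --- your final paragraph is unnecessary. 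If you do not, then what you have there is a plan rather than a proof, though the ``thin'' structure you point to is the right lever: every tile of a snake graph borders the unbounded face, so cycles of $P\ominus P_-$ cannot nest, heights lie in $\{0,1\}$, a matching is determined by its set of enclosed tiles, and an induction on tiles (of the same shape as this paper's proof of Lemma~\ref{PMUniversalSnake}) identifies the achievable sets with the order ideals of a fence poset.

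The one step that is wrong as written, rather than merely deferred, is the minimality of $P_-$. You justify it by ``every other legal height function is pointwise nonnegative,'' but that is a restatement of the claim, not an argument: relative to an arbitrary reference matching, height functions of other matchings certainly can take negative values, so some property peculiar to $P_-$ must enter. The fix is short but must be said. Every cycle $C$ of $P\ominus P_-$ alternates between $P$-edges and $P_-$-edges, and which alternating class of $C$ lies in $P_-$ is determined by $P_-$ alone; hence the sign with which $C$ counts its enclosed tiles is a property of the pair $(G,P_-)$, independent of $P$. One then checks from the definition of $P_-$ (boundary edges only, avoiding $e_1$ and $e_2$, with the relative-orientation convention) that this sign is always $+1$, for instance by examining the first tile of the consecutive run of tiles that $C$ bounds. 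Alternatively one may verify that every twist available at $P_-$ increases height and appeal to connectivity of the twist graph --- but that connectivity is itself part of Propp's theorem, so the sign computation is the cleaner patch.
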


\subsection{Laminations}

In the cluster algebra context, laminations were used by Fomin--D.~Thurston \cite{FT-II} as a tool for tracking the coefficients of a cluster algebra from a surface using W. Thurston's \cite{Thurston2} shear coordinates and theory of measured laminations. We will review only the relevant portion of their work (for unpunctured surfaces), but refer the reader either to Chapter~12 of their work for further details about laminations in this context, or to the work of Fock--Goncharov~\cite{Fock-Goncharov} or W.~Thurston \cite{Thurston2} for more details about measured laminations and their relationship to matrix mutations.

\begin{Definition}[{\cite[Definition~12.1]{FT-II}}]
Let $(S,M)$ be an unpunctured bordered surface. An \emph{integral unbounded measured lamination} (henceforth referred to as just a \emph{lamination}) on $S$ is a~finite collection of non-self-intersecting and pairwise non-intersecting curves on $S$ such that:
\begin{itemize}\itemsep=0pt
 \item each curve is either a closed curve or a non-closed curve with endpoints on umarked points on~$\partial S$,
 \item no curve bounds an unpunctured disk,
 \item and no curve with endpoints on~$\partial S$ is isotopic to a portion of the boundary containing either no or one marked point(s).
\end{itemize}
\end{Definition}

W.~Thurston's shear coordinates \cite{Thurston2} provide a coordinate system for these laminations.

\begin{Definition}[{\cite[Definition 12.2]{FT-II}}]
Let $S$ be a surface with triangulation $T$ and $L$ be a~lamination on $S$. For each arc $\gamma \in T$, the \emph{shear coordinate} of~$L$ with respect to~$T$ is
\[ b_\gamma(T,L) = \sum_i b_\gamma(T,L_i), \]
where the summation runs over all individual curves in $L$. The shear coordinates $b_\gamma(T,L_i)$ are defined as:
\begin{center}
 \includegraphics[scale=1.0]{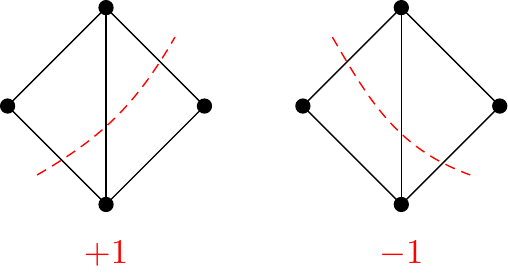}
\end{center}
\end{Definition}

Tracking principal coefficients requires the notion of an \emph{elementary lamination}.

\begin{Definition}
The \emph{elementary lamination} $L_i$ associated to an arc $\tau_i$ in triangulation $T$ is the lamination such that $b_{\tau_i}(T,L_i) = 1$ for $\tau_i \in T$ and $b_\tau(T,L_i)=0$ for $\tau \not\in T$.
\end{Definition}

 An ordinary cluster algebra of surface type with principal coefficients corresponds to a triangulated surface with a lamination composed of all possible elementary laminations.

\section{Generalized cluster algebras}\label{sec:generalizedClusterAlgebras}

\subsection{Background}

In 2014, Chekhov and Shapiro defined an extension of ordinary cluster algebras \cite{Fomin-Zelevinsky-I} without the restriction that exchange polynomials be strictly binomial \cite{Chekhov-Shapiro}. Although other generalizations exist, these algebras are often referred to as \emph{generalized cluster algebras} and we will adopt this nomenclature and will use the term \emph{ordinary cluster algebra} to refer to the original definition due to Fomin and Zelevinsky. Generalized cluster algebras were introduced in order to extend existing work on the Teichm\"{u}ller spaces of Riemann surfaces with holes and orbifold points of order two and three to the more general case of Riemann surfaces with holes and orbifold points of arbitrary order \cite{Chekhov,Chekhov-Mazzocco-Yangians}.

Many of the basic definitions for generalized cluster algebras follow the same structure as the corresponding definitions for ordinary cluster algebras, with differences arising as a result of the modified exchange relations. We briefly highlight the changes that will be relevant in this paper and refer the reader to~\cite{Chekhov-Shapiro} or~\cite{Nakanishi} for more precise descriptions.

Because the exchange relations may now have arbitrarily many terms, describing a generalized cluster algebra requires specifying an additional piece of data: a collection of exchange polynomials $(z_1,\dots,z_n)$ where $z_i$ specifies the exchange relation for a given cluster variable $x_i$. We do not describe the general rule for mutation of exchange polynomials (see~\cite{Chekhov-Shapiro} or~\cite{Nakanishi}) because in subclass of generalized cluster algebra we discuss, the exchange polynomials are fixed under mutation. When an exchange polynomial $z_k$ has degree one, mutation in direction~$k$ reduces to the definition of ordinary mutation. If all the exchange polynomials of a generalized cluster algebra have degree one, it reduces to an ordinary cluster algebra. Hence, ordinary cluster algebras can be understood as a subclass of generalized cluster algebras.

In their original paper, Chekhov and Shapiro prove that generalized cluster algebras exhibit the Laurent phenomenon.

\begin{Theorem}[{\cite[Theorem 2.5]{Chekhov-Shapiro}}]
\label{CS-LP}
Let $\mathcal{A} = (\mathbf{x}, \mathbf{y}, B, \mathbf{Z})$ be an arbitrary generalized cluster algebra over $\mathbb{P}$. The cluster variables $x_1, \dots, x_n$ can be expressed in terms of any cluster of $\mathcal{A}$ as a Laurent polynomial with coefficients in $\mathbb{ZP}$.
\end{Theorem}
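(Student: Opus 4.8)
The plan is to adapt the original caterpillar-lemma argument of Fomin and Zelevinsky for the ordinary Laurent phenomenon to the generalized exchange relations of Chekhov--Shapiro. Fix the initial seed and regard all seeds as the vertices of the $n$-regular tree $\mathbb{T}_n$, with edges labeled by mutation directions. I would prove by induction on the distance in $\mathbb{T}_n$ from the initial vertex that every cluster variable occurring at any vertex lies in the Laurent ring $\mathcal{L} = \mathbb{ZP}[x_1^{\pm 1},\dots,x_n^{\pm 1}]$ of the initial cluster. The whole argument rests on the fact that $\mathcal{L}$ is a Laurent polynomial ring over the integral group ring $\mathbb{ZP}$, hence a GCD domain in which coprimality and factorization up to units and monomials make sense; this is what lets one upgrade local Laurentness into global Laurentness.

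First I would record the precise shape of the generalized exchange relation. Mutation in direction $k$ replaces $x_k$ by $x_k'$ with $x_k x_k' = F_k$, where $F_k = \sum_{s=0}^{d_k} \theta_{k,s} N_{k,s}$ is the substituted exchange polynomial $z_k$; here the $N_{k,s}$ are monomials in the remaining cluster variables whose exponents interpolate linearly between the ``all-negative'' monomial $N_{k,0}$ and the ``all-positive'' monomial $N_{k,d_k}$. The crucial structural point, replacing the binomial case, is that although $F_k$ now has $d_k+1$ terms, its lowest and highest terms $\theta_{k,0}N_{k,0}$ and $\theta_{k,d_k}N_{k,d_k}$ are monomials with unit coefficients (the Chekhov--Shapiro normalization takes $\theta_{k,0}=\theta_{k,d_k}=1$), so in particular $F_k$ is not divisible by $x_k$. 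The base cases of the induction (distances $0$, $1$, $2$) then follow directly: distance $1$ is the exchange relation itself, and distance $2$ requires only that a second mutation, either back in direction $k$ (where $\mu_k\mu_k=\mathrm{id}$ returns $x_k$) or in a new direction $l$, produces a Laurent polynomial, which is read off the explicit relation.

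The heart of the proof is a generalized caterpillar lemma. In the inductive step one arrives at a rational function $X$ that, by the inductive hypothesis applied along two branches of $\mathbb{T}_n$ meeting at a vertex, is simultaneously Laurent over two clusters differing from it by mutations in directions $k$ and $l$. Clearing denominators and substituting the two exchange relations expresses $X$ over the initial cluster as a fraction whose denominator divides a product of powers of $x_k$, $x_l$, $F_k$, and $F_l$. I would then show that $x_k$ cannot divide the reduced denominator, using that $F_k \bmod x_k$ is the nonzero expression built from the extreme monomials $N_{k,0}$ and $N_{k,d_k}$, and likewise for $x_l$; and that $F_k$ and $F_l$ cannot divide it, using their pairwise coprimality in $\mathcal{L}$. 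Since these are coprime nonunits in the GCD domain $\mathcal{L}$, a denominator avoiding all of them must be a unit times a monomial, whence $X \in \mathcal{L}$.

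I expect the coprimality verifications to be the main obstacle, precisely because the generalized exchange polynomials are no longer binomial. In the ordinary case coprimality of two monomials is immediate; here one must instead argue with the extreme terms of the multi-term polynomials $F_k$ and $F_l$ and track how they transform under the intermediate mutations, checking that the normalization of the leading and constant coefficients is compatible with coefficient mutation in $\mathbb{P}$ and is preserved at every step, so that $N_{k,0}$ and $N_{k,d_k}$ stay genuine monomials throughout. Once these coprimality statements and the ``$F_k \bmod x_k$'' computations are established, assembling them along the caterpillar tree and running the induction is formally parallel to the ordinary argument, and the conclusion that each cluster variable is a Laurent polynomial over $\mathbb{ZP}$ follows.
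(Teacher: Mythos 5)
This statement is background quoted from \cite{Chekhov-Shapiro}; the present paper gives no proof of it, remarking only that Chekhov and Shapiro establish the Laurent phenomenon ``using arguments similar to those given by Fomin and Zelevinsky'' \cite{Fomin-Zelevinsky-LP}. Your proposal --- running the Fomin--Zelevinsky caterpillar-lemma induction over $\mathbb{T}_n$, with the genuinely new work being the non-divisibility of $F_k$ by $x_k$ (via the unit extreme coefficients of the exchange polynomials) and the pairwise coprimality checks in the Laurent ring $\mathbb{ZP}\big[x_1^{\pm 1},\dots,x_n^{\pm 1}\big]$ --- is precisely the route taken in the cited source, so it is correct in approach and matches the intended proof.
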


Further, they prove that a particular subclass of generalized cluster algebras exhibit positivity. It is important to note, however, that although this proof only applies to a subclass of generalized cluster algebras, positivity is widely expected to hold for all generalized cluster algebras.

\begin{Theorem}[{cf.\ \cite[Section 5]{Chekhov-Shapiro}}]\label{CS-reciprocalDeg2}\looseness=-1
Let~$\mathcal{A}$ be any generalized cluster algebra whose exchange polynomials are all reciprocal and of degree at most two. Then its cluster variables can be expressed in terms of any cluster of $\mathcal{A}$ as Laurent polynomials with non-negative coefficients in~$\mathbb{ZP}$.
\end{Theorem}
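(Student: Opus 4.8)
The plan is to deduce non-negativity of coefficients from the already-established Laurent phenomenon (Theorem~\ref{CS-LP}) together with known positivity for ordinary cluster algebras, by realizing each generalized mutation as a composite of ordinary mutations. By Theorem~\ref{CS-LP} every cluster variable of $\mathcal{A}$ is a Laurent polynomial in any cluster with coefficients in $\mathbb{ZP}$, so only the \emph{sign} of those coefficients is at issue. First I would fix an initial seed and argue by induction on the length of a mutation sequence producing a given cluster variable, reducing the claim to a single-step \emph{positivity propagation}: if every cluster variable is a non-negative Laurent polynomial in a cluster $\mathbf{x}$, then the same holds in each adjacent cluster $\mu_k(\mathbf{x})$.

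For the single step, write the reciprocal degree-two exchange polynomial as $z_k(u) = c_0 + c_1 u + c_0 u^2$ with $c_0, c_1$ non-negative, so that in principal coefficients the exchange relation takes the form
\[ x_k x_k' = c_0 \prod_i x_i^{2[-b_{ik}]_+} + c_1\, y_k \prod_i x_i^{[b_{ik}]_+ + [-b_{ik}]_+} + c_0\, y_k^2 \prod_i x_i^{2[b_{ik}]_+}, \]
a sum of monomials with non-negative coefficients divided by $x_k$. To re-express a non-negative Laurent polynomial $L(\mathbf{x})$ in the new coordinates $\mathbf{x}' = \mu_k(\mathbf{x})$, one substitutes $x_k = N_k(\mathbf{x}')/x_k'$, where $N_k$ is the manifestly non-negative exchange numerator above read in the new cluster. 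The outcome is a Laurent polynomial by Theorem~\ref{CS-LP}, and the entire problem is to show that its coefficients remain non-negative despite the newly introduced denominators $x_k'$ and the factor $N_k$.

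\emph{The main obstacle} is exactly this cancellation step: a priori the substitution can manufacture negative coefficients, and for ordinary cluster algebras this is precisely the hard content of positivity. My plan is to bypass a direct sign analysis by \emph{unfolding}: I would construct an ordinary skew-symmetrizable cluster algebra $\widetilde{\mathcal{A}}$ of larger rank together with a homomorphism $\pi\colon \widetilde{\mathcal{A}} \to \mathcal{A}$ under which generalized mutation in direction $k$ becomes a composite of two ordinary mutations, the reciprocity $c_0 = c_2$ being exactly what forces the two ordinary exchange relations to compose so that the middle term $c_1 u$ appears with the correct positive sign. One then pulls each generalized cluster variable back to a non-negative Laurent expression upstairs, invokes known positivity for the ordinary algebra (\cite{Lee-Schiffler} in the skew-symmetric case, or Theorem~\ref{theorem:clusterExpansion_ordinary} for the surface and orbifold realizations central to this paper), and verifies that $\pi$ carries non-negative Laurent polynomials to non-negative Laurent polynomials. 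The genuinely delicate point — and the place where the degree-$\le 2$ and reciprocity hypotheses are indispensable — is establishing that such an unfolding exists and that $\pi$ is positivity-preserving; for the generalized cluster algebras arising from triangulated orbifolds this is underwritten by the branched-cover lift $\widetilde{S}_\gamma$ used elsewhere in this paper, which supplies exactly the required ordinary cluster algebra and comparison homomorphism.
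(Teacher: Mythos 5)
Your proposal cannot be checked against a proof in the paper for a simple reason: the paper does not prove this statement. Theorem~\ref{CS-reciprocalDeg2} is imported from Chekhov--Shapiro \cite[Section~5]{Chekhov-Shapiro} as background (and, per the paper's own remark in Section~3.3, their argument runs along the algebraic lines of Fomin--Zelevinsky's Laurent-phenomenon proofs, not via a geometric unfolding). The closest result the paper actually proves is Theorem~\ref{Thm:GenSnake+}, positivity for the subclass of generalized cluster algebras arising from unpunctured triangulated orbifolds, and there your instinct is sound: lifting to a triangulated polygon $\widetilde{S_\gamma}$, using the known type-$A$ expansion, and pushing down along a homomorphism $\phi_\gamma$ is exactly the paper's strategy. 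Restricted to orbifold-type algebras, your plan is essentially a sketch of the paper's proof of its own main theorem.

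As a proof of Theorem~\ref{CS-reciprocalDeg2} \emph{as stated}, however, there is a genuine gap, and it sits exactly where you flagged ``the genuinely delicate point'': the unfolding $\pi\colon \widetilde{\mathcal{A}} \to \mathcal{A}$ is never constructed, and nothing in this paper or elsewhere supplies it in the required generality. The hypotheses ``reciprocal and of degree at most two'' are purely algebraic: such an $\mathcal{A}$ has an arbitrary exchange matrix and need not come from any orbifold, just as most ordinary cluster algebras are not of surface type, so the branched-cover lift you invoke as the underwriting construction simply does not exist for it. Even in the orbifold case, the paper's construction is not a global unfolding: $\widetilde{A_\gamma}$ and $\phi_\gamma$ are built separately for each arc $\gamma$, the map lands in ${\rm Frac}(\mathcal{A})$ and sends a \emph{frozen} variable $\sigma_{[j]}$ to $\lambda_p x_\rho$ (a scalar multiple of a mutable variable downstairs), and the verification that $\phi_\gamma$ is a homomorphism rests on the Berenstein--Fomin--Zelevinsky presentation (Proposition~\ref{prop:Agamma}), available only because the lift is an acyclic type-$A$ algebra; no analogue of that presentation exists for a general $\widetilde{\mathcal{A}}$. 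Moreover, ``generalized mutation in direction $k$ becomes a composite of two ordinary mutations'' is not a well-defined unfolding operation: the two preimages $\sigma_j$, $\sigma_{j+1}$ of a pending arc bound a common triangle, hence are adjacent in the lifted quiver, so mutations at them do not commute and their composite is not an involution -- which is precisely why the paper works with generators and relations ($\phi_\gamma(x_{\sigma_j})$, $\phi_\gamma(x_{\sigma_j}')$) rather than with a mutation-equivariant cover. Since you correctly abandon your opening reduction (induction with single-step ``positivity propagation'') as unworkable, the entire weight of the argument falls on this unconstructed object, and the proposal as written reduces the cited theorem to a statement at least as hard as the theorem itself.
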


Although generalized cluster algebras were defined relatively recently, they have already been the subject of quite a bit of study. This includes work by Nakanishi~\cite{Nakanishi} on a subclass of generalized cluster algebras, for which he gives formulas expressing the cluster variables and coefficients in terms of $c$-vectors, $g$-vectors, and $F$-polynomials; work by Nakanishi and Rupel~\cite{Nakanishi-Rupel} in which they define the notion of a \emph{companion algebra} to a generalized cluster algebra; and work by Paquette and Schiffler~\cite{Paquette-Schiffler} on a related generalization with additional allowed types of exchange relations.

\subsection{Orbifolds}\label{sec:orbifolds}

An orbifold is a generalization of a manifold where the local structure is given by quotients of open subsets of $\mathbb{R}^n$ under finite group actions. Orbifolds arose independently in many mathematical contexts, ranging from the theories of modular or automorphic forms \cite{Satake} to $3$-manifold theory~\cite{Thurston}, and so there are many possible phrasings for a definition. In the context of cluster algebras from orbifolds, we can simply think of orbifolds as surfaces with isolated singular points, referred to as \emph{orbifold points}.
In parallel with the classification of cluster algebras associated with triangulated surfaces~\cite{FST-triangulatedSurfaces, FT-II}, Felikson, Shapiro, and Tumarkin established both a notion of triangulating orbifolds and a classification of cluster algebras from orbifolds~\cite{FST-orbTriang}. In this section, we briefly review some nomenclature and definitions for triangulations of orbifolds that will be used in later sections. For more details and many examples, we refer the reader to \cite[Section~4]{FST-orbTriang}.

\begin{Definition}
An \emph{orbifold} $\mathcal{O}$ is a triple $(S,M,Q)$, where~$S$ is a~bordered surface, $M$~is a~finite set of marked points, and $Q$ is a finite set of orbifold points, such that: no point is both a marked point and an orbifold point (i.e., $M \cap Q = \varnothing$); all orbifold points are interior points of $S$; and each boundary component of $S$ contains at least one marked point. For notational convenience, $\partial \mathcal{O}$ is often used to refer to $\partial S$.
\end{Definition}

\begin{Remark}
Unlike in \cite{Felikson-Shapiro-Tumarkin} where all orbifold points are weight~2 or~$\frac12$, our orbifold points are associated with positive integer orders, $p \geq 2$. Note that our orbifold points of order~2 have weight~$\frac12$ in the language of~\cite{Felikson-Shapiro-Tumarkin}.
\end{Remark}

An orbifold point of order $p$ has an associated constant $\lambda_p = 2\cos(\pi/p)$. In our context, $\lambda_p$ arises geometrically from the length of diagonals in an equilateral $p$-gon which appears in a~particular covering space called the \emph{$p$-fold cover} \cite{Chekhov-Shapiro, L-FV}. In the literature, it also appears in the work of Holm and J{\o}rgensen on non-integral frieze patterns from polygon dissections~\cite{HJ}.

\begin{Definition}
An \emph{arc} $\gamma$ on an orbifold $\mathcal{O} = (S,M,Q)$ is a non-self-intersecting curve in $S$ with endpoints in $M$ that is otherwise disjoint from $M$, $Q$, and $\partial \mathcal{O}$. Curves that are contractible onto $\partial \mathcal{O}$ are not considered arcs. Arcs are considered up to isotopy class. An arc which cuts out an unpunctured monogon with exactly one point in $Q$ is called a \emph{pending arc}, while all other arcs are called \emph{standard arcs}.
\end{Definition}

The two possible ways to draw pending arcs are shown below. We will prefer to draw pending arcs as cutting out unpunctured monogons which contain exactly one orbifold point, shown on the right hand side, as this is more geometrically suggestive. In particular, this makes it clear that if a pending arc crosses another arc, it necessarily does so an even number of times. We will sometimes refer to a pending arc as being \emph{incident} to the orbifold point it encloses, in the spirit of the left hand side.

\begin{center}
 \includegraphics[scale=1.5]{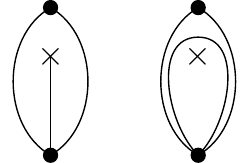}
\end{center}

\begin{Definition}Two arcs are considered \emph{compatible} if their isotopy classes contain non-inter\-sec\-ting representatives. A \emph{triangulation} is a maximal collection of pairwise compatible arcs.
\end{Definition}

Note that in a triangulation, every pending arc is necessarily enclosed by a bigon with one orbifold point, or a monogon with two orbifold points. See \cite[Fig.~4.1]{Felikson-Shapiro-Tumarkin}. While in many pictures it appears that the pending arc is in a bigon, we can identify the two vertices to recover a monogon with two orbifold points.
 There is also the special case of a sphere with one marked point and three orbifold points, which has exactly one triangle made up of three pending arcs, as in \cite[Fig.~3.5]{Felikson-Shapiro-Tumarkin}. Our construction works in this special case as well.

We will also consider generalized arcs. Allowing self-intersections introduces the possibility of winding around orbifold points. By convention, we will consider counterclockwise winding to be positive and clockwise winding to be negative. A generalized arc exhibits modular behavior when winding around an orbifold point. For an orbifold point of order $p$, a winding arc can have up to $p-1$ self-intersections. Once the number of self-intersections reaches $p$, the arc is isotopic to an arc with no self-intersections -- i.e., one that does not wind around the orbifold point at all. If a winding arc has $k > p$ self-intersections, then it is isotopic to an arc with $k \mod{p}$ self-intersections and~$\big\lfloor \frac{k}{p}\big\rfloor$ contractible kinks. The below diagram shows examples of possible winding behavior around an orbifold point of order~$4$.

\begin{center}
 \includegraphics[scale=1.8]{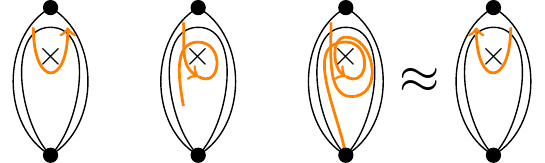}
\end{center}

For an orbifold point of order $p$, winding counter-clockwise with $k$ self-intersections is isotopic to winding clockwise with $(p-1)-k$ self-intersections. For convenience, we will use the phrasing ``winding $k$ times" to refer to winding with $k$ self-intersections. So, for example, ``winding 0 times" simply refers to crossing a pending arc twice with no self-intersections occurring between those crossings.

It is also possible to have closed curves with no self-intersections which we refer to as \emph{loops}. A non-contractible loop is often called an \emph{essential loop}.

Triangulated orbifolds can provide geometric realizations for some ordinary cluster algebras which cannot be realized as triangulated surfaces. This realization is due to Felikson, Shapiro, and Tumarkin, who describe a correspondence between skew-symmetrizable ordinary cluster algebras and triangulated orbifolds \cite{Felikson-Shapiro-Tumarkin}. In a later paper, Felikson and Tumarkin generalize the bracelet, bangle, and band bases to ordinary cluster algebras from unpunctured orbifolds with at least two marked points on the boundary \cite{Felikson-Tumarkin}. \c{C}anak\c{c}i and Tumarkin later showed that assumption about the number of marked points on the boundary can be removed and extended the snake graph and band graph constructions to triangulated orbifolds which correspond to ordinary cluster algebras \cite{Canakci-Tumarkin}.

\subsection{Generalized cluster algebras from orbifolds}\label{subsec:genCAfromOrb}

Recall that a subset of ordinary cluster algebras have a geometric realization in terms of triangulated surfaces, as discussed in Section \ref{sec:ordinaryClusterAlgebras}. Analogously, there exists a subset of generalized cluster algebras which can be realized geometrically as triangulated orbifolds. In these generalized cluster algebras, the exchange polynomials have the form $z_i = 1 + \lambda_p u + u^2$ if the cluster variable~$x_i$ corresponds to a~pending arc incident to an orbifold point of order~$p$ or $z_i = 1 + u$ if~$x_i$ is a standard arc.

Triangulated orbifolds first arose in a cluster algebra context when Felikson, Shapiro, and Tumarkin \cite{ FST-orbTriang, Felikson-Shapiro-Tumarkin} studied \emph{unfoldings} of skew-symmetrizable ordinary cluster algebras. Later, Chekhov and Shapiro \cite{Chekhov-Shapiro} showed that mutations for orbifold points of order greater than two are given by trinomial exchange relations with reciprocal coefficients.
Chekhov and Shapiro showed that both the Laurent phenomenon and positivity hold for such generalized cluster algebras using arguments similar to those given by Fomin and Zelevinsky \cite{Fomin-Zelevinsky-LP, Fomin-Zelevinsky-CAII} for ordinary cluster algebras. Labardini-Fragoso and Velasco~\cite{L-FV} showed that the generalized cluster algebras associated to polygons with a single orbifold point of order~3 are equivalent to Caldero--Chapoton algebras of quivers with relations arising from this polygon.

When working with triangulated orbifolds, it is often useful to consider some covering space. Which particular covering space is most useful varies depending on the application, but covering spaces that appear in the literature include the \emph{associated orbifolds} of Felikson, Shapiro, and Tumarkin \cite{FST-orbTriang} and the polygonal \emph{$p$-fold covering} of an orbifold with a single orbifold point of order~$p$~\cite{Chekhov-Shapiro, L-FV}.

\begin{figure}\centering
\includegraphics[scale=1.5]{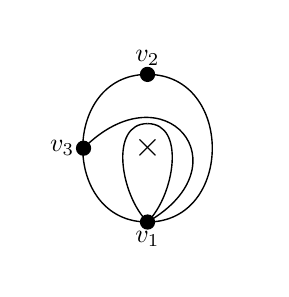}
\includegraphics[scale=1.3]{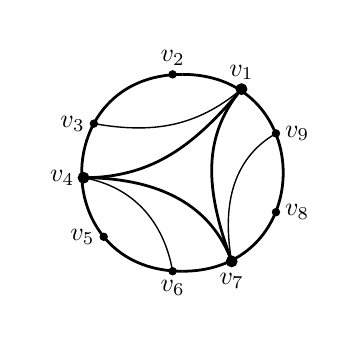}
\includegraphics[scale=1.3]{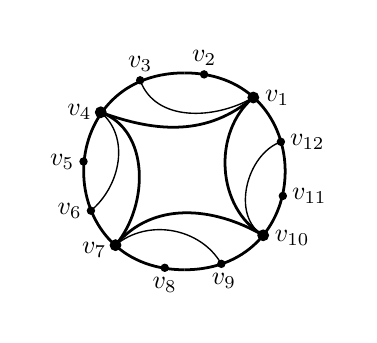}
\caption{An example of a triangulated orbifold with a single orbifold point of order $p$ (left) and the $p$-fold covering spaces for $p=3$ (middle) and $p=4$ (right).}\label{fig:pfold}
\end{figure}

\subsection{Laminations}\label{sec:generalizedLaminations}
In \cite[Section~6]{FST-orbTriang}, Felikson, Shapiro, and Tumarkin extend Fomin and Thurston's work on laminations~\cite{FT-II} in order to track coefficients for cluster algebras from orbifolds. Although they define laminations on an object called an \emph{associated orbifold} and we work with laminations on the original orbifold, much of their work transfers to our setting. We use their definition of a~lamination on an orbifold.

\begin{Definition}[{\cite[Definition 6.1]{FST-orbTriang}}]
Let $\mathcal{O} = (S,M,Q)$ be an unpunctured orbifold. An \emph{integral unbounded measured lamination} (henceforth just a \emph{lamination}) on $\mathcal{O}$ is a finite collection of non-self-intersecting and pairwise non-intersecting curves on $\mathcal{O}$ such that:
\begin{itemize}\itemsep=0pt
 \item Each curve is either a closed curve or a non-closed curve for which each end is either an unmarked point on the boundary of $\mathcal{O}$ or an orbifold point in~$Q$.
 \item No curve bounds an unpunctured disk or a disk containing a unique point of $M \cup Q$.
 \item No curve with both endpoints on the boundary of~$\mathcal{O}$ is isotopic to a portion of the boundary containing either no or one marked point(s).
 \item No two curves begin at the same orbifold point.
\end{itemize}
\end{Definition}

For the associated shear coordinates, however, we adopt a modified definition. The difference in our definition stems from the fact that we draw pending loops as arcs around orbifold points, rather than as having one endpoint at an orbifold point.

\begin{Definition}
Let $\mathcal{O}$ be an orbifold with triangulation $T$ and $L$ be a lamination on $\mathcal{O}$. For each arc $\gamma \in T$, the \emph{shear coordinate} of $L$ with respect to $T$ is
\[b_\gamma(T,L) = \sum_i b_\gamma(T,L_i), \]
where the summation runs over all individual curves in $L$. The shear coordinates $b_\gamma(T,L_i)$ are defined as:
\begin{center}
 \includegraphics[scale=1]{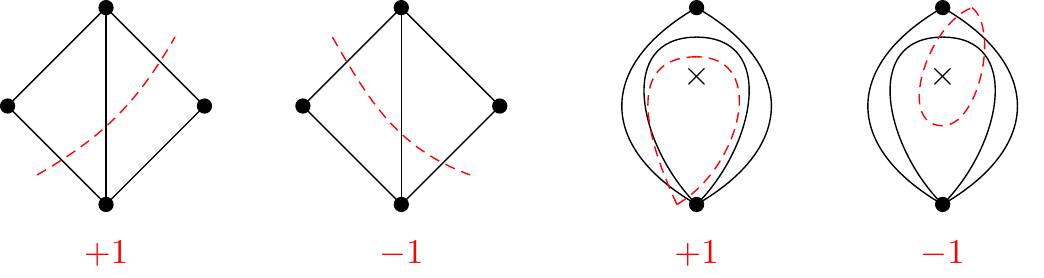}
\end{center}
\end{Definition}
To understand why these are the natural shear coordinate definitions for pending arcs, consider the corresponding view in the covering space. A pending curve $L_i$ for which $b_{\gamma}(T,L_i) = +1$ appears in the covering space as

\begin{center}
\begin{tikzpicture}[scale = 1.5]
\draw(0:2\R) -- (36:2\R) -- (72:2\R) -- (36*3:2\R) -- (36*4:2\R) -- (36*5:2\R) -- (36*6:2\R) -- (36*7:2\R) -- (36*8:2\R) -- (36*9:2\R) -- (0:2\R);
\draw(0:2\R) -- (72:2\R) -- (36*4:2\R) -- (36*6:2\R) -- (36*8:2\R) --(0:2\R);
\draw[dashed, red] (-10:2\R) -- (72-10:2\R) -- (36*4 - 10:2\R) -- (36*6 -10 :2\R) -- (36*8 - 10:2\R) --(-10:2\R);
\end{tikzpicture}
\end{center}

Notice that each copy of the lamination $L_i$ crosses a copy of the pending arc twice. One of these crossings contributes $+1$ to the shear coordinate and the other crossing contributes $0$, for a net shear coordinate of $+1$. The picture for $b_{\gamma}(T,L_i) = -1$ is analogous.

This extended shear coordinate definition then allows us to apply the usual definition of an elementary lamination to both standard and pending arcs.

Recall that if $\tau_i$ is a standard arc, the corresponding elementary lamination $L_i$ can be found by shifting its endpoints clockwise. Similarly, if $\tau_i$ is a pending arc, then $L_i$ can be found by shifting the singular endpoint clockwise. Examples of both are shown below.

\begin{center}
 \includegraphics[scale=1.5]{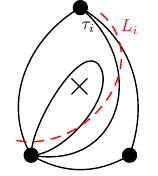}\qquad
 \includegraphics[scale=2]{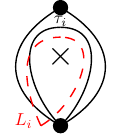}
\end{center}

Other types of crossings contribute 0 to the shear coordinate of the pending arc. If we look at these crossings in the cover, they resemble crossings in standard triangulations that contribute~0 to the shear coordinates.

\begin{center}
\begin{tikzpicture}[scale = 1.25]

\tikzset{->-/.style={decoration={
 markings,
 mark=at position #1 with {\arrow{>}}},postaction={decorate}}}

\draw[out=30,in=-30,looseness=1.5, xshift = -4\R, yshift = -2\R] (0,0.3) to (0,2.5);
\draw[out=150,in=-150,looseness=1.5, xshift = -4\R, yshift = -2\R] (0,0.3) to (0,2.5);

\draw[in=0,out=45,looseness =1.25, xshift = -4\R, yshift = -2\R] (0,0.3) to (0,2);
\draw[in=180,out=135,looseness=1.25, xshift = -4\R, yshift = -2\R] (0,0.3) to (0,2);

\draw[dashed, blue, xshift = -4\R, yshift = -2\R] (-1, 0.7) -- (1, 0.7);

\draw[fill=black, xshift = -4\R, yshift = -2\R] (0,0.3) circle [radius=2pt];
\draw[fill=black, xshift = -4\R, yshift = -2\R] (0,2.5) circle [radius=2pt];
\draw[thick, xshift = -4\R, yshift = -2\R] (0,1.7) node {$\mathbf{\times}$};

\draw(0:2\R) -- (36:2\R) -- (72:2\R) -- (36*3:2\R) -- (36*4:2\R) -- (36*5:2\R) -- (36*6:2\R) -- (36*7:2\R) -- (36*8:2\R) -- (36*9:2\R) -- (0:2\R);
\draw(0:2\R) -- (72:2\R) -- (36*4:2\R) -- (36*6:2\R) -- (36*8:2\R) --(0:2\R);
\draw[dashed, blue, thick] (-30: 2.1\R) -- (30: 2.1\R);
\draw[dashed, blue, thick] (-30 + 72: 2.1\R) -- (30+72: 2.1\R);
\draw[dashed, blue, thick] (-30+ 144: 2.1\R) -- (30+144: 2.1\R);
\draw[dashed, blue, thick] (-30-72: 2.1\R) -- (30-72: 2.1\R);
\draw[dashed, blue, thick] (-30-144: 2.1\R) -- (30-144: 2.1\R);
\end{tikzpicture}
\end{center}

However, these new elementary laminations associated to pending arcs will contribute 2 to a standard arc when they cross in a meaningful way. This contribution of 2 is also seen in generalized mutation rules.

\begin{center}
\begin{figure}[h!]
\begin{center}
\begin{tabular}{cc}
 \begin{tikzpicture}[scale = 2.5]
 \tikzset{->-/.style={decoration={markings, mark=at position #1 with {\arrow{>}}},postaction={decorate}}}

 \draw[out=30,in=-30,looseness=1.5] (0,0.3) to (0,1.5);
 \draw[out=150,in=-150,looseness=1.5] (0,0.3) to (0,1.5);

 \draw[in=0,out=45,looseness =1.25] (0,0.3) to (0,1.3);
 \draw[in=180,out=135,looseness=1.25] (0,0.3) to (0,1.3);

 \draw[in=0,out=30,looseness =1.25, dashed, red, thick] (-0.2,0.3) to (0,1.2);
 \draw[in=180,out=115,looseness=1.25, dashed, red, thick] (-0.2,0.3) to (0,1.2);
 \draw[out = 115, in = 210, dashed, green, looseness = 1.5, thick] (-0.2,0.3) to (0.2, 1.5);
 \draw[out = 30, in = 300, looseness = 1.5, dashed, blue, thick] (-0.2,0.3) to (0.2, 1.5);

 \draw[thick] (0,1) node {$\mathbf{\times}$};

 \node[] at (-0.26,1.4) {$\alpha$};
 \node[] at (0.26,1.41) {$\beta$};
 \node[] at (0,1.36) {$\rho$};

 \draw(0,0.3) -- (-1,.9) -- (0,1.5) -- (1,.9) -- (0,0.3);
 \end{tikzpicture} &
 \begin{tikzpicture}[scale=2.5]
 \draw[out=30,in=-30,looseness=1.5, xshift = 3\R] (0,0.3) to (0,1.5);
 \draw[out=150,in=-150,looseness=1.5, xshift = 3\R] (0,0.3) to (0,1.5);

 \draw[in=0,out=-45,looseness =1.25, xshift = 3\R] (0,1.5) to (0, 0.7);
 \draw[in=180,out=-135,looseness=1.25, xshift = 3\R] (0,1.5) to (0,0.7);

 \draw[in=0,out=30,looseness =1.25, dashed, red, thick, xshift = 3\R] (-0.2,0.3) to (0,1.2);
 \draw[in=180,out=115,looseness=1.25, dashed, red, thick, xshift = 3\R] (-0.2,0.3) to (0,1.2);
 \draw[out = 115, in = 210, dashed, green, looseness = 1.5, thick, xshift = 3\R] (-0.2,0.3) to (0.2, 1.5);
 \draw[out = 30, in = 300, looseness = 1.5, dashed, blue, thick, xshift = 3\R] (-0.2,0.3) to (0.2, 1.5);

 \draw[thick, xshift = 3\R] (0,1) node {$\mathbf{\times}$};

 \draw[xshift = 3\R] (0,0.3) -- (-1,.9) -- (0,1.5) -- (1,.9) -- (0,0.3);
 \end{tikzpicture} \\
 $\begin{bmatrix} 0 & 1 & -1 \\ -1 & 0 & 1 \\ 1 & - 1 & 0 \\ 1 & 0 & 0 \\ 0 & 1 & 0 \\ 0 & 0 & 1 \\ \end{bmatrix}$ & $\begin{bmatrix} 0 & -1 & 1 \\ 1 & 0 & -1 \\ -1 & 1 & 0 \\ 1 & 0 & 0 \\ 0 & 1 & 0 \\ 2 & 0 & -1 \\ \end{bmatrix}$ \\
\end{tabular}
\end{center}
\caption{An example of how flipping a pending arc impacts the shear coordinates of a lamination. The lefthand side of the diagram is an example of an elementary lamination from a triangulated orbifold. This diagram is best viewed in color.}\label{fig:LamFlipPending}
\end{figure}
\end{center}

On the left of Fig.~\ref{fig:LamFlipPending}, we have the elementary lamination associated to this triangulation. When we flip the pending arc, then the lamination associated to the pending arc intersects $\alpha$ nontrivially twice. This matches the result of mutating (with generalized mutation) the extended $B$-matrix associated to the lefthand picture at the index representing the pending arc. The third column and row correspond to the pending arc.

\begin{Remark}
The mutation of the extended portion of the B-matrix resembles the result of mutating at an orbifold point of weight 2 in the sense of \cite{Felikson-Shapiro-Tumarkin}. While the dynamics of our $x$-variables mimic those of orbifold points of weight $\frac12$, the $y$-variables more closely resemble those from orbifold points of weight $2$. This seems to follow from the duality between $c$-vectors and $g$-vectors \cite{NZ}.
\end{Remark}

\section{Constructing snake graphs from orbifolds} \label{Sec:Construction}

In this section, we show how to construct a snake graph (respectively, a band graph) from an arc (closed curve) on a trianguated unpunctured orbifold. Later, we will verify that the weighted sum of perfect matchings gives the correct cluster variable in the case when $\gamma$ is an arc, Theorem~\ref{Thm:GenSnake}, and that for arbitrary $\gamma$, these satisfy skein relations, Proposition~\ref{Prop:OrbSkein}.

\subsection{Tiles}

If $\gamma = \tau_i$ for $1 \leq i \leq n$ (recall the final $c - n$ arcs are boundary arcs), then $G_\gamma$ is a single edge labeled with $\tau_i$. Otherwise, $\gamma$ must cross at least one arc in $T$.

 Let $\tau_{i_1},\ldots, \tau_{i_d}$ be the set of internal arcs of $T$ that $\gamma$ crosses, given a fixed orientation of $\gamma$. For each standard arc $\tau_{i_j}$ that $\gamma$ crosses, we construct a square tile $G_{j}$ by taking the two triangles that $\tau_{i_j}$ borders and gluing them along $\tau_{i_j}$ such that either both either the same orientation relative to $\mathcal{O}$. We say that the square tile produced has relative orientation $+1$ if the orientation of its triangles matches that of $\mathcal{O}$ and $-1$ otherwise. We denote this as $\operatorname{rel}(G_j) = \pm 1.$

\begin{center}
\begin{tikzpicture}[scale = .14cm]
\draw (45:0.3cm) to node[auto]{$a$}(135:0.3cm) to node[auto]{$b$}(180+45:0.3cm) to node[auto]{$c$}(270+45:0.3cm) to node[auto]{$d$} (45:0.3cm);
\draw[gray](45:0.3cm) to node[below]{$\rho_i$} (225:0.3cm);
\draw[xshift = 1 cm] (45:0.3cm) to node[auto]{$b$}(135:0.3cm) to node[auto]{$a$}(180+45:0.3cm) to node[auto]{$d$}(270+45:0.3cm) to node[auto]{$c$} (45:0.3cm);
\draw[xshift = 1 cm, gray](45:0.3cm) to node[below]{$\rho_i$} (225:0.3cm);
\end{tikzpicture}
\end{center}

Next, we consider the case when $\tau_{i_j}$ is a pending arc incident to an orbifold point of order $p$. If $\gamma$ is a generalized arc who shares an endpoint with $\tau_{i_j}$, then it could be that $\gamma$ only crosses~$\tau_{i_j}$ once. In this case, $j = 1$ or $j = k$, and we use a square tile as before. However, the labels of some edges will be given by normalized Chebyshev polynomials, $U_\ell(x)$, evaluated at~$\lambda_p$. Recall $\lambda_p = 2\cos(\pi/p)$.

\begin{Definition} \label{def:Chebyshev}
We let $U_\ell(x)$ denote the $\ell$-th normalized Chebyshev polynomial of the second kind, for $\ell \geq -1$. These are given by initial polynomials $U_{-1}(x) = 0$, $U_0(x) = 1$, and the recurrence, \[
U_\ell(x) = xU_{\ell-1}(x) - U_{\ell-2}(x).
\]
\end{Definition}

For instance, $U_1(x) = x$, $U_2(x) = x^2 - 1$, $U_3(x) = x^3 - 2x$. These polynomials are normalized as they can be recovered by evaluating the standard Chebyshev polynomials of the second kind at~$x/2$.

The following lemma verifies that, up to sign, these labels are independent of increasing or decreasing the winding around an orbifold point by an integer multiple of its order.

\begin{Lemma}\label{lem:ChebyPeriodic}
Evaluations of Chebyshev polynomials at $\lambda_p$ are periodic, in the sense that $U_{k+p}(\lambda_p)\!\allowbreak = -U_{k}(\lambda_p)$. In particular, $U_{p-1}(\lambda_p) = -U_{-1}(\lambda_p) = 0$.
\end{Lemma}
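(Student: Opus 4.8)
The plan is to prove the periodicity relation $U_{k+p}(\lambda_p) = -U_k(\lambda_p)$ by exploiting the trigonometric closed form of the normalized Chebyshev polynomials evaluated at $\lambda_p = 2\cos(\pi/p)$. The key observation is that the standard (un-normalized) Chebyshev polynomials $\widehat U_\ell$ of the second kind satisfy $\widehat U_\ell(\cos\theta) = \sin((\ell+1)\theta)/\sin\theta$, and since these $U_\ell$ are normalized so that $U_\ell(x) = \widehat U_\ell(x/2)$, we obtain the closed form
\begin{equation*}
U_\ell(\lambda_p) = U_\ell(2\cos(\pi/p)) = \frac{\sin((\ell+1)\pi/p)}{\sin(\pi/p)}.
\end{equation*}
First I would verify this closed form is valid by checking that the right-hand side satisfies the defining recurrence $U_\ell = \lambda_p U_{\ell-1} - U_{\ell-2}$ together with the initial conditions $U_{-1}(\lambda_p) = 0$ and $U_0(\lambda_p) = 1$; the recurrence check reduces to the product-to-sum identity $2\cos(\pi/p)\sin(\ell\pi/p) = \sin((\ell+1)\pi/p) + \sin((\ell-1)\pi/p)$, and the initial conditions are immediate since $\sin(0) = 0$ and $\sin(\pi/p)/\sin(\pi/p) = 1$.

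Given the closed form, the periodicity is a one-line trigonometric computation: replacing $\ell$ by $k+p$ gives
\begin{equation*}
U_{k+p}(\lambda_p) = \frac{\sin((k+p+1)\pi/p)}{\sin(\pi/p)} = \frac{\sin((k+1)\pi/p + \pi)}{\sin(\pi/p)} = -\frac{\sin((k+1)\pi/p)}{\sin(\pi/p)} = -U_k(\lambda_p),
\end{equation*}
using $\sin(\theta + \pi) = -\sin\theta$. The particular case $U_{p-1}(\lambda_p) = 0$ then follows either directly from the closed form, since $\sin(p\pi/p) = \sin(\pi) = 0$, or by specializing the periodicity relation at $k = -1$ and invoking $U_{-1}(\lambda_p) = 0$ from the initial data, which gives $U_{p-1}(\lambda_p) = -U_{-1}(\lambda_p) = 0$.

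I expect no serious obstacle here; the main thing to be careful about is bookkeeping with the normalization, namely correctly relating this paper's $U_\ell$ to the textbook Chebyshev polynomials so that the argument $\lambda_p = 2\cos(\pi/p)$ is fed in at $x/2 = \cos(\pi/p)$. An alternative, entirely self-contained route that avoids invoking the trigonometric identity as an external fact is to prove the periodicity by induction on $k$ directly from the recurrence: once one establishes the two base cases $U_{p-1}(\lambda_p) = -U_{-1}(\lambda_p) = 0$ and $U_p(\lambda_p) = -U_0(\lambda_p) = -1$, the recurrence $U_{k+p} = \lambda_p U_{k+p-1} - U_{k+p-2}$ matches $-U_k = -(\lambda_p U_{k-1} - U_{k-2})$ term by term, so the relation propagates. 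Proving the base case $U_{p-1}(\lambda_p) = 0$ is then the only genuine input, and that is exactly where the trigonometric closed form (or an equivalent eigenvalue argument for the tridiagonal recurrence) is cleanest, so I would present the closed-form proof as the primary argument.
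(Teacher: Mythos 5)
Your proof is correct, and it fills in exactly what the paper leaves implicit: the paper offers no written proof of this lemma, remarking only that it ``can be readily proven using basic properties of Chebyshev polynomials,'' and the trigonometric closed form $U_\ell(\lambda_p) = \sin((\ell+1)\pi/p)/\sin(\pi/p)$ is precisely the basic property being invoked. Both your primary closed-form argument and your fallback induction from the recurrence are sound (note $\sin(\pi/p)\neq 0$ for $p\geq 2$, so the quotient is well defined), so nothing further is needed.
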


Lemma \ref{lem:ChebyPeriodic} can be readily proven using basic properties of Chebyshev polynomials. We see in Lemmas \ref{lem:Mpathdontcare} and \ref{lem:ChebyshevMatrices} that our statistics are still well-defined up to sign.

The edge labels of these tiles contain $U_\ell(\lambda_p)$ and $U_{\ell - 1}(\lambda_p)$ where $\ell$ is the number of self-intersections of $\gamma$ around the orbifold point. For concision, $U_\ell$ is used as shorthand for $U_\ell(\lambda_p)$ throughout the paper. Moreover, $\alpha$ and $\beta$ may be standard or pending arcs. If one of these arcs is pending, then this is in fact a monogon enclosing two orbifold points.

\begin{center}
\begin{tabular}{|c|c|}
\hline
 \includegraphics[scale=1.5]{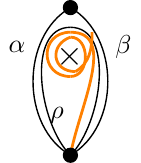}& \includegraphics[scale=1.5]{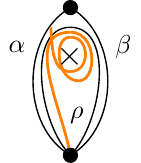} \\ \hline \includegraphics[scale=1.5]{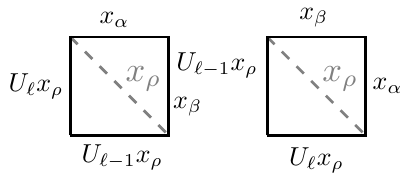} & \includegraphics[scale=1.5]{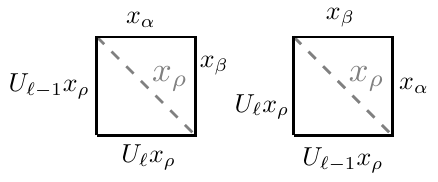} \\ \hline
\end{tabular}
\end{center}

Above, each tile on the left has positive orientation and the tile on the right has negative orien\-tation. We can see this, for example, by comparing the relative orientation of edges labeled~$x_2$ and~$x_a$ with~$\tau_2$ and $\tau_a$.

\begin{Remark}Musiker and Williams discuss a similar example in~\cite{MW} with a puncture rather than an orbifold point. We compare these cases in Section~\ref{sec:RelateToPuncture}.
\end{Remark}

In most cases, if $\gamma$ crosses a pending arc $\tau_{i_j}$, it crosses it twice consecutively, so that $\tau_{i_j} = \tau_{i_{j+1}}$ or $\tau_{i_j} = \tau_{i_{j-1}}$. In this case, we introduce a hexagonal tile which accounts for both intersections. These hexagonal tiles also will have edges labeled by Chebyshev polynomials evaluated at $\lambda_p$, and we again let $\ell$ be the number of self-intersections of $\gamma$ as it winds around the orbifold point. Because these hexagonal tiles can be thought of as ``containing" two square tiles, we assign them a tuple of signs. A hexagonal tile has relative orientation $(+,-)$ if the south-west triangle matches the orientation of the surface and the north-east triangle does not, as on the left hand side of the below diagram, and $(-,+)$ otherwise, as on the right hand side.

\begin{center}
 \includegraphics[scale=1.5]{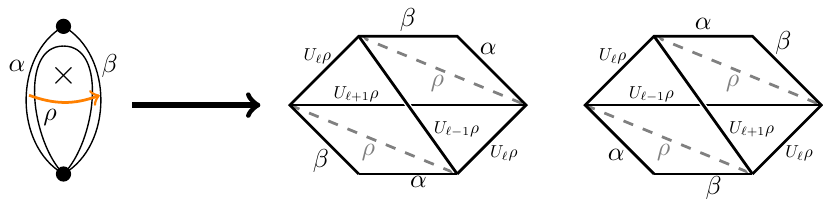}
\end{center}

In Section \ref{subsec:LiftGenArc}, we give a geometric intuition for why the edge labels $U_\ell\rho$, $U_{\ell+1}\rho$, and $U_{\ell-1}\rho$ appear in this particular arrangement on the hexagonal tiles. This geometric intuition is based on crossing diagonals in the $p$-fold cover. We formally justify these hexagonal tiles in Section~\ref{sec:universalSnakeGraph}, however, using matrix products associated to arcs, perfect matchings of abstract graphs, and Lemma~\ref{lem:ChebyshevMatrices}.

We give puzzle pieces to construct a generalized snake graph from such an arc. Again, $\alpha$~and~$\beta$ could be standard or pending arcs.

\begin{center}
{\renewcommand{\arraystretch}{2}
\begin{tabular}{|@{\,}c@{\,}|@{\,}c@{\,}|@{\,}c@{\,}|@{\,}c@{\,}|}
\hline
 \includegraphics[scale = 2.1]{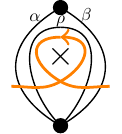} &
 \includegraphics[scale=2.1]{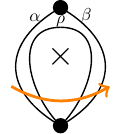} & \includegraphics[scale = 2.1]{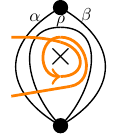} & \includegraphics[scale=2.1]{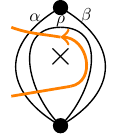} \\ \hline \includegraphics[scale = 1.2]{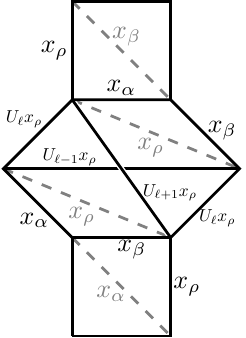} & \includegraphics[scale=1.2]{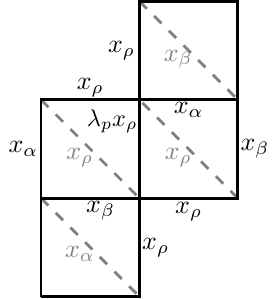} & \includegraphics[scale = 1.2]{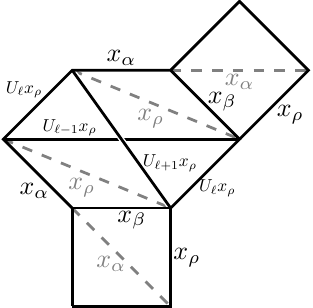} & \includegraphics[scale=1.2]{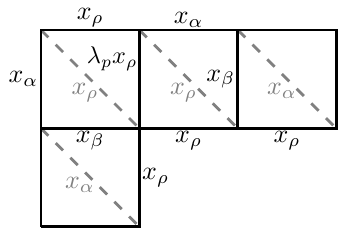}
 \\ \hline
\end{tabular}}

\end{center}

Note that if $\ell = 0$ (i.e., the arc does not intersect itself) then the edge labeled $U_{\ell-1}(\lambda_p)$ has weight 0. Thus, we can delete it and recover a standard snake graph with square tiles. We show side by side the general hexagonal tiles and the $k=0$ cases. By Lemma \ref{lem:ChebyPeriodic}, this is also true if $k = p-2$. Moreover, if $\ell = p-1$, then two edges have weight zero and one has negative weight. Using the symmetry of an orbifold point, this is equivalent to not crossing the pending arc at all. Thus, we will assume that $\gamma$ winds less than $p-1$ times around an orbifold point of order~$p$ to avoid including absolute values in our labels.

\subsection[Gluing $G_{T,\gamma}$ puzzle pieces]{Gluing $\boldsymbol{G_{T,\gamma}}$ puzzle pieces}

To construct generalized snake graphs, we will glue together tiles corresponding to arcs crossed consecutively by $\gamma$. If $\gamma$ crosses $\tau_i$ and $\tau_{i+1}$ consecutively, and $\tau_i$ and $\tau_{i+1}$ are distinct arcs, then these arcs form a triangle. Call the third arc in this triangle $\tau_{[i]}$. Then, we glue tiles~$G_i$ and~$G_{i+1}$ along the edge $\tau_{[i]}$. using the appropriate planar embeddings so $\operatorname{rel}(T,G_i) \neq \operatorname{rel}(T,G_{i+1})$. Note that this rule does not differentiate between standard and pending arcs. If~$G_i$ and~$G_{i+1}$ are either both square or both hexagonal, then the statement of the rule is clear. If~$G_i$ is square and~$G_{i+1}$ is hexagonal, then $\operatorname{rel}(T,G_{i+1})$ should be understood to mean the orientation of the south-west triangle of $G_{i+1}$. Likewise, if $G_i$ is hexagonal and $G_{i+1}$ square, then $\operatorname{rel}(T,G_{i})$ should be understood to mean the orientation of the north-east triangle of~$G_i$.

Because the choice of relative orientation for the first tile,~$G_1$, is not fixed, there are two valid planar embeddings of $G_{T,\gamma}$ for any $\gamma$. Our cluster expansion formula produces the same result for either choice of planar embedding, so the choice is unimportant. We also make a choice to glue the tiles so that our snake graphs travel from south-west to north-east; this also will not affect any statistics related to the snake graph.

Finally, we can construct generalized band graphs using the same ideas. Band graphs calculate the length of closed curves on a surface. Choose a point~$p$ on $\gamma$ such that~$p$ does not lie on any arc in $T$ or at an intersection of $\gamma$ with itself. For simplicity, we require $p$ to not be in the interior of a pending arc. Then, construct the snake graph for $\gamma$, picking an orientation and starting and ending at~$p$. Because the first and last tile correspond to arcs bordering the same triangle, they will always have a common edge. We glue the first and tile along this edge, producing a graph which resembles an annulus or a Mobius strip.

Band graphs have the same associated statistics as snake graphs and a version of a perfect matching on a band graph, called a \emph{good matching}, is defined similarly. Musiker and Williams note that a good matching of a band graph can always be obtained from a perfect matching of the original, unglued snake graph used to construct the band graph. To do so, one takes a perfect matching that uses at least one of the glued edges and deletes that glued edge. For further discussion and details, see \cite[Section~3]{MW}.

\subsection{Cluster expansion formulas}

We use Musiker, Schiffler, and Williams' definitions for minimal and maximal matchings, the crossing monomial $\operatorname{cross}(T,\gamma)$, and the weight $x(P)$ and height monomial~$y(P)$ associated to a~perfect matching~$P$, as stated in Section \ref{section:snakeGraphs}. Using this language, we can establish the following theorem for Laurent expansions of arcs (both standard and pending), a more general version of Theorem~4.9 from~\cite{MSW}.

\begin{Theorem}\label{Thm:GenSnake+}
Let $\mathcal{O} = (S,M,Q)$ be an unpunctured orbifold with triangulation $T$ and $\mathcal{A}$ be the corresponding generalized cluster algebra with principal coefficients with respect to $\Sigma_T = (\mathbf{x}_T,\mathbf{y}_T,B_T)$. For an ordinary arc $\gamma$ with generalized snake graph $G_{T,\gamma}$, the Laurent expansion of $x_\gamma$ with respect to $\Sigma_T$ is
\[ [x_\gamma]^{\mathcal{A}}_{\Sigma_T} = \frac{1}{\operatorname{cross}(T,\gamma)} \sum_P x(P)y(P), \]
where the summation is indexed by perfect matchings of $G_{T,\gamma}$.
\end{Theorem}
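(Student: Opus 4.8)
The plan is to follow the strategy of Musiker, Schiffler, and Williams \cite{MSW}: reduce the orbifold computation to one on a triangulated polygon, where the expansion formula is already known, and then transport the result back along an algebra homomorphism. Concretely, I would first fix an orientation on $\gamma$ and record the sequence of internal arcs $\tau_{i_1}, \dots, \tau_{i_d}$ it crosses, and then build a cover $\widetilde{S}_\gamma$ on which $\gamma$ lifts to an ordinary arc $\widetilde{\gamma}$ inside a triangulated polygon with no internal triangles. For the ordinary crossings this is the usual unfolding of the fan of triangles traversed by $\gamma$; the delicate part is each pending arc, where $\gamma$ winds $\ell$ times around an orbifold point of order $p$. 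There I would pass to the $p$-fold cover, in which the orbifold point opens up into an equilateral $p$-gon, so that the winding becomes an ordinary arc crossing a sequence of diagonals of that polygon. Gluing these local pieces along $\gamma$ produces a single triangulated polygon $\widetilde{S}_\gamma$ with triangulation $\widetilde{T}$ and no internal triangles.

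Because $\widetilde{S}_\gamma$ is a triangulated polygon without internal triangles, the cluster expansion formula of Musiker and Schiffler \cite{Musiker-Schiffler} (the polygon precursor of Theorem~\ref{theorem:clusterExpansion_ordinary}) applies to $\widetilde{\gamma}$ and expresses $x_{\widetilde{\gamma}}$ as $\frac{1}{\operatorname{cross}(\widetilde{T},\widetilde{\gamma})}\sum_{\widetilde{P}} x(\widetilde{P})\, y(\widetilde{P})$, summed over perfect matchings of the ordinary snake graph $G_{\widetilde{T},\widetilde{\gamma}}$. This is the formula I would push forward.

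The core of the argument is the homomorphism $\phi_\gamma \colon \widetilde{\mathcal{A}}_\gamma \to \mathcal{A}$ from the polygon's ordinary cluster algebra to the orbifold's generalized cluster algebra. On initial variables, $\phi_\gamma$ sends each arc of $\widetilde{T}$ to the corresponding arc of $T$, except that the diagonals created by unfolding a pending arc are sent to the multiples involving $U_\ell(\lambda_p)$ and $U_{\ell-1}(\lambda_p)$ appearing on the hexagonal tiles. To check that $\phi_\gamma$ is a well-defined algebra homomorphism with $\phi_\gamma(x_{\widetilde{\gamma}}) = x_\gamma$, I would verify compatibility with the exchange relations along the flips connecting $\widetilde{T}$ to a triangulation containing $\widetilde{\gamma}$; this is exactly the role of the quadrilateral and bigon lemmas for orbifolds (Section~\ref{sec:QuadBigon}), which convert each ordinary Ptolemy relation in the polygon into the correct reciprocal degree-two generalized exchange relation at a pending arc. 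The coefficient bookkeeping — that $\phi_\gamma$ carries principal coefficients of $\widetilde{\mathcal{A}}_\gamma$ to those of $\mathcal{A}$ — is handled by comparing shear coordinates of the elementary laminations in the cover and on the orbifold, using the modified shear-coordinate conventions for pending arcs; this is the purpose of Sections~\ref{Sec:mapSection}--\ref{sec:orbifoldLaminations}.

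Finally, I would assemble everything in Section~\ref{sec:ClusterExpansionPf}: applying $\phi_\gamma$ to the Musiker--Schiffler expansion of $x_{\widetilde{\gamma}}$ gives $x_\gamma$ on the left, while on the right the image of $G_{\widetilde{T},\widetilde{\gamma}}$ is the generalized snake graph $G_{T,\gamma}$. The main obstacle I anticipate is matching the two perfect-matching sums: several diagonals of the lift collapse to a single hexagonal tile of $G_{T,\gamma}$, so the correspondence between matchings $\widetilde{P}$ and matchings $P$ is not a bijection, and one must check that the Chebyshev-weighted edge labels on the hexagonal tile reproduce, after $\phi_\gamma$, the correct total weight and height summed over the collapsing matchings. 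The periodicity $U_{k+p}(\lambda_p) = -U_k(\lambda_p)$ of Lemma~\ref{lem:ChebyPeriodic} should make this bookkeeping independent of the winding representative up to sign, and the sign ambiguities should cancel because $\gamma$ crosses each pending arc an even number of times.
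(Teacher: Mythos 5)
Your high-level plan (lift to a polygon, apply the Musiker--Schiffler expansion, push forward along a homomorphism $\phi_\gamma$) is the same as the paper's, but the construction you propose for the lift breaks the argument at its first load-bearing step. You build $\widetilde{S_\gamma}$ near a pending arc by passing to the $p$-fold cover, where the orbifold point opens into an equilateral $p$-gon. That $p$-gon is \emph{not} triangulated: as the paper itself notes in Section~\ref{subsec:LiftGenArc}, for $p>3$ the lifted arc passes through an untriangulated $p$-gon, so your $\widetilde{S_\gamma}$ is a polygon \emph{dissection} rather than a triangulated polygon, and the Musiker--Schiffler formula you invoke in the very next sentence does not apply to it. This is exactly why the paper treats the $p$-fold-cover picture only as a heuristic (``we use this as a heuristic rather than a proof'') and why its actual lift in Section~\ref{sec:lift} never uses the cover: an \emph{ordinary} arc crosses a pending arc $\rho$ exactly twice with no self-intersection, and this double crossing is unfolded directly into two arcs $\sigma_j,\sigma_{j+1}$ of a genuinely triangulated polygon meeting at a shared vertex, with one extra \emph{boundary} edge $\sigma_{[j]}$ whose image under $\phi_\gamma$ is declared to be $\lambda_p x_\rho$. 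No Chebyshev polynomial beyond $U_1(\lambda_p)=\lambda_p$ enters the proof of this theorem at all.

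Your anticipated ``main obstacle''---a many-to-one correspondence between matchings of the lift and matchings of $G_{T,\gamma}$, controlled by the periodicity of Lemma~\ref{lem:ChebyPeriodic}---is a symptom of the same error, compounded by conflating Theorem~\ref{Thm:GenSnake+} with its generalized-arc extension. Since ordinary arcs never wind ($\ell=0$), the hexagonal tiles degenerate to square tiles, the unlabeled graphs $G_{\widetilde{T},\widetilde{\gamma}}$ and $G_{T,\gamma}$ are \emph{identical} by construction, and the matching correspondence is a bijection with $\phi_\gamma$ acting edge-by-edge; this is the entire content of Section~\ref{sec:ClusterExpansionPf}. The winding/Chebyshev bookkeeping you describe belongs to generalized arcs, for which the paper abandons the lift entirely and instead uses the matrix $M$-path machinery (Theorem~\ref{Thm:ArcsAndGraphs}), precisely because a lift-based argument of the kind you outline is unavailable there. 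Two smaller corrections: the homomorphism property of $\phi_\gamma$ is proved not by chasing flips toward a triangulation containing $\widetilde{\gamma}$, but by checking relations in the Berenstein--Fomin--Zelevinsky presentation of the acyclic algebra $\widetilde{A_\gamma}$ (Proposition~\ref{prop:Agamma}); the quadrilateral and bigon lemmas enter separately, in the induction on $e(\gamma,T)$ proving $\phi_\gamma(x_{\widetilde{\gamma}})=x_\gamma$, where the bigon case is what produces the three-term generalized exchange relation.
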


\begin{Example}
 The table below shows snake graphs for a variety of curves on the triangulated orbifold corresponding to $\mathcal{A} = \left(\mathbf{x},\mathbf{y}, \left[\begin{smallmatrix} 0 & -1 \\ 1 & 0 \end{smallmatrix}\right], \big(1+\mu u + u^2, 1 + \lambda u + u^2\big) \right)$.
\end{Example}

\begin{center}
\begin{tabular}{|@{\,}c@{\,}|@{\,}c@{\,}||@{\,}c@{\,}|@{\,}c@{\,}|}
\hline
 \includegraphics[scale=1.6]{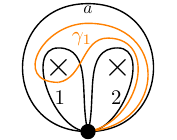} & \includegraphics[scale=1]{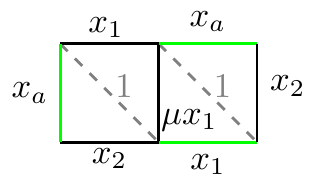} &\includegraphics[scale=1.6]{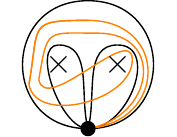} & \includegraphics[scale=1]{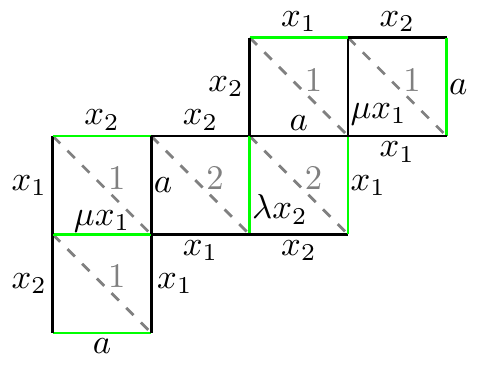} \\ \hline
 \multicolumn{2}{@{\,}c@{\,}}{$x_{\gamma_1} = \frac{1}{x_1}\big(\textcolor{green}{x_a^2} + \mu y_2x_2x_a + y_2^2 x_2^2$\big)} & \multicolumn{2}{@{\,}c@{\,}}{$\begin{array}{@{\,}l@{\,}} x_{\gamma_2} = \frac{1}{x_1^2x_2}\big(x_a^2x_1^2y_1^4y_2^2+\textcolor{green}{\mu\lambda x_a^2 x_1x_2y_1^3y_2}+\lambda x_a x_1x_2^2y_1^2y_2 \\ \qquad \qquad {}+ \lambda x_a^3x_1 y_1^4 y_2 + \mu^2 x_a^2 x_2^2 y_1^2 + 2\mu x_a x_2^3 y_1 \\ \qquad\qquad{} + 2 \mu x_a^3 x_2 y_1^3 + x_2^4 + 2x_a^2x_2^2y_1^2 + x_a^4y_1^4\big) \end{array}$}
 \end{tabular}
\end{center}
\begin{center}
\begin{tabular}{|@{\,}c@{\,}|@{\,}c@{\,}||@{\,}c@{\,}|@{\,}c@{\,}|}
 \hline
\includegraphics[scale=1.7]{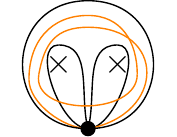} & \includegraphics[scale=1]{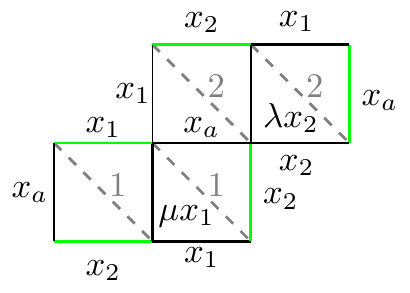}&\includegraphics[scale=1.7]{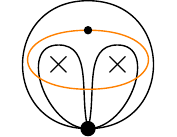}& \includegraphics[scale=1]{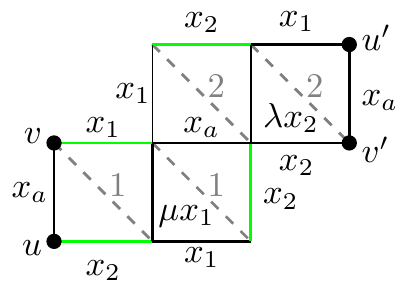}\\ \hline
\multicolumn{2}{@{\,}c@{\,}}{$\begin{array}{@{\,}l@{\,}} x_{\gamma_3} = \frac{1}{x_1x_2}\big(x_ax_1^2+ \lambda y_2x_a^2x_1 + y_2^2 x_a^3 \\ \qquad \qquad {}+\mu y_1 y_2^2 x_a^2 x_2 + \textcolor{green}{y_1^2y_2^2 x_ax_2^2}\big) \end{array}$} &
\multicolumn{2}{@{\,}c@{\,}}{$\begin{array}{@{\,}l@{\,}} x_{\gamma_4} = \frac{1}{x_1x_2}\big(x_1^2+ \lambda y_2x_ax_1 + y_2^2 x_a^2 \\ \qquad \qquad {}+ \mu y_1 y_2^2 x_a x_2 + \textcolor{green}{y_1^2y_2^2x_2^2}\big)\end{array}$}
\end{tabular}
\end{center}

Labels for arcs in the initial triangulation are only shown in the first orbifold diagram, but are consistent throughout. Snake graphs are shown for each (orange) curve $\gamma_i$, with one perfect matching and the corresponding term in the Laurent expansion highlighted. Both $\gamma_1$ and $\gamma_2$ are cluster variables of $\mathcal{A}$ which can be obtained via the respective mutation sequences $\mu_1$ and $\mu_2\mu_1$.

The second half of this example illustrates our results for generalized arcs and closed curves. Since $\gamma_3$ and $\gamma_4$ cross the same arcs in the same orientation, the shapes of the two associated graphs are the same. However, in the band graph associated to $\gamma_4$, we identify $u$ with $u'$ and $v$ with $v'$. In each graph, we have highlighted the maximal matching and the corresponding term in the Laurent expansion.

Note that in each example, our expression for $x_{\gamma_i}$ is given after canceling a mutual factor from the crossing monomial and the numerator. Although the exact mutual factor depends on the curve being considered, cancellation of this type occurs whenever we cross pending arcs.

In the Sections~\ref{sec:lift} to~\ref{sec:ClusterExpansionPf} we prove Theorem~\ref{Thm:GenSnake+} when $\gamma$ is an ordinary arc. Then, $x_\gamma$ is a~cluster variable in the associated generalized cluster algebra. Moreover, we are able to lift $\gamma$ to a~construct a triangulated polygon where expansion formulas are already known. In Section~\ref{subsec:LiftGenArc}, we explain why this tactic does not work for generalized arcs.

\begin{figure}[h!] \centering
 \begin{tabular}{|c|c|}
 \hline
 \includegraphics[scale=1.0,valign=m]{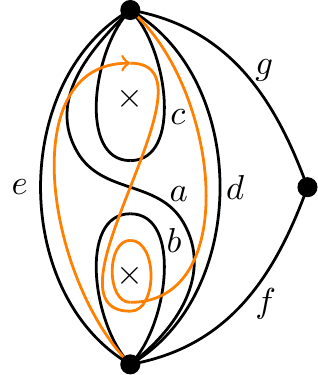} & \includegraphics[scale=1.0,valign=m]{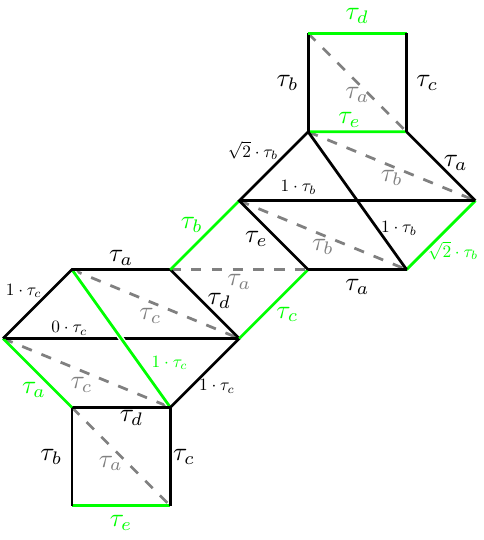} \\ \hline
 \multicolumn{2}{c}{$ x_{\gamma} = \frac{1}{x_a^3x_b^2x_c^2}\big(\textcolor{green}{\sqrt{2}y_ay_b^2y_cx_a x_b^2 x_c^3 x_d x_e^2}+ y_ay_by_c^2x_ax_b^2 x_c^2x_d^2x_e + \cdots\big) $}
 \end{tabular}
 \caption{An example of a generalized snake graph from a triangulated orbifold with one orbifold point of order 3 (top) and one orbifold point of order 4 (bottom).} \label{fig:my_label}
\end{figure}

\section[The lift $\widetilde{S_\gamma}$]{The lift $\boldsymbol{\widetilde{S_\gamma}}$}\label{sec:lift}

In the following sections, let $\mathcal{O} = (S,M,Q)$ with triangulation $T = \{ \tau_1,\ldots, \tau_n, \tau_{n+1},\ldots, \tau_{n+c}\}$ where $\tau_1,\ldots, \tau_n$ are internal arcs and $\tau_{n+1},\ldots, \tau_{n+c}$ are boundary arcs. Let $\gamma \notin T$ be an arc on an orbifold, and pick an orientation of $\gamma$. Let $\tau_{i_1},\ldots, \tau_{i_d}$ be the arcs crossed by $\gamma$ in order. Note that it is possible to have $j \neq k$ and $\tau_{i_k} = \tau_{i_j}$, since $\gamma$ may cross a given arc in $T$ multiple times. It is even possible to have $\tau_{i_j} = \tau_{i_{j+1}}$; this occurs only when $\tau_{i_j}$ is a pending arc.

We define a polygon $\widetilde{S_\gamma}$ with triangulation $\widetilde{T_\gamma}$ which lifts the local configuration of $\mathcal{O}$ and $T$ around $\gamma$. The triangulation $\widetilde{T_\gamma}$ consists of arcs $\sigma_1,\ldots, \sigma_d,\sigma_{d+1},\ldots, \sigma_{2d + 3}$ where $\sigma_{d+1},\ldots, \sigma_{2d + 3}$ are boundary arcs. We also construct a lift of $\gamma$ in $\widetilde{S_\gamma}$, denoted as $\widetilde{\gamma}$; in short, $\widetilde{\gamma}$ will be the arc in $\widetilde{S_\gamma}$ which crosses all arcs in $\widetilde{T_\gamma}$.

\looseness=1 Musiker, Schiffler, and Williams gave a construction of $\widetilde{S_\gamma}$ and $\widetilde{T_\gamma}$ for the case where $\gamma$ is an arc on a surface \cite{MSW}. We describe an extension to their construction and refer the interested reader to their paper for details of the original construction. Essentially, they keep track of when consecutive arcs in $T$, $\tau_{i_j}$ and $\tau_{i_{j+1}}$, share a vertex on the right or on the left of $\gamma$. We will let~$t_j$ denote the vertex shared by~$\tau_{i_j}$ and~$\tau_{i_{j+1}}$.
The corresponding consecutive arcs~$\sigma_j$ and~$\sigma_{j+1}$ in~$\widetilde{T_\gamma}$ share a vertex, $s_j$, on the same side of $\widetilde{\gamma}$ and $\widetilde{S_\gamma}$ is constructed by gluing together the fans formed by sets of consecutive arcs in $T$ which share a given vertex~$t_j$. Musiker, Schiffler, and Williams also provide a projection map $\pi\colon \widetilde{T_\gamma} \to T$ such that $\pi(\sigma_j) = \tau_{i_j}$. This map also can be applied to boundary arcs in $\widetilde{S_\gamma}$; we will give a full definition of~$\pi$ in Section~\ref{Sec:mapSection}.

This construction can be used in the orbifold case when $\gamma$ crosses consecutive standard arcs; what remains is to analyze the case when $\gamma$ crosses a pending arc.

There are several possible configurations for this case. Let $\tau_{i_j}$ be a pending arc; then, $\tau_{i_j}$ is enclosed by a bigon or monogon with sides $\alpha$ and $\beta$. If this is a bigon, let $v$ be the vertex shared by $\tau_{i_j}$, $\alpha,$ and $\beta$ and let $w$ be the vertex only shared by $\alpha$ and $\beta$, as shown below. If this is a~monogon, let $v = w$ be the unique vertex shared by $\alpha$, $\beta$ and $\rho$. Our configuration of $s_{j-1}$, $s_j,$ and $s_{j+1}$ will depend on how $\gamma$ interacts with the bigon.

It could be that $\gamma$ is the result of flipping $\rho$. In this case, $d = 2$, $s_1 = v$, and $\widetilde{S_\gamma}$ is a~triangulated pentagon, as below. We label arcs $\sigma_j$ with $\widetilde{\tau_k}$ if $\pi(\sigma_j) = \tau_k$.

\begin{center}
\begin{tikzpicture}[scale=1.8]
\node[circle, fill = black, scale = 0.25] (A) at (0,0) {$w$};
\node[circle, fill = black, scale = 0.25] (B) at (2,0) {$v$};
\node[] at (-.2,0) {$w$};
\node[] at (2.2,0) {$v$};
\node[] (E) at (1,0) {$\times$};
\node[](F) at (0.8, 0){};
\node[](G) at (1.2,0){};
\draw (A) to [out=60,in=120] node[auto]{$\alpha$}(B);
\draw (A) to [out=-60,in=-120] node[below]{$\beta$}(B);
\draw(2,0) to [out = 150, in = 90] node[below, xshift = 2pt]{$\rho$}(0.8, 0);
\draw(2,0) to [out = 210, in = 270] node[auto]{}(0.8, 0);
\draw[dashed, red](0,0) to [out = 30, in = 90] (1.2,0);
\draw[dashed, red](0,0) to [out = -30, in = 270] node[below]{$\gamma$}(1.2,0);
\node[](arrw) at (3,0) {$\rightarrow$};
\node[circle, fill = black, scale = 0.1](P1) at (4,-1){};
\node[circle, fill = black, scale = 0.1](P2) at (6,-1){};
\node[](P5) at (4,0){$s_0 = w$};
\node[](P3) at (6,0){$w = s_d$};
\node[](P4) at (5,0.7){$s_1$};
\draw (P1) -- node[below] {$\lambda_p \cdot \tilde{\rho}$} (P2) -- node[right]{$\tilde{\alpha}$} (P3) -- node[above]{$\tilde{\beta}$} (P4) -- node[left]{$\tilde{\alpha}$} (P5) -- node[left]{$\tilde{\beta}$} (P1);
\draw(P4) -- node[auto] {$\tilde{\rho}$} (P1);
\draw (P4) -- node[left] {$\tilde{\rho}$}(P2);
\draw[dashed, red] (P5) -- (P3);
\end{tikzpicture}
\end{center}

Next, consider the case where $d > 2$ and $\tau_{i_1}$ or $\tau_{i_d}$ is a pending arc. Suppose, without loss of generality, that $\tau_{i_1} = \rho$ is a pending arc. Since, as an ordinary arc, $\gamma$ necessarily crosses $\rho$ twice, $\rho = \tau_{i_1} = \tau_{i_2}$. Then regardless of whether $\tau_{i_3} = \alpha$ or $\tau_{i_3} = \beta$, we set $s_1 = s_2 $. See below for the case where $\tau_{i_3} = \beta$.

\begin{center}
\begin{tikzpicture}[scale=1.8]
\node[circle, fill = black, scale = 0.25] (A) at (0,0) {$w$};
\node[circle, fill = black, scale = 0.25] (B) at (2,0) {$v$};
\node[] at (-.2,0) {$w$};
\node[] at (2.2,0) {$v$};
\node[] (E) at (1,0) {$\times$};
\node[](F) at (0.8, 0){};
\node[](G) at (1.2,0){};
\node[] (H) at (1.2, -0.8) {};
\draw (A) to [out=60,in=120] node[auto]{$\alpha$}(B);
\draw (A) to [out=-60,in=-120] node[below]{$\beta$}(B);
\draw(2,0) to [out = 150, in = 90] node[below, xshift = 2]{$\rho$}(0.8, 0);
\draw(2,0) to [out = 210, in = 270] node[auto]{}(0.8, 0);
\draw[dashed, red](0,0) to [out = 30, in = 90] node[above]{$\gamma$}(1.2,0);
\draw[dashed, red](1.2,0) to (1.2,-0.8);
\node[](arrw) at (2.5,0) {$\rightarrow$};
\node[circle, fill = black, scale = 0.1](P1) at (4,-1){};
\node[circle, fill = black, scale = 0.1](P2) at (5,-1){};
\node[circle, fill = black, scale = 0.1](P3) at (6,-1){};
\node[](P6) at (4,0){$s_0 $};
\node[](P4) at (6,0){$\cdots$};
\node[](P5) at (5,0.7){$s_1 = s_2$};
\draw (P1) -- node[below] {$\lambda_p \cdot \tilde{\rho}$} (P2) -- node[below]{$\tilde{\alpha}$} (P3) -- node[above]{} (P4) -- node[left]{} (P5)-- node[left]{$\tilde{\alpha}$} (P6) -- node[left]{$\tilde{\beta}$} (P1);
\draw(P5) -- node[auto] {$\tilde{\rho}$} (P1);
\draw (P5) -- node[left] {$\tilde{\rho}$}(P2);
\draw (P5) -- node[left] {$\tilde{\beta}$}(P3);
\draw[dashed, red] (P6) -- (P4);
\end{tikzpicture}
\end{center}

Finally, we have two cases for when $\gamma$ crosses the bigon twice. In this case, $d \geq 4$ and $j >1$. If $\gamma$ crosses both sides of the bigon, then $s_{j-1} = s_j = s_{j+1}$.

\begin{center}
\begin{tikzpicture}[scale=1.8]
\node[circle, fill = black, scale = 0.25] (A) at (0,0) {$w$};
\node[circle, fill = black, scale = 0.25] (B) at (2,0) {$v$};
\node[] at (-.2,0) {$w$};
\node[] at (2.2,0) {$v$};
\node[] (E) at (1,0) {$\times$};
\node[](F) at (0.8, 0){};
\node[](G) at (1.2,0.8){};
\node[] (H) at (1.2, -0.8) {};
\draw (A) to [out=60,in=120] node[auto]{$\alpha$}(B);
\draw (A) to [out=-60,in=-120] node[below]{$\beta$}(B);
\draw(2,0) to [out = 150, in = 90] node[below, xshift = 2]{$\rho$}(0.8, 0);
\draw(2,0) to [out = 210, in = 270] node[auto]{}(0.8, 0);
\draw[dashed, red](G) to node[right] {$\gamma$} (H);
\node[](arrw) at (2.5,0) {$\rightarrow$};
\node[circle, fill = black, scale = 0.1](P0) at (3,-1){};
\node[circle, fill = black, scale = 0.1](P1) at (4,-1){};
\node[circle, fill = black, scale = 0.1](P2) at (5,-1){};
\node[circle, fill = black, scale = 0.1](P3) at (6,-1){};
\node[](P6) at (3,0){$\cdots$};
\node[](P4) at (6,0){$\cdots$};
\node[](P5) at (5,0.7){$s_{j-1} = s_j = s_{j+1}$};
\draw (P1) -- node[below] {$\lambda_p \cdot \tilde{\rho}$} (P2) -- node[below]{$\tilde{\alpha}$} (P3) -- node[above]{} (P4) -- node[left]{} (P5)-- (P6) -- (P0) -- node[below]{$\tilde{\beta}$} (P1);
\draw(P5) -- node[left] {$\tilde{\rho}$} (P1);
\draw (P5) -- node[left] {$\tilde{\rho}$}(P2);
\draw (P5) -- node[left] {$\tilde{\beta}$}(P3);
\draw(P5) -- node[left] {$\tilde{\alpha}$} (P0);
\draw[dashed, red] (P6) -- (P4);
\end{tikzpicture}
\end{center}

Alternatively, $\gamma$ could cross the same side of the bigon both before and after crossing $\tau_{i_j}$. That is to say, $\tau_{i_j} = \tau_{i_{j+1}}$ and $\tau_{i_{j-1}} = \tau_{i_{j+2}}$. If the first point of intersection between $\gamma$ and $\tau_{i_{j-1}} = \tau_{i_{j+2}}$ is closer to $v$ than the second point of intersection, then set $s_{j-1} = s_j $ and $s_{j+1} \neq s_j$; otherwise, set $s_j = s_{j+1} $ and $s_{j-1} \neq s_j$. See below for an example where $\tau_{i_{j-1}} = \tau_{i_{j+2}} = \beta$ and the second point of intersection is closer than the first.

\begin{center}
\begin{tikzpicture}[scale=1.8]
\node[circle, fill = black, scale = 0.25] (A) at (0,0) {$w$};
\node[circle, fill = black, scale = 0.25] (B) at (2,0) {$v$};
\node[] at (-.2,0) {$w$};
\node[] at (2.2,0) {$v$};
\node[] (E) at (1,0) {$\times$};
\node[](F) at (0.8, 0){};
\node[](G) at (1.2,0){};
\node[] (H) at (1.2, -0.8) {};
\node[] (I) at (0.8, -0.8) {};
\draw (A) to [out=60,in=120] node[auto]{$\alpha$}(B);
\draw (A) to [out=-60,in=-120] node[below]{$\beta$}(B);
\draw(2,0) to [out = 150, in = 90] node[below, xshift = 2]{$\rho$}(0.8, 0);
\draw(2,0) to [out = 210, in = 270] node[auto]{}(0.8, 0);
\draw[dashed, red](1.2,0) to node[right] {$\gamma$} (1.2,-0.8);
\draw[dashed, red, <-= -1] (1.2,0) to [out = 90, in = 90] (0.7,0);
\draw[dashed, red] (0.7,0) to (0.7, -0.8);
\node[](arrw) at (2.5,0) {$\rightarrow$};
\node[circle, fill = black, scale = 0.1](P0) at (3.5,0.7){};
\node[](P1) at (4,-1){$s_{j-1}$};
\node[circle, fill = black, scale = 0.1](P2) at (5,-1){};
\node[circle, fill = black, scale = 0.1](P3) at (6,-1){};
\node[](P6) at (3,0){$\cdots$};
\node[](P4) at (6,0){$\cdots$};
\node[](P5) at (5,0.7){$s_j = s_{j+1}$};
\draw (P1) -- node[below] {$\lambda_p \cdot \tilde{\rho}$} (P2) -- node[below]{$\tilde{\alpha}$} (P3) -- node[above]{} (P4) -- (P5)-- node[above]{$\tilde{\alpha}$} (P0) -- (P6) -- (P1);
\draw(P5) -- node[left] {$\tilde{\rho}$} (P1);
\draw (P5) -- node[left] {$\tilde{\rho}$}(P2);
\draw (P5) -- node[left] {$\tilde{\beta}$}(P3);
\draw(P1) -- node[left] {$\tilde{\beta}$} (P0);
\draw(P5) -- (P0);
\draw[dashed, red] (P6) -- (P4);
\end{tikzpicture}
\end{center}

Using these rules in addition to those in \cite{MSW}, we can construct $\widetilde{S_\gamma}$, a $(d+3)$-gon with triangulation $\widetilde{T_\gamma}$ consisting of $d$ internal arcs and $d+3$ boundary arcs. The arc $\widetilde{\gamma} \in \widetilde{S_\gamma}$ crosses all arcs in $\widetilde{T_\gamma}$, and this pattern of crossings resembles the arcs that $\gamma$ crosses in $\mathcal{O}$.

\section{Quadrilateral and bigon lemmas}\label{sec:QuadBigon}

The machinery of our proof that $\phi_\gamma(x_{\widetilde{\gamma}}) = x_{\gamma}$ will be an induction on the number of crossings between $\gamma$ and $T$. To that end, we provide a way to express $x_{\gamma}$ in terms of $x_{\zeta_i}$ where all arcs $\zeta_i$ have less crossings with $T$ than $\gamma$

This was accomplished in \cite{MSW} by Lemma~9.1, known as the quadrilateral lemma. The quadrilateral specified in this lemma gives slightly weaker results when pending arcs are present, but still allows us to prove our expansion formula.

\begin{Lemma}\label{lem:QuadAndBigon}
Let $T$ be a triangulation of an unpunctured orbifold $\mathcal{O}$ and $\gamma$ be a standard arc not in $T$. Then, there exists a quadrilateral $\alpha_1$, $\alpha_2$, $\alpha_3$, $\alpha_4$, of arcs in $\Orb$ such that:
\begin{itemize}\itemsep=0pt
\item $\gamma$ and another arc, $\gamma'$, are the two diagonals of this quadrilateral,
\item $e(\alpha_i, T) \leq e(\gamma, T)$, and
\item $e(\gamma', T) < e(\gamma, T)$.
\end{itemize}

Moreover, if $e(\alpha_i,T) = e(\gamma,T)$ for some $i$, then $\alpha_i$ is a pending arc and ${e(\alpha_j,T) < e(\gamma,T)}$ for all $j \neq i$.

If $\gamma$, instead, is a pending arc, then there exists another pending arc, $\rho$, and a bigon composed of arcs $\beta_1$ and $\beta_2$ such that:
\begin{itemize}\itemsep=0pt
\item $\gamma$ and $\rho$ are the two possible pending arcs contained within the bigon,
\item $e(\beta_i, T) < e(\gamma, T)/2$, and
\item $e(\rho, T) < e(\gamma, T)/2$.
\end{itemize}
\end{Lemma}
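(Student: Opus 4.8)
The plan is to follow the template of Musiker, Schiffler, and Williams' quadrilateral lemma \cite{MSW}, producing the quadrilateral (or, in the pending case, the bigon) by resolving a single crossing of $\gamma$ with $T$, and then to redo the crossing-number bookkeeping in the presence of orbifold points, where the new feature is that every arc meets a pending arc an even number of times. First I would orient $\gamma$ from $s$ to $t$ and work at the last crossed arc $\tau_{i_d}$, running the argument in parallel with the planar case of \cite{MSW} and isolating the finitely many local pictures in which a pending arc of $T$, or its enclosed orbifold point, participates in the resolution. The surface cases will go through verbatim and give the strict inequalities; the orbifold cases are where the statement weakens to ``$\leq$'' and where the factor $\tfrac12$ appears.

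For the backbone construction (standard $\gamma$), I would let $\tau_{i_d}=\overline{ab}$ be the last crossed arc and $\Delta$ the final triangle with third vertex $t$, so that $\overline{at}$ and $\overline{tb}$ lie in $T$. I would then form $\alpha_1$ and $\alpha_4$ as the two arcs that follow $\gamma$ from $s$ up to the point just before the crossing $p_d$ and then branch to $a$ and to $b$, and set $\alpha_2=\overline{at}$, $\alpha_3=\overline{tb}$. The quadrilateral with vertices $s,a,t,b$ then has $\gamma=\overline{st}$ and $\gamma'=\tau_{i_d}$ as its two diagonals. Since $\alpha_1,\alpha_4$ cross precisely the arcs $\tau_{i_1},\dots,\tau_{i_{d-1}}$ met by $\gamma$ before $p_d$, while $\alpha_2,\alpha_3,\gamma'\in T$, in the absence of orbifold points every one of $\alpha_1,\dots,\alpha_4,\gamma'$ meets $T$ strictly fewer than $e(\gamma,T)$ times, which is more than the lemma requires.

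The main obstacle is the bookkeeping in the orbifold cases. The delicate configuration is the one in which the final triangle is the self-folded triangle cut out by a pending arc $\rho\in T$ and its enclosing sides: here $\gamma$ reaches the marked vertex of that triangle only after winding once around the orbifold point, so that a branched arc $\alpha_1$ or $\alpha_4$ is itself forced to enclose the orbifold point and hence becomes a \emph{pending} arc rather than a standard one. When I isotope such a branched arc into minimal position, the excursion around the orbifold point cannot be removed, so its crossing count fails to drop and instead equals $e(\gamma,T)$. The heart of the argument will be to show that this degeneration can occur on \emph{at most one} of the two branched sides -- the branch on the opposite side of $\gamma$ still resolves a genuine crossing and so remains strictly smaller, as do the two triangle edges and $\gamma'$. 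Establishing that the equality is confined to a single pending side, and identifying exactly which side, is precisely the point at which the later algebraic argument must substitute the generalized (trinomial) exchange, with its coefficient $\lambda_p$, for the ordinary Ptolemy relation.

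Finally, for a pending $\gamma$ I would reduce to the standard construction applied to the \emph{spine} of $\gamma$, namely the half-path from its basepoint to the enclosed orbifold point $q$. Because $\gamma$ meets every arc of $T$ an even number of times, $e(\gamma,T)=2k$ and the spine meets $T$ in $k$ points; resolving the crossing nearest $q$ (equivalently, flipping $\gamma$ against the pending arc of $T$ that most tightly encloses $q$) produces the bigon with sides $\beta_1,\beta_2$ whose unique orbifold point is $q$ and whose two inscribed pending arcs are $\gamma$ and a companion pending arc $\rho$, these being interchanged by the flip that realizes the generalized exchange with coefficient $\lambda_p$. Since $\beta_1,\beta_2$ and $\rho$ are each assembled from proper sub-paths of the spine, every crossing bound is naturally phrased in terms of the $k$ spine crossings, and so reads as strictly less than $k=e(\gamma,T)/2$; this is exactly where the factor $\tfrac12$ in the statement comes from. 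The remaining work, as in the standard case, is the verification that passing to minimal position does not reintroduce crossings around $q$, which is controlled by the same even-crossing property of pending arcs.
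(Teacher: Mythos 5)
Your bigon half is essentially the paper's own argument: the paper likewise takes $\rho\in T$ to be the pending arc at the same orbifold point as $\gamma$, smooths against it, and counts $e(\beta_1,T)=j-1$ and $e(\beta_2,T)=d-(j+1)$, both $<d/2$. The quadrilateral half, however, has a genuine gap, in fact two.

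First, resolving at the \emph{last} crossing forces $\gamma'=\tau_{i_d}$, and for $\gamma$ and $\gamma'$ to be the two diagonals of a quadrilateral they must cross exactly once. Nothing prevents $\gamma$ from crossing $\tau_{i_d}$ at earlier points as well, and when $\tau_{i_d}$ is a pending arc this is automatic, since an ordinary arc meets a pending arc an even number of times (the paper uses this fact explicitly in Section~\ref{sec:lift}). In that situation your ``sides'' cross the diagonal $\tau_{i_d}$, there is no ideal quadrilateral, and the Ptolemy-type relation this lemma is designed to feed into Proposition~\ref{Proposition5} is unavailable. This is exactly why the paper, following \cite{MSW}, resolves at the \emph{middle} crossing $h=\lceil d/2\rceil$ and takes for $\gamma'$ and the $\alpha_i$ composite arcs of the form $(\,\cdot\,,\,\cdot\,,\,\cdot\,\vert\,\gamma,\tau,\gamma^{\pm})$, which run out along $\gamma$, along $\tau$, and back along $\gamma$: these cross $\gamma$ exactly once by construction, and because they traverse prefixes/suffixes of $\gamma$ twice, bounds such as $e(\alpha_1,T)\le (j_{\ell-1}-1)+j_\ell<d$ hold only because the split point is balanced. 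Your two design choices are incompatible: the last-crossing resolution is a quadrilateral only when $e(\gamma,\tau_{i_d})=1$, and repairing it with composite sides at the last crossing gives counts of order $2(d-1)$, destroying the inequality.

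Second, your account of the equality case does not describe what your construction produces. Smoothing the last crossing on a pending arc $\rho$ based at $v$ yields a side from $s(\gamma)$ to $v$, which is a loop (hence possibly a pending arc) only if $s(\gamma)=v$; generically, the bad resolution is an arc that winds once around the orbifold point, and a winding arc necessarily self-intersects (in the $p$-fold cover its distinct lifts have interleaved endpoints on the boundary of the polygon, so they cross). So that side is a generalized arc, not an arc at all: the conclusion of the lemma fails for your quadrilateral, and the downstream induction breaks, since it needs every side to be an ordinary or pending arc whose cluster variable is already controlled. In the paper the equality $e(\alpha_i,T)=e(\gamma,T)$ arises elsewhere: when the middle crossing lies on a pending arc $\tau$ and $d$ is even, the composite side $\alpha_3=\big(t(\gamma),j_{\ell+1},j_\ell,t(\gamma)\,\vert\,\gamma^-,\tau^-,\gamma\big)$ is an \emph{embedded} loop based at $t(\gamma)$ enclosing the orbifold point, i.e., an honest pending arc, with exactly $d$ crossings. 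That is the content of the ``moreover'' clause, and it is what lets the induction terminate, because pending arcs then descend strictly through the bigon half. (A minor related confusion: the trinomial $\lambda_p$-exchange enters only in the bigon case; the quadrilateral case, even with a pending side, still feeds a binomial Ptolemy relation in the paper.)
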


Prior to the proof, we need to establish some notation. Let $\gamma_1$ and $\gamma_2$ be two arcs which intersect at a point $b$. This can be an end point of the arcs or not. Let $a$ be another point on~$\gamma_1$ and let $c$ be another point on $\gamma_2$. Then, $(a,b, c\vert \gamma_1,\gamma_2)$ denotes an arc which starts at~$a$, is isotopic to~$\gamma_1$ between $a$ and $b$, is isotopic to $\gamma_2$ between $b$ and $c$, and finally ends at $c$. We can generalize this notion to more arcs which consecutively intersect. We also let $\gamma^-$ denote an arc that is isotopic to $\gamma$ but has opposite orientation.

\begin{proof}We will induct on $e(\gamma, T)$. We have two base cases. If $e(\gamma, T) = 1$, then $\gamma$ must be a~standard arc and is the result of flipping an arc $\tau \in T$, so $\gamma$ is one diagonal in a quadrilateral which is entirely made up of arcs in $T$, and the other diagonal is $\tau$.

The other base case is when $\gamma$ is the result of flipping a pending arc~$\rho$. Then, $e(\gamma,T) = 2$ and~$\gamma$ is a pending arc. It also must be that the arcs, $\beta_1$, $\beta_2$, making up the bigon about~$\gamma$ and~$\rho$ are in $T$ as well.

\looseness=-1 Now, suppose first that $e(\gamma, T) = d$ and $\gamma$ is a standard arc. Label the crossing points between~$\gamma$ and $T$ by $1,2, \ldots, d$. If $\tau$, the arc that crosses $\gamma$ at point $h = \lceil \frac{d}{2} \rceil$, is not a pending arc, then the construction from \cite[Lemma~9.1]{MSW} holds. However, if $\tau$ is a pending arc, and $\gamma$ crosses $\tau$ in spots $j_1,\ldots, j_r$ where $j_\ell = h$, then either the crossing point $h+1$ or $h-1$ is also on~$\tau$.

More explicitly, suppose $j_{\ell} + 1 = j_{\ell + 1} = h+1$ and first let $d$ be even, so that $d = 2h$. Let $s(\gamma)$, $t(\gamma)$ be respectively the start and end of $\gamma$ once we select an orientation. Moreover, suppose that we orient $\tau$, the pending arc containing the intersection points $j_\ell$, $j_{\ell + 1}$ so that it visits $j_\ell$ before $j_{\ell+1}$. Then, assuming that $\ell > 1$, Musiker, Schiffler, and Williams~\cite{MSW} give the following explicit construction for the quadrilateral:
\begin{alignat*}{3}
 & \alpha_1 = \big(s(\gamma), j_{\ell - 1}, j_\ell, s(\gamma) \vert \gamma, \tau, \gamma^-\big), \qquad &&
 \alpha_2 = \big(s(\gamma), j_\ell, j_{\ell + 1}, t(\gamma) \vert \gamma, \tau, \gamma\big), & \\
 & \alpha_3 = \big(t(\gamma), j_{\ell+1}, j_\ell, t(\gamma) \vert \gamma^-, \tau^-, \gamma\big), \qquad &&
 \alpha_4 = \big(t(\gamma), j_\ell, j_{\ell - 1}, s(\gamma) \vert \gamma^-, \tau^-, \gamma^-\big), & \\
 &\gamma' = \big(s(\gamma), j_{\ell - 1}, j_{\ell + 1}, t(\gamma) \vert \gamma, \tau, \gamma\big).\qquad &&&
\end{alignat*}

From these descriptions of $\alpha_i$, we can compute $e(\alpha_i,T)$, and similarly for $\gamma'$. We only highlight a few calculations as the rest are equivalent to the calculations in \cite{MSW}:
\begin{gather*}
 e(\alpha_1,T)= (j_{\ell-1} - 1) + j_\ell < (h-1) + h < d ,\\
 e(\alpha_3,T)= (d - j_{\ell + 1}) + (d - j_{\ell} + 1) = (d - (h+1)) + (d - h + 1) = d.
\end{gather*}

We can see that $\alpha_3$ is a pending arc incident to the same orbifold point as $\tau$. If instead $j_{\ell} - 1 = j_{\ell - 1} = h-1$ and $d$ is still even, then we will find that $e(\alpha_1, T) = e(\gamma,T)$ and $\alpha_1$ will be a pending arc. One can check that $e(\alpha_i,T) < d$ for other $i$ and $e(\gamma',T) < d$ in both these cases.

\begin{center}
\begin{tikzpicture}[scale = 1.3]
\draw(1,4) -- (3,2) -- (1,0) -- (-1,2) -- (1,4);
\node[] at (1,2) {$\times$};
\draw (1,0) to [out = 60, in = 0] (1,2.2);
\draw (1,0) to [out = 120, in = 180] (1,2.2);
\draw (1,0) to [out = 45, in = -45] (1,4);
\draw (1,0) to [out = 135, in = -135] (1,4);
\draw[dashed, red] (-1,2) to [out = -30, in = 210] (3,2); 
\draw[dashed, red] (1,0) to [out = 135, in = 180] (1,2.4);
\draw[dashed, red] (1,2.4) to [out = 0, in = 165] (3,2);
\draw[red] (-1,2) to [out = 30, in = 150] (3,2);
\draw[red] (3,2) -- (1,0) -- (-1,2);
\draw[red] (3,2) to [out = 180, in = 90] (0.7, 1.9);
\draw[red] (3,2) to [out = 200, in = 270] (0.7,1.9);
\end{tikzpicture}
\end{center}

If $d$ is odd, then we will again have that $e(\alpha_i,T) < e(\gamma,T)$ for all $i$ if we follow the recipe for~$\alpha_i$ given in~\cite{MSW}.

Now, let $\gamma$ be a pending arc, and let $\rho \in T$ be the pending arc to the same orbifold point as~$\gamma$. First, note that $d = e(\gamma,T)$ is necessarily even. Let $j$, $j+1$ be the intersections of $\gamma$ and $\rho$. Then, $j = d/2$. Orient $\rho$ so that, like $\gamma$, it passes $j$ before $j + 1$. Define $\beta_1 = (s(\gamma), j_\ell, s(\rho) \vert \gamma, \rho^-)$ and $\beta_2 = (t(\gamma), j_{\ell + 1}, t(\rho) \vert \gamma^-, \rho)$. We can check that all of these arcs cross arcs in $T$ fewer times than $\gamma$:
\begin{itemize}\itemsep=0pt
\item $e(\rho, T) = 0$ as $\rho \in T$,
\item $e(\beta_1, T) = j - 1 < \frac{d}{2}$,
\item $e(\beta_2, T) = k - (j+1) < \frac{d}{2}$.\hfill \qed
\end{itemize}\renewcommand{\qed}{}
\end{proof}

\section[$\widetilde{A_\gamma}$ and $\phi_\gamma$]{$\boldsymbol{\widetilde{A_\gamma}}$ and $\boldsymbol{\phi_\gamma}$} \label{Sec:mapSection}

We first define a map $\pi\colon \widetilde{S_\gamma} \to \mathcal{O}$. Then, we define a morphism, $\phi_\gamma$ between the algebras from these spaces and show it is an algebra homomorphism.

We define $\pi$ from $\{\sigma_1,\ldots, \sigma_{2d +3}\}$ to $\{\tau_1,\ldots, \tau_{n+c}\}$, which will also define $\pi$ on the marked points of each space. Recall that $\tau_{[i_k]}$ is the third side of the triangle formed by $\tau_{i_k}$ and $\tau_{i_{k+1}}$. For completeness, we define $\sigma_a$, $\sigma_b$ to be the two boundary arcs in the first triangle that $\widetilde{\gamma}$ crosses where $\sigma_b$ follows $\sigma_a$ in the clockwise direction. Note that $\widetilde{\gamma}$ inherits an orientation based on the orientation of~$\gamma$. We define $\tau_a$ and $\tau_b$ to be analogous arc in $\mathcal{O}$; note that~$\tau_a$ and~$\tau_b$ are not necessarily on the boundary. Then, we define $\sigma_w$, $\sigma_z$ to be the boundary arcs in the last triangle~$\widetilde{\gamma}$ crosses where $\sigma_z$ follows $\sigma_w$ in the clockwise direction, and define~$\tau_w$ and~$\tau_z$ analogously in $\mathcal{O}$:
\[
\pi(\sigma_j) = \begin{cases} \tau_{i_j}, & 1 \leq j \leq d, \\ \tau_{[\gamma_k]}, & j > d \text{ and } \sigma_j \text{ incident to } \sigma_k \text{ and } \sigma_{k+1}, \\
\tau_{x}, & \sigma_j = \sigma_x \text{ for } x \in \{a,b,w,z\}. \end{cases}
\]

Let $\mathcal{A}$ be the generalized cluster algebra from $\Orb$, as explained in Section~\ref{subsec:genCAfromOrb}. Let $\widetilde{A_\gamma}$ be the cluster algebra corresponding to the polygon $\widetilde{S_\gamma}$ with initial triangulation $\widetilde{T_\gamma} = \{\sigma_1,\ldots, \sigma_d,\sigma_{d+1},\allowbreak \ldots, \sigma_{2d + 3}\}$ where $\sigma_1,\ldots, \sigma_d$ are the arcs in the triangulation and images of the arcs that $\gamma$ crosses in $\Orb$, and $\sigma_{d+1},\ldots, \sigma_{2d + 3}$ are boundary arcs.
In $\widetilde{A_\gamma}$, let $x_{\sigma_i}$ be the variable associated to $\sigma_i$. We treat the variables from boundary arcs, $x_{\sigma_{d+1}},\ldots, x_{\sigma_{2d+3}}$, as coefficients. We also consider $\widetilde{A_\gamma}$ with principal coefficients $\{y_{\sigma_1},\ldots, y_{\sigma_d}\}$; geometrically, we place an elementary (multi)-lamination $\big\{\widetilde{L_1},\ldots, \widetilde{L_d}\big\}$ on $\widetilde{S_\gamma}$ where $\widetilde{L_i}$ is the elementary lamination from $\sigma_i$.
 Let $\mathbb{P} = \text{Trop}(x_{\sigma_{d+1}},\ldots,x_{\sigma_{2d+3}}, y_{\sigma_1},\ldots, y_{\sigma_d})$ be the tropical semifield generated by these elements.

It is clear by construction that $\widetilde{A_\gamma}$ is a type $A_d$, acyclic cluster algebra since the triangulation~$\widetilde{T_\gamma}$ has no internal triangles. Thus, we have the following proposition from Bernstein--Fomin--Zelevinsky.

\begin{Proposition}[{\cite[Corollary~1.21]{BFZ}}] \label{prop:Agamma}The algebra $\widetilde{A_\gamma}$ is the $\mathbb{ZP}$ algebra with set of generators $\{x_{\sigma_1}, \ldots, x_{\sigma_d}, x_{\sigma_1}', \ldots, x_{\sigma_d}'\}$, where $x_{\sigma_k}' = \mu_k(x_{\sigma_k})$, and relations generated by those of the form $x_{\sigma_k}x_{\sigma_k}'$.
\end{Proposition}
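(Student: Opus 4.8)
The plan is to read the presentation directly off \cite[Corollary~1.21]{BFZ} once its single standing hypothesis — acyclicity of the initial seed — has been checked for $\widetilde{A_\gamma}$. Thus the real content of the proof is to verify that the exchange matrix $B_{\widetilde{T_\gamma}}$ attached to the triangulation $\widetilde{T_\gamma}$ is acyclic, after which the statement is a quotation of that corollary. I would also pause to confirm that the coefficient setup here fits the framework of \cite{BFZ}.

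First I would describe the quiver of $B_{\widetilde{T_\gamma}}$ in the standard way for a triangulated polygon: its vertices are the internal arcs $\sigma_1,\dots,\sigma_d$, and each triangle of $\widetilde{T_\gamma}$ contributes a cyclically oriented arrow between the internal arcs on its sides. The crucial structural input, already isolated in the surrounding text, is that $\widetilde{T_\gamma}$ triangulates a polygon and has no internal triangles; hence every triangle has at least one boundary edge and so bounds at most two internal arcs. Consequently no triangle produces an oriented $3$-cycle, the underlying graph of the quiver is a type $A_d$ Dynkin diagram (a disjoint union of paths), and the quiver has no oriented cycles at all. Therefore the seed $\Sigma_{\widetilde{T_\gamma}}$ is acyclic and $\widetilde{A_\gamma}$ is an acyclic cluster algebra of finite type $A_d$.

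With acyclicity established I would invoke \cite[Corollary~1.21]{BFZ}. For an acyclic cluster algebra with coefficients in a semifield $\mathbb{P}$ — here the tropical semifield $\mathbb{P} = \mathrm{Trop}(x_{\sigma_{d+1}},\dots,x_{\sigma_{2d+3}},y_{\sigma_1},\dots,y_{\sigma_d})$, which lies in the geometric-type framework of that paper — the corollary identifies $\mathcal{A}$ with its upper cluster algebra and exhibits it as the $\mathbb{ZP}$-algebra on the $2d$ generators $x_{\sigma_1},\dots,x_{\sigma_d},x_{\sigma_1}',\dots,x_{\sigma_d}'$ with $x_{\sigma_k}'=\mu_k(x_{\sigma_k})$, subject precisely to the $d$ exchange relations, each of which writes the product $x_{\sigma_k}x_{\sigma_k}'$ as its exchange binomial. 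This is exactly the asserted presentation.

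The one step needing genuine care is the acyclicity check, i.e.\ translating ``no internal triangles'' into ``no oriented cycle.'' Here one uses that $\widetilde{S_\gamma}$ is built in Section~\ref{sec:lift} by gluing fans along shared vertices, so that $\widetilde{T_\gamma}$ is a snake (fan) triangulation of a polygon, which by construction contains no triangle bounded by three internal arcs; the only possible source of an oriented $3$-cycle is therefore absent. Once this is observed the remainder is a direct appeal to \cite[Corollary~1.21]{BFZ}, with the finite-type $A_d$ classification guaranteeing that the listed exchange relations form a complete set of defining relations, so no further obstacle arises.
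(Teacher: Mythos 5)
Your proposal is correct and matches the paper's treatment: the paper likewise justifies Proposition \ref{prop:Agamma} by observing that $\widetilde{T_\gamma}$ has no internal triangles, so $\widetilde{A_\gamma}$ is an acyclic cluster algebra of type $A_d$, and then quoting \cite[Corollary~1.21]{BFZ}. Your added detail on translating ``no internal triangles'' into acyclicity of the quiver and on the tropical-semifield coefficient setup is a sound elaboration of the same one-line argument.
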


We now construct a map, $\phi_\gamma$, from $\widetilde{A_\gamma}$ to $\text{Frac}(\mathcal{A})$. First, we will describe what $\phi_\gamma$ does to the generators of $\widetilde{A_\gamma}$, which we found in Proposition \ref{prop:Agamma}. Then, we will prove that this map is indeed an algebra homomorphism by showing that it sends relations in $\widetilde{A_\gamma}$ to relations in $\mathcal{A}$. We eventually will show that $\phi_\gamma(x_{\widetilde{\gamma}}) = x_\gamma$.

In most cases, we define $\phi_\gamma(x_{\sigma_j}) = x_{\pi(\sigma_j)}$; the exception will be if $\sigma_j = \sigma_{[k]}$ for some $1 \leq k < d$ and $\pi(\sigma_k) = \pi(\sigma_{k+1}) = \rho$ is a pending arc in $\mathcal{O}$. In this case, if the orbifold point incident to~$\sigma_k$ is order~$p$, we set $\phi_\gamma(x_{\sigma_j}) = \lambda_p x_\rho$.
Regardless of whether~$\pi(\sigma_j)$ is a pending arc or standard arc, we set $\phi_\gamma(y_{\sigma_j}) = y_{\pi(\sigma_j)}$.

Next, we need to define the image of $\phi_\gamma$ on the first mutations of the mutable variables in~$\widetilde{A_\gamma}$. If $\pi(\sigma_j) \in T$ is a standard arc, then we set $\phi_\gamma(x_{\sigma_j}') = x_{\pi(\sigma_j)}'$. If $\pi(\sigma_j) = \tau_{i_j}$ is a pending arc in $T$, then either $\pi(\sigma_{j-1}) = \pi(\sigma_j)$ or $\pi(\sigma_{j}) = \pi(\sigma_{j+1})$. Without loss of generality, assume the latter. Let $\delta$ and $\mu$ be the two other arcs in the quadrilateral in $\widetilde{T_\gamma}$ around $\sigma_j$ such that $\delta$ is opposite of~$\sigma_{[j]}$ in this quadrilateral.

 \begin{center}
\begin{tikzpicture}[scale=2]
\node[circle, fill = black, scale = 0.3] (A) at (0,0) {$w$};
\node[circle, fill = black, scale = 0.3] (B) at (2,0) {$v$};
\node[] (E) at (1,0) {$\times$};
\node[](F) at (0.8, 0){};
\node[](G) at (1.2,0.4){};
\node[] (H) at (1.2, -0.4) {};
\draw (A) to [out=60,in=120] node[auto]{}(B);
\draw (A) to [out=-60,in=-120] node[left]{}(B);
\draw(B) to [out = 150, in = 90] node[above, right]{}(0.8, 0);
\draw(B) to [out = 210, in = 270] node[auto]{}(0.8, 0);
\draw[dashed, red](G) to node[right] {$\gamma$} (H);
\node[](arrw) at (2.8,0) {$\rightarrow$};
\node[circle, fill = black, scale = 0.1](P0) at (3,-1){};
\node[circle, fill = black, scale = 0.1](P1) at (4,-1){};
\node[circle, fill = black, scale = 0.1](P2) at (5,-1){};
\node[circle, fill = black, scale = 0.1](P3) at (6,-1){};
\node[](P5) at (5,0.7){};
\draw (P1) -- node[below] {$\sigma_{[j]}$} (P2);
\draw(P5) -- node[left] {$\sigma_j$} (P1);
\draw (P5) -- node[right] {$\sigma_{j+1}$}(P2);
\draw[dashed, red] (4.5,0) -- (5.3,0);
\draw(P1) -- node[left]{$\mu$}(4,0.6);
\draw(5,0.6) -- node[below]{$\delta$}(4,0.6);
\end{tikzpicture}
\end{center}

If $\sigma_{[j]}$ is counterclockwise of $\sigma_j$, as in the diagram above, then we define \[\phi_\gamma(x_{\sigma_j'}) = \lambda_p \cdot \phi_\gamma(x_{\delta}) +\phi_\gamma(y_{\sigma_j})\cdot \phi_\gamma(x_{\mu}) = \lambda_p \cdot x_{\pi(\delta)} + y_{\pi(\sigma_j)} \cdot x_{\pi(\mu)}.
\]

Otherwise, define \[\phi_\gamma(x_{\sigma_j'}) =
\lambda_p\cdot \phi_\gamma(y_{\sigma_j}) \cdot \phi_\gamma(x_{\delta}) + \phi_\gamma(x_{\mu}) = \lambda_p \cdot y_{\pi(\sigma_j)}\cdot x_{\pi(\delta)} + x_{\pi(\mu)}.
\]

\begin{Remark}The expression $\lambda_p y_\rho \alpha + \beta$ is the result when you simplify the self-intersection of the arc below with the skein relation. Compare this with the arc with self-intersection we encounter when proving Proposition \ref{prop:OrbSkeinThreeTerm}.
\end{Remark}

\begin{center}
\begin{tikzpicture}[scale=2]
\node[circle, fill = black, scale = 0.3] (A) at (0,0) {$w$};
\node[circle, fill = black, scale = 0.3] (B) at (2,0) {$v$};
\node[] (E) at (1,0) {$\times$};
\node[](F) at (0.8, 0){};
\node[](G) at (1.2,0){};
\node[](G1) at (1, -0.2){};
\node[](G2) at (1,0.2){};
\draw (A) to [out=60,in=120] node[auto]{$\alpha$}(B);
\draw (A) to [out=-60,in=-120] node[below]{$\beta$}(B);
\draw(B) to [out = 150, in = 90] node[above, right]{$\rho$}(0.8,0);
\draw(B) to [out = 210, in = 270] node[auto]{}(0.8,0);
\draw[red, thick](0,0) to [out = 30, in = 90] node[above]{$\gamma$} (1.2,0);
\draw[red, thick](1.2,0) to [out = 270, in = 0] (1,-0.2);
\draw[red, thick] (1,-0.2) to [out = 180, in = 180] (1,0.2);
\draw[red, thick] (1,0.2) to [out = 0, in = 150] (2,0);
\end{tikzpicture}
\end{center}

\begin{Proposition}
The map $\phi_\gamma$ is an algebra homomorphism; that is, it maps relations in $\widetilde{A_\gamma}$ to relations in $\mathcal{A}$.
\end{Proposition}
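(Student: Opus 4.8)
The plan is to exploit the finite presentation of $\widetilde{A_\gamma}$ furnished by Proposition~\ref{prop:Agamma}. Since $\widetilde{A_\gamma}$ is generated as a $\mathbb{ZP}$-algebra by $\{x_{\sigma_1},\dots,x_{\sigma_d},x_{\sigma_1}',\dots,x_{\sigma_d}'\}$ subject only to the $d$ exchange relations of the form $x_{\sigma_k}x_{\sigma_k}'=(\cdots)$, and $\phi_\gamma$ has already been prescribed on every generator (and on the coefficient semifield $\mathbb{P}$, via $y_{\sigma_j}\mapsto y_{\pi(\sigma_j)}$ together with $x_{\sigma_j}\mapsto x_{\pi(\sigma_j)}$ for the boundary variables $j>d$), the assignment descends to a well-defined algebra homomorphism exactly when the image of each defining exchange relation is a valid identity in $\operatorname{Frac}(\mathcal{A})$. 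So the proposition reduces to checking, for each $1\le k\le d$, that $\phi_\gamma(x_{\sigma_k})\,\phi_\gamma(x_{\sigma_k}')=\phi_\gamma(R_k)$, where $R_k$ is the binomial right-hand side read off the quadrilateral of $\widetilde{T_\gamma}$ with diagonal $\sigma_k$.

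I would split the verification by whether $\pi(\sigma_k)$ is standard or pending. When $\pi(\sigma_k)$ is a \emph{standard} arc, $\phi_\gamma(x_{\sigma_k})=x_{\pi(\sigma_k)}$ and $\phi_\gamma(x_{\sigma_k}')=x_{\pi(\sigma_k)}'$ are honest cluster variables of $\mathcal{A}$, and the exchange polynomial at a standard arc is binomial ($z=1+u$) in both algebras. Here $\pi$ carries the quadrilateral around $\sigma_k$ in $\widetilde{T_\gamma}$ onto the quadrilateral around $\pi(\sigma_k)$ in $T$, and the identity is exactly the computation of Musiker--Schiffler--Williams~\cite{MSW}, now with the coefficient correspondence $y_{\sigma_k}\mapsto y_{\pi(\sigma_k)}$ tracked through the elementary laminations of Section~\ref{sec:orbifoldLaminations}. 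The only departure from~\cite{MSW} is that a side of the quadrilateral may be a glue edge $\sigma_{[\ell]}$ with $\pi(\sigma_\ell)=\pi(\sigma_{\ell+1})$ pending, on which $\phi_\gamma$ inserts a factor $\lambda_p$; one checks this matches the weight carried by the corresponding side of the triangle in $\mathcal{O}$, so the identity is unaffected.

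The substantive case is $\pi(\sigma_k)=\rho$ a \emph{pending} arc at an orbifold point of order $p$. Then $\rho$ is the common image of two consecutive lift-halves, say $\pi(\sigma_k)=\pi(\sigma_{k+1})=\rho$, joined along the glue edge $\sigma_{[k]}$ with $\phi_\gamma(x_{\sigma_{[k]}})=\lambda_p x_\rho$. Reading off the quadrilateral around $\sigma_k$, whose opposite-side pairs are $\{\sigma_{[k]},\delta\}$ and $\{\sigma_{k+1},\mu\}$ (in the orientation where $\sigma_{[k]}$ is counterclockwise of $\sigma_k$), the binomial exchange relation of $\widetilde{A_\gamma}$ is $x_{\sigma_k}x_{\sigma_k}'=x_{\sigma_{[k]}}x_\delta+y_{\sigma_k}x_{\sigma_{k+1}}x_\mu$. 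Applying $\phi_\gamma$ and using $\phi_\gamma(x_{\sigma_{[k]}})=\lambda_p x_\rho$, $\phi_\gamma(x_{\sigma_{k+1}})=x_{\pi(\sigma_{k+1})}=x_\rho$, and $\phi_\gamma(y_{\sigma_k})=y_\rho$ yields $\lambda_p x_\rho x_{\pi(\delta)}+y_\rho x_\rho x_{\pi(\mu)}=x_\rho\bigl(\lambda_p x_{\pi(\delta)}+y_\rho x_{\pi(\mu)}\bigr)$, which is precisely $\phi_\gamma(x_{\sigma_k})\,\phi_\gamma(x_{\sigma_k}')$ by the defining formula for $\phi_\gamma(x_{\sigma_k}')$. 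The opposite orientation of $\sigma_{[k]}$ relative to $\sigma_k$ (and the mirror case $\pi(\sigma_{k-1})=\rho$) is handled identically, now invoking the second defining formula for $\phi_\gamma(x_{\sigma_k}')$, in which $\lambda_p$ and the coefficient $y_\rho$ trade places.

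I expect the main obstacle to be orientation and coefficient bookkeeping rather than any single hard identity: one must confirm that the placement of $\lambda_p$ and of the principal-coefficient factor $y_\rho$ in the two defining formulas for $\phi_\gamma(x_{\sigma_k}')$ is exactly what the two relative orientations of $\sigma_{[k]}$ and $\sigma_k$ demand, and likewise reconcile the lamination/coefficient correspondence across $\pi$ in the standard case. It is also worth confirming at the outset, from the lift construction of Section~\ref{sec:lift} and Lemma~\ref{lem:QuadAndBigon}, that each local quadrilateral of $\widetilde{T_\gamma}$ genuinely projects under $\pi$ to the configuration in $\mathcal{O}$ that these computations assume; once that faithfulness is in hand, the pending identity is forced by the very definition of $\phi_\gamma$, and the proposition follows.
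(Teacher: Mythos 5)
Your proposal is correct and is essentially the paper's own proof: both arguments reduce, via the presentation in Proposition~\ref{prop:Agamma}, to checking that each exchange relation of $\widetilde{A_\gamma}$ maps to a valid identity in $\operatorname{Frac}(\mathcal{A})$ --- the corresponding exchange relation of $\mathcal{A}$ when $\pi(\sigma_k)$ is a standard arc, and a tautological identity (forced by $\phi_\gamma(x_{\sigma_{[k]}})=\lambda_p x_\rho$ together with the two orientation-dependent defining formulas for $\phi_\gamma(x_{\sigma_k}')$) when $\pi(\sigma_k)$ is pending, exactly as in the paper. One small correction: the ``departure'' you flag in the standard case is vacuous, since a glue edge $\sigma_{[\ell]}$ is a boundary edge of $\widetilde{S_\gamma}$ lying on only the triangle $\{\sigma_\ell,\sigma_{\ell+1},\sigma_{[\ell]}\}$, whose other two sides have pending image, so it can never be a side of the quadrilateral around a standard-image arc; and had it occurred, your stated reason would not rescue the identity, because the corresponding side of the relation in $\mathcal{A}$ carries weight $x_\rho$, not $\lambda_p x_\rho$.
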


\begin{proof}
First, let $\pi(\sigma_j)$ be a standard arc. Then, in $\widetilde{A_\gamma}$, we have a relation \begin{equation}\label{eq:liftPtolemy}
x_{\sigma_j}x_{\sigma_j}' = y_{\sigma_j} \Pi_b x_b + \Pi_c x_c, \end{equation} where $b$ ranges over arcs which are immediately clockwise of $\sigma_j$ in $\widetilde{T_\gamma}$ and $c$ ranges over arcs which are counterclockwise of $\sigma_j$. The image of this relation under $\phi_\gamma$ is \begin{equation}
x_{\pi(\sigma_j)}x_{\pi(\sigma_j)}' = y_{\pi(\sigma_j)}\Pi_b x_{\pi(b)} + \Pi_c x_{\pi(c)}.\end{equation} This is exactly the exchange relation for $x_{\pi(\sigma_j)}$ in $\mathcal{A}$.

\looseness=-1 Now assume $\pi(\sigma_j)$ is a pending arc in $T$, then $x_{\sigma_j}$ has an exchange relation in $\widetilde{A_\gamma}$ akin to equation~\eqref{eq:liftPtolemy}. Using prior notation, in the case where $\sigma_{[j]}$ is clockwise of $\sigma_j$, so that this exchange relation in $\widetilde{A_\gamma}$ is $x_{\sigma_j}' x_{\sigma_j} = y_{\sigma_j} x_\delta x_{\sigma_{[j]}} + x_{\sigma_{j+1}} x_\mu$, we have that $\phi_\gamma$ maps $x_{\sigma_j}' x_{\sigma_j} $ to the following:\[
 ( \lambda_p y_{\pi(\sigma_j)} x_{\pi(\delta)} + x_{\pi(\mu)}) x_{\pi(\sigma_j)}= \lambda_p y_{\pi(\sigma_j)} x_{\pi(\sigma_j)}x_{\pi(\delta)} + x_{\pi(\sigma_j)} x_{\pi(\mu)}.
\]

Moreover, this is equivalent to $\phi_\gamma( y_{\sigma_j} x_\delta x_{\sigma_{[j]}} + x_{\sigma_{j+1}} x_\mu)$ since $\phi_\gamma(x_{\sigma_{[j]}}) = \lambda_p x_{\pi(\sigma_j)}$.
We see a~similar relation when $\delta$ is counterclockwise of~$\sigma_j$. In either case, this is simply an identity in~$\mathcal{A}$. Thus, all relations in $\widetilde{A_\gamma}$ are mapped to relations in~$\mathcal{A}$.
\end{proof}

\begin{Remark} It is reasonable that we send the exchange relation for a pre-image of a pending arc to an identity in $Frac(\mathcal{A})$ since, if both $\sigma_j$ and $\sigma_{j+1}$ correspond to the same arc in $\widetilde{S_\gamma}$, it does not make sense to only mutate one of them.
\end{Remark}

\section[Showing $\phi_\gamma(x_{\widetilde{\gamma}}) = x_\gamma$]{Showing $\boldsymbol{\phi_\gamma(x_{\widetilde{\gamma}}) = x_\gamma}$}\label{sec:mapClusterVar}

In Section \ref{Sec:mapSection}, we defined $\phi_\gamma\colon \widetilde{A_\gamma} \to \mathcal{A}$ on the generators of $\widetilde{A_\gamma}$ and showed that it is in fact an algebra homomorphism. Now, we will show that $\phi_\gamma(x_{\widetilde{\gamma}}) = x_\gamma$. In $\widetilde{S_\gamma}$, we already have expansion formulas thanks to \cite{MSW} (and originally due to~\cite{Musiker-Schiffler}). So, we can import the expansion formula for $x_\gamma$ in $\mathcal{A}$ via our map $\phi_\gamma$.

\begin{Proposition}\label{Proposition5}
Let $\phi_\gamma$ be the map from the last section. Then, $\phi_\gamma(x_{\widetilde{\gamma}}) = x_\gamma$.
\end{Proposition}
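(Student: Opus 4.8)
The plan is to prove $\phi_\gamma(x_{\widetilde{\gamma}}) = x_\gamma$ by induction on $e(\gamma,T)$, the number of crossings between $\gamma$ and the triangulation $T$, mirroring the strategy of Musiker, Schiffler, and Williams but using the quadrilateral and bigon lemma (Lemma~\ref{lem:QuadAndBigon}) as the inductive engine. The base cases are $e(\gamma,T)=1$, where $\gamma$ is the flip of a single standard arc $\tau$, and $e(\gamma,T)=2$, where $\gamma$ is the flip of a pending arc $\rho$. In each base case $x_{\widetilde{\gamma}}$ is a first mutation $x_{\sigma_j}'$ of a mutable variable in $\widetilde{A_\gamma}$, so the identity $\phi_\gamma(x_{\widetilde{\gamma}}) = x_\gamma$ follows directly from the definition of $\phi_\gamma$ on the generators $x_{\sigma_j}'$ given in Section~\ref{Sec:mapSection}, after checking that the image matches the exchange relation for $\gamma$ in $\mathcal{A}$ (either the ordinary binomial exchange or the trinomial generalized exchange $x_\rho x_{\rho}' = 1 + \lambda_p(\cdots) + (\cdots)$ for a pending arc).

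For the inductive step, I would split into the two cases of Lemma~\ref{lem:QuadAndBigon}. When $\gamma$ is a standard arc, the lemma produces a quadrilateral with sides $\alpha_1,\alpha_2,\alpha_3,\alpha_4$ having $\gamma,\gamma'$ as diagonals, where $e(\gamma',T)<e(\gamma,T)$ and each $e(\alpha_i,T)\le e(\gamma,T)$. The corresponding Ptolemy-type (generalized exchange) relation in $\mathcal{A}$ expresses $x_\gamma x_{\gamma'}$ in terms of the $x_{\alpha_i}$. I would lift this relation to $\widetilde{A_\gamma}$, apply the induction hypothesis to $\gamma'$ and to each $\alpha_i$, and then push forward through $\phi_\gamma$, using that $\phi_\gamma$ is an algebra homomorphism (the preceding Proposition) to conclude $\phi_\gamma(x_{\widetilde{\gamma}}) = x_\gamma$. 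When $\gamma$ is a pending arc, I would instead use the bigon part of the lemma: $\gamma$ and the companion pending arc $\rho$ sit inside a bigon with sides $\beta_1,\beta_2$, all of which cross $T$ fewer than $e(\gamma,T)$ times, and here the relevant relation is the \emph{trinomial} generalized exchange relation, whose middle coefficient is exactly the $\lambda_p$ appearing in the definition of $\phi_\gamma$ on pending-arc generators.

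The main obstacle will be the case where $e(\alpha_i,T) = e(\gamma,T)$ for some side $\alpha_i$, since then the naive induction on crossing number does not strictly decrease. Lemma~\ref{lem:QuadAndBigon} guarantees that this can only happen when $\alpha_i$ is itself a pending arc incident to the same orbifold point as $\tau$. I would handle this by a secondary induction or a direct argument: such an $\alpha_i$, although it crosses $T$ the same number of times as $\gamma$, is "closer" to the triangulation in a way that can be resolved using the periodicity of the Chebyshev evaluations (Lemma~\ref{lem:ChebyPeriodic}) and the symmetry of winding around an orbifold point, so that the appearance of the factor $\lambda_p$ on the lifted edge $\sigma_{[j]}$ (recall $\phi_\gamma(x_{\sigma_{[j]}}) = \lambda_p x_\rho$) exactly accounts for the discrepancy. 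Concretely, I would verify that applying $\phi_\gamma$ to the lift of the exchange relation reproduces the trinomial generalized exchange relation term by term, with the $\lambda_p$ coefficient in the correct place, so that the equation closes even in this borderline case. Once this is checked, the homomorphism property and the induction hypothesis combine to yield the claim.
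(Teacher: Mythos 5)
Your skeleton---induction on $e(\gamma,T)$, the quadrilateral part of Lemma~\ref{lem:QuadAndBigon} for standard arcs, the bigon part for pending arcs, and pushing lifted exchange relations through the homomorphism---is the same as the paper's. The genuine gap is in your treatment of the borderline case $e(\alpha_i,T)=e(\gamma,T)$, which is exactly the point where this proof must depart from Musiker--Schiffler--Williams. Your proposed fix appeals to Chebyshev periodicity (Lemma~\ref{lem:ChebyPeriodic}) and the symmetry of winding, but that lemma concerns generalized arcs winding around orbifold points and plays no role here: an ordinary arc does not wind, and no term-by-term check of the image of the exchange relation can substitute for the missing induction hypothesis---without it you simply do not know that the lift of $\alpha_i$ is sent to $x_{\alpha_i}$, so the relation you push forward contains an unidentified factor. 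The paper's resolution is structural rather than computational: Lemma~\ref{lem:QuadAndBigon} guarantees the offending $\alpha_i$ is a \emph{pending} arc, and the pending-arc case of the proposition is proved first at each level $d$, because its bigon relation involves only arcs $\beta_1$, $\beta_2$, $\rho$ with strictly fewer than $d/2$ crossings (so only the honest induction hypothesis is needed). Once all pending arcs with $d$ crossings are settled, the standard-arc case at level $d$ may legitimately cite $\phi_{\alpha_i}(x_{\widetilde{\alpha_i}})=x_{\alpha_i}$ for the problematic side. This ordering---pending before standard within a fixed crossing number---is the key idea your proposal lacks.

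Two further steps you omit would also need to be supplied. First, when $e(\alpha_i,T)=e(\gamma,T)$ the quadrilateral has no preimage inside $\widetilde{S_\gamma}$ at all; the paper glues $\widetilde{S_{\alpha_i}}$ (and if necessary $\widetilde{S_{\gamma'}}$) onto $\widetilde{S_\gamma}$ to form a larger polygon $\hat{S}$ in which a preimage of the quadrilateral lives, and works there. Second, the coefficients: the lifted relation carries coefficients $\widetilde{Y_i}$ while the generalized exchange relation in $\mathcal{A}$ carries coefficients $Y_i$, and the equality $\phi_\gamma\big(\widetilde{Y_i}\big)=Y_i$ is a genuine claim about shear coordinates of elementary laminations (the paper's Lemma~\ref{phimapsys}); your proposal treats this coefficient matching as automatic. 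A smaller slip: in your base case $e(\gamma,T)=2$, the lift $\widetilde{\gamma}$ crosses both internal arcs of the pentagon $\widetilde{S_\gamma}$, so $x_{\widetilde{\gamma}}$ is not a first mutation $x_{\sigma_j}'$ of a single generator; that case, too, must be handled by the pentagon computation rather than by the definition of $\phi_\gamma$ on generators.
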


\begin{proof}Our proof in the orbifold case will differ from the proof of the analogous result in the surface case \cite[Theorem~10.1]{MSW}, in two ways. First of all, we need to prove this for the case when~$\gamma$ is a pending arc. Then, we need to take account for the case when $\gamma$ is an standard arc and the resulting quadrilateral from the quadrilateral lemma, $\{\alpha_i\}$, is such that $e(\alpha_i,T) = e(\gamma,T)$ for some index $i$. Both of these cases will utilize Lemma~\ref{lem:QuadAndBigon}. We work through these cases simultaneously using induction on~$e(\gamma,T)$.

Let $e(\gamma,T) = d$. If $d = 0$, then $\gamma \in T$, and we already have that $\phi_\gamma(x_{\widetilde{\gamma}}) = x_{\pi(\widetilde{\gamma})} = x_\gamma$. If $d = 1$, then $\gamma$ is a standard arc which crosses one other standard arc, and the statement follows from \cite[Theorem~10.1]{MSW}.

Now, suppose that $d>1$. First, consider the case where $\gamma$ is a pending arc. Then $d$ is necessarily even. Let $\rho \in T$ be the pending arc incident to the same orbifold point as $\gamma$. By Lemma~\ref{lem:QuadAndBigon}, we can find $\beta_1$, $\beta_2$ such that $\beta_1$, $\beta_2$ form the bigon which contains the pending arcs~$\rho$ and~$\gamma$, and $e(\beta_i, T) < \frac{d}{2}$. Suppose the orbifold point incident to~$\gamma$ is order~$p$. Then, in~$\mathcal{A}$, we have that $x_\gamma x_\rho = Y_1 x_{\beta_1}^2 + Y_0 \lambda_p x_{\beta_1} x_{\beta_2} + Y_{-1} x_{\beta_2}^2$, where we can compute $Y_i$ by finding a~sequence of flips from $\rho$ to $\gamma$ and performing the corresponding mutations in the cluster algebra. In Proposition~\ref{prop:ShearCoordMutation}, we will see that we can also compute these $Y_i$ from the orientation of~$\beta_1$ and~$ \beta_2$ and their intersections with the elementary lamination on~$\mathcal{O}$.

We compare this with the scenario in the lift, $\widetilde{S_\gamma}$. Recall $\widetilde{S_\gamma}$ is a polygon triangulated by $\sigma_i$, for $1 \leq i \leq d$. For $j = \frac{d}{2}$, we have that $\pi(\sigma_j) = \pi(\sigma_{j+1}) = \rho$. Moreover, in $\mathcal{O}$, the $\beta_i$ only cross arcs in $\{\pi(\sigma_i)\}_i$, implying that $\widetilde{S_\gamma}$ already contains $\widetilde{S_{\beta_i}}$ and trivially contains $\widetilde{S_\rho}$ as $\rho \in T$. Thus, we can apply $\phi_\gamma$ to $\beta_i$ and $\rho$, as all of these are arcs in the polygon $\widetilde{S_\gamma}$.

 Due to the symmetry of arcs crossed by $\gamma$, there are two lifts of $\beta_1$ and $\beta_2$ to $\widetilde{S_\gamma}$; call them $\beta_{1,i}$, $\beta_{2,i}$ for $i = 1,2$. Moreover, $\beta_{1,i}$, $\beta_{2,i}$, and $\sigma_{[j]}$ form a pentagon in $\widetilde{S_\gamma}$, which is triangulated by $\sigma_j$ and $\sigma_{j+1}$. Let $s_0 = s(\widetilde{\gamma})$ and $s_d = t(\widetilde{\gamma})$ be the start and end of the arc $\widetilde{\gamma}$. Recall we define~$s_j$ to be the vertex shared by $\sigma_j$ and $\sigma_{j+1}$ . Let $a_j$ ($a_{j+1}$) be the other vertex of~$\sigma_j$~($\sigma_{j+1}$). Note that $\pi(s_j) = \pi(a_j) = \pi(a_{j+1})$ since $\pi(\sigma_j) = \pi(\sigma_{j+1})$, and this arc is a pending arc. Then, up to changing indices, $\beta_{1,1}$ connects $s_0$ and $s_j$, and~$\beta_{1,2}$ connects~$a_{j+1}$ and $s_d$. Similarly, $\beta_{2,1}$~connects~$s_0$ and~$a_j$, and $\beta_{2,2}$ connects~$s_j$ and~$s_d$.

\begin{center}
\begin{tikzpicture}[scale=1.8]
\node[circle, fill = black, scale = 0.1](P0) at (3.5,0.7){};
\node[](P1) at (4,-1){};
\node[circle, fill = black, scale = 0.1](P2) at (5,-1){};
\node[circle, fill = black, scale = 0.1](P3) at (6,-1){};
\node[](P6) at (2.8,0){$s_0$};
\node[](P4) at (6.2,0){$s_d$};
\node[] at (3.2,0) {$\cdots$};
\node[] at (5.8,0) {$\cdots$};
\node[](P5) at (5,0.7){$s_j$};
\draw (P1) -- node[below] {$\delta$} (P2) -- node[below]{} (P3) -- node[above]{} (P4) -- (P5)-- node[above]{} (P0) -- (P6) -- (P1);
\draw(P5) -- node[left] {$\sigma_j$} (P1);
\draw (P5) -- node[right] {$\sigma_{j+1}$}(P2);
\draw (P5) -- node[left] {}(P3);
\draw(P1) -- node[left] {} (P0);
\draw(P5) -- (P0);
\draw[dashed, red] (3.5,0) to node[above]{$\tilde{\gamma}$} (5.6,0);
\draw[thick](P6) to node[above]{$\beta_{1,1}$} (P5);
\draw[thick](P2) to node[below]{$\beta_{1,2}$} (P4);
\draw[thick](P6) to node[below]{$\beta_{2,1}$}(P1);
\draw[thick](P5) to node[above]{$\beta_{2,2}$}(P4);
\end{tikzpicture}
\end{center}

Using cluster algebra expansion formulas from triangulated polygons \cite{MSW}, in $\widetilde{A_\gamma}$ we have that $x_{\widetilde{\gamma}}x_{\sigma_j}x_{\sigma_{j+1}} = \widetilde{Y_1} x_{\beta_{1,1}}x_{\beta_{1,2}}x_{\sigma_j} + \widetilde{Y_0} x_{\beta_{1,1}}x_{\beta_{2,2}}x_{\delta} + \widetilde{Y_{-1}}x_{\beta_{2,1}}x_{\beta_{2,2}}x_{\sigma_{j+1}}$. The image of this relation under $\phi_\gamma$ is
\begin{gather}
\phi_\gamma(x_{\widetilde{\gamma}})x_{\rho}^2 = \phi_\gamma(\widetilde{Y_1}) x_{\beta_1}^2x_{\rho} + \phi_\gamma(\widetilde{Y_0}) x_{\beta_1}x_{\beta_2}(\lambda_px_\rho)+ \phi_\gamma(\widetilde{Y_{-1}})x_{\beta_2}^2x_{\rho}\nonumber
\\
\qquad{} \implies\phi_\gamma(x_{\widetilde{\gamma}})x_{\rho} = \phi_\gamma(\widetilde{Y_1}) x_{\beta_1}^2 +\phi_\gamma(\widetilde{Y_0}) \lambda_px_{\beta_1}x_{\beta_2}+ \phi_\gamma(\widetilde{Y_{-1}})x_{\beta_2}^2.\label{genmutate}
\end{gather}

Comparing this with our generalized exchange relation, if we can show that $\phi_\gamma(\widetilde{Y_i}) = Y_i$, we can conclude that $\phi_\gamma(x_{\widetilde{\gamma}}) = x_\gamma$. We postpone this discussion of $y$-variables and laminations to Lemma~\ref{phimapsys} in the next section.

Now, let $\gamma$ be an standard arc in $\mathcal{O}$ with $d = e(\gamma, T)$. Since we are in an orbifold, it may be that the quadrilateral, $\{\alpha_i\}$, which we produce from Lemma~\ref{lem:QuadAndBigon}, has a pending arc $\alpha = \alpha_i$ for some $i$, such that $e(\alpha,T) = d$. In this case, $\widetilde{S_\alpha}$ is not contained in $\widetilde{S_\gamma}$, but we can glue these polygons together as the intersection of arcs crossed by $\alpha$ and $\gamma$ is nonempty. We may also need to glue $\widetilde{S_\gamma'}$ onto this. Details about this gluing may be found in~\cite{MSW}. Denote this glued polygon~$\hat{S}$. The advantage of this larger polygon is a preimage of our quadrilateral $\{\alpha_i\}$ with diagonals $\gamma$, $\gamma'$, lives in~$\hat{S}$. We already showed that $\phi_\alpha(x_{\widetilde{\alpha}}) = x_\alpha$, since $\alpha$ is a pending arc with $e(\alpha,T) = d$. By induction, we also know that $\phi_{\gamma}(x_{\widetilde{\gamma}'}) = x_{\gamma'}$ and for the other~$\alpha_i$, $\phi_{\alpha_i}(x_{\widetilde{\alpha_i}}) = x_{\alpha_i}$.

In $\hat{S}$, by cluster expansion formulas from surfaces, we have the exchange relation $x_{\widetilde{\gamma}} x_{\widetilde{\gamma}'}= \widetilde{Y_+}x_{\widetilde{\alpha_1}}x_{\widetilde{\alpha_3}} + \widetilde{Y_-}x_{\widetilde{\alpha_2}}x_{\widetilde{\alpha_4}}$. The image of this relation under $\phi_\gamma$ is \[
\phi_\gamma(x_{\widetilde{\gamma}}) x_{\gamma'} = \phi_\gamma\big(\widetilde{Y_+}\big)x_{\alpha_1}x_{\alpha_3} + \phi_\gamma\big(\widetilde{Y_-}\big)x_{\alpha_2}x_{\alpha_4}.
\]

Again, we direct our reader to the next section for discussion of laminations on an orbifold and for now assume Lemma~\ref{phimapsys}. By comparing the previous discussion to the Ptolemy relation in~$\mathcal{O}$ applied to the intersection of~$\gamma$ and~$\gamma'$, we conclude that $\phi_\gamma(x_{\widetilde{\gamma}}) = x_\gamma$.
\end{proof}

\section{Laminations on an orbifold}\label{sec:orbifoldLaminations}

We now show that the shear coordinates and elementary laminations for pending arcs defined in Section~\ref{sec:generalizedLaminations} correctly models the mutation of an extended $B$-matrix in a generalized cluster algebra.
Let $L_{i+n}$ be the elementary lamination from arc~$\tau_i \in T$. Recall~$n$ is the number of arcs in the triangulation~$T$.

\begin{Proposition}\label{prop:ShearCoordMutation}
These shear coordinate rules for an orbifold agree with mutation of extended $B$-matrices in the associated generalized cluster algebra.
\end{Proposition}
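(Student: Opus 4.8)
The plan is to verify Proposition~\ref{prop:ShearCoordMutation} by checking, arc by arc, that the shear-coordinate rules from Section~\ref{sec:generalizedLaminations} reproduce the generalized $B$-matrix mutation. Since the upper $n \times n$ block $B_T$ already mutates correctly (this is the known surface/orbifold case from \cite{FST-orbTriang, Felikson-Shapiro-Tumarkin}), the real content is the extended rows, whose entries are exactly the shear coordinates $b_{\tau_k}(T, L_i)$. So I would first recall the generalized matrix-mutation rule $\mu_k$ for the extended $B$-matrix and isolate the formula governing how row $i+n$ (the shear coordinates of $L_i$) transforms when we flip a pending arc $\tau_k$. Then I would directly recompute those shear coordinates geometrically, using the covering-space pictures already drawn in Section~\ref{sec:generalizedLaminations} and the flip diagram of Figure~\ref{fig:LamFlipPending}, and match the two.

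The crux is the pending case, and I would organize it around two phenomena already flagged in the text. First, the factor of $2$: when we flip a pending arc $\tau_k$ incident to an orbifold point of order~$p$, the elementary lamination $L_k$ crosses the new arc $\alpha$ \emph{twice} rather than once (it winds around the orbifold point), so its shear contribution is~$\pm 2$. I would show this matches the ``$2$'' appearing in the mutated extended block on the right-hand side of Figure~\ref{fig:LamFlipPending}, confirming that the generalized mutation rule for a pending direction behaves like mutation at a weight-$2$ orbifold point in the sense of \cite{Felikson-Shapiro-Tumarkin}, as recorded in the Remark after that figure. Second, I would carefully track the sign conventions coming from our convention of drawing pending loops as arcs around the orbifold point rather than as having an endpoint at it, since this is precisely the modification that distinguishes our shear-coordinate definition from that of \cite{FT-II}.

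The heart of the verification is a case analysis of the local crossing picture between an elementary lamination $L_i$ and the flipped arc, following the standard Fomin--Thurston argument \cite{FT-II} but now allowing $\tau_k$ to be pending. For each of the finitely many local configurations (the curve entering/exiting the quadrilateral or bigon surrounding $\tau_k$ on each side), I would compute $b_{\tau_k}(T', L_i)$ before and after the flip and check it equals the value predicted by the generalized mutation formula applied to the extended row. The crossings that contribute~$0$ are exactly those depicted as resembling standard surface crossings in the covering space, so those reduce to the known case; only the meaningful pending crossings produce the $\pm 2$ and require new bookkeeping.

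The main obstacle I anticipate is not any single computation but getting the sign bookkeeping uniformly correct across all configurations simultaneously. Because a pending arc is crossed an even number of times, and because we shift the singular endpoint clockwise to form $L_k$, the two crossings of $L_k$ with a transversal contribute asymmetrically (one gives $\pm 1$, the other gives $0$, netting the $\pm 1$ on a standard arc but $\pm 2$ on the pending arc itself after the flip); keeping these contributions consistent with both the counterclockwise-positive winding convention and the tropical sign rule in the mutation formula is where errors are most likely to creep in. I expect this to be a finite but delicate check, and I would lean on the explicit matrices in Figure~\ref{fig:LamFlipPending} as a worked instance to anchor the general argument.
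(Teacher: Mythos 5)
Your overall method --- a direct geometric check of local configurations against the mutation rules, organized around the factor of $2$ and the sign conventions forced by drawing pending arcs as loops --- is the same method the paper uses. However, your proposal omits one of the two cases the paper's proof actually verifies, and your claimed reduction hides it. You organize the whole verification around flips \emph{at} a pending arc (``how row $i+n$ transforms when we flip a pending arc $\tau_k$'', ``allowing $\tau_k$ to be pending''), asserting that the remaining configurations reduce to the known surface case. That is not correct. The first half of the paper's proof treats the complementary case: flip a \emph{standard} arc $\tau_k$ that is a side of the bigon enclosing a pending arc $\tau_j$, and check that the lamination entry $b_{ij}$ (the shear coordinate of $L_i$ with respect to the pending arc $\tau_j$) transforms by the ordinary rule $\mu_k(b_{ij}) = b_{ij} + b_{ik}b_{kj}$ when $b_{ik}$ and $b_{kj}$ have the same sign, and is unchanged otherwise. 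This cannot be outsourced to Fomin--Thurston \cite{FT-II}: even though the flipped direction is standard, the quantity being tracked is the shear coordinate attached to a pending arc, which is defined by the paper's new loop-based rules. The geometric content is that flipping $\tau_k$ deforms the bigon around $\tau_j$ so that $L_i$ now meets it on the same side twice, changing $b_{\tau_j}(T,L_i)$ (from $0$ to $1$ in the paper's picture). So a pending column of the extended matrix changes under a standard flip, and this requires its own new case analysis.

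Two further points of bookkeeping your plan should make explicit. First, the substance of the check is never the entry of the flipped arc itself (that entry merely changes sign, as the paper notes ``by set up''); it is the entries of the \emph{other} arcs whose local quadrilateral or bigon contains the flipped arc, so ``compute $b_{\tau_k}(T',L_i)$ before and after the flip'' is aimed at the wrong entry. Second, within the pending-flip case that you do treat, the affected arc $\tau_k$ may itself be pending; the paper observes that $L_i$ then crosses $\tau_k$ (drawn as a loop) four times, in two pairs, and it is the pairing of contributions that produces the required multiple of $2$ in $\mu_j(b_{ik}) = b_{ik} \pm 2\,b_{ij}b_{jk}$. With these cases added, your argument would match the paper's proof; as written, it proves only half of the statement.
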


\begin{proof}First, we show that the shear coordinate associated to a pending arc, $\tau_j$ changes when we flip an standard arc, $\tau_k$, in the same way that the bottom half of the corresponding column (call it column $j$) of the extended
$B$-matrix changes when we mutate at this index,~$k$. The entry~$b_{k,j}$ is positive if and only if $\tau_k$ is counterclockwise of $\tau_j$. For a lamination~$L_i$, with $i > n$, the entry~$b_{ik}$ is positive if and only if $L_i$ intersects the two arcs that are clockwise of $\tau_k$. If both of these situations are true, then $\mu_k(b_{ij})$ will be given by $b_{ij} + b_{ik}b_{kj}$. In a picture, we can see that when we flip $\tau_k$, it will change the bigon around $\tau_j$, so that now $L_i$ will intersect the bigon on the same side twice. This will increase the shear coordinate associated to $L_i$ and $\tau_j$. See picture below, where the shear coordinate $b_{\tau_j}(T, L)$ changes from~0 to~1. We can deal with the case where $b_{ik}$ and $b_{kj}$ are both negative similarly. If these entries are different signs or one is zero, it is clear from pictures that there will be no change to $b_{\tau_j}(T,L)$.

\begin{center}
\begin{tikzpicture}[scale = 2.2]

\tikzset{->-/.style={decoration={
 markings,
 mark=at position #1 with {\arrow{>}}},postaction={decorate}}}

\draw[out=30,in=-30,looseness=1.5] (0,0.3) to (0,1.5);
\draw[out=150,in=-150,looseness=1.5] (0,0.3) to (0,1.5);
\draw(0,0.3) to (1,0.9);
\draw(0,1.5) to (1,0.9);

\draw[in=0,out=45,looseness =1.25] (0,0.3) to (0,1.3);
\draw[in=180,out=135,looseness=1.25] (0,0.3) to (0,1.3);

\draw[fill=black] (0,0.3) circle [radius=1pt];
\draw[fill=black] (0,1.5) circle [radius=1pt];
\draw[fill = black] (1,0.9) circle [radius=1pt];
\draw[thick] (0,1) node {$\mathbf{\times}$};

\node[] at (-0.26,1.4) {};
\node[] at (0.55,0.9) {$\tau_k$};
\node[] at (0,1.36) {$\tau_j$};

\draw[dashed] (-0.5,0.3) to (0.5,1.5);

\draw[out = 10, in = 170, looseness = 0.8, xshift = 3\R] (0, 1.38) to (1,0.9);
\draw[out=140,in=-160,looseness=1.44, xshift = 3\R] (0,0.3) to (0,1.38);
\draw[out=150,in=-150,looseness=1.5, xshift = 3\R] (0,0.3) to (0,1.5);
\draw[xshift = 3\R](0,0.3) to (1,0.9);
\draw[xshift = 3\R] (0,1.5) to (1,0.9);

\draw[xshift = 3\R, in=0,out=45,looseness =1.25] (0,0.3) to (0,1.3);
\draw[xshift = 3\R, in=180,out=135,looseness=1.25] (0,0.3) to (0,1.3);

\draw[xshift = 3\R, fill=black] (0,0.3) circle [radius=1pt];
\draw[xshift = 3\R, fill=black] (0,1.5) circle [radius=1pt];
\draw[xshift = 3\R, fill = black] (1,0.9) circle [radius=1pt];
\draw[xshift = 3\R, thick] (0,1) node {$\mathbf{\times}$};

\node[xshift = 3\R] at (-0.26,1.4) {};

\draw[xshift = 3\R, dashed] (-0.5,0.3) to (0.5,1.5);

\end{tikzpicture}
\end{center}

Next, we want to show that, when we flip a pending arc $\tau_j$, all shear coordinates change according to generalized mutation rules. By set up, it is clear that the shear coordinates associated to that pending arc will flip signs. Recall other entries mutate by $\mu_j(b_{ik}) = b_{ik} + 2 b_{ij} b_{jk}$ if both~$b_{ij}$ and~$b_{jk}$ are positive, $\mu_j(b_{ik}) = b_{ik} - 2 b_{ij} b_{jk}$ if both~$b_{ij}$ and~$b_{jk}$ are negative, and no change otherwise. As before, $b_{jk}$ is positive if and only if $\tau_j$ is counterclockwise of $\tau_k$. For $i > n$, the entry $b_{ij}$ is positive if and only if the lamination $L_i$ intersects the side of the bigon around~$\tau_j$ that is clockwise of~$\tau_j$ as well as~$\tau_j$ itself. Thus, both entries are positive if $L_i$ intersects $\tau_k$ twice, both before and after intersecting~$\tau_j$. If~$\tau_k$ is a pending arc, since we draw this as a loop $L_i$ intersects $\tau_k$ four times, in two pairs. Moreover, the two intersections or pairs of intersections of~$L_i$ and~$\tau_k$ could either both contribute~$-1$, both contribute~0, or one of each contribution. We know that they cannot contribute $+1$ since $L_i$ intersects~$\tau_j$, which is counterclockwise of $\tau_k$. Then, when we flip $\tau_j$, we change the quadrilateral or bigon around~$\tau_k$, depending on whether~$\tau_k$ is standard or pending, which $L_i$ intersects. Thus, we will change the shear coordinate associated to~$\tau_k$ and $L_i$. Because of the two intersections or pairs of intersections, we will change by a multiple of two, as required in the generalized mutation rule.
Fig.~\ref{fig:LamFlipPending} illustrates one example of this situation. Notice that~$b_{6,1}$ changes from~0 to~2.
\end{proof}

\begin{Lemma}\label{phimapsys}
In the language of the previous section, $\phi_\gamma\big(\widetilde{Y_i}\big) = Y_i$.
\end{Lemma}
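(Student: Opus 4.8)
The claim is that the map $\phi_\gamma$ sends the $y$-monomial coefficients $\widetilde{Y_i}$ computed in the polygon $\widetilde{S_\gamma}$ to the corresponding coefficients $Y_i$ appearing in the (generalized) exchange relation in $\mathcal{A}$. Since $\phi_\gamma$ is already known to act on $y$-variables by $\phi_\gamma(y_{\sigma_j}) = y_{\pi(\sigma_j)}$, the whole content is that the \emph{shear coordinates} (equivalently, the $c$-vectors or $b$-matrix columns) recording how the elementary laminations $\widetilde{L_i}$ intersect the relevant arcs in $\widetilde{S_\gamma}$ match, under $\pi$, the shear coordinates of the pushforward laminations $L_{\pi(\sigma_j)}$ on $\mathcal{O}$. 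So my first step is to unwind the definition of $\widetilde{Y_i}$: each is a product of $y_{\sigma_k}$'s determined by which tiles lie below the symmetric-difference cycle of the matchings realizing that term, which in turn is governed by the shear coordinates $b_{\sigma_k}(\widetilde{T_\gamma}, \widetilde{L})$ of the elementary lamination on the polygon.

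\emph{Key steps in order.}
First I would recall, from the construction of $\widetilde{S_\gamma}$ in Section~\ref{sec:lift}, that the projection $\pi$ carries the local configuration of arcs around $\widetilde{\gamma}$ onto the local configuration around $\gamma$ in $\mathcal{O}$, matching the side (left/right of the curve) on which consecutive arcs share a vertex. Second, I would transport each elementary lamination: the lamination $\widetilde{L_i}$ associated to $\sigma_i$ projects under $\pi$ to (a curve isotopic to) the elementary lamination $L_{\pi(\sigma_i)}$ of $\pi(\sigma_i)$ on $\mathcal{O}$, because both are obtained by shifting the relevant endpoint clockwise and $\pi$ preserves the clockwise orientation near $\widetilde{\gamma}$. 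Third, I would verify that the intersection pattern is preserved crossing-by-crossing, so that $b_{\sigma_k}(\widetilde{T_\gamma},\widetilde{L_i}) = b_{\pi(\sigma_k)}(T, L_{\pi(\sigma_i)})$ for every standard arc; this gives $\phi_\gamma(\widetilde{Y_i}) = Y_i$ directly in the standard case. Fourth, and most importantly, I must handle the pending-arc crossings, where two consecutive arcs $\sigma_j,\sigma_{j+1}$ both project to the same pending arc $\rho$: here the single crossing of $\gamma$ with the bigon side in $\mathcal{O}$ corresponds to the doubled crossing in $\widetilde{S_\gamma}$, and I would invoke Proposition~\ref{prop:ShearCoordMutation} to see that the shear coordinate on $\mathcal{O}$ picks up the correct multiplicity-two contribution matching the product $y_{\sigma_j}\cdot y_{\sigma_{j+1}} \mapsto y_\rho^{2}$ (or the appropriate power after the $\lambda_p$ cancellation seen in \eqref{genmutate}).

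\emph{The main obstacle.}
The hard part will be the bookkeeping at the pending arc: I need to show that the way the elementary lamination meets the \emph{two} preimages $\sigma_j,\sigma_{j+1}$ in the polygon, together with the factor $\lambda_p$ absorbed into $\phi_\gamma(x_{\sigma_{[j]}}) = \lambda_p x_\rho$, reassembles correctly into the single shear coordinate of $L_i$ against $\rho$ on the orbifold — in particular that no spurious factor or sign is introduced and that the ``$0$ plus $+1$ equals net $+1$'' phenomenon described after the covering-space picture in Section~\ref{sec:generalizedLaminations} is exactly what makes the two polygon contributions collapse to the one orbifold contribution. I expect this to follow by comparing, for each of the three terms $\widetilde{Y_1},\widetilde{Y_0},\widetilde{Y_{-1}}$ in \eqref{genmutate}, the tiles enclosed by the matching symmetric difference in the pentagon against the height monomial $Y_i$ dictated by Proposition~\ref{prop:ShearCoordMutation}, but confirming the middle term $\widetilde{Y_0}$ (with its $\lambda_p$) matches will require the most care, since it is the term where the pending structure genuinely departs from the ordinary surface case.
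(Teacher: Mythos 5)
Your proposal is correct and takes essentially the same approach as the paper: both reduce the claim to comparing elementary-lamination (shear coordinate) contributions on $\mathcal{O}$ with those of their lifts in $\widetilde{S_\gamma}$, handling standard arcs by matching local configurations and then treating the pending-arc case as the crux. In particular, your key observation --- that a pending-arc lamination contributing $\pm 2$ to a shear coordinate in $\mathcal{O}$ corresponds to the two lifted laminations $L_j$, $L_{j+1}$ each contributing $\pm 1$, which collapse correctly because $\phi_\gamma(y_{\sigma_j}) = \phi_\gamma(y_{\sigma_{j+1}}) = y_\rho$ --- is exactly the mechanism of the paper's proof, which verifies the three coefficients in equation~\eqref{genmutate} via the $y$-weighted expansion in the pentagon of $\widetilde{S_\gamma}$.
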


\begin{proof}Recall the expressions $\phi_\gamma\big(\widetilde{Y_i}\big)$ from equation~\eqref{genmutate}.

First, let $\gamma$ be a pending arc. Then, by the Bigon Lemma, we have a bigon $\beta_1$, $\beta_2$ around $\gamma$ and the arc $\rho \in T$ at the same orbifold point, such that $e(\beta_i, T) < e(\gamma,T)$. We saw that the pre-image of this bigon in $\widetilde{S_\gamma}$ is a pentagon. We want to show that the laminations $L_{\tau_{i_k}}$ contribute the same shear coordinates in the bigon as their pre-images, $L_{\sigma_k}$, contribute in the pentagon in~$\widetilde{S_\gamma}$. However, since~$\rho$ is in the triangulation $T$, and accordingly its images~$\sigma_j$ and~$ \sigma_{j+1}$ in $\widetilde{S_\gamma}$ are in the triangulation $\widetilde{T_\gamma}$, the only elementary laminations that will contribute nontrivially to the relations in either case will be those associated to~$\rho$ in~$\mathcal{O}$, or $\sigma_j$, $\sigma_{j+1}$ in $\widetilde{S_\gamma}$.

In $\widetilde{S_\gamma}$, we have a pentagon with sides $\beta_{1,1}$, $\beta_{1,2}$, the two pre-images of $\beta_1 \in \mathcal{O}$, $\beta_{2,1}$, $\beta_{2,2}$, the two pre-images of $\beta_2$, and $\sigma_{[j]}$, the third arc in the triangle formed by~$\sigma_j$ and $\sigma_{j+1}$. This pentagon is triangulated by~$\sigma_j$ and $\sigma_{j+1}$, and the lift $\widetilde{\gamma}$ is the arc crossing both arcs in this triangulation. By using the skein relations with $y$-variables from~\cite{MW} in $\widetilde{S_\gamma}$ twice, on these two intersections, we have the expansion
\[ x_{\widetilde{\gamma}} = \frac{ y_{\sigma_j} y_{\sigma_{j+1}} x_{\beta_{1,1}}x_{\beta_{1,2}}x_{\sigma_j} + y_{\sigma_{j+1}} x_{\beta_{1,1}}x_{\beta_{2,2}}x_{\sigma_{[j]}} + x_{\beta_{2,1}}x_{\beta_{2,2}}x_{\sigma_{j+1}}}{x_{\sigma_j}x_{\sigma_{j+1}}},
\] and recalling that $\phi_\gamma(y_{\sigma_j}) = \phi_\gamma(y_{\sigma_{j+1}}) = y_\rho$, we see that our map~$\phi_\gamma$ maps the $y$-variables as we hoped.

\begin{center}
\begin{tikzpicture}[scale=2]
\node[circle, fill = black, scale = 0.1](P0) at (3.5,0.7){};
\node[](P1) at (4,-1){};
\node[circle, fill = black, scale = 0.1](P2) at (5,-1){};
\node[circle, fill = black, scale = 0.1](P3) at (6,-1){};
\node[](P6) at (2.8,0){$s_0$};
\node[](P4) at (6.2,0){$s_d$};
\node[] at (3.2,0) {$\cdots$};
\node[] at (5.8,0) {$\cdots$};
\node[](P5) at (5,0.7){$s_j$};
\draw (P1) -- node[below] {$\delta$} (P2) -- node[below]{} (P3) -- node[above]{} (P4) -- (P5)-- node[above]{} (P0) -- (P6) -- (P1);
\draw(P5) -- node[left] {$\sigma_j$} (P1);
\draw[dashed, blue] (5.2,0.7) -- (3.8,-1);
\draw (P5) -- node[right] {$\sigma_{j+1}$}(P2);
\draw[dashed, blue] (5.2,0.7) -- (4.8,-1.1);
\draw (P5) -- node[left] {}(P3);
\draw(P1) -- node[left] {} (P0);
\draw(P5) -- (P0);
\draw[dashed, red] (3.5,0) to node[above]{$\tilde{\gamma}$} (5.6,0);
\draw[thick](P6) to node[above]{$\beta_{1,1}$} (P5);
\draw[thick](P2) to node[below]{$\beta_{1,2}$} (P4);
\draw[thick](P6) to node[below]{$\beta_{2,1}$}(P1);
\draw[thick](P5) to node[above]{$\beta_{2,2}$}(P4);
\end{tikzpicture}
\end{center}

\looseness=-1 Next, let $\gamma$ be an standard arc. From \cite{MSW}, we know that elementary laminations from standard arcs have the same local configuration about $Q$, the quadrilateral corresponding to $\gamma$ and $T$ from the quadrilateral lemma, and $\widetilde{Q}$, the lift of $Q$ in $\widetilde{S_\gamma}$. We need to verify that the same is true for elementary laminations from pending arcs. Suppose that $\rho \in T$ is a pending arc with elementary lamination $L_\rho$, and $\sigma_j, \sigma_{j+1} \in \widetilde{T_\gamma}$ are the pre-images of $\rho$ with elementary laminations~$L_j$,~$L_{j+1}$. In Fig.~\ref{standardcrossespending}, on the left we show one example of intersections of~$\rho$ and~$L_\rho$ with~$Q$, the quadrilateral from applying the quadrilateral lemma to $\gamma$ and $T$. In this case, $b_\gamma(T, L_\rho) = 2$. On the right half we show first the intersections of $\sigma_j$ and $\sigma_{j+1}$ and then the intersections of $L_j$ and $L_{j+1}$ with $\widetilde{Q}$, the lift of $Q$ to $\widetilde{S_\gamma}$. Here, $b_{\widetilde{\gamma}}\big(\widetilde{T_\gamma}, L_j\big) = b_{\widetilde{\gamma}}\big(\widetilde{T_\gamma}, L_{j+1}\big) = 1$. If $y_\rho, y_j,$ and $ y_{j+1}$ are the $y$-variables associated to $L_\rho$, $L_j$ and $L_{j+1}$ respectively, then since $\phi_\gamma(y_j) = \phi_\gamma(y_{j+1}) = y_\rho$, we see that the contribution of laminations is consistent in~$\mathcal{O}$ and~$\widetilde{S_\gamma}$ in this case. The cases $b_\gamma(T, L_\rho) = -2$ and $b_\gamma(T, L_\rho) =0 $ are similar as, again, the local configurations around $\gamma$ and $\widetilde{\gamma}$ look the same.
\end{proof}

\begin{figure}\label{standardcrossespending}\centering
\begin{tikzpicture}[scale = 1.3]
\draw (-1,0) -- (0,1) -- (1,0) -- (0, -1) -- (-1,0);
\draw (0,1) -- (0,-1);
\node[] at (0.8,0.8) {$\times$};
\draw(-0.7, -0.7) to [out = 75, in = 120] (0.9, 0.9);
\draw(-0.7, -0.7) to [out = 30, in = 300] (0.9, 0.9);
\draw (2, 0) -- (3,1) -- (4, 0) -- (3, -1) -- (2,0);
\draw(3,1) -- (3,-1);
\node[] at (3.8,0.8) {$\times$};
\draw[dashed](2.3,-0.7) to [out = 75, in = 120] (3.9, 0.9);
\draw[dashed](2.3, -0.7) to [out = 30, in = 300] (3.9, 0.9);
\draw (5,0) -- (6,1) -- (7,0) -- (6, -1) -- (5,0);
\draw (6,1) -- (6,-1);
\draw(5.3, -0.7) to (7,0.7);
\draw(5.3, -0.7) to (6.7, 1);
\draw (8, 0) -- (9,1) -- (10, 0) -- (9, -1) -- (8,0);
\draw(9,1) -- (9,-1);
\draw[dashed](8.3, -0.7) to (10,0.7);
\draw[dashed] (8.3, -0.7) to (9.7, 1);
\end{tikzpicture}
\caption{From left to right: A standard arc and a pending arc crossing, the elementary lamination from a pending arc and a standard arc crossing, and the lifts of these two scenarios to $\widetilde{S_\gamma}$.}
\end{figure}
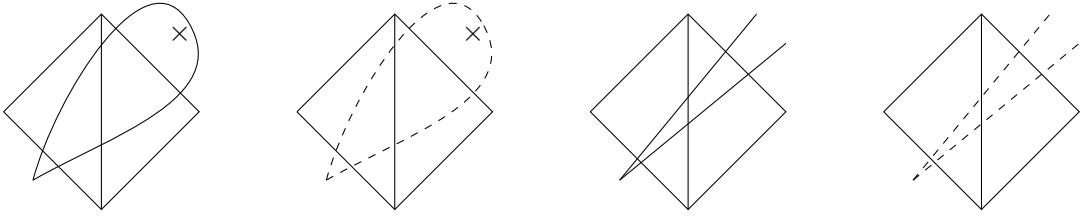

\section{Proof of cluster expansion formula}\label{sec:ClusterExpansionPf}

With the proof of Lemma \ref{phimapsys}, we are ready to complete our proof of Theorem~\ref{Thm:GenSnake+}.

\begin{proof}\looseness=-1 In the statement of Theorem~\ref{Thm:GenSnake+}, we have a fixed orbifold $\mathcal{O} = (S,M,Q)$ with triangulation $T = \{\tau_1,\dots,\tau_n,\tau_{n+1},\dots,\tau_{n+c} \}$ where $\tau_1,\dots,\tau_n$ are internal arcs and $\tau_{n+1},\dots,\tau_{n+c}$ are boundary arcs. This determines the corresponding generalized cluster algebra $\mathcal{A}$ with principal coefficients with respect to the initial generalized seed $\Sigma_T = (\mathbf{x}_T,\mathbf{y}_T,B_T,\mathbf{z})$. For a given arc~$\gamma$ on~$\mathcal{O}$, we defined the lifted triangulated polygon $\widetilde{S_{\gamma}}$, the lifted arc $\widetilde{\gamma}$, and lifted triangulation $\widetilde{T} = \{\sigma_1, \dots, \sigma_d, \sigma_{d+1}, \dots, \sigma_{2d+3} \}$ where $\sigma_1,\dots,\sigma_d$ are internal arcs and $\sigma_{d+1},\dots,\sigma_{2d+3}$ are boundary arcs. The lift $\widetilde{S_\gamma}$ has an associated type $A_d$ ordinary cluster algebra, $\widetilde{A_\gamma}$, where $d = e(\gamma,T)$. We then defined a projection map $\pi\colon \{ \sigma_1,\dots,\sigma_{2d+3} \} \rightarrow \{ \tau_1,\dots,\tau_{n+c} \}$, which in turn allowed us to define an algebra homomorphism $\phi_\gamma\colon \widetilde{A_\gamma} \rightarrow {\rm Frac}(\mathcal{A})$; in general, $\phi_\gamma$~acts by $\phi_\gamma(x_{\sigma_j}) = x_{\pi(\sigma_{j})}$ and $\phi_\gamma(y_{\sigma_j}) = y_{\pi(\sigma_{j})}$ for all $\sigma_j \in \{\sigma_1, \dots, \sigma_{2d+3} \}$. We noted that when~$\gamma$ crosses one or multiple pending arc(s), $\phi_\gamma$ will map some variables associated to boundary arcs in~$\widetilde{S_\gamma}$ to constant multiples of the variables associated to these pending arcs in~$\mathcal{O}$. These multiples are determined by the orders of orbifold points.
Further, we proved in Proposition~\ref{Proposition5} that $\phi_\gamma(x_{\widetilde{\gamma}}) = x_{\gamma}$.

Because $\widetilde{A_\gamma}$ is a type $A_d$ ordinary cluster algebra, we know from the work of Musiker, Schiffler, and Williams~\cite{MSW} that we can build a snake graph $G_{\widetilde{T},\widetilde{\gamma}}$ which has the cluster expansion for~$x_{\widetilde{\gamma}}$ as the generating function for its perfect matchings. This cluster expansion for $x_{\widetilde{\gamma}}$ is in terms of the variables $x_{\sigma_1}, \dots, x_{\sigma_{2d+3}}$ and $y_{\sigma_1},\dots, y_{\sigma_{d}}$. Hence, computing the cluster expansion for~$x_\gamma$ in~$\Sigma_T$ is equivalent to specializing the variables in the generating function for perfect matchings of~$G_{\widetilde{T},\widetilde{\gamma}}$ using the homomorphism $\phi_\gamma$.

By construction, the unlabeled graphs for $G_{\widetilde{T},\widetilde{\gamma}}$ and $G_{T,\gamma}$ are identical. Because $\phi_\gamma(x_{\sigma_j}) = x_{\pi(\sigma_j)}$,
applying $\phi_\gamma$ sends most edges labeled $\sigma_j$ in $G_{\widetilde{T},\widetilde{\gamma}}$ to edges labeled $\pi(\sigma_j)$ in $G_{T,\gamma}$. Similarly, diagonals labeled $y_{\sigma_j}$ are sent to diagonals labeled $y_{\pi(\sigma_j)}$. Hence, applying $\phi_\gamma$ to the generating function for perfect matchings of $G_{\widetilde{T},\widetilde{\gamma}}$ yields the formula in the theorem statement, which is itself the generating function for perfect matchings of $G_{T,\gamma}$, as desired.
 \end{proof}

Now we have an expansion formula for arcs without self-intersections in an unpunctured orbifold $\mathcal{O}$. These correspond to cluster variables in the associated generalized cluster algebra~$\mathcal{A}$. Arcs with self-intersections, i.e., generalized arcs~-- and closed curves do not correspond to cluster variables as they can never appear in a triangulation of $\mathcal{O}$. However, we can still use the rules in Section~\ref{Sec:Construction} to construct snake graphs from these arcs and curves. By applying the expansion formula to these snake graphs, we associate an element of~$\mathcal{A}$ to each generalized arc and closed curve. In the following sections, we will show that this association has desirable properties.

In order to study these arcs and curves, we will associate each with a product of $2\times 2$ matrices, based on breaking up the path of the arc/curve into a sequence of ``elementary steps''. We can use another set of $2\times 2$ matrices to help us compute weighted perfect matchings of graphs. We will show that these two sets of matrices are related. With these connections between arcs/curves, graphs, and matrices, we will be able to investigate properties of one object by studying another. In particular, we will use our matrix formulation to show that our expansion formula for generalized arcs and closed curves respects the skein relations. This work follows closely the work of Musiker--Williams~\cite{MW}, which does these same calculations in the case of a~cluster algebra from a~surface.

\section{Universal snake graph}\label{sec:universalSnakeGraph}

In \cite{MW}, Musiker and Williams compared their snake graph formulas to formulas arising from multiplying together strings of $2 \times 2$ matrices. These $2 \times 2$ matrices came in two types, depending on whether the matrix corresponds to adding a tile to the east or north of a snake graph. We simplify the calculations and arguments of~\cite{MW} by using \emph{universal tiles} to build \emph{universal snake graphs}. Accordingly, we use only one type of $2 \times 2$ matrix which includes both types in~\cite{MW} as specializations. We will similarly see that the universal snake graph is made up of a combination of the pieces used to build standard snake graphs.

For any positive integer $n$, the $n$-tile universal snake graph, $UG_n$, encodes information about the perfect matchings of all $n$ tile ordinary snake graphs, as well as those with extra diagonals that we encounter in the orbifold setting. We will make this statement make more precise. Below is the universal snake graph with~4 tiles~$UG_4$. The horizontal edges are labeled with $a_j$ and the long diagonal edges, which are solid, are labeled with~$b_j$. The dashed lines, labeled~$i_j$, serve as labels for the individual tiles and cannot be used in a perfect mat\-ching.

\begin{center}
\begin{tikzpicture}[scale = 1.85]
\draw[thick] (0,0) to node[left]{$b$} (0,1) to node[left]{$\ell_1$} (0,2) to node[left]{$\ell_2$} (0,3)to node[left]{$\ell_3$} (0,4) to node[above]{$z'$} (1,4) to node[right]{$w'$} (1,3) to node[right]{$r_3$} (1,2) to node[right]{$r_2$} (1,1) to node[right]{$r_1$} (1,0) to node[below]{$a$} (0,0);
\draw[thick, blue] (0,1) to node[right, below, xshift = 15pt]{$a_1$} (1,1);
\draw[thick, blue] (0,2) to node[right, below, xshift = 15pt]{$a_2$} (1,2);
\draw[thick, blue] (0,3) to node[right, below, xshift = 15pt]{$a_3$} (1,3);
\draw[thick, green] (1,0) to node[above, xshift = -15pt, yshift = 10pt]{$b_1$} (0,2);
\draw[thick, green] (1,1) to node[above, xshift = -15pt, yshift = 10pt]{$b_2$} (0,3);
\draw[thick, green] (1,2) to node[above, xshift = -15pt, yshift = 10pt]{$b_3$} (0,4);
\draw[dashed, gray] (0,1) to node[]{$i_1$} (1,0);
\draw[dashed, gray] (0,2) to node[]{$i_2$} (1,1);
\draw[dashed, gray] (0,3) to node[]{$i_3$} (1,2);
\draw[dashed, gray] (0,4) to node[]{$i_4$} (1,3);
\end{tikzpicture}
\end{center}

Note that we can glue $a$ or $b$ to $w'$ or $z'$ to obtain a \emph{universal band graph}. Good matchings of universal band graphs are defined analogously to good matchings of standard band graphs.

If $n$ is even, let $w' = w$ and $z' = z$. Otherwise, $w' = z$ and $z' = w$. As a heuristic, we label the last tile so that the matching of all boundary edges that uses edge $a$ must also include $w$. We call this the \emph{minimal matching} to be consistent with the standard snake graph case. The other matching consisting of only boundary edges will include edges $b$ and $z$, and we call this the \emph{maximal matching}.

We note that we can recover any snake graph we are interested in, as well as others, from the universal snake graph of the appropriate size.
\begin{itemize}\itemsep=0pt
\item Specializing $a_j = 0$ or $b_j = 0$ at each $j$ will recover an ordinary snake graph. Based on the correspondence between snake graphs and sign sequences noted in \cite{Canakci-Schiffler}, we know that there are $2^{n-1}$ snake graphs with $n$ tiles. This is also the number of ways to choose whether $a_j = 0$ or $b_j = 0$ for $j = 1, \ldots, n-1$.
\item If we do not set $a_j = 0$ or $b_j = 0$ at some $j$, but $a_{j-1}b_{j-1} = 0$ and $a_{j+1}b_{j+1} = 0$, we recover a hexagonal tile as in Section \ref{Sec:Construction}.
\item We do not have a geometric interpretation of a graph where $a_jb_j \neq 0$ and $a_{j+1}b_{j+1} \neq 0$, or a graph where $a_j = b_j = 0$.
\end{itemize}

\begin{Remark}
We can think about the universal snake graph $UG_n$ as constructed of two initial triangles and $n-1$ parallelograms with crossing diagonals,
\begin{center}
\begin{tikzpicture}[scale = 1.85]
\draw (1,0) to node[below]{$a$} (0,0) to node[left]{$b$}(0,1);
\draw[dashed, gray] (1,0) to node[above]{$i_1$} (0,1);
\draw[xshift = 4\R, dashed] (1,0) to node[below]{$i_j$} (0,1);
\draw[xshift = 4\R, dashed] (1,1) to node[above]{$i_{j+1}$} (0,2);
\draw[xshift = 4\R] (0,1) to node[left]{$\ell_j$} (0,2) to node[left, yshift = 20pt, xshift = -6pt]{$b_j$} (1,0) to node[right]{$r_j$} (1,1) to node[below, xshift = 20pt]{$a_j$} (0,1);
\draw[xshift = 8\R] (0,2) to node[above]{$z'$} (1,2) to node[right]{$w'$} (1,1);
\draw[xshift = 8\R, dashed] (1,1) to node[below]{$i_n$} (0,2);
\end{tikzpicture}
\end{center}

These parallelograms are essentially a superposition of the north-pointing and east-pointing parallelograms in \cite{MW}. If $b_j = 0$, the parallelogram is genuinely north-facing, and if $a_j = 0$, it is east-facing.
\end{Remark}

\looseness=-1 We also verify some simple properties about this graph and its perfect matchings. First, we explain how to extend the definition of a \emph{twist} to the more complicated tiles in $UG_n$. As in the case of ordinary snake graphs (Theorem~\ref{Thm:PMPoset}), twisting induces a poset structure on the set of perfect matchings of $UG_n$. In Lemma~\ref{PMUniversalSnake}, we see that this poset structure has a simple description.

\begin{figure}[t] \centering
 \includegraphics[scale=0.93]{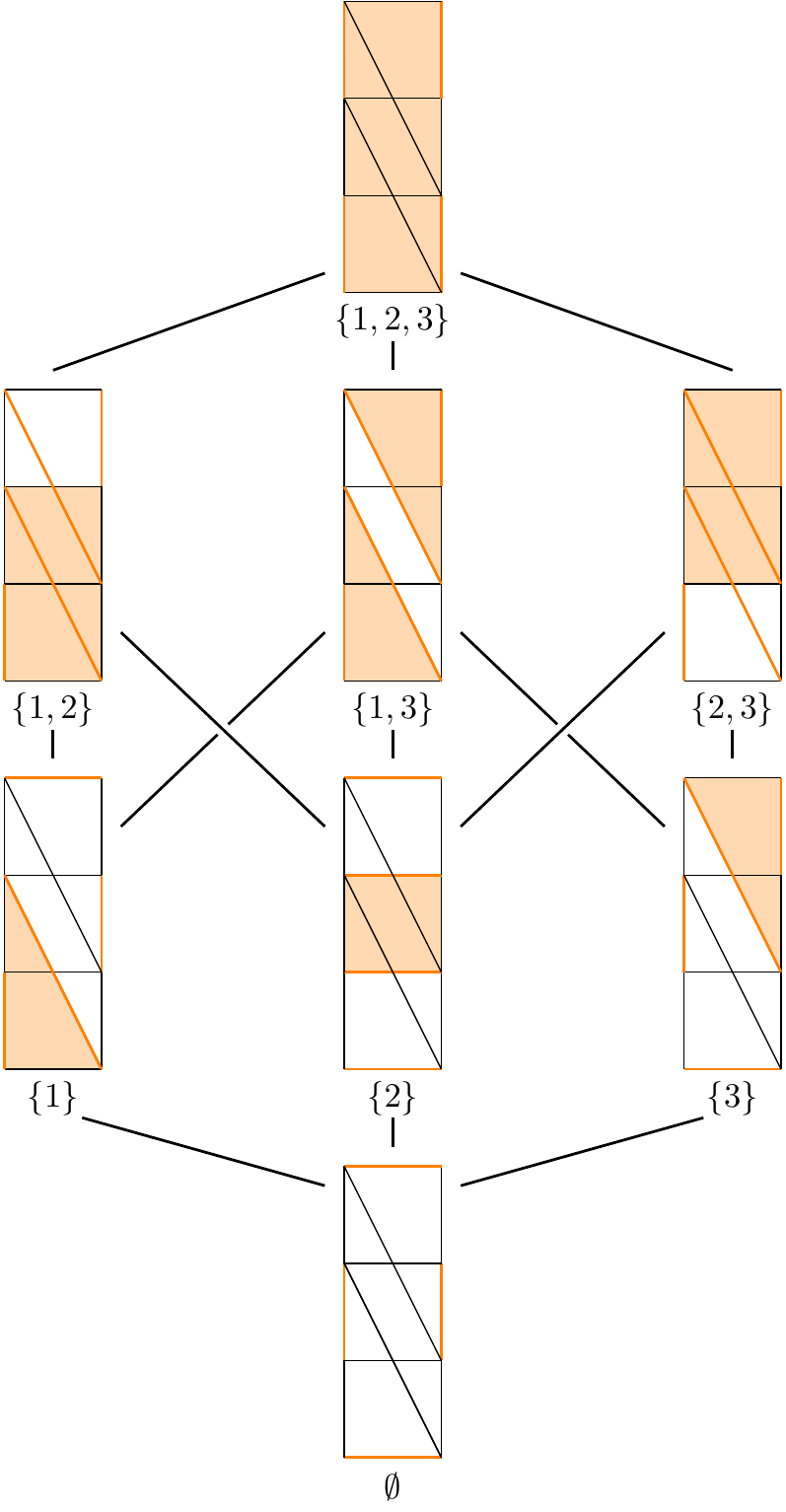}
 \caption{A Hasse diagram showing the poset of perfect matchings of $UG_3$, ranked by height monomial. For each perfect matching, the enclosed tiles are shaded in orange.}
\end{figure}

If a perfect matching uses edges $\ell_{j-1}$ (set $\ell_0 = b$) and $r_{j}$ (set $r_n = w'$) for $1 \leq j \leq n$, we twisting at tile $j$ is accomplished by replacing those edges with the edges $a_{j-1}$ (set $a_0 = a$) and $a_{j}$ (set $a_{n} = z'$). This twist results in another valid perfect matching of $UG_n$. If a perfect matching instead uses edges $\ell_{j}$ (set $\ell_n = z'$) and $r_{j-1}$ (set $r_0 = a$), for $1 \leq j \leq n$, then twisting at tile $j$ is accomplished by replacing those edges with the edges $b_{j-1}$ (set $b_0 = b$) and $b_j$ (set $b_n = w'$). Both types of local move are referred to as a twist at tile $j$.

The poset of perfect matchings of $UG_n$ has some of the same basic properties as the ordinary case described in Section~\ref{section:snakeGraphs} -- that is, the covering relation is given by a twist at single tiles, and the poset rank function is given by the degree of the associated height monomials. As before, the height monomial for a given perfect matching $P$ can be determined by viewing the labels of tiles enclosed by cycles in the symmetric difference $P \ominus P_-$. Note that we consider a tile to be ``enclosed'' by a cycle if the dashed line marking the tile is inside the cycle.

\begin{Lemma}\label{PMUniversalSnake}\quad
 \begin{enumerate}\itemsep=0pt
\item[$1.$] $UG_n$ has $2^n$ perfect matchings.
\item[$2.$] The poset of perfect matchings of $UG_n$ is isomorphic to the poset of subsets of $\{1,\ldots, n\}$ ordered by inclusion, $B_n$. This isomorphism sends a subset $\{i_1,\ldots, i_k\}$ to the matching with weight $y_{i_1}\cdots y_{i_k}$.
\end{enumerate}
\end{Lemma}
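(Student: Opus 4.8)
The plan is to prove both statements at once by setting up an explicit bijection between the perfect matchings of $UG_n$ and the subsets of $\{1,\dots,n\}$, given by $P \mapsto S(P)$, where $S(P)$ is the set of tiles enclosed by the cycles of $P \ominus P_-$. Since each tile of $UG_n$ carries its own distinct diagonal label, a given tile is either enclosed once or not at all, so the height monomial of $P$ is automatically squarefree and equals $y(P) = \prod_{j \in S(P)} y_j$; thus $S(P)$ is exactly the data recorded by the height monomial, and the weight $y_{i_1}\cdots y_{i_k}$ in the statement is just $y(P)$ for $S(P) = \{i_1,\dots,i_k\}$. Establishing that $P \mapsto S(P)$ is a bijection onto the subsets of $\{1,\dots,n\}$ immediately yields statement~1 (there are $2^n$ subsets) and the underlying set map of statement~2.

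For injectivity I would note that $P \mapsto P \ominus P_-$ is injective because $P_-$ is fixed, and that in the width-one strip $UG_n$ the cycles of $P \ominus P_-$ are precisely the boundary cycles of the connected components of the enclosed region $\bigcup_{j \in S(P)} (\text{tile } j)$; hence $P \ominus P_-$, and therefore $P$, is determined by $S(P)$. For surjectivity I would construct, for each $S$, a matching $P_S$ realizing it: decompose $S$ into its maximal runs of consecutive tiles $[p,q]$, over each run use the left-rail edge $\ell_{p-1}$, the long diagonals $b_p, b_{p+1}, \dots, b_{q-1}$, and the right-rail edge $r_q$ to form a single cycle enclosing exactly the tiles of that run, and cover every remaining vertex (all heights outside the runs) by horizontal edges $a_j$. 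A direct check shows $P_S$ is a perfect matching enclosing exactly $S$. The essential point is that the diagonal edges $b_j$ are what allow a block of consecutive tiles to be enclosed by one cycle, so that \emph{every} subset is attainable; without them one would reach only the order ideals of a nontrivial poset, as in the ordinary snake-graph case.

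To upgrade the set bijection to a poset isomorphism I would analyze the two types of twist at a tile $j$. A Type~I twist exchanges $\{\ell_{j-1}, r_j\}$ with $\{a_{j-1}, a_j\}$ and a Type~II twist exchanges $\{\ell_j, r_{j-1}\}$ with $\{b_{j-1}, b_j\}$; in either case the swap is supported on edges local to tile $j$, so it leaves the rest of the matching, and hence the enclosure status of every tile other than $j$, unchanged, while toggling whether $j \in S(P)$. Type~I applies when tile $j$ meets the boundary of the enclosed region as an isolated square, Type~II when it meets the region through the diagonals, e.g.\ when splitting or merging a run, as in passing between $S=\{1,2,3\}$ and $S=\{1,3\}$. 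Consequently the covering relations of the twist poset are exactly $S \lessdot S \cup \{j\}$ for $j \notin S$, which is the Hasse diagram of $B_n$; together with $P_- \leftrightarrow \varnothing$ as minimum and the height-monomial degree $|S|$ as rank, this shows the matching poset is isomorphic to $B_n$ via $P \mapsto S(P)$.

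I expect the main obstacle to be the twist analysis of the last paragraph: one must check that at \emph{every} matching and \emph{every} tile exactly one admissible twist is available and that it alters only that tile's enclosure, which requires enumerating how a tile can meet the enclosed region (through its square edges versus through the two diagonals) and confirming that the merging and splitting of runs are each realized by a single Type~II move. The surjective construction and the uniqueness argument are comparatively routine once one observes that the enclosed region is always a disjoint union of vertical rectangles whose boundary cycles are forced.
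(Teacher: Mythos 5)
Your route (an explicit bijection $P \mapsto S(P)$ followed by a twist analysis) is genuinely different from the paper's, but two of its steps rest on a picture of $UG_n$ that is concretely false. Because the long diagonals $b_j$ span two tiles, the cycles of $P \ominus P_-$ are \emph{not} boundary cycles of unions of tiles, and the enclosed region is not a disjoint union of vertical rectangles. Indeed, in $UG_3$ the square boundary $\{a, r_1, a_1, b\}$ of tile $1$ can never be a component of $P \ominus P_-$: such a component must alternate between edges of $P$ and of $P_-$, but $P_- = \{a, \ell_1, r_2, z'\}$ contains only one edge of that square. This breaks your injectivity argument, and it makes your surjectivity construction produce the wrong matchings. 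Concretely, your matching for $S = \{1\}$ is $P_S = \{b, r_1, a_2, z'\}$, but $P_S \ominus P_-$ is the hexagon $\{a, r_1, r_2, a_2, \ell_1, b\}$, which contains both dashed diagonals $i_1$ and $i_2$, so this matching has height monomial $y_1 y_2$ and corresponds to $\{1,2\}$, not $\{1\}$. The matching corresponding to $\{1\}$ is $\{b, b_1, r_2, z'\}$, whose cycle $\{b, \ell_1, b_1, a\}$ uses the long diagonal $b_1$ to cut tile $2$ out --- exactly opposite to your rule, which reserves diagonals for runs of length at least two. (For $n=4$ the failure is starker: your $P_{\{2\}} = \{a, \ell_1, r_2, a_3, z'\}$ encloses $\{4\}$.) Which of $a_j$ or $b_j$ closes off a run at height $j$ depends on the parity of $j$, i.e.\ on whether $P_-$ uses $\ell_j$ or $r_j$ there; any corrected construction must track this.

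The obstacle you flagged as the main one is real and is fatal to your last step: it is false that every tile $j \notin S(P)$ admits a twist at $P$. Take $P = \{b, b_1, r_2, z'\}$, the matching with $S(P) = \{1\}$ in $UG_3$. A Type I twist at tile $2$ requires both of $\ell_1, r_2$ or both of $a_1, a_2$; a Type II twist requires both of $\ell_2, r_1$ or both of $b_1, b_2$; and $P$ contains exactly one edge from each of these four pairs, so no twist at tile $2$ exists. Hence the cover from $\{1\}$ up to $\{1,2\}$ (and symmetrically from $\{3\}$ up to $\{2,3\}$) is not realized by any twist, and your identification of the covering relations with all relations $S \subset S \cup \{j\}$ collapses; the order has to be accessed through the height monomials or a recursive structure, not tile-by-tile twist availability. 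This is what the paper does: it inducts on $n$, partitioning the matchings of $UG_k$ according to whether they contain $w$ or $z$ (every matching contains exactly one, since these are the only edges at their common corner vertex). Deleting the edges excluded by either choice leaves a graph isomorphic to $UG_{k-1}$, so each class is identified with the matchings of $UG_{k-1}$, the $w$-class giving the subsets omitting $k$ and the $z$-class (whose minimal element is $P_-$ twisted at tile $k$) giving the subsets containing $k$; this yields the count $2^n$ and the Boolean description simultaneously, without ever needing a twist at every tile of every matching. If you want to salvage a non-inductive argument, the bijection half can be repaired with a parity-corrected construction, but the poset half needs an argument of the paper's recursive type.
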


\begin{proof}Part 1 is implied by part~2, so we only prove part 2. This is done by induction. It is clear that the claim holds for $UG_1$, as this snake graph is a single tile with only a minimal and maximal matching. The maximal matching covers the minimal matching in the corresponding poset.

Now, suppose our claim holds for $UG_{k-1}$, and consider the poset of perfect matchings of~$UG_k$. This contains a subposet of all matchings using the edge $w$; the minimal matching is in this subposet. Such matchings cannot use edges $z$ and either cannot use $\ell_{k-1}$ or $r_{k-1}$, depending on the parity of $k$. If we remove $z$, $w$ and either $r_{k-1}$ or $\ell_{k-1}$ from~$UG_k$, we have a graph isomorphic to $UG_{k-1}$; hence, the subposet of matchings using $w$ is isomorphic to the poset of perfect matchings of $UG_{k-1}$.

The remaining elements of $UG_{k}$ necessarily use~$z$. The minimal element of this subposet is the perfect matching obtained by twisting the minimal matching at tile $i_k$. For the same reasons as for the matchings using~$w$, this subposet is isomorphic to the poset of perfect matchings of~$UG_{k-1}$.

\looseness=2 Since the poset corresponding to perfect matchings of $UG_{k-1}$ is isomorphic to~$B_{k-1}$, and~$UG_k$ consists of exactly two disjoint subposets isomorphic to $UG_{k-1}$ in the way described, we have that $UG_k$ is isomorphic to $B_k$. Following our same induction, we can show the second statement of part~2; the subposet of matchings using $w$ corresponds to subsets of $\{1,\ldots, k\}$ which do not include $k$ while the subposet of matchings using~$z$ corresponds to subsets which do include~$k$.
\end{proof}

Along with the $y_i$ variables from the poset structure of perfect matchings on $UG_n$, for each edge, $\eta$ in the graph, we associate a formal variable $x_\eta$. Of course, when these graphs come from a surface, these variables will be cluster variables. We associate a product of matrices to $UG_n$ for each $n$. These products will encode all weighted perfect matchings of~$UG_n$. Let~$MG_1 = \left[\begin{smallmatrix} 1 &0 \\ 0 & 1 \end{smallmatrix}\right]$. Then, for $n \geq 2$,
\[
MG_n := m_{n-1}\cdots m_1 = \cdots \begin{bmatrix} \dfrac{x_{\ell_3}}{x_{i_3}} & y_3 x_{b_3} \vspace{1mm}\\ \dfrac{x_{a_3}}{x_{i_3}x_{i_4}} & y_3 \dfrac{x_{r_3}}{x_{i_4}} \end{bmatrix} \begin{bmatrix}\dfrac{x_{r_2}}{x_{i_2}} & y_2 x_{a_2} \vspace{1mm}\\ \dfrac{x_{b_2}}{x_{i_2}x_{i_3}} & y_2 \dfrac{x_{\ell_2}}{x_{i_3}} \end{bmatrix} \begin{bmatrix} \dfrac{x_{\ell_1}}{x_{i_1}} & y_1 x_{b_1} \vspace{1mm}\\ \dfrac{x_{a_1}}{x_{i_1}x_{i_2}} & y_1 \dfrac{x_{r_1}}{x_{i_2}} \end{bmatrix},
\]
where the last terms depend on the parity of $n$. Explicitly,
\begin{gather*}
 m_j := \begin{cases}
 \begin{bmatrix} \dfrac{x_{\ell_j}}{x_{i_j}} & y_jx_{b_j} \vspace{1mm}\\ \dfrac{x_{a_j}}{x_{i_j}x_{i_{j+1}}} & y_j\dfrac{x_{r_j}}{x_{i_{j+1}}} \end{bmatrix} & \text{for odd } j, \vspace{2mm} \\
\begin{bmatrix} \dfrac{x_{r_j}}{x_{i_j}} & y_jx_{a_j} \vspace{1mm}\\ \dfrac{x_{b_j}}{x_{i_j}x_{i_{j+1}}} & y_j\dfrac{x_{\ell_j}}{x_{i_{j+1}}} \end{bmatrix} & \text{for even }j.
 \end{cases}
\end{gather*}

We show that the graphs $UG_n$ and the matrices $MG_n$ satisfy the same relationship as Proposition~5.5 of~\cite{MW}.

\begin{Proposition}\label{prop:MatchingMatrices}
The matrix $MG_n$ is given by $MG_n = \left[\begin{smallmatrix} A_n & B_n \\ C_n & D_n \end{smallmatrix}\right]$ where
\begin{alignat*}{3}
& A_n = \frac{\sum_{P \in S_A} x(P) y(P)}{(x_{i_1}\cdots x_{i_{n-1}}) x_ax_w}, \qquad && B_n = \frac{\sum_{P \in S_B} x(P) y(P)}{(x_{i_2}\cdots x_{i_{n-1}}) x_bx_w},&
\vspace{1mm}\\
& C_n = \frac{\sum_{P \in S_C} x(P) y(P)}{(x_{i_1}\cdots x_{i_{n}}) x_ax_z y_{i_n}}, \qquad && D_n = \frac{\sum_{P \in S_D} x(P) y(P)}{(x_{i_2}\cdots x_{i_{n}}) x_bx_z y_{i_n}}, &
\end{alignat*}
where $S_A$ is the set of matchings using $a$ and $w$ $($this includes the minimal matching$)$, $S_B$ is the set of matchings using~$b$ and~$w$, $S_C$ is the set of matchings using $a$ and $z$, and $S_D$ is the set of matchings using $b$ and $z$ $($this includes the maximal matching$)$.
\end{Proposition}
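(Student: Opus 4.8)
The plan is to prove Proposition~\ref{prop:MatchingMatrices} by induction on $n$, following the structure of Proposition~5.5 of~\cite{MW}. The base case (small $n$, e.g.\ $n=1$ with $MG_1$ the identity) is checked directly: $UG_1$ has only its minimal and maximal matchings, and one verifies that the normalizations in the statement record exactly $A_1=D_1=1$, $B_1=C_1=0$. For the inductive step I would exploit the factorization $MG_n = m_{n-1}\,MG_{n-1}$, so that left multiplication by $m_{n-1}$ corresponds geometrically to attaching the last parallelogram: this creates the new boundary edges $w',z'$ of $UG_n$ and converts the former end edges of $UG_{n-1}$ into the interior edges $\ell_{n-1}$ and $r_{n-1}$.

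Writing $MG_{n-1} = \left[\begin{smallmatrix} A_{n-1} & B_{n-1} \\ C_{n-1} & D_{n-1} \end{smallmatrix}\right]$ and expanding $m_{n-1}MG_{n-1}$ expresses each entry of $MG_n$ as an explicit linear combination of $A_{n-1},B_{n-1},C_{n-1},D_{n-1}$ with coefficients read off from $m_{n-1}$, in two cases according to the parity of $n-1$. For instance, when $n-1$ is odd one gets $A_n = \frac{x_{\ell_{n-1}}}{x_{i_{n-1}}}A_{n-1} + y_{n-1}x_{b_{n-1}}C_{n-1}$, and similarly for $B_n,C_n,D_n$. The combinatorial half of the argument is to establish these \emph{same} four recursions for the weighted sums $\sum_{P\in S_\bullet}x(P)y(P)$ over the matching classes of $UG_n$, divided by the denominators in the statement; then the inductive hypothesis forces the matrix entries and the combinatorial quantities to coincide.

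To set up those recursions I would classify each perfect matching $Q$ of $UG_n$ by how it covers the two vertices introduced with the last parallelogram, which forces a choice among the local edges $\{\ell_{n-1},r_{n-1},a_{n-1},b_{n-1},w',z'\}$; removing these recovers a perfect matching of $UG_{n-1}$ in a determined class among $S_A,S_B,S_C,S_D$. This produces a weight-preserving bijection between matchings of $UG_n$ using a prescribed pair of boundary edges and pairs consisting of a matching of $UG_{n-1}$ together with a compatible completion of the new tile, where the completion contributes exactly the relevant entry of $m_{n-1}$. The crossing-monomial denominators then have to telescope correctly: the new $x_{i_{n-1}}$, $x_{i_n}$, and $y_{i_n}$ factors must appear precisely as in $m_{n-1}$, and the former end edge $x_w$ or $x_z$ of $UG_{n-1}$ must be reinterpreted as $x_{\ell_{n-1}}$ or $x_{r_{n-1}}$.

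The main obstacle is the bookkeeping in this bijection rather than any conceptual difficulty: one must handle the two parity cases (which swap the roles of $w'$/$z'$ and of the $\ell$/$r$ and $a$/$b$ edges), verify that any matching using a diagonal edge $b_{n-1}$ necessarily encloses the new tile and so carries the extra height factor $y_{n-1}$, and confirm that matchings ending at $z$ rather than $w$ acquire the $y_{i_n}$ normalization visible in the denominators of $C_n$ and $D_n$. Once these local compatibilities are checked case by case, the equality of the matrix recursion with the combinatorial recursion, together with the inductive hypothesis, yields the claimed closed forms for $A_n,B_n,C_n,D_n$.
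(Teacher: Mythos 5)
Your proposal is correct and takes essentially the same route as the paper's own proof: induction on $n$ using the factorization $MG_n = m_{n-1}\,MG_{n-1}$, matched against a combinatorial recursion obtained by classifying perfect matchings of $UG_n$ according to which edges of the last parallelogram they use (reducing to the classes $S_A',\dots,S_D'$ of the embedded $UG_{n-1}$), with the same height bookkeeping -- the factor $y_{n-1}$ for matchings through the long diagonal $b_{n-1}$ and the $y_{i_n}$ normalization for matchings using $z$. One small labeling slip worth noting: in the paper's conventions the former end edges of $UG_{n-1}$ are relabeled $a_{n-1}$ and $r_{n-1}$ in $UG_n$ (not $\ell_{n-1}$ and $r_{n-1}$), but since your stated recursion $A_n = \frac{x_{\ell_{n-1}}}{x_{i_{n-1}}}A_{n-1} + y_{n-1}x_{b_{n-1}}C_{n-1}$ agrees with the paper and you explicitly defer this parity bookkeeping to case-checking, the argument is unaffected.
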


\begin{proof}\looseness=-1 The proof proceeds by induction. The statement clearly holds for $n = 1$ or $n = 2$. Now, suppose it holds for $n-1$, and consider the graph~$UG_n$. Suppose that~$n$ is even. Then, we have that
\begin{gather*}
MG_n = \begin{bmatrix}\dfrac{x_{\ell_{n-1}}}{x_{i_{n-1}}} & y_{n-1} x_{b_{n-1}} \vspace{2mm}\\ \dfrac{x_{a_{n-1}}}{x_{i_{n-1}}x_{i_n}} & y_{n-1} \dfrac{x_{r_{n-1}}}{x_{i_n}} \end{bmatrix} \begin{bmatrix} \dfrac{x_{r_{n-2}}}{x_{i_{n-2}}} & y_{n-2} x_{a_{n-2}} \vspace{2mm}\\ \dfrac{x_{b_{n-2}}}{x_{i_{n-2}}x_{i_{n-1}}} & y_{n-2} \dfrac{x_{\ell_{n-2}}}{x_{i_{n-1}}} \end{bmatrix} \cdots\\
\hphantom{MG_n=}{}\times
 \begin{bmatrix}\dfrac{x_{r_2}}{x_{i_2}} & y_2 x_{a_2} \vspace{2mm}\\ \dfrac{x_{b_2}}{x_{i_2}x_{i_3}} & y_2 \dfrac{x_{\ell_2}}{x_{i_3}} \end{bmatrix} \begin{bmatrix} \dfrac{x_{\ell_1}}{x_{i_1}} & y_1 x_{b_1} \vspace{1mm}\\ \dfrac{x_{a_1}}{x_{i_1}x_{i_2}} & y_1 \dfrac{x_{r_1}}{x_{i_2}} \end{bmatrix}
\\
\hphantom{MG_n}{}
= \begin{bmatrix}\dfrac{x_{\ell_{n-1}}}{x_{i_{n-1}}} & y_{n-1} x_{b_{n-1}} \vspace{2mm}\\ \dfrac{x_{a_{n-1}}}{x_{i_{n-1}}x_{i_n}} & y_{n-1} \dfrac{x_{r_{n-1}}}{x_{i_n}} \end{bmatrix} M_{n-1} = \begin{bmatrix}\dfrac{x_{\ell_{n-1}}}{x_{i_{n-1}}} & y_{n-1} x_{b_{n-1}} \vspace{2mm} \\ \dfrac{x_{a_{n-1}}}{x_{i_{n-1}}x_{i_n}} & y_{n-1} \dfrac{x_{r_{n-1}}}{x_{i_n}} \end{bmatrix} \begin{bmatrix} A_{n-1} & B_{n-1} \\ C_{n-1} & D_{n-1} \end{bmatrix}
\\
\hphantom{MG_n}{}
= \begin{bmatrix} \dfrac{x_{\ell_{n-1}}}{x_{i_{n-1}}} A_{n-1} + y_{n-1} x_{b_{n-1}} C_{n-1} & \dfrac{x_{\ell_{n-1}}}{x_{i_{n-1}}} B_{n-1} + y_{n-1} x_{b_{n-1}} D_{n-1} \vspace{2mm}\\ \dfrac{x_{a_{n-1}}}{x_{i_{n-1}}x_{i_n}} A_{n-1} + y_{n-1} \dfrac{x_{r_{n-1}}}{x_{i_n}} C_{n-1} & \dfrac{x_{a_{n-1}}}{x_{i_{n-1}}x_{i_n}} B_{n-1} + y_{n-1} \dfrac{x_{r_{n-1}}}{x_{i_n}} D_{n-1} \end{bmatrix}.
\end{gather*}
Consider the subgraph consisting of tiles $i_1,\ldots, i_{n-1}$ as the graph $UG_{n-1}$. Since $n$ is even, the edge that would be labeled~$w$ in this embedded copy of $UG_{n-1}$ (which we call $w_{n-1}$) is labeled~$a_{n-1}$ in $UG_n$. Similarly, the edge labeled $r_{n-1}$ in $UG_n$ would be labeled $z$ in $UG_{n-1}$ and so we call this edge~$z_{n-1}$.

\looseness=2 Let $S_A'$, $S_B'$, $S_C'$, $S_D'$ be the sets of matchings satisfying the description in the proposition for the specified subgraph $UG_{n-1}$. Then, we have that all matchings in $S_A$ correspond either to a~matching in $S_A'$ or in $S_C'$ via the following correspondence. Matchings in $S_A$ use both~$a$ and~$w$; because~$n$ is even, they must also use either $\ell_{n-1}$ or $b_{n-1}$. If one of these matchings uses~$\ell_{n-1}$, it uniquely corresponds to a matching of~$UG_{n-1}$ which uses $a_{n-1} = w_{n-1}$; such a~matching belongs to~$S_A'$. If it uses $b_{n-1}$, then it uniquely corresponds to a matching of $UG_{n-1}$ which uses $r_{n-1} = z_{n-1}$; this matching of $UG_{n-1}$ belongs to~$S_{C}'$.

\begin{center}
 \includegraphics[scale=1]{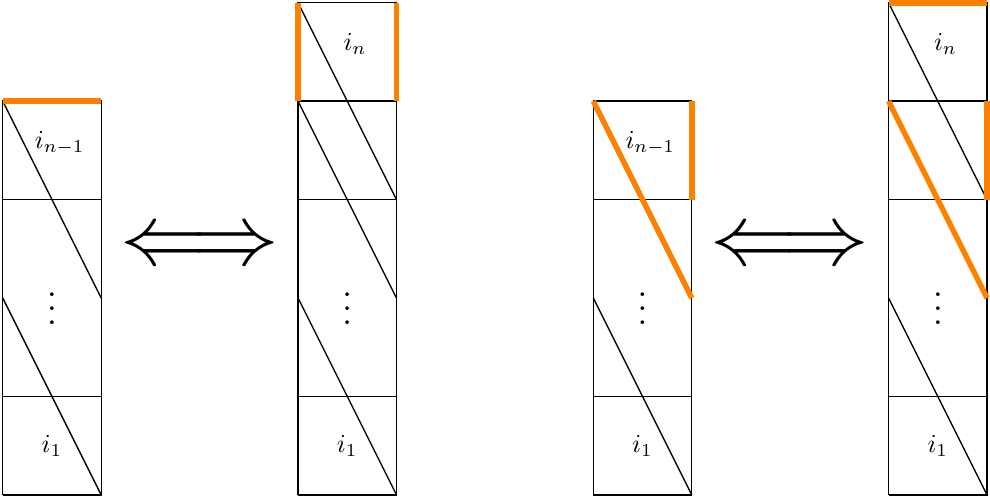}
\end{center}

We can then consider the weights of each matching. If a matching uses the edge $\ell_{n-1}$, then its symmetric difference with the minimal matching of $UG_n$ cannot enclose the tile labeled~$i_{n}$. Hence, its weight must be equal to the weight of the corresponding perfect matching in $S_A'$. If the matching instead uses the edge $b_{n-1}$, then its symmetric difference with the minimal matching must enclose the tile labeled $i_n$, so its weight is given by $y_{i_{n+1}}\cdot (\text{weight of corresponding matching}\allowbreak\text{from }S_C')$. Therefore, the set of matchings in $S_A$ satisfies the relationship
\[ A_n = \frac{x_{\ell_{n-1}}}{x_{i_{n-1}}} A_{n-1} + y_{n-1} x_{b_{n-1}} C_{n-1}.\] The remaining arguments for the other matrix entries and the case where $n$ is odd are very similar, so are not reproduced here.
\end{proof}

By considering several specializations, we can apply Proposition \ref{prop:MatchingMatrices} to band graphs. Note that, while abstractly we can glue $a$ or $b$ to $w$ or $z$ to form a band graph, in order to get a graph which would come from a surface we must either glue $a$ to $z$ or $b$ to $w$. In the first case, if the graph is from a closed curve on a surface, then we would also have $b = i_n$ and $w = i_1$. If we glue~$b$ to~$w$, then $a = i_n$ and $z = i_1$. See Fig.~5 in~\cite{MW}.

\begin{Theorem}\label{thm:upperright}
Let $UG_n$ be a universal snake graph on $n$ tiles. Then, we can express its sum of weighted perfect matchings by{\samepage \[
\sum_P x(P) h(P) = x_{i_1} \cdots x_{i_n} \operatorname{ur} \left( \begin{bmatrix} \dfrac{x_w}{x_{i_n}} & x_z y_{i_n} \vspace{1mm}\\ \dfrac{-1}{x_z} & 0 \end{bmatrix} M_n \begin{bmatrix} 0 & x_a \\ \dfrac{-1}{x_a} & \dfrac{x_b}{x_{i_1}} \end{bmatrix}\right),
\]
where $\emph{ur}$ returns the upper right entry of a matrix.}

Now, let $G$ be the result of gluing $a$ and $z$ in $UG_n$, and setting $b = i_n$ and $w = i_1$. Then, we can express its sum of weighted perfect matchings by\[
\sum_P x(P) h(P) = x_{i_1} \cdots x_{i_n} \operatorname{tr} \left( \begin{bmatrix} \dfrac{x_{i_1}}{x_{i_n}} & x_a y_{i_n} \\ 0 & \dfrac{y_{i_n} x_{i_n}}{x_{i_1}} \end{bmatrix} M_n \right).
\]

Similarly, if $G$ is the result of gluing $b$ and $w$ in $UG_n$ and setting $a = i_n$ and $z = i_1$, then
\[
\sum_P x(P) h(P) = x_{i_1} \cdots x_{i_n} \operatorname{tr} \left( \begin{bmatrix} \dfrac{x_{i_1}}{x_{i_n}} & 0 \\ \dfrac{x_b}{x_{i_1}x_{i_n}} & \frac{y_{i_n} x_{i_n}}{x_{i_1}} \end{bmatrix} M_n \right).
\]
\end{Theorem}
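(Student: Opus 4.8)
The plan is to reduce all three identities to Proposition~\ref{prop:MatchingMatrices} together with routine $2\times 2$ matrix algebra, so that the real content is just matching the four blocks $A_n, B_n, C_n, D_n$ of $M_n$ against the appropriate families of (good) matchings. Throughout I write $\sum_{S_X}$ for the weighted sum $\sum_{P\in S_X} x(P)h(P)$ over the family $S_X$ from that proposition.

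First I would prove the open (non-glued) formula. Every perfect matching of $UG_n$ uses exactly one of $a,b$ and exactly one of $w,z$, so the set of all perfect matchings is the disjoint union $S_A \sqcup S_B \sqcup S_C \sqcup S_D$; hence $\sum_P x(P)h(P) = \sum_{S_A} + \sum_{S_B} + \sum_{S_C} + \sum_{S_D}$, and Proposition~\ref{prop:MatchingMatrices} rewrites each partial sum as the corresponding entry $A_n, B_n, C_n, D_n$ times its normalizing monomial. I would then expand the triple product $L\,M_n\,R$, where $L$ and $R$ denote the two matrices flanking $M_n$ in the first displayed formula, and read off its upper-right entry. It is a sum of four terms, one proportional to each of $A_n, B_n, C_n, D_n$, whose coefficients are precisely the reciprocals of those normalizing monomials up to the global factor $x_{i_1}\cdots x_{i_n}$. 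Multiplying through by $x_{i_1}\cdots x_{i_n}$ recovers the four partial sums exactly, which proves the first claim.

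For the two band-graph formulas I would invoke the correspondence recalled in Section~\ref{Sec:Construction}, following \cite{MW}, between good matchings of a band graph and perfect matchings of the underlying snake graph that use at least one of the two glued edges, with that edge then deleted. For the gluing of $a$ to $z$ with $b=i_n$ and $w=i_1$, deleting the glued edge is implemented algebraically by the factors $1/x_a$ and $1/x_z$, while the substitutions $b=i_n$, $w=i_1$ collapse the normalizing monomials. Computing $\operatorname{tr}(N\,M_n)$, where $N$ is the matrix multiplying $M_n$ in the second displayed formula, yields a sum of exactly three terms, proportional to $A_n$, $C_n$, and $D_n$; the family $S_B$ (matchings using neither glued edge) does not appear, matching the fact that such matchings do not descend to good matchings. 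After the substitutions, these three terms reproduce $\sum_{S_A}$, $\sum_{S_C}$, $\sum_{S_D}$ with the glued edge removed, i.e.\ the good-matching generating function. The third identity (gluing $b$ to $w$ with $a=i_n$, $z=i_1$) follows by the same argument with the roles of $a,z$ and $b,w$ interchanged, so I would record only the one computation in detail and invoke symmetry for the other.

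The matrix multiplications themselves are straightforward; the one step that needs care is the band-graph bookkeeping. Concretely, matchings in $S_C$ use \emph{both} glued edges $a$ and $z$, so I must verify that the trace counts each resulting good matching exactly once (deleting, say, $z$) rather than twice, and that no good matching is omitted --- equivalently, that the three trace terms are in weight-preserving bijection with good matchings under the deletion map. I expect this to be the main obstacle, and I would resolve it by fixing which copy of the glued edge is deleted and checking that the induced map on matchings is a bijection, after which the identity is immediate.
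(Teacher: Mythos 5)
Your proposal is correct and takes essentially the same route as the paper's own proof: expand $\operatorname{ur}(L\,M_n\,R)$ and $\operatorname{tr}(N\,M_n)$, identify each resulting coefficient of $A_n$, $B_n$, $C_n$, $D_n$ with the normalizing monomial from Proposition~\ref{prop:MatchingMatrices}, and use the partition of perfect (resp.\ good) matchings into the families $S_A$, $S_B$, $S_C$, $S_D$, with the absent family corresponding to matchings avoiding both glued edges. The band-graph bookkeeping you flag as the main obstacle is precisely the point the paper dispatches in one sentence (``one less factor of $x_a = x_z$ \dots matches the relationship between perfect matchings and good matchings''), and your proposed resolution -- noting that deleting either copy of the glued edge from a matching in $S_C$ yields the \emph{same} good matching, so the map from $S_A \sqcup S_C \sqcup S_D$ to good matchings is a weight-preserving bijection -- is the correct way to make that sentence rigorous.
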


\begin{proof}For the case of $UG_n$, by using $A_n$, $B_n$, $C_n$, $D_n$ as in Proposition~\ref{prop:MatchingMatrices}, we find that \begin{gather}
\operatorname{ur} \left( \begin{bmatrix} \dfrac{x_w}{x_{i_n}} & x_z y_{i_n} \vspace{1mm}\\ \dfrac{-1}{x_z} & 0 \end{bmatrix} M_n \begin{bmatrix} 0 & x_a \vspace{1mm}\\ \dfrac{-1}{x_a} & \dfrac{x_b}{x_{i_1}} \end{bmatrix}\right)\nonumber\\
\qquad{} = \frac{x_a x_w}{x_{i_n}} A_n + \frac{x_b x_w}{x_{i_1}x_{i_n}} B_n + x_a x_z y_{i_n} C_n + \frac{x_b x_z y_{i_n}}{x_{i_n}} D_n.\label{eq:ExpandUR}
\end{gather}

From the definition of $A_n$, we see that
\[ \frac{x_a x_w}{x_{i_n}} A_n = \frac{\sum_{P \in S_A} x(P)h(P)}{x_{i_1}\cdots x_{i_n}}.\] A very similar statement is true for the terms of $B_n$, $C_n$, and $D_n$. Since the sets $S_A$, $S_B$, $S_C$, and $ S_D$ partition all perfect matchings of~$UG_n$, the proof is complete.

Next, consider the case where we obtain $G$ by gluing $a$ and $z$ in $UG_n$ and make appropriate specializations. A good matching of this graph is one which could be extended to a perfect matching of $UG_n$ by adding $a$ or $z$. Thus, the matchings from $A_n$, $C_n$, and $D_n$ all descend to good matchings of $G$. We expand the trace, \[
\operatorname{tr} \left( \begin{bmatrix} \dfrac{x_{i_1}}{x_{i_n}} & y_{i_n} x_a\\ 0 & \dfrac{y_{i_n} x_{i_n}}{x_{i_1}} \end{bmatrix} M_n \right) = \frac{x_{i_1}}{x_{i_n}} A_n + y_{i_n} x_a C_n + \frac{y_{i_n} x_{i_n}}{x_{i_1}} D_n.
\]

We see that the coefficients on $A_n, C_n,$ and $D_n$ are as in equation~\eqref{eq:ExpandUR}, with one less factor of $x_a = x_z$. This matches the relationship between perfect matchings and good matchings. The situation is similar for a band graph obtained from gluing~$b$ and~$w$.
\end{proof}

When a snake graph $UG_n$ or band graph $G$ is associated to an arc or closed curve $\gamma$ on an orbifold $\mathcal{O}$ with triangulation $T$, we define $X_{\gamma, T} := \frac{1}{{\rm cross}(T,\gamma)}\sum_P x(P) h(P) $ where $P$ is the sum of weighted perfect matchings of $UG_n$ or weighted good matchings of $G$, as in Definitions~\ref{def:genArc} and~\ref{def:closedcurve}. These definitions also cover some special cases when it is not clear how to build a snake graph from the arc or curve.

\subsection{Lift for generalized arcs}\label{subsec:LiftGenArc}

We give brief motivation for the crossing diagonals in generalized snake graphs from arcs which wind around orbifold points. If the order, $p$, of the orbifold point is greater then two, then when we lift a piece of such an arc to a $p$-fold cover, the lifted arc passes through a $p$-gon. If $p > 3$, then this is an untriangulated $p$-gon.

The standard snake graph construction relies on an arc passing through a triangulation. However, by using a loosened notion of $T$-paths, we can determine the appropriate expansion formula for such arcs which wind around orbifold points. Since the covers we consider are not triangulated but instead dissected into polygons, which is a setting not fully explored in $T$-path literature, we use this as a heuristic rather than a proof. Sections \ref{sec:$M$-path}--\ref{sec:Skein} will formally verify these formulas.

The concept of $T$-paths was defined originally by Schiffler and Thomas in~\cite{Schiffler-Thomas} to give cluster expansion formulas in unpunctured surfaces and provide a proof of positivity for these cluster algebras as a corollary. Musiker and Gunawan expanded the $T$-path construction to once-punctured disks in~\cite{Musiker-Gunawan}.

As always, let $\gamma$ be an arc on a surface $(S,M)$ with triangulation $T = \{\tau_1, \ldots, \tau_n\}$, and let $d = e(\gamma,T) > 0$. Fix an arbitrary orientation to each arc $\tau \in T$ and to $\gamma$, and let $\tau^-$ be an arc isotopic to $\tau$ with opposite orientation. Let $\tau_{i_1},\ldots, \tau_{i_d}$ be the arcs crossed by $\gamma$, with order determined by $\gamma$'s orientation. Loosely, a (complete) $T$-path is a path $\alpha = (\alpha_1,\ldots, \alpha_{2d+1})$ such that
\begin{enumerate}\itemsep=0pt
 \item[1)] each $\alpha_i$ is equivalent to $\tau_j$ or $\tau_j^-$ for some $\tau_j \in T$,
 \item[2)] for $1 \leq i < n$, $t(\alpha_i) = s(\alpha_{i+1})$,
 \item[3)] $s(\alpha_1) = s(\gamma)$ and $t(\alpha_{2d+1}) = t(\gamma)$,
 \item[4)] (this requirement makes the $T$-path ``complete'') $\alpha_{2j} = \tau_{i_j}$ for $1 \leq j \leq d$.
\end{enumerate}

From each $T$ path we obtain a monomial where variables associated to the arcs crossed on odd steps are in the numerator and variables from arcs crossed on even steps are in the denominator. Then, we sum the monomials from all $T$-paths from $\gamma$ to obtain $x_\gamma$. Note that for a complete $T$-path, each denominator is equal to ${\rm cross}(T,\gamma)$.

Given an arc $\gamma$, the collection of $T$-paths from $\gamma$ are in bijection with perfect matchings of~$G_{T,\gamma}$. Moreover, we can draw each complete $T$-path on $G_{T,\gamma}$ by using the dashed diagonals in each tile as the steps along the arcs crossed by $\gamma$, that is, the even indexed steps. The set of edges used by the odd-indexed steps (those not on dashed edges) is a perfect matching of the graph.

As an example of the sort of arcs we are describing, consider $\gamma$ as below, where the orbifold point is order~5.

\begin{center}
\begin{tikzpicture}[scale = 2]

\tikzset{->-/.style={decoration={
 markings,
 mark=at position #1 with {\arrow{>}}},postaction={decorate}}}

\draw[out=30,in=-30,looseness=1.5] (0,0.3) to (0,1.5);
\draw[out=150,in=-150,looseness=1.5] (0,0.3) to (0,1.5);

\draw[in=0,out=45,looseness =1.25] (0,0.3) to (0,1.3);
\draw[in=180,out=135,looseness=1.25] (0,0.3) to (0,1.3);

\draw[orange,thick,in=165,out=0] (-0.5,1.2) to (0,1.15);
\draw[orange, thick,out=-15,in=0,looseness = 2] (0,1.15) to (0,0.8);
\draw[orange,thick,out=180,in=180,->=0.5,looseness=1.5] (0,1.2) to (0,0.8);
\draw[orange,thick,out=0,in=10,looseness=1.4] (0,1.2) to (0.1,0.7);
\draw[orange,thick] (0.1,0.7) to (-0.5,0.6);

\draw[fill=black] (0,0.3) circle [radius=2pt];
\draw[fill=black] (0,1.5) circle [radius=2pt];
\draw[thick] (0,1) node {$\mathbf{\times}$};
\node[scale=0.8] at (0,1.36) {$c$};
\end{tikzpicture}
\end{center}

One lift of this configuration is as below.

\begin{center}
\begin{tikzpicture}[scale = 1.5]
\tikzset{->-/.style={decoration={
 markings,
 mark=at position #1 with {\arrow{>}}},postaction={decorate}}}
\draw(0:2\R) -- (36:2\R) -- (72:2\R) -- (36*3:2\R) -- (36*4:2\R) -- (36*5:2\R) -- (36*6:2\R) -- (36*7:2\R) -- (36*8:2\R) -- (36*9:2\R) -- (0:2\R);
\draw(0:2\R) -- (72:2\R) to node[above] {$c_2$} (36*4:2\R) -- (36*6:2\R) -- (36*8:2\R) to node[above]{$c_1$} (0:2\R);
\draw[thick] (0:2\R) -- (72:2\R);
\draw[thick, blue, dashed] (-72:2\R) -- (36*4:2\R);
\draw[thick, green, dashed] (-72:2\R) -- (72:2\R);
\draw[thick, green, dashed] (36*4:2\R) -- (0:2\R);
\draw[thick, orange, <- = 1] (126:2.1\R) -- (-18:2.1\R);
\end{tikzpicture}
\end{center}

We consider possible sub-path $(\alpha_{2i},\alpha_{2i+1}, \alpha_{2(i+1)})$ of a $T$-path from the lift of $\gamma$. As in the definition, $\alpha_{2i}$ will go along $c_1$ in some direction and $\alpha_{2(i+1)}$ will go along $c_2$, as required by the definition. Then, $\alpha_{2i+1}$ will have to connect $t(\alpha_{2i})$ and~$s(\alpha_{2(i+1)})$. If we lift the requirement that each $\alpha_j$ is an arc in the triangulation, we see that the four diagonals highlighted (one being a~side of the polygon in this case) all connect end points of $c_1$ and $c_2$.

Each of the polygons in the lifts is regular since all sides correspond to the same arc in the orbifold. Thus, we can use elementary geometry to write the lengths of these diagonals in terms of the length of the sides of the polygon.

\begin{Definition}\label{def:kdiagonal}
A \emph{$k$-diagonal} in a polygon is one which skips $k-1$ vertices. For instance, boundary edges in a $p$-gon are both 1-diagonals and $(p-1)$-diagonals.
\end{Definition}

The following lemma comes from work in \cite{Lang}.

\begin{Lemma} \label{lem:polygonlength}
A $k$-diagonal in a regular $p$-gon with sides of length $s$ has length $U_{k-1}(\lambda_p) \cdot s$, where $U_k(x)$ denotes the $k$-th normalized Chebyshev polynomial as in Definition~{\rm \ref{def:Chebyshev}}.
\end{Lemma}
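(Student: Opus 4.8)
The plan is to prove Lemma~\ref{lem:polygonlength} by showing that the diagonal lengths in a regular $p$-gon satisfy the same recurrence and initial conditions as the normalized Chebyshev polynomials $U_\ell(x)$ evaluated at $\lambda_p = 2\cos(\pi/p)$, so that the identification follows by induction on $k$. Write $d_k$ for the length of a $k$-diagonal in a regular $p$-gon with side length $s$; I want to establish $d_k = U_{k-1}(\lambda_p)\cdot s$. The key observation is a geometric three-term relation among the lengths $d_{k-1}$, $d_k$, and $d_{k+1}$ that exactly mirrors the Chebyshev recurrence $U_\ell(x) = x\,U_{\ell-1}(x) - U_{\ell-2}(x)$ from Definition~\ref{def:Chebyshev}.

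First I would set up the base cases. A $1$-diagonal is a side, so $d_1 = s = 1\cdot s = U_0(\lambda_p)\,s$, matching $U_{k-1}$ at $k=1$. For the $2$-diagonal (a ``short diagonal'' skipping one vertex), elementary trigonometry in the isoceles triangle formed by two adjacent sides gives $d_2 = 2\cos(\pi/p)\,s = \lambda_p\, s = U_1(\lambda_p)\,s$, matching $k=2$. These two base cases anchor the induction.

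The inductive step is the heart of the argument. Fixing vertices $v_0, v_{k-1}, v_k, v_{k+1}$ of the polygon, I would express the three diagonals $d_{k-1} = |v_0 v_{k-1}|$, $d_k = |v_0 v_k|$, and $d_{k+1} = |v_0 v_{k+1}|$ using the standard chord-length formula $d_k = \dfrac{\sin(k\pi/p)}{\sin(\pi/p)}\,s$, which itself follows by placing the polygon on a circle of radius $R$ with $s = 2R\sin(\pi/p)$ and each $k$-diagonal subtending a central angle $2k\pi/p$ so that $d_k = 2R\sin(k\pi/p)$. From the product-to-sum identity $\sin((k-1)\theta) + \sin((k+1)\theta) = 2\cos\theta\,\sin(k\theta)$ with $\theta = \pi/p$, I obtain
\[
d_{k-1} + d_{k+1} = 2\cos(\pi/p)\,d_k = \lambda_p\, d_k,
\]
equivalently $d_{k+1} = \lambda_p\, d_k - d_{k-1}$. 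This is precisely the recurrence defining $U_\ell(\lambda_p)$, so combined with the base cases the induction closes and $d_k = U_{k-1}(\lambda_p)\,s$ for all $k$.

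The main obstacle is not any single computation but getting the indexing conventions to line up cleanly: the $k$-diagonal corresponds to $U_{k-1}$, not $U_k$, and one must check the edge cases (the $1$-diagonal and the degenerate ``$(p-1)$-diagonal,'' which is again a side as noted in Definition~\ref{def:kdiagonal}) to confirm consistency with the periodicity already recorded in Lemma~\ref{lem:ChebyPeriodic}, namely $U_{p-1}(\lambda_p) = 0$. Since \cite{Lang} supplies the underlying polygon-geometry computation, I would cite it for the chord-length formula and present the recurrence verification as the substantive content, keeping the trigonometric manipulation brief.
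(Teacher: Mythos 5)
Your proof is correct, but note that the paper itself does not prove this lemma at all: it simply attributes it to \cite{Lang} and moves on. So your argument is a self-contained replacement for that citation rather than an alternative to a proof appearing in the paper, and it is a reasonable thing to supply. The substance checks out: placing the $p$-gon on its circumcircle of radius $R$ gives $s = 2R\sin(\pi/p)$ and $d_k = 2R\sin(k\pi/p)$, and the identity $\sin((k-1)\theta)+\sin((k+1)\theta) = 2\cos\theta\,\sin(k\theta)$ with $\theta = \pi/p$ yields $d_{k+1} = \lambda_p d_k - d_{k-1}$, which together with the base cases $d_1 = s = U_0(\lambda_p)\,s$ and $d_2 = 2\cos(\pi/p)\,s = U_1(\lambda_p)\,s$ closes the induction. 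One small streamlining: once you have the chord formula $d_k = \frac{\sin(k\pi/p)}{\sin(\pi/p)}\,s$, the geometric recurrence among the $d_k$ is logically redundant; the induction is exactly the standard verification of the closed form $U_{k-1}(2\cos\theta) = \frac{\sin(k\theta)}{\sin\theta}$, so you could either prove that identity once and substitute, or keep the recurrence and drop the chord formula (deriving the three-term relation instead from Ptolemy's theorem applied to an inscribed quadrilateral, which makes the argument purely synthetic). Your boundary checks are consistent with the paper's conventions: $U_{p-2}(\lambda_p) = \frac{\sin((p-1)\pi/p)}{\sin(\pi/p)} = 1$ recovers the fact that a $(p-1)$-diagonal is a side (Definition \ref{def:kdiagonal}), and $U_{p-1}(\lambda_p) = 0$ agrees with Lemma \ref{lem:ChebyPeriodic} and the degenerate $p$-diagonal.
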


The fact that we have four options for potential steps between $c_1$ and $c_2$ leads to the hexagonal tiles discussed in Section \ref{Sec:Construction}. Note that the configuration between the two dashed lines in these tiles looks similar to the lift of the generalized arc above. In Lemma~\ref{lem:ChebyshevMatrices}, we will see these Chebyshev polynomials also arise from products of matrices in ${\rm SL}_2(\Real)$.

\section[$M$-path from an arc in a triangulated orbifold]{$\boldsymbol{M}$-path from an arc in a triangulated orbifold} \label{sec:$M$-path}

In the previous section, we associated a product of $2 \times 2$ matrices to the universal snake graph. Now, following the construction of \cite{MW}, we will also associate products of matrices to arbitrary arcs or curves on a triangulated orbifold; Theorem \ref{Thm:ArcsAndGraphs} will show a relationship between these two systems of matrices. This method will allow us to extend our snake graph formula to generalized arcs and closed curves.

Similar to the graph case, we break arcs or closed curves into a series of \emph{elementary steps} and associate $2 \times 2$ matrices to each step. Arcs do not have a unique associated $M$-path, but in Section \ref{sec:StandardMPath} we will both describe a convention for which $M$-path to use and show that the statistics we use do not depend on the path.

While the start and terminal point of an arc on an orbifold coincide with the set of marked points, elementary steps and the $M$-paths in general go between points which are near marked points but are not marked points themselves. To formalize this, draw a small circle, $h_m$, around each marked point $m$. These should be small enough that $h_m$ does not intersect $h_{m'}$ for another distinct marked point $m'$. If $\tau$ is an arc incident to $m$, let $v_{m,\tau}$ be the intersection of~$\tau$ and~$h_m$. If $\tau$ is a standard arc, we define $v^+_{m,\tau}$ (respectively, $v^-_{m,\tau}$) to be a point on $h_m$ that is counterclockwise (clockwise) of $v_{m,\tau}$. If $\tau$ is a pending arc, we define $v^{-,-}_{m,\tau}$, $v^{-,+}_{m,\tau}$, $v^{+,-}_{m,\tau}$, and $v^{+,+}_{m,\tau}$ to be, in counterclockwise order, four spots along $h_m$ such that $v^{+,+}_{m,\tau}$ is clockwise of all of $\tau$, $v^{-,-}_{m,\tau}$ is counterclockwise of all of~$\tau$, and~$v^{-,+}_{m,\tau}$ and~$v^{+,-}_{m,\tau}$ are contained within~$\tau$, drawn as a~loop, such that $v^{-,+}_{m,\tau}$ is counterclockwise from $v^{+,-}_{m,\tau}$.

\begin{center}
\begin{tikzpicture}
\draw (-2,0) -- (1,0);
\draw(-0.5,0) to node[right] {$\tau$} (-0.5,2);
\node[] at (-0.5,-0.2) {$m$};
\node[] at (-1.2, 0.3){$v^+_{m,\tau}$};
\node[] at (0.2, 0.3){$v^-_{m,\tau}$};
\node[circle, fill = black, scale = 0.2] at (-0.7,0.2){};
\node[circle, fill = black, scale = 0.2] at (-0.3,0.2){};

\draw(3,0) -- (7,0);
\draw(5,0) -- (3.5,2);
\draw(5,0) -- (6.5,2);
\node[] at (5, -0.2){$m$};
\node[circle, fill = black, scale = 0.2] at (4.7,0.2){};
\node[] at (4.2, 0.3){$v_{m,\tau}^{+,+}$};
\node[circle, fill = black, scale = 0.2] at (5.2,0.5){};
\node[] at (5.4, 1.2){$v_{m,\tau}^{+,-}$};
\node[circle, fill = black, scale = 0.2] at (4.8,0.5){};
\node[] at (4.6,1.2){$v_{m,\tau}^{-,+}$};
\node[circle, fill = black, scale = 0.2] at (5.3,0.2){};
\node[] at (5.8, 0.3){$v_{m,\tau}^{-,-}$};
\end{tikzpicture}
\end{center}

Given an arc $\gamma$, with end points $s(\gamma)$ and $t(\gamma)$, any representative $M$-path will go between~$v^\pm_{s(\gamma),\tau}$ or $v^{\pm,\pm}_{s(\gamma),\tau}$ and $v^\pm_{t(\gamma), \tau'}$ or $v^{\pm,\pm}_{t(\gamma), \tau'}$where $\tau$ and $\tau'$ are arcs in the triangulation incident to $s(\gamma)$ and $t(\gamma)$ respectively.

First, we recall the three types of elementary steps used in the surface case~\cite{MW}:
\begin{itemize}\itemsep=0pt
 \item An elementary step of type 1 goes from $v^\pm_{m,\tau}$ to $v^\mp_{m,\tau'}$ where $\tau$ and $\tau'$ share an endpoint and border the same triangle in $T$. If $\sigma$ is the third side of the triangle, then we associate the matrix $\left[\begin{smallmatrix} 1 & 0 \\ \pm \frac{x_{\sigma}}{x_\tau x_{\tau'}} & 1 \end{smallmatrix}\right]$. The sign of $\frac{x_{\sigma}}{x_\tau x_{\tau'}}$ is positive if we travel from $v^+_{m,\tau}$ to $v^-_{m,\tau'}$ and negative otherwise.
 \item An elementary step of type 2 goes from $v^\pm_{m,\tau}$ to $v^\mp_{m,\tau}$; that is, this step crosses the arc $\tau$. We associate the matrix $\left[\begin{smallmatrix} 1 & 0 \\ 0 & y_\tau \end{smallmatrix}\right]$ if we go from $v^-_{m,\tau}$ to $v^+_{m,\tau}$ and $\left[\begin{smallmatrix} y_\tau & 0 \\ 0 & 1 \end{smallmatrix}\right]$ otherwise.
 \item An elementary step of type 3 follows an arc $\tau$ in the triangulation. That is, if $\tau$ connects marked points $m$ and $m'$, then this step goes from $v^\pm_{m, \tau}$ to $v^\mp_{m',\tau}$. We associate to this the matrix $\left[\begin{smallmatrix} 0 & \pm x_\tau \\ \frac{\mp 1}{x_\tau} & 0 \end{smallmatrix}\right]$. We use $+ x_\tau$ and $\frac{-1}{x_\tau}$ if this step sees $\tau$ on the right and uses the opposite signs if it sees~$\tau$ on the left.
\end{itemize}

As we are working in a triangulated orbifold, we show how to update these elementary steps to interactions with a pending arc. In particular, we show how to decompose a portion of an arc winding around an orbifold point into a sequence of elementary steps; combining this with the above elementary steps will allow us to decompose any arc in a triangulated orbifold.

First, we can go from $v^{\pm,\mp}_{m,\rho}$ to $v^{\mp,\pm}_{m,\rho}$ where $\rho$ is a pending arc. We also examine this local configuration in a cover.

\begin{center}
\begin{tikzpicture}[scale=2]
\node[circle, fill = black, scale = 0.3, xshift = -6\R] (A) at (-4.1,0) {$w$};
\node[circle, fill = red, scale = 0.2, xshift = -6\R](B1) at (-2.62, 0.1){};
\node[circle, fill = red, scale = 0.2, xshift = -6\R](B1) at (-2.62, -0.1){};
\node[] (E) at (-3.8,0) {$\times$};
\node[xshift = -6\R](F) at (0.8, 0){};
\draw[xshift = -6\R] (0,0) to [out=60,in=120] node[auto]{$\alpha$}(2,0);
\draw[xshift = -6\R] (0,0) to [out=-60,in=-120] node[below]{$\beta$}(2,0);
\draw[green, xshift = -6\R](2,0) to [out = 140, in = 90, looseness = 1] node[auto]{$\rho$}(0.8,0);
\draw[green, xshift = -6\R](0.8,0) to [out = 270, in = 220, looseness = 1] node[auto]{}(2,0);
\draw[red, xshift = -6\R] (1.7, 0.1) to node[right]{$\gamma$} (1.7,-0.1);
\node[circle, fill = black, scale = 0.3, xshift = -6\R] (B) at (-2.05,0) {$v$};

 \draw[green] (0:1.5\R) to node[auto]{$\rho$} (90:1.5\R) to node[auto]{$\rho$} (135 + 45:1.5\R) to node[auto]{$\rho$} (270 :1.5\R) to node[auto]{$\rho$} (0:1.5\R);
 \draw (0:1.5\R) to node[above]{$\alpha$} (45:1.5\R) to node[above]{$\beta$} (90:1.5\R) to node[above]{$\alpha$} (135:1.5\R) to node[above]{$\beta$} (135 + 45:1.5\R) to node[above]{$\alpha$} (180 + 45:1.5\R) to node[above]{$\beta$} (270:1.5\R) to node[above]{$\alpha$} (270 + 45:1.5\R)to node[above]{$\beta$} (0:1.5\R);
 \draw[red] (80:1.2\R) to node[below]{$\gamma$} (100:1.2\R);
 \node[circle, fill = red, scale = 0.2](B1) at (100:1.2\R){};
 \node[circle, fill = red, scale = 0.2](B1) at (80:1.2\R){};
 \draw[dashed, green] (0:1.5\R) to node[below]{$\lambda_p \cdot \rho$} (180:1.5\R);
\end{tikzpicture}
\end{center}

In the cover, this resembles an elementary step of type 1 from \cite{MW}. Accordingly, we associate to this a matrix $\left[\begin{smallmatrix} 1 & 0 \\ \pm \frac{\lambda_p \rho}{\rho^2} & 1 \end{smallmatrix}\right] = \left[\begin{smallmatrix} 1 & 0 \\ \pm \frac{\lambda_p}{\rho} & 1 \end{smallmatrix}\right]$. As for an elementary step of type 1 in a surface, we use~$\frac{\lambda_p}{\rho}$ if we travel clockwise (from $v^{\mp, \pm}_{m,\rho}$ to $v^{\pm, \mp}_{m,\rho}$) and use $\frac{-\lambda_p}{\rho}$ otherwise.

If $\gamma$ does not have self-intersections, this is the only sort of step we will see. But if $\gamma$ winds $k\geq 1$ times around the orbifold point, we will also see an elementary step of type~3 along the pending arc, again between $v^{\pm,\mp}_{m,\rho}$ to $v^{\mp,\pm}_{m,\rho}$.

\begin{center}
\begin{tikzpicture}[scale=1.8]
\node[circle, fill = black, scale = 0.3, xshift = -6\R] (A) at (-4.1,0) {$w$};
\node[circle, fill = red, scale = 0.2, xshift = -6\R](B1) at (-2.54, 0.1){};
\node[circle, fill = red, scale = 0.2, xshift = -6\R](B1) at (-2.54, -0.1){};
\node[] (E) at (-3.8,0) {$\times$};
\draw[xshift = -6\R] (0,0) to [out=60,in=120] node[auto]{$\alpha$}(2,0);
\draw[xshift = -6\R] (0,0) to [out=-60,in=-120] node[below]{$\beta$}(2,0);
\draw[green, xshift = -6\R](2,0) to [out = 140, in = 90, looseness = 1] node[auto]{$\rho$}(0.8,0);
\draw[green, xshift = -6\R](0.8,0) to [out = 270, in = 220, looseness = 1] node[auto]{}(2,0);
\draw[red, xshift = -6\R](1.8, 0.1) to [out = 150, in = 90, looseness = 1] node[auto]{}(0.9,0);
\draw[red, xshift = -6\R](1.8, -0.1) to [out = 210, in = 270, looseness = 1] node[below]{$\gamma$}(0.9,0);

 \draw[green] (0:1.5\R) to node[auto]{$\rho$} (90:1.5\R) to node[left]{$\rho$} (135 + 45:1.5\R) to node[auto]{$\rho$} (270 :1.5\R) to node[auto]{$\rho$} (0:1.5\R);
 \draw (0:1.5\R) to node[above]{$\alpha$} (45:1.5\R) to node[above]{$\beta$} (90:1.5\R) to node[above]{$\alpha$} (135:1.5\R) to node[above]{$\beta$} (135 + 45:1.5\R) to node[above]{$\alpha$} (180 + 45:1.5\R) to node[above]{$\beta$} (270:1.5\R) to node[above]{$\alpha$} (270 + 45:1.5\R)to node[above]{$\beta$} (0:1.5\R);
 \draw[red] (175:1.2\R) to node[right]{$\gamma$} (100:1.2\R);
 \node[circle, fill = red, scale = 0.2]() at (175:1.2\R){};
 \node[circle, fill = red, scale = 0.2](B1) at (100:1.2\R){};
\end{tikzpicture}
\end{center}

This configuration resembles an elementary step of type 3, and as a result, we associate the matrix $\left[\begin{smallmatrix} 0 & \pm \rho \\ \frac{\mp 1}{\rho} & 0 \end{smallmatrix}\right]$, with the same rule for determining the sign as before.

We can treat an elementary step of type 2 across a pending arc, that is, between $v^{\pm,\pm}_{m,\rho}$ and $v^{\pm,\mp}_{m,\rho}$, the same as for a pending arc.

\begin{Definition}If $\kappa$ is an $M$-path whose sequence of elementary steps has associated matrices $\eta_1,\ldots, \eta_n$, then we define $M(\kappa) = \eta_n \cdots \eta_1$.
\end{Definition}

\section[Standard $M$-path]{Standard $\boldsymbol{M}$-path}\label{sec:StandardMPath}

We give an algorithm of assigning a $M$-path, $\kappa_\gamma$, to an arc or closed curve $\gamma$, which consists of a series of connected elementary steps. We say that this convention produces the ``standard $M$-path'' associated to $\gamma$. As an informal heuristic, we will pick an orientation of~$\gamma$, then always travel along the right of $\gamma$.

First, we utilize the symmetry about an orbifold point to choose a convenient representative for $\gamma$. At each pending arc that $\gamma$ crosses, we choose a representative that winds clockwise and less than $p$ times around the incident orbifold point, with one exception. If $\gamma$ crosses a pending arc which is based at a vertex to the left of $\gamma$, and if $\gamma$ is isotopic to one which winds~0 times around this orbifold point, then we will use a representative of $\gamma$ which winds $p$ times around this orbifold point. The reason why we choose this will be made clear in the description of $\kappa_\gamma$.

As before, let $\gamma$ be an arbitrary arc on an orbifold $\mathcal{O}$ with triangulation $T = \{\tau_1,\ldots, \tau_n\}$. Let $\tau_{i_1},\ldots, \tau_{i_d}$ be the arcs which $\gamma$ crosses, with order determined by an orientation on $\gamma$.

Suppose the first triangle that $\gamma$ cuts through has sides $\alpha,\beta,\tau_{i_1}$, in clockwise order, so that~$\alpha$ and~$\beta$ share an endpoint at $s(\gamma)$. Then, $\kappa_\gamma$ will start at $v_{s(\gamma),\alpha}^-$, and follow~$\alpha$ with a step of type~3, followed by a step of type~1 from~$a$ to~$\tau_{i_1}$.

Similarly, suppose the last triangle that $\gamma$ cuts through has sides $w$, $z$, $\tau_{i_d}$ in clockwise order, with $w$ and $z$ both touching~$t(\gamma)$. Then, the last few steps of $\kappa_\gamma$ will be a step of type~2 crossing~$\tau_{i_d}$, a step of type 1 from $\tau_{i_d}$ to $z$, and a step of type~3 along~$z$. Then, $\kappa_\gamma$ will end at~$v^+_{t(\gamma),z}$.

We next explain the sequence of steps we use between $\tau_{i_j}$ and $\tau_{i_{j+1}}$ for $1 \leq j \leq n-1$, where these are both standard arcs. This sequence will involve crossing~$\tau_{i_j}$ but not~$\tau_{i_{j+1}}$. First, we use a step of type 2 to cross $\tau_{i_j}$. Then, if $\tau_{i_j}$ and $\tau_{i_{j+1}}$ share a vertex to the right of~$\gamma$, then we use a step of type 1. Call this sequence of a step of type 2 and a step of type 1 a \emph{compound step of type~A}. If $\tau_{i_j}$ and $\tau_{i_{j+1}}$ share a vertex to the left of $\gamma$, let $\sigma_j$ be the third arc in this triangle. Then we use a step of type 1 between $\tau_{i_j}$ and $\sigma_j$, a step of type 3 along $\sigma_j$, and a step of type 1 between $\tau_{i_j}$ and $\sigma_j$. We call this sequence a \emph{compound step of type B}. A~``step'' will be assumed to be elementary unless otherwise specified.

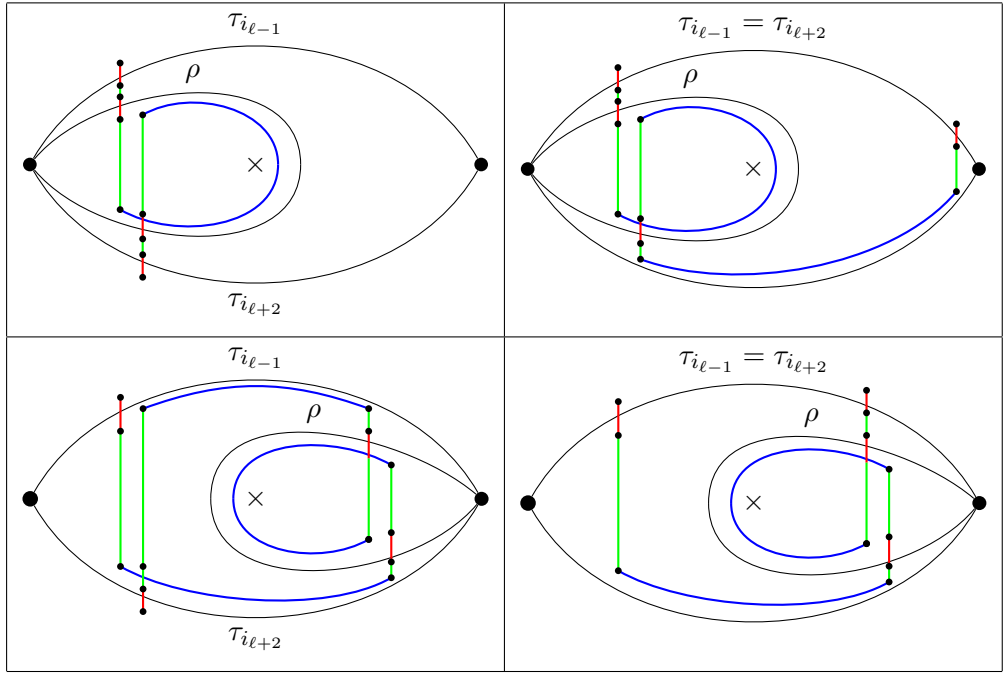
\begin{figure}
\begin{center}
\begin{tabular}{|c|c|}
\hline
 \begin{tikzpicture}[scale=3]
\node[circle, fill = black, scale = 0.5] (A) at (0,0) {};
\node[circle, fill = black, scale = 0.5] (B) at (2,0) {};
\node[] (E) at (1,0) {$\times$};
\draw (A) to [out=60,in=120] node[auto]{$\tau_{i_{\ell-1}}$}(B);
\draw (A) to [out=-60,in=-120] node[below]{$\tau_{i_{\ell+2}}$}(B);
\draw[](A) to [out = 50, in = 90, looseness = 1] node[auto]{$\rho$}(1.2,0);
\draw[](1.2,0) to [out = 270, in = 310, looseness = 1] node[auto]{}(A);
\draw[red, thick](0.4, 0.45) to (0.4, 0.35);
\draw[green, thick] (0.4, 0.35) to (0.4, 0.3);
\draw[red, thick] (0.4,0.3) to (0.4, 0.2);
\draw[green, thick] (0.4, 0.2) to (0.4, -0.2);
\draw[blue, thick] (0.4, -0.2) to [out = -30, in = 270] (1.1,0);
\draw[blue, thick] (1.1, 0) to [out = 90, in = 30] (0.5, 0.22);
\draw[green, thick] (0.5, 0.22) to (0.5, -0.22);
\draw[red, thick] (0.5, -0.22) to (0.5, -0.33);
\draw[green, thick] (0.5, -0.33) to (0.5, -0.4);
\draw[red, thick] (0.5, -0.4) to (0.5, -0.5);
\node[circle, fill = black, scale = 0.25] at (0.4, 0.2){};
\node[circle, fill = black, scale = 0.25] at (0.4, 0.3){};
\node[circle, fill = black, scale = 0.25] at (0.4, 0.35){};
\node[circle, fill = black, scale = 0.25] at (0.4, 0.45){};
\node[circle, fill = black, scale = 0.25] at (0.5, -0.4){};
\node[circle, fill = black, scale = 0.25] at (0.5, -0.5){};
\node[circle, fill = black, scale = 0.25] at (0.5, -0.33){};
\node[circle, fill = black, scale = 0.25] at (0.5, 0.22){};
\node[circle, fill = black, scale = 0.25] at (0.5, -0.22){};
\node[circle, fill = black, scale = 0.25] at (0.4, -0.2){};
\end{tikzpicture}
& \begin{tikzpicture}[scale = 3]
\node[circle, fill = black, scale = 0.5] (A) at (0,0) {};
\node[circle, fill = black, scale = 0.5] (B) at (2,0) {};
\node[] (E) at (1,0) {$\times$};
\draw (A) to [out=60,in=120] node[auto]{$\tau_{i_{\ell-1}} = \tau_{i_{\ell+2}}$}(B);
\draw (A) to [out=-60,in=-120] node[below]{}(B);
\draw[](A) to [out = 50, in = 90, looseness = 1] node[auto]{$\rho$}(1.2,0);
\draw[](1.2,0) to [out = 270, in = 310, looseness = 1] node[auto]{}(A);
\draw[red, thick](0.4, 0.45) to (0.4, 0.35);
\draw[green, thick] (0.4, 0.35) to (0.4, 0.3);
\draw[red, thick] (0.4,0.3) to (0.4, 0.2);
\draw[green, thick] (0.4, 0.2) to (0.4, -0.2);
\draw[blue, thick] (0.4, -0.2) to [out = -30, in = 270] (1.1,0);
\draw[blue, thick] (1.1, 0) to [out = 90, in = 30] (0.5, 0.22);
\draw[green, thick] (0.5, 0.22) to (0.5, -0.22);
\draw[red, thick] (0.5, -0.22) to (0.5, -0.33);
\draw[green, thick] (0.5, -0.33) to (0.5, -0.4);
\draw[blue, thick] (0.5, -0.4) to [out = -20, in = 230, looseness = 0.8] (1.9, -0.1);
\draw[green, thick] (1.9, -0.1) to (1.9, 0.1);
\draw[red, thick] (1.9, 0.1) to (1.9, 0.2);
\node[circle, fill = black, scale = 0.25] at (0.4, 0.2){};
\node[circle, fill = black, scale = 0.25] at (0.4, 0.3){};
\node[circle, fill = black, scale = 0.25] at (0.4, 0.35){};
\node[circle, fill = black, scale = 0.25] at (0.4, 0.45){};
\node[circle, fill = black, scale = 0.25] at (0.5, -0.4){};
\node[circle, fill = black, scale = 0.25] at (1.9, -0.1){};
\node[circle, fill = black, scale = 0.25] at (0.5, -0.33){};
\node[circle, fill = black, scale = 0.25] at (0.5, 0.22){};
\node[circle, fill = black, scale = 0.25] at (0.5, -0.22){};
\node[circle, fill = black, scale = 0.25] at (0.4, -0.2){};
\node[circle, fill = black, scale = 0.25] at (1.9, 0.1){};
\node[circle, fill = black, scale = 0.25] at (1.9, 0.2){};
\end{tikzpicture}\\\hline
 \begin{tikzpicture}[scale=3]
\node[circle, fill = black, scale = 0.3] (A) at (0,0) {$w$};
\node[circle, fill = black, scale = 0.3] (B) at (2,0) {$v$};
\node[] (E) at (1,0) {$\times$};
\draw (A) to [out=60,in=120] node[auto]{$\tau_{i_{\ell-1}}$}(B);
\draw (A) to [out=-60,in=-120] node[below]{$\tau_{i_{\ell+2}}$}(B);
\draw[](B) to [out = 140, in = 90, looseness = 1] node[above]{$\rho$}(0.8,0);
\draw[](0.8,0) to [out = 270, in = 230, looseness = 1] node[auto]{}(B);
\draw[red, thick](0.4, 0.45) to (0.4, 0.3);
\draw[green, thick] (0.4, 0.3) to (0.4, -0.3);
\draw[blue, thick] (0.4, -0.3) to [out = -30, in = 210, looseness = 0.7] (1.6, -0.35);
\draw[green, thick](1.6, -0.35) to (1.6, -0.28);
\draw[red, thick] (1.6, -0.28) to (1.6, -0.15);
\draw[green, thick] (1.6, -0.15) to (1.6, 0.15);
\draw[blue, thick] (1.6, 0.15) to [out = 150, in = 90] (0.9, 0);
\draw[blue, thick] (0.9, 0) to [out = 270, in = 210] (1.5, -0.18);
\draw[green, thick] (1.5, -0.18) to (1.5, 0.18);
\draw[red, thick] (1.5, 0.18) to (1.5, 0.3);
\draw[green, thick] (1.5, 0.3) to (1.5, 0.4);
\draw[blue, thick] (1.5, 0.4) to [out = 160, in = 20] (0.5, 0.4);
\draw[green, thick] (0.5, 0.4) to (0.5, -0.4);
\draw[red, thick] (0.5, -0.4) to (0.5, -0.5);
\node[circle, fill = black, scale = 0.25] at (0.4, -0.3){};
\node[circle, fill = black, scale = 0.25] at (1.5, 0.3){};
\node[circle, fill = black, scale = 0.25] at (1.5, 0.4){};
\node[circle, fill = black, scale = 0.25] at (0.5, 0.4){};
\node[circle, fill = black, scale = 0.25] at (0.5, -0.4){};
\node[circle, fill = black, scale = 0.25] at (0.5, -0.5){};
\node[circle, fill = black, scale = 0.25] at (0.4, 0.3){};
\node[circle, fill = black, scale = 0.25] at (0.4, 0.45){};
\node[circle, fill = black, scale = 0.25] at (0.5, -0.3){};
\node[circle, fill = black, scale = 0.25] at (1.6, -0.35){};
\node[circle, fill = black, scale = 0.25] at (1.6, -0.28){};
\node[circle, fill = black, scale = 0.25] at (1.6, -0.15){};
\node[circle, fill = black, scale = 0.25] at (1.6, 0.15){};
\node[circle, fill = black, scale = 0.25] at (1.5, -0.18){};
\node[circle, fill = black, scale = 0.25] at (1.5, -0.18){};
\end{tikzpicture} & \begin{tikzpicture}[scale = 3]
\node[circle, fill = black, scale = 0.3] (A) at (0,0) {$w$};
\node[circle, fill = black, scale = 0.3] (B) at (2,0) {$v$};
\node[] (E) at (1,0) {$\times$};
\draw (A) to [out=60,in=120] node[auto]{$\tau_{i_{\ell-1}} = \tau_{i_{\ell+2}}$}(B);
\draw (A) to [out=-60,in=-120] node[below]{}(B);
\draw[](B) to [out = 140, in = 90, looseness = 1] node[above]{$\rho$}(0.8,0);
\draw[](0.8,0) to [out = 270, in = 230, looseness = 1] node[auto]{}(B);
\draw[red, thick](0.4, 0.45) to (0.4, 0.3);
\draw[green, thick] (0.4, 0.3) to (0.4, -0.3);
\draw[blue, thick] (0.4, -0.3) to [out = -30, in = 210, looseness = 0.7] (1.6, -0.35);
\draw[green, thick](1.6, -0.35) to (1.6, -0.28);
\draw[red, thick] (1.6, -0.28) to (1.6, -0.15);
\draw[green, thick] (1.6, -0.15) to (1.6, 0.15);
\draw[blue, thick] (1.6, 0.15) to [out = 150, in = 90] (0.9, 0);
\draw[blue, thick] (0.9, 0) to [out = 270, in = 210] (1.5, -0.18);
\draw[green, thick] (1.5, -0.18) to (1.5, 0.18);
\draw[red, thick] (1.5, 0.18) to (1.5, 0.3);
\draw[green, thick] (1.5, 0.3) to (1.5, 0.4);
\draw[red, thick] (1.5, 0.4) to (1.5, 0.5);
\node[circle, fill = black, scale = 0.25] at (0.4, 0.3){};
\node[circle, fill = black, scale = 0.25] at (0.4, 0.45){};
\node[circle, fill = black, scale = 0.25] at (0.4, -0.3){};
\node[circle, fill = black, scale = 0.25] at (1.6, -0.35){};
\node[circle, fill = black, scale = 0.25] at (1.6, -0.28){};
\node[circle, fill = black, scale = 0.25] at (1.6, -0.15){};
\node[circle, fill = black, scale = 0.25] at (1.6, 0.15){};
\node[circle, fill = black, scale = 0.25] at (1.5, -0.18){};
\node[circle, fill = black, scale = 0.25] at (1.5, -0.18){};
\node[circle, fill = black, scale = 0.25] at (1.5, 0.3){};
\node[circle, fill = black, scale = 0.25] at (1.5, 0.4){};
\node[circle, fill = black, scale = 0.25] at (1.5, 0.5){};
\end{tikzpicture}\\ \hline
\end{tabular}
\end{center}
\caption{Sequences of elementary steps to use when $\gamma$ crosses a pending arc twice consecutively. If you are viewing this in color, the green steps are type 1, red steps are type 2 and blue steps are type 3.}\label{fig:MPathRules}
\end{figure}

Now we explain the protocol when $\gamma$ crosses a pending arc $\rho = \tau_{i_\ell}$. First, we assume that $\rho$ is not the first or last arc that $\gamma$ crosses, so $\tau_{i_{\ell-1}}$ and $\tau_{i_{\ell+2}}$ are not necessarily distinct arcs in the bigon or monogon surrounding $\rho$. We give rules for the transition from $\tau_{i_{\ell-1}}$ to $\tau_\ell = \rho$, for the winding inside $\rho$, and the transition from $\tau_{\ell+1}$ to $\tau_{\ell+2}$. These depend on whether $\rho$ is based at a vertex to the right or left of $\gamma$, and whether $\tau_{i_{\ell-1}}$ and $\tau_{i_{\ell+2}}$ are distinct or not. These will not depend on whether $\tau_{i_{\ell-1}}$ and $\tau_{i_{\ell+2}}$ are standard or pending.

First, suppose that $\rho$ is based at a marked point $w$ to the right of $\gamma$, and that~$\tau_{i_{\ell-1}}$ and~$\tau_{i_{\ell+2}}$ are distinct. Then, between $\tau_{i_{\ell-1}}$ and $\rho$, we use a compound step of type~$A$. Between the two crossings of $\rho$, we use an elementary step of type~2 to cross $\rho$. If $\gamma$ winds $k \geq 0$ times around the orbifold point incident to $\rho$, we include a step of type~1 from $v^{-,+}_{w,\rho}$ to $v^{+,-}_{w,\rho}$ followed by $k$ iterations of a step of type 3 along $\rho$ and a step of type 1 from $v^{-,+}_{w,\rho}$ to $v^{+,-}_{w,\rho}$. Finally, we transition from $\rho = \tau_{i_{\ell+1}}$ to $\tau_{i_{\ell+1}}$ with a compound step of type $A$. See the top left of Fig.~\ref{fig:MPathRules}.

Otherwise, we have that $\tau_{i_{\ell-1}} = \tau_{i_{\ell+2}}$, so that $\gamma$ crosses the same arc both before and after crossing $\rho$. Then, at the transition from $\rho = \tau_{i_{\ell+1}}$ to $\tau_{i_{\ell+1}}$, we instead use a compound step of type $B$. The earlier part of the sequence remains the same. See the top right of Fig.~\ref{fig:MPathRules}.

Now suppose that $\rho$ is based to the left of $\gamma$, and first suppose that $\tau_{i_{\ell-1}}$ and $\tau_{i_{\ell+2}}$ are distinct. We can use a compound step of type $B$ to transition from $\tau_{i_{\ell-1}}$ to $\rho$. From our choice of a representative of $\gamma$, we know that $\gamma$ winds $k \geq 1$ times around the orbifold point incident to $\rho$. We can use the same algorithm for the sequence of steps within the pending arc $\rho$, but we will only include $k-1$ self-intersections. Then, we use another compound step of type $B$ to transition from $\rho$ to $\tau_{i_{\ell+2}}$. Note that while $\kappa_\gamma$ only intersects itself $k-1$ times inside the pending arc $\gamma$, it intersects itself one more time outside the pending arc. Thus, $\kappa_\gamma$ remains homotopic to $\gamma$. See the bottom left of Fig.~\ref{fig:MPathRules}.

If $\tau_{i_{\ell-1}}$ and $\tau_{i_{\ell+2}}$ are not distinct, then we can include all $k$ self-intersections in the pending arc $\rho$. In this case, we use a compound step of type $A$ when transitioning from $\tau_{i_{\ell+1}}$ to $\tau_{i_{\ell+2}}$. See the bottom right of Fig.~\ref{fig:MPathRules}.

\begin{Example} As an example, here is the corresponding expansion of matrices for the piece of $\gamma$ portrayed in the case $\tau_{i_{\ell-1}} \neq \tau_{i_{\ell+2}}$ and the pending arc $\rho = \tau_{i_\ell}$ is based to the right of $\gamma$, as in the top left of Fig.~\ref{fig:MPathRules}. For convenience, let $\alpha = \tau_{i_{\ell-1}}$ and $\beta = \tau_{i_{\ell + 2}}$
 \begin{gather*}
 \begin{bmatrix} 1 & 0 \\ \dfrac{x_\alpha}{x_\beta x_\rho} & 1 \end{bmatrix}
 \begin{bmatrix} 1 & 0 \\ 0 & y_{\beta} \end{bmatrix} \begin{bmatrix} 1 & 0 \\ \dfrac{\lambda_p}{x_\rho} & 1 \end{bmatrix}
\begin{bmatrix} 0 & x_\rho \\ \dfrac{-1}{x_\rho} & 0 \end{bmatrix}
 \begin{bmatrix} 1 & 0 \\ \dfrac{\lambda_p}{x_\rho} & 1 \end{bmatrix}
 \begin{bmatrix} 1 & 0 \\ 0 & y_{\rho} \end{bmatrix}
 \begin{bmatrix} 1 & 0 \\ \dfrac{x_\beta}{x_\alpha x_\rho} & 1 \end{bmatrix}
 \begin{bmatrix} 1 & 0 \\ 0 & y_{\alpha} \end{bmatrix}
\\
\qquad{}
= \begin{bmatrix} 1 & 0 \\ \dfrac{x_\alpha}{x_\beta x_\rho} & y_\beta \end{bmatrix}
\begin{bmatrix} \lambda_p & y_{\rho} x_{\rho} \\ \dfrac{(\lambda_p^2 - 1)}{x_{\rho}} & \lambda_p y_{\rho}\end{bmatrix}
 \begin{bmatrix} 1 & 0 \\ \dfrac{x_\beta}{x_\alpha x_\rho} & y_\alpha \end{bmatrix}.
\end{gather*}

In the second line, we multiply the matrices within each compound step. Notice that these matrices resemble those we assigned to pieces of the universal snake graph. We will eventually solidify this connection.
\end{Example}

The cases we have yet to discuss are when the first or last arc that $\gamma$ crosses is a pending arc. We again will vary our procedure based on whether the pending arc is based at a vertex to the right or to the left of~$\gamma$ or if it is based at $s(\gamma)$.

Let $P = s(\gamma)$ be the start of $\gamma$. Recall we choose a representative of $\gamma$ which winds counterclockwise around any orbifold point it encounters. If the pending arc, $\rho$, is based at a vertex other than $P$, then we use a step of type~3 along~$\alpha$, the boundary edge to the right of~$\gamma$, and a~step of type~1 from~$\alpha$ to~$\rho$.
The following steps will depend on how many times~$\gamma$ winds around this orbifold point and which arc $\gamma$ crosses next. These use the same compound steps as in the earlier discussion. For example, on the left-hand side of Fig.~\ref{fig:MPathRulesStart} if~$\gamma$ crosses $\alpha$ after winding around the orbifold point, then after a compound step which winds around the orbifold point, as drawn, we will use a compound step of type B to transition from~$\rho$ to~$\alpha$.

Next, suppose $s(\gamma)$ is also the unique marked point incident to~$\rho$ and~$\gamma$ winds at least once around the orbifold point. Note that if~$\gamma$ does not wind at least once around the orbifold point, it is isotopic to an arc that does not cross the pending arc. See the right-hand side of Fig.~\ref{fig:MPathRulesStart}.
As in the case when $\rho$ is not based at~$s(\gamma)$, the following steps depends on which arc~$\gamma$ crosses next.

\begin{figure}\centering
\begin{tabular}{|c|c|}
\hline
\begin{tikzpicture}[scale=3]
\node[] at (2.1,0) {$P$};
\node[circle, fill = black, scale = 0.3] (A) at (0,0) {$w$};
\node[circle, fill = black, scale = 0.3] (B) at (2,0) {$v$};
\node[] (E) at (1,0) {$\times$};
\draw (A) to [out=60,in=120] node[auto]{$\alpha$}(B);
\draw (A) to [out=-60,in=-120] node[below]{$\beta$}(B);
\draw[](A) to [out = 50, in = 90, looseness = 1] node[auto]{$\rho$}(1.2,0);
\draw[](1.2,0) to [out = 270, in = 310, looseness = 1] node[auto]{}(A);
\draw[blue, thick](1.9, 0.1) to [out = 130, in = 20] (0.4, 0.35);
\draw[green, thick] (0.4, 0.35) to (0.4, 0.28);
\draw[red, thick] (0.4,0.3) to (0.4, 0.2);
\draw[green, thick] (0.4, 0.2) to (0.4, -0.2);
\draw[blue, thick] (0.4, -0.2) to [out = -30, in = 270] (1.1,0);
\draw[blue, thick] (1.1, 0) to [out = 90, in = 30] (0.5, 0.22);
\draw[green, thick] (0.5, 0.22) to (0.5, -0.22);
\node[circle, fill = black, scale = 0.25] at (1.9, 0.1){};
\node[circle, fill = black, scale = 0.25] at (0.4, 0.35){};
\node[circle, fill = black, scale = 0.25] at (0.4, 0.3){};
\node[circle, fill = black, scale = 0.25] at (0.4, 0.2){};
\node[circle, fill = black, scale = 0.25] at (0.4, -0.2){};
\node[circle, fill = black, scale = 0.25] at (0.5, 0.22){};
\node[circle, fill = black, scale = 0.25] at (0.5, -0.22){};
\end{tikzpicture}
&
\begin{tikzpicture}[scale=3]
\node[] at (-0.1,0) {$P$};
\node[circle, fill = black, scale = 0.3] (A) at (0,0) {$w$};
\node[circle, fill = black, scale = 0.3] (B) at (2,0) {$v$};
\node[] (E) at (1,0) {$\times$};
\draw (A) to [out=60,in=120] node[auto]{}(B);
\draw (A) to [out=-60,in=-120] node[below]{}(B);
\draw[](A) to [out = 50, in = 90, looseness = 1] node[auto]{$\rho$}(1.2,0);
\draw[](1.2,0) to [out = 270, in = 310, looseness = 1] node[auto]{}(A);
\draw[green, thick] (0.2, 0.12) to (0.2, -0.12);
\draw[blue, thick] (0.1, -0.08) to [out = -40, in = 270] (1.1,0);
\draw[blue, thick] (1.1, 0) to [out = 90, in = 30] (0.2, 0.12);
\node[circle, fill = black, scale = 0.25] at (0.2, 0.12){};
\node[circle, fill = black, scale = 0.25] at (0.2, -0.12){};
\node[circle, fill = black, scale = 0.25] at (0.1, -0.08){};
\end{tikzpicture}
\\ \hline
\end{tabular}
\caption{Sequences of elementary steps when a pending arc is the first arc which $\gamma$ crosses. If you are viewing this in \textcolor{green}{color}, the green steps are type 1, red steps are type 2 and blue steps are type 3.}\label{fig:MPathRulesStart}
\end{figure}

The cases where the last arc that $\gamma$ crosses is a pending arc, $\rho$ are very similar. If $t(\gamma)$ is distinct from the unique marked point incident to $\rho$, the final compound step will start with a~step of type~2 to cross~$\rho$, then a step of type 1 and a step of type~3. We can see this by traveling the opposite direction along the $M$-path on the left of Fig.~\ref{fig:MPathRulesStart}.

If $t(\gamma)$ is the marked point incident to $\rho$, then our final compound step will be as in the case when $\rho$ is incident to $s(\gamma)$, but again with the order reversed.

Finally, we consider the case when $\gamma$ is a closed curve. Pick a triangle, $\Delta$, such that $\gamma$ consecutively crosses two of its arcs. Label these arcs $\tau_{i_1}$ and $\tau_{i_n}$ such that $\tau_{i_1}$ immediately follows $\tau_{i_n}$ in a clockwise order. Let $q$ be the endpoint of $\tau_{i_1}$ which is not also an endpoint of~$\tau_{i_n}$. Then, the standard $M$-path, $\kappa_\gamma$, will start and stop at $v_{q, \tau_{i_1}}^-$. The $M$-path can start with a~compound step of type $A$ or $B$, depending on whether $\tau_{i_1}$ and $\tau_{i_2}$ share a vertex to the right or left of the chosen orientation of $\gamma$. Then, since by construction $\tau_{i_n}$ and $\tau_{i_1}$ share an endpoint to the left of~$\gamma$, $\kappa_\gamma$ will end with a compound step of type $B$.

\subsection[Upper right entry does not depend on choice of $M$-path]{Upper right entry does not depend on choice of $\boldsymbol{M}$-path}

Lemma~4.8 in \cite{MW} shows that the upper right (trace) of matrices from $M$-paths associated to arcs (closed curves) on a surface does not depend on our choice of $M$-path. For instance, if $\gamma$ is a closed curve, the trace of $M(\kappa)$ for an $M$-path $\kappa$ from $\gamma$ does not depend on $\kappa's$ start and end point since trace is invariant under cyclic permutations.

\begin{Lemma}[{\cite[Lemma 4.8]{MW}}]\label{lem:MWMPathDontCare}
Let $\gamma_1$ and $\gamma_2$ be a generalized arc and closed curve with no contractible kinks, respectively, on a triangulated surface $(S,M)$. Then, given $\kappa_1$ and~$\kappa_2$, two $M$-paths associated to~$\gamma_1$, we have $\vert {\rm ur}(M(\kappa_1)) \vert = \vert {\rm ur}(M(\kappa_2))\vert $. If $\kappa_1'$ and $\kappa_2'$ are two $M$-paths associated to $\gamma_2$, we have $\vert {\rm tr}(M(\kappa_1)) \vert = \vert {\rm tr}(M(\kappa_2)) \vert$.
\end{Lemma}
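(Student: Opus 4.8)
The plan is to read the product $M(\kappa)$ as a holonomy: the assignment of a type-1, type-2, or type-3 matrix to each elementary step should descend to a representation, valued in $\mathrm{PGL}_2$ of the fraction field, of the fundamental groupoid of $S \setminus M$ based at the auxiliary points $v^\pm_{m,\tau}$. Granting this, two $M$-paths $\kappa_1,\kappa_2$ that are homotopic rel endpoints satisfy $M(\kappa_1) = c\,M(\kappa_2)$ for a single scalar $c$ in the fraction field, and the whole problem reduces to (i) proving the holonomy is well defined up to scalar, and (ii) pinning down $c$. The point of working in $\mathrm{PGL}_2$ first is that the local relations only need to be checked \emph{up to scalar}, which is far less bookkeeping than an entrywise identity, and the remaining scalar ambiguity will then be killed by a determinant computation.

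First I would establish (i) by the same reduction to local moves used in \cite{MW}: any homotopy rel endpoints between $\kappa_1$ and $\kappa_2$ can be factored into finitely many moves, each supported in one or two adjacent tiles, namely a \emph{backtrack} (inserting or deleting a step immediately followed by its reverse), a \emph{triangle move} (replacing a type-1 bend around the near vertex of a triangle by the type-3, type-1, type-3 detour around the far edge), and a \emph{vertex move} (changing which side of a marked point the path skirts). For each move I would verify the corresponding matrix relation modulo scalars using the explicit matrices of Section \ref{sec:$M$-path}: a type-3 step composed with its reverse gives $\left[\begin{smallmatrix} 0 & x_\tau \\ -1/x_\tau & 0\end{smallmatrix}\right]^2 = -I$, a type-2 step with its reverse gives $\left[\begin{smallmatrix} y_\tau & 0 \\ 0 & 1\end{smallmatrix}\right]\left[\begin{smallmatrix} 1 & 0 \\ 0 & y_\tau\end{smallmatrix}\right] = y_\tau I$, and the triangle and vertex relations become identities between two local sub-products that agree up to a scalar matrix. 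Because a scalar matrix is central, each move multiplies the entire product $M(\kappa)$ by a scalar, which is exactly the claim $M(\kappa_1)=c\,M(\kappa_2)$.

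Next I would pin down $c$ by determinants, which is the clean step. The three elementary matrices have determinants $1$ (type 1), $y_\tau$ (type 2), and $1$ (type 3, since $-x_\tau\cdot(-1/x_\tau)=1$), so $\det M(\kappa) = \prod_{\text{type-2 steps}} y_\tau$. Since a reduced $M$-path associated to $\gamma$ crosses $T$ exactly along the reduced crossing multiset $\tau_{i_1},\dots,\tau_{i_d}$, this determinant equals $\prod_{j=1}^d y_{\tau_{i_j}}$, an invariant of $\gamma$ that is therefore identical for $\kappa_1$ and $\kappa_2$. Combining with $M(\kappa_1)=c\,M(\kappa_2)$ gives $c^2 = \det M(\kappa_1)/\det M(\kappa_2) = 1$, so $c = \pm 1$. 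Every entry of the two matrices then agrees up to sign, giving $|\mathrm{ur}(M(\kappa_1))| = |\mathrm{ur}(M(\kappa_2))|$ for the arc case. For a closed curve $\gamma_2$, changing the basepoint of the $M$-path conjugates (equivalently, cyclically permutes the factors of) $M(\kappa)$, and free homotopy reduces to a rel-basepoint homotopy after conjugation; since $\mathrm{tr}$ is invariant under both conjugation and cyclic permutation, the same $c=\pm1$ yields $|\mathrm{tr}(M(\kappa_1'))| = |\mathrm{tr}(M(\kappa_2'))|$.

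I expect the main obstacle to lie in step (i): verifying, in all orientation and left/right configurations, that the triangle and vertex moves really do hold up to scalar, and—more delicately—confirming that the notion of "$M$-path associated to $\gamma$" used in the lemma is the reduced one, so that the determinant is genuinely the crossing invariant of $\gamma$ and no stray backtrack inflates it by a factor $y_\tau^2$. Matching the $\pm$ sign conventions in the type-1 and type-3 matrices against the side along which the path travels is the routine but error-prone core of that verification; once it is in place, the determinant argument disposes of the scalar cleanly and the closed-curve case follows formally from cyclicity of the trace.
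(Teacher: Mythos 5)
The first thing to note is that the paper does not prove this statement at all: Lemma \ref{lem:MWMPathDontCare} is imported verbatim from \cite{MW}, and the only invariance result the paper proves itself is the orbifold-specific supplement, Lemma \ref{lem:Mpathdontcare} (done by an explicit Chebyshev-matrix computation). So your proposal can only be measured against Musiker--Williams' own argument, which, like yours, reduces the difference of two $M$-paths to local moves and checks matrix relations; your skeleton is the standard one. Your determinant step is a genuinely clean refinement: type-1 and type-3 matrices are unimodular and a type-2 step has determinant $y_\tau$, so $\det M(\kappa)=\prod_j y_{\tau_{i_j}}$ is an invariant of $\gamma$ (given that, as in \cite{MW}, the definition forces the type-2 steps of an $M$-path to match the crossings of $\gamma$ with $T$ --- the point you rightly flag), and once proportionality $M(\kappa_1)=c\,M(\kappa_2)$ is known, $c^2=1$ in the fraction field gives $c=\pm 1$ with no case analysis, even if intermediate moves create and cancel $y_\tau$ factors.

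Two caveats, the second of which is a genuine gap in the argument as written. First, what you defer in step (i) is the actual content of the lemma: the circuit of alternating type-1 and type-3 steps around a triangle does multiply to a scalar (with $L(u)=\left[\begin{smallmatrix} 1 & 0 \\ u & 1\end{smallmatrix}\right]$ and $E(v)=\left[\begin{smallmatrix} 0 & v \\ -1/v & 0\end{smallmatrix}\right]$ and consistent orientation conventions the product of the six factors is exactly $-I$), but until that and the backtrack relations are recorded in all sign/side configurations you have a plan rather than a proof. Second, your framing assumes the two $M$-paths are homotopic \emph{rel endpoints}, which is false in general: an $M$-path associated to $\gamma$ is only required to begin at \emph{some} $v^{\pm}_{s(\gamma),\tau}$ and end at \emph{some} $v^{\pm}_{t(\gamma),\tau'}$, and these auxiliary points may differ between $\kappa_1$ and $\kappa_2$, so the fundamental-groupoid reduction does not literally apply. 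The repair is short but must be said: the admissible initial (respectively final) points lie in the sector of the first (last) triangle crossed by $\gamma$ and differ by a type-1 step, and pre- or post-composition by a unipotent type-1 matrix leaves the upper-right entry unchanged --- the same endpoint bookkeeping that appears in the paper's proof of Theorem \ref{Thm:ArcsAndGraphs}, where the first column of the initial factor is observed not to affect $\operatorname{ur}$. With the local checks executed and that endpoint argument added, your proof goes through, and the closed-curve case via invariance of the trace under conjugation and cyclic permutation is fine.
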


Since we are in an orbifold, there are more ways to adjust an $M$-path associated to an arc $\gamma$; in particular, if an $M$-path winds $k$ times around an orbifold point of order~$p$, we can adjust it to wind $k + m p$ times for any integer $m$. We show in Lemma~\ref{lem:Mpathdontcare} that these adjustments still do not affect the statistics of the matrices which we care about.

\begin{Lemma}\label{lem:Mpathdontcare}
Let $\kappa_1$ and $\kappa_2$ be two $M$-paths which are identical except at one orbifold point of order $p$, such that at this orbifold point $\kappa_1$ winds $k$ times and $\kappa_2$ winds $k + mp$ times where $m \in \Z$. Then, up to universal sign, $M(\kappa_1) = M(\kappa_2)$. In particular, $\vert {\rm ur}(M(\kappa_1)) \vert = \vert {\rm ur}(M(\kappa_2))\vert$ and $\vert {\rm tr}(M(\kappa_1)) \vert = \vert {\rm tr}(M(\kappa_2)) \vert $.
\end{Lemma}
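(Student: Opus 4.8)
The plan is to reduce the claim to a single identity among $2\times 2$ matrices and then invoke the periodicity of Chebyshev evaluations from Lemma~\ref{lem:ChebyPeriodic}. Since $\kappa_1$ and $\kappa_2$ agree except at the one orbifold point, their sequences of elementary steps factor as a common prefix, the winding portion at that orbifold point, and a common suffix. Writing $M(\kappa_i) = Q\, W_i\, P$, where $P$ and $Q$ are the (identical) matrix products coming from the steps before and after the winding, it suffices to show that the winding matrices satisfy $W_2 = \pm W_1$: the global sign then factors out of $M(\kappa_2) = Q W_2 P = \pm Q W_1 P = \pm M(\kappa_1)$. Here I would note that the start and end points of the winding segment (the points $v^{-,+}_{m,\rho}$ and $v^{+,-}_{m,\rho}$ on the circle $h_m$) do not depend on the winding count, so $P$ and $Q$ are genuinely unchanged.

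First I would identify $W_k$, the matrix associated to winding $k$ times. By the rules of Section~\ref{sec:$M$-path}, this portion consists of one step of type~1 followed by $k$ repetitions of a step of type~3 along $\rho$ together with a step of type~1, giving $W_k = A(BA)^k$, where $A = \left[\begin{smallmatrix} 1 & 0 \\ \pm\lambda_p/x_\rho & 1\end{smallmatrix}\right]$ is the type-1 matrix and $B = \left[\begin{smallmatrix} 0 & \pm x_\rho \\ \mp 1/x_\rho & 0\end{smallmatrix}\right]$ is the type-3 matrix. Setting $N := BA$, a direct computation shows that, independently of the orientation conventions, $N$ has determinant $1$ and trace $\lambda_p$; for instance in the clockwise case $N = \left[\begin{smallmatrix} \lambda_p & x_\rho \\ -1/x_\rho & 0\end{smallmatrix}\right]$.

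Then I would use the Cayley--Hamilton theorem. Since $\det N = 1$ and $\operatorname{tr} N = \lambda_p$, an easy induction gives the closed form $N^k = U_{k-1}(\lambda_p)\,N - U_{k-2}(\lambda_p)\,I$, where $U_\ell$ is the normalized Chebyshev polynomial of Definition~\ref{def:Chebyshev} extended to all integers by its recurrence; this is the matrix identity underlying Lemma~\ref{lem:ChebyshevMatrices}. Applying the periodicity $U_{\ell+p}(\lambda_p) = -U_\ell(\lambda_p)$ from Lemma~\ref{lem:ChebyPeriodic} to both coefficients yields $N^{k+p} = -N^k$, and iterating gives $N^{k+mp} = (-1)^m N^k$ for every $m \in \Z$. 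Hence $W_{k+mp} = A(BA)^{k+mp} = (-1)^m A(BA)^k = (-1)^m W_k$, so $M(\kappa_2) = (-1)^m M(\kappa_1)$, which is the claimed equality up to universal sign. The ``in particular'' statements are then immediate, since a global factor of $-1$ negates both $\operatorname{ur}$ and $\operatorname{tr}$ while leaving their absolute values unchanged.

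The main obstacle I anticipate is bookkeeping rather than conceptual: pinning down the exact shape $W_k = A(BA)^k$ from the step rules, and verifying that $N = BA$ has trace exactly $\lambda_p$ and determinant $1$ for every admissible choice of signs (clockwise versus counterclockwise, and the pending arc based to the left or to the right of $\gamma$), so that the single relation $N^{k+p} = -N^k$ covers all configurations uniformly. Once this normalization is established, the Chebyshev periodicity does all the remaining work.
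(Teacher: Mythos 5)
Your proof is correct and takes essentially the same route as the paper: the paper likewise isolates the extra winding as a power of the product of the type-1 and type-3 elementary matrices and uses Lemma~\ref{lem:ChebyshevMatrices} together with the values $U_{p-2}(\lambda_p)=1$, $U_{p-1}(\lambda_p)=0$, $U_p(\lambda_p)=-1$ (i.e., Lemma~\ref{lem:ChebyPeriodic}) to conclude that this extra factor is $(-1)^m\,\mathrm{Id}$. Your Cayley--Hamilton identity $N^k = U_{k-1}(\lambda_p)N - U_{k-2}(\lambda_p)I$ is just a basis-free repackaging of that same lemma, so the two arguments coincide in substance.
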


To prove this, we will prove a lemma about products of the elementary matrices which correspond to an $M$-path winding around an orbifold point of order $p$. It turns out products of these matrices have Chebyshev polynomials, evaluated at~$\lambda_p$, as coefficients.

\begin{Lemma} \label{lem:ChebyshevMatrices}
Let $k \geq 0$, and let $U_k(x)$ be the $k$-th normalized Chebyshev polynomial of the second kind. Then,
\begin{equation}\label{eqn:chebymatrix}
\left( \begin{bmatrix} 1 & 0 \\ \dfrac{\lambda_p}{x_\rho} & 1 \end{bmatrix} \begin{bmatrix} 0 & x_\rho \\ \dfrac{-1}{x_\rho} & 0 \end{bmatrix} \right)^k = \begin{bmatrix} -U_{k-2} (\lambda_p) & U_{k-1}(\lambda_p) \cdot x_\rho \vspace{1mm}\\ \dfrac{-U_{k-1}(\lambda_p)}{x_\rho} & U_k(\lambda_p) \end{bmatrix}
\end{equation}
and
\begin{equation}\label{eqn:negchebymatrix}
\left( \begin{bmatrix} 0 & -x_\rho \\ \dfrac{1}{x_\rho} & 0 \end{bmatrix} \begin{bmatrix} 1 & 0 \\ \dfrac{-\lambda_p}{x_\rho} & 1 \end{bmatrix} \right)^k = \begin{bmatrix} U_{k} (\lambda_p) & -U_{k-1}(\lambda_p) \cdot x_\rho \vspace{1mm}\\ \dfrac{U_{k-1}(\lambda_p)}{x_\rho} & -U_{k-2}(\lambda_p) \end{bmatrix}.
\end{equation}
\end{Lemma}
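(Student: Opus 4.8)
The plan is to prove the first identity \eqref{eqn:chebymatrix} by induction on $k$, and then deduce the second identity \eqref{eqn:negchebymatrix} almost for free by taking inverses.

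First I would carry out a one-time computation of the base matrix. Multiplying the two factors on the left of \eqref{eqn:chebymatrix} gives
\[
M := \begin{bmatrix} 1 & 0 \\ \tfrac{\lambda_p}{x_\rho} & 1 \end{bmatrix} \begin{bmatrix} 0 & x_\rho \\ \tfrac{-1}{x_\rho} & 0 \end{bmatrix} = \begin{bmatrix} 0 & x_\rho \\ \tfrac{-1}{x_\rho} & \lambda_p \end{bmatrix},
\]
which lies in $\mathrm{SL}_2$ since $\det M = 1$. Using the initial values $U_{-1} = 0$, $U_0 = 1$, $U_1 = \lambda_p$ from Definition~\ref{def:Chebyshev} (and the backward extension $U_{-2} = -1$ forced by the recurrence), one checks the claimed right-hand side directly for $k = 0$ and $k = 1$, which establishes the base case.

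For the inductive step I would write $M^{k+1} = M \cdot M^k$, substitute the inductive hypothesis for $M^k$, and multiply on the left by $M$. Each of the four resulting entries is a $\lambda_p$-linear combination of two consecutive Chebyshev values, and the recurrence $U_\ell = \lambda_p U_{\ell-1} - U_{\ell-2}$ (evaluated at $x = \lambda_p$) collapses each entry to a single Chebyshev polynomial with index raised by one: for instance the $(2,2)$ entry becomes $-U_{k-1} + \lambda_p U_k = U_{k+1}$ and the $(2,1)$ entry becomes $\tfrac{1}{x_\rho}(U_{k-2} - \lambda_p U_{k-1}) = -\tfrac{U_k}{x_\rho}$. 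The outcome is exactly the claimed matrix with $k$ replaced by $k+1$, completing the induction.

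For \eqref{eqn:negchebymatrix} the key observation is that the product on its left-hand side is precisely $M^{-1}$: a direct multiplication shows $\left[\begin{smallmatrix} 0 & -x_\rho \\ 1/x_\rho & 0 \end{smallmatrix}\right]\left[\begin{smallmatrix} 1 & 0 \\ -\lambda_p/x_\rho & 1 \end{smallmatrix}\right] = \left[\begin{smallmatrix} \lambda_p & -x_\rho \\ 1/x_\rho & 0 \end{smallmatrix}\right] = M^{-1}$, as confirmed by $\det M = 1$ and the adjugate formula. Hence the left-hand side of \eqref{eqn:negchebymatrix} equals $(M^{-1})^k = (M^k)^{-1}$, and since $M^k \in \mathrm{SL}_2$ its inverse is given by the swap-and-negate rule $\left[\begin{smallmatrix} a & b \\ c & d \end{smallmatrix}\right]^{-1} = \left[\begin{smallmatrix} d & -b \\ -c & a \end{smallmatrix}\right]$. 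Applying this to the right-hand side of \eqref{eqn:chebymatrix} yields exactly the right-hand side of \eqref{eqn:negchebymatrix}, so no second induction is needed. The computations are entirely routine; the only point requiring care is the index bookkeeping for the Chebyshev polynomials at the low end (the values $U_{-1}$ and $U_{-2}$) in the base case and invoking the recurrence in the correct direction at each matrix entry, so I anticipate no serious obstacle.
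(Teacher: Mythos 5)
Your proof is correct and takes essentially the same approach as the paper: the paper's own proof is a one-line remark that the identities follow by induction together with the recurrence $U_\ell(x) = xU_{\ell-1}(x) - U_{\ell-2}(x)$, which is precisely your argument for \eqref{eqn:chebymatrix}, including the correct handling of the low-index values $U_{-1}=0$ and $U_{-2}=-1$. Your additional observation that the second product equals $M^{-1}$, so that \eqref{eqn:negchebymatrix} follows from \eqref{eqn:chebymatrix} via the $\mathrm{SL}_2$ adjugate formula rather than a second induction, is a minor but legitimate streamlining of the same idea.
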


\begin{proof}Recall our convention that $U_{-1}(x) = 0$, $U_0(x) = 1$, and the normalized recurrence for $\ell > 0$: $U_\ell(x) = x U_{\ell-1}(x) - U_{\ell-2}(x)$. This proof follows by induction and the recurrence for Chebyshev polynomials.
\end{proof}

\begin{Remark}Equation \ref{eqn:negchebymatrix} can also be thought about as making sense of what matrix should be assigned to a generalized arc which winds $k$ times clockwise around an orbifold point.
\end{Remark}

\begin{Remark}Compare the matrices in Lemma \ref{lem:ChebyshevMatrices} with the statement of Proposition \ref{prop:MatchingMatrices} and the labels we include in hexagonal tiles from an arc with nontrivial winding about an orbifold point. In particular, note that if we consider a hexagonal tile as $UG_2$, then there is exactly one perfect matching in each $A_n$, $B_n$, $C_n$, and~$D_n$, and each matching uses exactly one edge with label $U_\ell(\lambda_p)x_\rho$ for some $\ell$ and for the pending arc $\rho$.
\end{Remark}

Now, we can prove Lemma~\ref{lem:Mpathdontcare}.

\begin{proof}If $m = 0$, this lemma is trivial. If $m > 0$, then the expansion of $M(\kappa_2)$ into elementary matrices will have a term $\left( \left[\begin{smallmatrix} 1 & 0 \\ \frac{\lambda_p}{\rho} & 1 \end{smallmatrix}\right] \left[\begin{smallmatrix} 0 & \rho \\ \frac{-1}{\rho} & 0 \end{smallmatrix}\right] \right)^{mp} = \left(\left[\begin{smallmatrix} -U_{p-2} (\lambda_p) & U_{p-1}(\lambda_p) \cdot \rho \\ \frac{-U_{p-1}(\lambda_p)}{\rho} & U_p(\lambda_p) \end{smallmatrix}\right]\right)^m $. Recall that $U_{p-2}(\lambda_p) = 1$, $U_{p-1}(\lambda_p) = 0$, and $U_p(\lambda_p) = -1$. Thus, this extra factor in the expansion of $M(\kappa_2)$ is simply $\pm \text{Id}$, where the sign depends on the parity of $m$. Thus, $\vert {\rm ur}(M(\kappa_1)) \vert = \vert {\rm ur}(M(\kappa_1)) \vert$. The case where $m < 0$ is similar.
\end{proof}

From Lemmas~\ref{lem:MWMPathDontCare} and~\ref{lem:Mpathdontcare}, we see that we can always use the standard $M$-path, $\kappa_\gamma$ for any generalized arc or closed curve~$\gamma$ and not affect the upper right or trace, respectively, of the associated matrix.

We have that the upper right (trace) of the matrix from an $M$-path for most arcs (closed curves) is a well-defined statistic. However, if $\gamma$ is a closed curve, there are some cases where~$\gamma$ does not cross any arcs on~$T$ and so the $M$-path may be ambiguous. Musiker and Williams deal with curves which are contractible or enclose a single puncture~\cite{MW}. In an orbifold, we can also have a curve which encloses a single orbifold point.

 We turn to the normalized Chebyshev polynomials \emph{of the first kind} which Musiker, Schiffler, and Williams use to describe arcs such as we are describing.

\begin{Definition}[{\cite[Definition 2.33 and Proposition 2.34]{MSW-bases}}]\label{def:ChebyShevFirstKind}
Let $T_\ell(x)$ denote the $\ell$-th norma\-lized Chebyshev polynomial of the first kind, for $\ell \geq -1$. These are given by initial polynomials $T_{0}(x) = 2$, $T_1(x) = x$, and the recurrence,
\[
T_\ell(x) = xT_{\ell-1}(x) - T_{\ell-2}(x).
\]
\end{Definition}

While the definition in \cite{MSW-bases} keeps track of an extra variable $Y$, for now we set $Y = 1$. See Section \ref{sec:RelateToPuncture} for a related discussion of $y$-variables.

We give the following as a corollary of Proposition 4.2 of \cite{MSW-bases}.

\begin{Proposition}\label{prop:MSWChebyshev}
Let $\gamma$ be isotopic to a closed loop encompassing a single orbifold point with $k\geq 0$ self intersections. Then, $x_\gamma = T_{k+1}(\lambda_p)$.
\end{Proposition}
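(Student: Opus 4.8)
The plan is to reduce the computation of $x_\gamma$ to the trace of a single $2\times 2$ matrix and then invoke the power--trace identity for $\mathrm{SL}_2$. Since $\gamma$ encircles a single orbifold point and meets the triangulation only through the pending arc $\rho$ enclosing that point, its standard $M$-path consists entirely of the elementary steps that wind around the orbifold point. By the matrix formulation (Theorem~\ref{Thm:ArcsAndGraphs}) together with the independence statements Lemmas~\ref{lem:MWMPathDontCare} and~\ref{lem:Mpathdontcare}, $x_\gamma$ is computed, up to the crossing monomial, by $\operatorname{tr} M(\kappa_\gamma)$, and this trace is well defined independently of the chosen representative and of the winding count modulo $p$. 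The single full winding is governed by the base product appearing in Lemma~\ref{lem:ChebyshevMatrices},
\[
W = \begin{bmatrix} 1 & 0 \\ \dfrac{\lambda_p}{x_\rho} & 1 \end{bmatrix} \begin{bmatrix} 0 & x_\rho \\ \dfrac{-1}{x_\rho} & 0 \end{bmatrix} = \begin{bmatrix} 0 & x_\rho \\ \dfrac{-1}{x_\rho} & \lambda_p \end{bmatrix},
\]
which satisfies $\det W = 1$ and $\operatorname{tr} W = \lambda_p$.

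Next I would match the geometry to the algebra: a closed loop around the orbifold point with $k$ self-intersections is isotopic to the $(k+1)$-st bracelet of the essential loop, i.e.\ it winds $k+1$ times, so $M(\kappa_\gamma)$ is conjugate (up to a global sign) to $W^{k+1}$. Since trace is conjugation invariant, $x_\gamma = \operatorname{tr}\big(W^{k+1}\big)$. Two equivalent routes then finish the computation. Using Lemma~\ref{lem:ChebyshevMatrices} directly gives
\[
\operatorname{tr}\big(W^{k+1}\big) = U_{k+1}(\lambda_p) - U_{k-1}(\lambda_p),
\]
and the identity $T_\ell(x) = U_\ell(x) - U_{\ell-2}(x)$, immediate from the recurrences in Definitions~\ref{def:Chebyshev} and~\ref{def:ChebyShevFirstKind}, yields $\operatorname{tr}\big(W^{k+1}\big) = T_{k+1}(\lambda_p)$. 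Alternatively, Cayley--Hamilton for $W \in \mathrm{SL}_2(\Real)$ gives $W^2 = (\operatorname{tr} W)\,W - I$, so the numbers $\operatorname{tr}(W^n)$ satisfy $\operatorname{tr}(W^0) = 2$, $\operatorname{tr}(W^1) = \lambda_p$, and $\operatorname{tr}(W^{n}) = \lambda_p\,\operatorname{tr}(W^{n-1}) - \operatorname{tr}(W^{n-2})$, which is exactly the defining data of $T_n(\lambda_p)$; this is the content of Proposition~4.2 of~\cite{MSW-bases}, of which the present statement is a corollary. Either route establishes $x_\gamma = T_{k+1}(\lambda_p)$.

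The main obstacle is the bookkeeping in the reduction step rather than the final identity. I must verify that a closed loop with $k$ self-intersections genuinely contributes the matrix $W^{k+1}$: this requires counting windings correctly (zero self-intersections corresponds to one winding, the essential loop), checking that closing the $M$-path into a cyclic word only conjugates the matrix, and confirming that $\operatorname{cross}(T,\gamma)$ produces no net factor of $x_\rho$, consistent with the fact that $\operatorname{tr}\big(W^{k+1}\big)$ is already a polynomial in $\lambda_p$ alone. The base case $k=0$, where $T_1(\lambda_p) = \lambda_p$ agrees with the value prescribed in Definition~\ref{def:closedcurve} for a curve bounding a disk with a single orbifold point, both pins down the global sign left ambiguous by Lemmas~\ref{lem:MWMPathDontCare} and~\ref{lem:Mpathdontcare} and serves as the consistency check for the normalization.
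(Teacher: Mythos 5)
Your proposal is correct and follows essentially the same route as the paper: the paper's proof likewise builds an $M$-path consisting of $k+1$ winding steps (of types~1 and~3), applies Lemma~\ref{lem:ChebyshevMatrices} to identify the entries of the resulting power of $W$, and concludes via the identity $T_\ell(x) = U_\ell(x) - U_{\ell-2}(x)$ of Lemma~\ref{lem:RelatingChebyshev} that the trace equals $U_{k+1}(\lambda_p) - U_{k-1}(\lambda_p) = T_{k+1}(\lambda_p)$, also citing Proposition~4.2 of~\cite{MSW-bases} just as you do. Your Cayley--Hamilton alternative and the attention to winding bookkeeping are fine additions but do not change the substance of the argument.
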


This proposition largely follows from the following relationship amongst the normalized Chebyshev polynomials discussed here.

\begin{Lemma}\label{lem:RelatingChebyshev}
Let $U_\ell(x)$ and $T_\ell(x)$ be normalized Chebyshev polynomials of the second and first kind, respectively, as in Definitions {\rm \ref{def:Chebyshev}} and {\rm \ref{def:ChebyShevFirstKind}}. Then, for $\ell \geq 1$, \[
T_\ell(x) = U_\ell(x) - U_{\ell-2}(x).
\]
\end{Lemma}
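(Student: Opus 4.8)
The plan is to prove the identity by a straightforward induction on $\ell$, exploiting the fact that both sides satisfy the same three-term recurrence. First I would introduce the auxiliary family $V_\ell := U_\ell(x) - U_{\ell-2}(x)$ for $\ell \geq 1$ and show that it obeys exactly the recurrence defining $T_\ell$, namely $V_\ell = x V_{\ell-1} - V_{\ell-2}$. This follows by writing, for $\ell \geq 3$,
\[ V_\ell = U_\ell - U_{\ell-2} = (x U_{\ell-1} - U_{\ell-2}) - (x U_{\ell-3} - U_{\ell-4}) = x(U_{\ell-1} - U_{\ell-3}) - (U_{\ell-2} - U_{\ell-4}) = x V_{\ell-1} - V_{\ell-2}, \]
where I have applied the recurrence of Definition~\ref{def:Chebyshev} to both $U_\ell$ and $U_{\ell-2}$; all indices remain $\geq -1$, so the convention $U_{-1}(x)=0$ is the only boundary value that enters.

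Next I would verify the two base cases that pin down the induction. For $\ell=1$ we have $V_1 = U_1 - U_{-1} = x - 0 = x = T_1(x)$, and for $\ell = 2$ we have $V_2 = U_2 - U_0 = (x^2-1) - 1 = x^2 - 2 = x\cdot x - 2 = T_2(x)$, using $T_0(x)=2$ and $T_1(x)=x$ from Definition~\ref{def:ChebyShevFirstKind}. Since $V_\ell$ and $T_\ell$ satisfy the same second-order recurrence and agree at $\ell = 1$ and $\ell = 2$, they agree for all $\ell \geq 1$, which is precisely the claimed identity.

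There is no genuine obstacle here; the only point requiring a little care is the bookkeeping at the boundary of the index range, ensuring that the recurrence step is invoked only for $\ell \geq 3$ (so that $V_{\ell-2}$ is defined) and that the normalization $U_{-1}=0$ is applied correctly in the base case. Everything else is an immediate consequence of the linearity of the recurrence: any fixed linear combination of shifted solutions of $f_\ell = x f_{\ell-1} - f_{\ell-2}$ is again a solution, so once the two initial values are matched the conclusion is forced.
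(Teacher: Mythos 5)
Your proof is correct and follows exactly the route the paper indicates: the paper merely remarks that the lemma ``can be proved using induction and the recurrence relations for each type of Chebyshev polynomials,'' and your argument is a complete write-up of precisely that induction, with the base cases $\ell=1,2$ and the observation that $V_\ell = U_\ell - U_{\ell-2}$ satisfies the defining recurrence of $T_\ell$. No gaps; the index bookkeeping (invoking the recurrence only for $\ell \geq 3$ and using $U_{-1}=0$) is handled correctly.
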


Lemma~\ref{lem:RelatingChebyshev} can be proved using induction and the recurrence relations for each type of Chebyshev polynomials.

\begin{proof}[Proof of Proposition~\ref{prop:MSWChebyshev}]
If $\gamma$ intersects itself $k \geq 0$ times, we can build an $M$-path for $\gamma$, call it $\kappa$, which is a sequence of $k+1$ steps of types~1 and~3. From Lemmas~\ref{lem:ChebyshevMatrices} and~\ref{lem:RelatingChebyshev}, we immediately see that ${\rm tr}M(\kappa) = U_{k+1}(\lambda_p) - U_{k-1}(\lambda_p) = T_{k+1}(\lambda_p)$. We see this follows naturally from Proposition~4.2 of~\cite{MSW-bases} since if $\xi$ is an essential loop around this single orbifold point and~$\kappa$ is an~$M$-path from~$\xi$, then we have ${\rm tr}(M(\kappa)) = \lambda_p$.
\end{proof}

Now, we are prepared to state a complete definition.

\begin{Definition}\label{def:chi_gamma}
Let $\gamma$ be a generalized arc and $\gamma'$ be a closed curve on an unpunctured orbifold $\mathcal{O}$ with triangulation~$T$. Then, $\chi_{\gamma,T} = \vert \emph{ur} (M(\kappa_\gamma)) \vert$.

If $\gamma'$ is contractible, set $\chi_{\gamma',T} = -2$. If $\gamma'$ is isotopic to a closed loop encompassing a single orbifold point of order $p$, with $k \geq 0$ self-intersections, let $\chi_{\gamma',T} = T_{k+1}(\lambda_p)$. Otherwise, let $\chi_{\gamma',T} = \vert {\rm tr} (M(\kappa_{\gamma'})) \vert$.
\end{Definition}

In Theorem \ref{Thm:ArcsAndGraphs} we will compare $\chi_{\gamma,T}$ with $X_{\gamma,T}$. Recall we found $X_{\gamma,T}$ by building a snake graph from $\gamma$.

\begin{Remark}Musiker and Williams show in \cite[Section~4]{MW} that their matrices, after specializations, generalize work of Fock and Goncharov in~\cite{FG3} which also associated matrix products to paths in triangulated surfaces as a way to construct coordinates on the corresponding Teichmuller space.

In their paper defining generalized cluster algebras, Chekhov and Shapiro update the matrix products which compute $X$-coordinates (in the sense of Fock--Goncharov) to include orbifolds~\cite{Chekhov-Shapiro}.
They accomplish this by assigning the matrix $F_p = \left(\begin{smallmatrix} 0 & 1 \\ -1 & -\lambda_p \end{smallmatrix}\right)$ to the piece of a path going around an orbifold point. If an arc winds $k$ times around an orbifold point, they include $(-I_2)^{k-1}F_p^k$ where~$I_2$ is a $2 \times 2$ identity matrix.

Notice that when $k = 1$ and when we specialize $x_\rho = 1$ in the matrix in equation \eqref{eqn:chebymatrix}, we get a matrix similar to~$F_p$. Thus, we can interpret this matrix as recording a composition of steps of types~1 and~3. (Musiker--Williams also have matrices which differ by a sign along the diagonal from Fock--Goncharov, which does not affect the desired matrix statistics.) This is akin to the quasi-elementary steps which Musiker--Williams associate to matrices from Fock--Goncharov which correspond to a path turning left or right inside a triangle. Thus, we can interpret these new matrices from Chekhov--Shapiro as a way to record turning ``inside'' a pending arc (when pending arcs as visualized as loops around orbifold points, as shown in Section~\ref{sec:orbifolds}).
\end{Remark}

\subsection{Connecting arcs and snake graphs}

So far, given an arc or closed curve $\gamma$ on an orbifold $\mathcal{O}$ with corresponding generalized cluster algebra $\mathcal{A}$, we have provided two elements of~$\mathcal{A}$ from $\gamma$: $X_{\gamma,T}$ and $\chi_{\gamma,T}$. We now show these are always the same element of $\mathcal{A}$

\begin{Theorem}\label{Thm:ArcsAndGraphs}
Let $\mathcal{O}$ be an unpunctured orbifold with triangulation $T$, and let $\gamma$ be any arc or closed curve on $\mathcal{O}$. Let $e(\gamma, T) = d \geq 1$ and $G_{T,\gamma}$ be the snake graph $($or band graph$)$ constructed from $\gamma$. Then,
\begin{equation}\label{MandGraphEqn}
\chi_{\gamma,T} = X_{\gamma,T} = \frac{1}{{\rm cross}(T,\gamma)} \sum_P x(P) h(P),
\end{equation}
where the summation ranges over all perfect matchings~$P$ of~$G_{T,\gamma}$.
\end{Theorem}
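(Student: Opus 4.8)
The plan is to prove equation~\eqref{MandGraphEqn} by induction on $d = e(\gamma,T)$, showing that the upper-right entry (respectively, trace) of the matrix product $M(\kappa_\gamma)$ coincides with the generating function of perfect (respectively, good) matchings of $G_{T,\gamma}$, after dividing by $\operatorname{cross}(T,\gamma)$. The conceptual engine is Theorem~\ref{thm:upperright}, which already expresses the weighted matching sum of a universal snake graph $UG_d$ (and its band-graph gluings) as an upper-right entry or trace of the product $M_d$ bracketed by explicit boundary matrices. So the heart of the argument is to match the $M$-path matrices $\eta_1,\ldots,\eta_n$ from Section~\ref{sec:$M$-path} against the universal tile matrices $m_j$ from the definition of $MG_n$, tile by tile, and then invoke Theorem~\ref{thm:upperright} to translate back into matchings. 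Concretely, I would establish that the product of elementary matrices accumulated as $\kappa_\gamma$ passes through consecutive crossings $\tau_{i_j}, \tau_{i_{j+1}}$ equals the universal tile matrix $m_j$ for the corresponding tile $G_j$ of $G_{T,\gamma}$, up to the boundary normalizations.

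First I would treat the base case $d=1$: here $\gamma$ crosses a single arc, $G_{T,\gamma}$ is a single tile, and a direct computation of $\operatorname{ur}(M(\kappa_\gamma))$ against the two matchings of $UG_1$ settles it (this is essentially the surface case of Musiker--Williams specialized by $\lambda_p$ where a pending arc is crossed). For the inductive step, the key observation is already flagged in the Remark following Lemma~\ref{lem:ChebyshevMatrices}: the compound steps in the standard $M$-path assemble, at each crossing, into exactly the $2\times 2$ matrix appearing as a factor $m_j$ of $MG_n$, where the Chebyshev-polynomial entries $U_\ell(\lambda_p)$ produced by Lemma~\ref{lem:ChebyshevMatrices} are precisely the edge labels $U_\ell\rho$, $U_{\ell+1}\rho$, $U_{\ell-1}\rho$ on the hexagonal tiles from Section~\ref{Sec:Construction}. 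I would verify the three cases in parallel: a standard-arc crossing reproduces the square-tile factor (matching the surface computation of \cite{MW}), a pending-arc crossing with winding $\ell$ reproduces the hexagonal-tile factor via the explicit product computed in the Example of Section~\ref{sec:StandardMPath}, and the compound steps of type~A and type~B supply exactly the off-diagonal boundary edges that the gluing rule of Section~\ref{Sec:Construction} assigns. Since $\chi_{\gamma,T}$ is well-defined independent of the chosen $M$-path by Lemmas~\ref{lem:MWMPathDontCare} and~\ref{lem:Mpathdontcare}, it suffices to verify the identity for the standard $M$-path $\kappa_\gamma$.

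Once the factor-by-factor correspondence $M(\kappa_\gamma) = (\text{boundary}) \cdot m_d \cdots m_1 \cdot (\text{boundary})$ is in place, Theorem~\ref{thm:upperright} finishes the arc case immediately, since its right-hand sides are exactly $\operatorname{cross}(T,\gamma) \cdot X_{\gamma,T}$; for closed curves I would use the trace formulas of Theorem~\ref{thm:upperright} together with the band-graph gluing, handling the degenerate cases (contractible loop, loop around one orbifold point) via Definition~\ref{def:chi_gamma} and Proposition~\ref{prop:MSWChebyshev}, which already computes $\chi_{\gamma,T} = T_{k+1}(\lambda_p)$ in exactly the form demanded by Definition~\ref{def:closedcurve}.

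I expect the main obstacle to be the bookkeeping of signs and of the boundary-matrix normalizations when a pending arc is the \emph{first} or \emph{last} arc crossed, since there the square/hexagonal tile is truncated and the $M$-path begins or ends inside the loop around the orbifold point (the cases of Fig.~\ref{fig:MPathRulesStart}). In those cases the correspondence between the initial/terminal compound steps and the boundary matrices in Theorem~\ref{thm:upperright} is least transparent, and I would need to check carefully that winding $p$ times versus $0$ times (the representative-choice convention of Section~\ref{sec:StandardMPath}) does not introduce a spurious sign, relying on Lemma~\ref{lem:ChebyPeriodic} and the $\pm\mathrm{Id}$ computation inside the proof of Lemma~\ref{lem:Mpathdontcare}. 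The reassuring point is that all such ambiguities are absorbed into the absolute values $\lvert\operatorname{ur}(\cdot)\rvert$ and $\lvert\operatorname{tr}(\cdot)\rvert$ in Definition~\ref{def:chi_gamma}, so only the \emph{magnitude} of the matrix statistic needs to be matched, which is exactly the positive quantity $\sum_P x(P)h(P)$.
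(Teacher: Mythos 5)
Your proposal is correct and follows essentially the same route as the paper's proof: matching the compound-step matrices of the standard $M$-path factor-by-factor against the universal tile matrices $m_j$ (with Lemma~\ref{lem:ChebyshevMatrices} supplying the hexagonal-tile Chebyshev entries), noting that the initial/final boundary matrices need not match exactly but do not affect the upper-right entry, and then invoking Theorem~\ref{thm:upperright} for arcs and its trace version for band graphs, with the degenerate closed curves handled by Definition~\ref{def:chi_gamma} and Proposition~\ref{prop:MSWChebyshev}. The only cosmetic difference is your induction-on-$d$ framing, which is idle scaffolding: the ``inductive step'' you describe is exactly the direct tile-by-tile verification the paper carries out without induction.
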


\begin{proof}First, we briefly discuss the case where $d=0$ for use in later portions of the proof. If $d = 0$, then $\gamma \in T$. In this case, the standard $M$-path is a step of type~3 along $\gamma$ and its associated matrix is simply $\left[\begin{smallmatrix} 0 & \pm x_\gamma \\ \frac{\mp 1}{x_\gamma} & 0 \end{smallmatrix}\right]$. The snake graph $G_{T,\gamma}$ consists of two vertices connected by a single edge with label $\gamma$. Such a graph has exactly one perfect matching.

Now, we are prepared to consider $d > 0$. First, we consider the case when $\gamma$ is an arc. Recall Theorem \ref{thm:upperright}, \[\frac{1}{{\rm cross}(T,\gamma)} \sum_P x(P) h(P)
=
{\rm ur} \left( \begin{bmatrix} \dfrac{x_w}{x_{i_d}} & x_z y_{i_d} \vspace{1mm}\\ \dfrac{-1}{x_z} & 0 \end{bmatrix} M_d \begin{bmatrix} 0 & x_a \\ \dfrac{-1}{x_a} & \dfrac{x_b}{x_{i_1}} \end{bmatrix}\right).\] Moreover, recall that $M_d = m_{d-1} \cdots m_1$ where for $j \geq 1$\[
m_{2j} = \begin{bmatrix}\dfrac{r_{2j}}{i_{2j}} & y_{2j} a_{2j} \vspace{1mm}\\ \dfrac{b_{2j}}{i_{2j}i_{2j+1}} & y_{2j} \dfrac{\ell_{2j}}{i_{2j+1}} \end{bmatrix}, \qquad m_{2j-1} = \begin{bmatrix} \dfrac{\ell_{2j-1}}{i_{2j-1}} & y_{2j-1} b_{2j-1} \vspace{1mm}\\ \dfrac{a_{2j-1}}{i_{2j-1}i_{2j}} & y_{2j-1} \dfrac{r_{2j-1}}{i_{2j}} \end{bmatrix}.
\] These elements of the matrices $m_i$ are labels of the edges of the universal snake graph $UG_d$. When our graph comes from an arc on a triangulated orbifold, the labels of the edges of the graph correspond to arcs in the orbifold. Here, the first triangle that $\gamma$ passes through has sides~$a$,~$b$, and $\tau_{i_1}$ in clockwise order, and $\tau_{i_1}$ is the arc which~$\gamma$ crosses. Similarly, the last triangle that $\gamma$ crosses through has sides~$w$,~$z$, and~$\tau_{i_d}$ in clockwise order, and~$\tau_{i_d}$ is the last arc which $\gamma$ crosses.

We gave an algorithm for determining $\kappa_\gamma$, the standard $M$-path of $\gamma$, in terms of a sequence of compound steps. If $\gamma$ crosses $d$ arcs in $T$, we use $d-1$ compound steps, as well as initial and final sequence of elementary steps. Each compound step has an associated matrix.

In most cases, the product of matrices associated to the elementary steps before the first crossing in the standard $M$-path is $\left[\begin{smallmatrix} 0 & x_a \\ \frac{-1}{x_a} & \frac{x_b}{x_{i_1}} \end{smallmatrix}\right]$. When $\gamma$ first crosses a pending arc, $\rho$, and $s(\gamma)$ is also the unique marked point incident to $\rho$, this matrix is of the form $\left[\begin{smallmatrix} * & x_a \\ * & \frac{x_b}{x_{i_1}} \end{smallmatrix}\right]$. However, the terms in the first column will not affect the upper right entry of the product of matrices. This is similar for the product of matrices associated to the elementary steps at and after the last crossing in $\kappa_\gamma$.

Next, we compare the matrices $m_i$ in the description of $M_d$ with the matrices from each compound step of $\kappa_\gamma$. In our rules for $\kappa_\gamma$, if $\gamma$ crosses two consecutive standard arcs, $\tau_{i_j}$~and~$\tau_{i_{j+1}}$, which share a vertex to the right of $\gamma$, then we use a compound step of type $A$. Multiplying the elementary matrices these correspond to gives the matrix $\left[\begin{smallmatrix} 1 & 0 \\ \frac{x_{c_j}}{x_{i_j}x_{i_{j+1}}} & y_{i_j} \\ \end{smallmatrix}\right]$ where $c_j$ is the third edge in the triangle formed by $\tau_{i_j}$ and $\tau_{i_{j+1}}$. If~$\gamma$ crosses a standard arc~$\tau_{i_j}$ and then a pending arc $\tau_{i_{j+1}}$ and $\tau_{i_{j+1}}$ is based to the right of~$\gamma$, then we have the same form of matrix.

From our construction of snake graphs, if $\tau_{i_j}$ and $\tau_{i_{j+1}}$ share a vertex to the right of $\gamma$, and $i_j$ is odd, then we use a north-pointing parallelogram, so $b_j = 0$. In this case, $r_j = i_{j+1}$ and $\ell_j = i_j$.
If $i_j$ is even, then we use an east-pointing parallelogram, so $a_j =0$, $r_j = i_j$ and $\ell_j = i_{j+1}$.
These specializations apply even if $\tau_{i_{j+1}}$ is a pending arc. In either case, the matrix from the $j$-th compound step in $\kappa_\gamma$ matches the matrix we use for the $j$-th parallelogram in~$G_{T,\gamma}$.

If $\tau_{i_j}$ and $\tau_{i_{j+1}}$ share a vertex to the left of $\gamma$, or if $\tau_{i_{j+1}}$ is a pending arc based to the left of $\gamma$, then we use a compound step of type $B$ to transition between these. Multiplying the matrices in this compound step gives $\left[\begin{smallmatrix} \frac{x_{i_{j+1}}}{x_{i_j}} & y_{i_j}x_{c_j} \\ 0 & \frac{y_{i_j}x_{i_j}}{x_{i_{j+1}}} \end{smallmatrix}\right] $. When constructing $G_{T,\gamma}$, if $i_j$ is odd, we use an east-pointing parallelogram and if $i_j$ is even, we use a north-pointing parallelogram. When we use the relevant specializations, we see again that in either case the matrix $m_j$ matches the matrix in the expansion of~$M(\kappa)$ from this compound step.

Next, consider when $\tau_{i_j} = \tau_{i_{j+1}}$; this implies $\tau_{i_j}$ is a pending arc. Suppose that $\gamma$ winds $k \geq 0$ times around the orbifold point enclosed by $\tau_{i_j}$. Then, the product matrices from the series of elementary steps from the standard $M$-path are \[
\left( \begin{bmatrix} 1 & 0 \\ \dfrac{\lambda_p}{\rho} & 1 \end{bmatrix} \begin{bmatrix} 0 & \rho \\ \dfrac{-1}{\rho} & 0 \end{bmatrix} \right)^k \begin{bmatrix} 1 & 0 \\ \dfrac{\lambda_p}{\rho} & 1 \end{bmatrix} \begin{bmatrix} 1 & 0 \\ 0 & y_{i_j} \end{bmatrix}.
\]

By Lemma \ref{lem:ChebyshevMatrices} and the recurrence relation for Chebyshev polynomials, we have \[
\begin{bmatrix} -U_{k-2} (\lambda_p) & U_{k-1}(\lambda_p) \cdot x_\rho \vspace{1mm}\\ \dfrac{-U_{k-1}(\lambda_p)}{x_\rho} & U_k(\lambda_p) \end{bmatrix} \begin{bmatrix} 1 & 0 \vspace{1mm}\\ \dfrac{\lambda_p}{x_\rho} & 1 \end{bmatrix} \begin{bmatrix} 1 & 0 \vspace{1mm}\\ 0 & y_{i_j} \end{bmatrix} = \begin{bmatrix} U_k(\lambda_p) & U_{k-1}(\lambda_p) \cdot y_{i_j} x_\rho \vspace{1mm}\\ \dfrac{U_{k+1}(\lambda_p)}{x_\rho} & U_k(\lambda_p) \cdot y_{i_j} \end{bmatrix}.
\]

If $k = 0$, then $U_{-1}(\lambda_p) = 0$, and this resembles the case when $\tau_{i_j}$ and $\tau_{i_{j+1}}$ are standard arcs which share a vertex to the right of $\gamma$. If $k = p-2$, then $U_{(p-2)+1}(\lambda_p) = 0$, and this resembles the case where two consecutive standard arcs share a vertex to the left of $\gamma$. If $0 < k < p-2$, then all four entries of this matrix are nonzero.

In our construction of $G_{T,\gamma}$, when an arc winds $k > 0$ times around an orbifold point, we associate a hexagonal tile. In the language of the universal snake graph, we construct this with a parallelogram where both diagonals are included. Moreover, we have $i_j = i_{j+1} = x_\rho$, and $r_j = \ell_j = U_k(\lambda_p) x_\rho$. Then, either $a_j = U_{k-1}(\lambda_p) x_\rho$ and $b_j = U_{k+1}(\lambda_p) x_\rho$ or vice versa.

When $\tau_{i_j}$ is a pending arc and $\tau_{i_{j+1}}$ is not a pending arc, we have to consider both whether~$\tau_{i_j}$ is to the left or right of $\gamma$ and whether $\tau_{i_{j-2}}$ is distinct from $\tau_{i_{j+1}}$. We saw these four cases in the description of $\kappa_\gamma$. If $\tau_{i_{j+1}}$ and $\tau_{i_{j-2}}$ are distinct arcs, then we use the same compound step between $\tau_{i_{j-2}}$ and $\tau_{i_{j-1}}$ as we do between $\tau_{i_{j}}$ and $\tau_{i_{j+1}}$. For example, if $\tau_{i_{j+1}}$ and $\tau_{i_{j-2}}$ are distinct arcs and $\tau_{i_j}$, the pending arc, is based to the right of $\gamma$, then between $\tau_{i_j}$ and $\tau_{i_{j+1}}$ we use a~compound step of type $A$, just as we use between $\tau_{i_{j-2}}$ and $\tau_{i_{j-1}}$. When constructing $G_{T,\gamma}$ in this case, at indices $j-2$ and $j$ we either have both parallelograms facing north or both facing east. Thus, in the standard labeling of the universal snake graph, either both $a_{j-2} = a_j = 0$ or $b_{j-2} = b_j = 0$. Conversely, if $\tau_{i_{j-2}} = \tau_{i_{j+1}}$, in $\kappa_\gamma$ we use opposite compound steps between $\tau_{i_{j-2}}$ and $\tau_{i_{j-1}}$ and between $\tau_{i_{j}}$ and $\tau_{i_{j+1}}$. In the construction of $G_{T,\gamma}$, we use opposite parallelograms at indices $j-2$ and $j$. By specializing the entries of the matrices $m_i$ from the parallelograms at each case, we will see that the matrices from the graph and $M$-path agree again.

Putting all these cases together demonstrates that the matrices from the compound step decomposition of $\kappa_\gamma$ largely match the matrices used in Theorem \ref{prop:MatchingMatrices} to encode weighted perfect matchings of $G_{\gamma, T}$. The initial and final matrices will not necessarily completely match. However, we can conclude the following \[\vert {\rm ur}(M(\kappa_\gamma)) \vert = \left| \ur\left( \begin{bmatrix} \dfrac{x_w}{x_{i_d}} & x_z y_{i_d} \vspace{1mm}\\ \dfrac{-1}{x_z} & 0 \end{bmatrix} M_d \begin{bmatrix} 0 & x_a \vspace{1mm}\\ \dfrac{-1}{x_a} & \dfrac{x_b}{x_{i_1}} \end{bmatrix} \right) \right| =\frac{1}{{\rm cross}(T,\gamma)} \sum_P x(P) h(P).
\]

Now, let $\gamma$ be a closed curve, and let $q = v_{\tau_{i_1},m}^\pm$ be a point chosen on $\gamma$ for $\kappa_\gamma$ to start and end. Then, $\tau_{i_1}$ and $\tau_{i_d}$ form two sides of the triangle which $q$ lives in; call the third side of this triangle~$a$. We see that we can start with a compound step of type $A$ or $B$ since we start adjacent to the first arc which $\gamma$ crosses. However, when we cross $\tau_{i_d}$, we will need to include a compound step of type $B$ to return to $q$, since $\tau_{i_1}$ follows $\tau_{i_d}$ immediately clockwise. This compound step has matrix $\left[\begin{smallmatrix} \frac{x_{i_1}}{x_{i_d}}& 0 \\ \frac{x_{a}}{x_{idn}x_{i_{1}}} & \frac{y_{i_d}x_{i_1}}{x_{i_d}} \end{smallmatrix}\right]$. Thus, $M(\kappa_\gamma) = {\rm tr}\left(\left[\begin{smallmatrix} \frac{x_{i_1}}{x_{i_d}}& 0 \\ \frac{x_{a}}{x_{i_d}x_{i_{1}}} & \frac{y_{i_d}x_{i_1}}{x_{i_d}} \end{smallmatrix}\right] M_d \right)$. By Theorem~\ref{thm:upperright}, this is equivalent to the weighted sum of perfect matchings of the band graph~$G_{T,\gamma}$.
\end{proof}

\begin{Corollary}
Let $\gamma$ be an arc or closed curve on $\mathcal{O}$ with no contractible kinks which winds at most $p-1$ times around any particular orbifold point of order $p$. Then the coefficients of the Laurent expansion for $x_{\gamma}$ obtained from the cluster expansion formula in Theorem~{\rm \ref{Thm:ArcsAndGraphs}} are non-negative.
\end{Corollary}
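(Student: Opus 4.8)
The plan is to reduce positivity entirely to the non-negativity of the numerical constants $U_\ell(\lambda_p)$ that decorate the edges of $G_{T,\gamma}$. By Theorem~\ref{Thm:ArcsAndGraphs}, the Laurent expansion of $x_\gamma$ is
\[
x_\gamma = \frac{1}{\operatorname{cross}(T,\gamma)} \sum_P x(P)h(P),
\]
where $P$ ranges over the perfect matchings (respectively good matchings) of $G_{T,\gamma}$. Since $\operatorname{cross}(T,\gamma)=\prod_j x_{\tau_{i_j}}$ is a single monomial in the initial cluster variables with coefficient $1$, dividing by it only shifts exponents and leaves the coefficient of every resulting monomial unchanged. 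Hence it suffices to show that every term $x(P)h(P)$ carries a non-negative numerical coefficient, and in particular the cancellation of mutual factors observed in the examples cannot disturb positivity.

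First I would dispose of the manifestly positive pieces. The height monomial $h(P)=y(P)$ is by definition a product of the coefficient variables $y_{\tau_k}$, hence a positive element of $\mathbb{P}$. The weight $x(P)$ is a product of the labels of the edges used by $P$, and by the tile construction of Section~\ref{Sec:Construction} every such label is either an initial cluster variable $x_\tau$ or a Chebyshev multiple $U_\ell(\lambda_p)\,x_\rho$ arising from a square or hexagonal tile at a pending arc $\rho$. Thus $x(P)$ equals a monomial in the $x_\tau$ times a product of constants $U_\ell(\lambda_p)$, and the entire problem collapses to showing that each factor $U_\ell(\lambda_p)$ that actually appears is non-negative.

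For this the key step is the trigonometric identity $U_\ell\big(2\cos\theta\big)=\dfrac{\sin\big((\ell+1)\theta\big)}{\sin\theta}$, specialized at $\theta=\pi/p$, giving $U_\ell(\lambda_p)=\sin\!\big((\ell+1)\pi/p\big)/\sin(\pi/p)$. The winding hypothesis, together with Lemma~\ref{lem:ChebyPeriodic} and the representative fixed for the standard $M$-path in Section~\ref{sec:StandardMPath}, guarantees that every Chebyshev index $\ell$ occurring on a tile satisfies $-1\le \ell\le p-1$; for such $\ell$ we have $(\ell+1)\pi/p\in(0,\pi]$, whence $\sin\!\big((\ell+1)\pi/p\big)\ge 0$ and $U_\ell(\lambda_p)\ge 0$ (with the boundary values $U_{-1}(\lambda_p)=U_{p-1}(\lambda_p)=0$). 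The instance $\ell=1$ records $\lambda_p=U_1(\lambda_p)\ge 0$, which also settles the closed-curve special value $X_{\gamma,T}=\lambda_p$ of Definition~\ref{def:closedcurve}. For closed curves I would run the identical argument over good matchings, invoking Theorem~\ref{thm:upperright} for the band-graph expansion.

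The main obstacle is the bookkeeping in this last step: I must pin down exactly which indices $\ell$ can decorate each tile type as a function of the winding number. On a hexagonal tile the largest index present is $\ell+1$, so naively allowing $\ell=p-1$ would produce $U_p(\lambda_p)=-1<0$. The point is that, by the isotopy identifying winding $k$ times counterclockwise with winding $(p-1)-k$ times clockwise and the periodicity $U_{k+p}(\lambda_p)=-U_k(\lambda_p)$ of Lemma~\ref{lem:ChebyPeriodic}, the standard representative never realizes an index beyond $p-1$, so the dangerous value $U_p$ is never actually used. Verifying this reduction, namely that the bound of at most $p-1$ windings forces all realized indices into $[-1,p-1]$ after passing to the standard $M$-path, is the delicate heart of the argument; once it is in place, non-negativity of every coefficient, and hence of the full Laurent expansion, follows immediately.
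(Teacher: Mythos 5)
Your proposal is correct and follows essentially the same route as the paper: reduce positivity to the non-negativity of the edge-weight constants $U_\ell(\lambda_p)$ for the indices $-1\le\ell\le p-1$ actually realized on the tiles, then observe that a sum of perfect (or good) matchings of a graph with non-negative edge weights has manifestly non-negative coefficients. The only cosmetic difference is that you establish $U_\ell(\lambda_p)\ge 0$ directly from the sine-quotient formula, whereas the paper cites Lemma~\ref{lem:ChebyPeriodic} together with the geometric interpretation in Lemma~\ref{lem:polygonlength} (lengths of diagonals of a regular $p$-gon) for the same fact.
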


\begin{proof}
By Lemmas \ref{lem:ChebyPeriodic} and \ref{lem:polygonlength}, $U_k(\lambda_p) \geq 0$ if $0 < k \leq p-1$. Therefore, for such $\gamma$, $x_\gamma$ is given by a sum of weighted perfect matchings of a graph with positive edge weights and the coefficients are manifestly non-negative.
\end{proof}

\section[Skein relations with $y$-variables]{Skein relations with $\boldsymbol{y}$-variables}\label{sec:Skein}

The following definition will be useful for a condensed discussion of the skein relations in \cite{MW}.

\begin{Definition}
A \emph{multicurve}, $C$, on $\mathcal{O}$ is a finite multi-set of arcs and closed curves on $\mathcal{O}$. If these arcs and curves are $\gamma_1,\ldots, \gamma_n$, then we define the monomial $\chi_{C,T}$ to be the product $\chi_{\gamma_1,T}\cdots \chi_{\gamma_n,T}$.
\end{Definition}

Musiker and Williams \cite{MW} prove in Propositions 6.4, 6.5, and 6.6 that, in the surface case, the quantities $\chi_{\gamma,T}$ respect the skein relation. Let $C$ be the multicurve which consists of either~$\gamma_1$ and~$\gamma_2$, two generalized arcs or closed curves which intersect, or $\gamma$, an arc or closed curve with points of self-intersection. At one point of intersection between $\gamma_1$ and $\gamma_2$, or one point of self-intersection on $\gamma$, we can use \emph{smoothing}. This will create two new multicurves, call them $C_1$ and $C_2$; amongst the arcs in~$C_1$ and~$C_2$, there is at least one less intersection than amongst the arcs in~$C$. See~\cite{MW} for more details about the process of smoothing and the proofs of these propositions.

\begin{Theorem}[{\cite[Propositions 6.4, 6.5, and 6.6]{MW}}]\label{MWSkein}
Let $\mathcal{A}$ be the cluster algebra associated to surface $(S,M)$ with initial triangulation $T$.
Given $C$, a multicurve consisting of two intersecting arcs/curves or one arc with self-intersection, and $C_1$, $C_2$ the multicurves resulting in smoothing one point of intersection in $C$, in $\mathcal{A}$ we have \[
\chi_{C,T} = \pm Y_1 \chi_{C_1,T} \pm Y_2 \chi_{C_2,T}.
\]
Here, $Y_1$ and $Y_2$ are monomials in the $y$-variables and can be computed by analyzing the intersections of the arcs/curves in $C$, $C_1$, and $ C_2$ with the elementary laminations from the initial triangulation~$T$.
\end{Theorem}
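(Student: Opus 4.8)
The plan is to reduce the identity to a manipulation of the $2\times 2$ matrices attached to $M$-paths, which is exactly the mechanism underlying Propositions 6.4--6.6 of \cite{MW}; since the statement concerns an ordinary surface $(S,M)$ one may simply invoke those propositions, but the matrix formalism of Section~\ref{sec:StandardMPath} gives a transparent reproof that will also guide the orbifold generalization in Proposition~\ref{Prop:OrbSkein}. First I would fix the distinguished crossing point $x$ of $C$ and, using Lemma~\ref{lem:MWMPathDontCare} to guarantee that the value of $\chi$ is insensitive to the choice of path, select $M$-paths for the curves of $C$ that coincide near $x$, so that each associated matrix $M(\kappa)$ factors as a product in which the elementary steps through $x$ appear as a common block. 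The two smoothings $C_1$ and $C_2$ are then obtained by recombining these blocks in the two possible ways.

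For two intersecting closed curves $\gamma_1,\gamma_2$, recall from Definition~\ref{def:chi_gamma} that $\chi_{\gamma_i,T}=\vert\operatorname{tr} M(\kappa_{\gamma_i})\vert$. Writing $A=M(\kappa_{\gamma_1})$ and $B=M(\kappa_{\gamma_2})$ aligned at $x$, the two smoothings correspond to the products $AB$ and $AB^{-1}$ up to reorientation. The key algebraic input is the generalized Fricke identity for $2\times 2$ matrices,
\[
\operatorname{tr}(A)\operatorname{tr}(B) = \operatorname{tr}(AB) + \det(B)\,\operatorname{tr}\big(AB^{-1}\big),
\]
which follows at once from the adjugate relation $\operatorname{adj}(B)=\operatorname{tr}(B)\,I-B$ together with $B\,\operatorname{adj}(B)=\det(B)\,I$. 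Since each elementary matrix of type~1 or~3 is unimodular while a type~2 step contributes a factor $y_\tau$ to the determinant, $\det(B)$ is precisely a monomial in the $y$-variables; this produces the coefficient $Y_2$ (and, after the symmetric rearrangement, $Y_1$), matching the shear-coordinate computation. Passing to absolute values as in Definition~\ref{def:chi_gamma} absorbs the ambiguous signs $\pm$.

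For the arc cases, and for the self-intersection case, I would run the same scheme with $\operatorname{ur}$ in place of $\operatorname{tr}$. After aligning the $M$-paths through $x$, the product $\chi_{C,T}$ becomes a product of upper-right entries, and the two resolutions are read off from the two ways of splicing the shared crossing block; the relevant $2\times 2$ identity again expresses one upper-right entry in terms of the other two, with the determinant factors supplying $Y_1,Y_2$. When $C$ is a single arc with a self-intersection, the chosen $M$-path traverses the crossing region twice, so the common block occurs twice in $M(\kappa)$ and the splicing is internal to a single matrix product; Theorem~\ref{Thm:ArcsAndGraphs} then guarantees that the resulting $\operatorname{ur}$ expression still equals the weighted matching sum of the associated (band) graph.

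The main obstacle will be the bookkeeping rather than the algebra: one must arrange the $M$-paths so that the matrices genuinely factor through a common crossing block in each configurational case (arc--arc, arc--closed curve, closed--closed, and self-intersection), and then verify that the determinant factors arising in the Fricke-type identities coincide term by term with the $y$-monomials $Y_1,Y_2$ predicted by the intersections of $C$, $C_1$, $C_2$ with the elementary laminations of $T$. Tracking the signs $\pm$ through these recombinations is delicate, but all of them are rendered harmless once we take absolute values in the definition of $\chi$; this is exactly the content of \cite[Propositions 6.4, 6.5, and 6.6]{MW}, on which the statement may ultimately be rested.
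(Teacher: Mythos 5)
The paper gives no proof of this statement at all: it is imported verbatim from \cite{MW} (Propositions 6.4, 6.5, and 6.6) and is only re-used later, when the proof of Proposition~\ref{Prop:OrbSkein} argues that MW's matrix-identity machinery (their Lemma~6.11) carries over to orbifolds, so resting the claim on the citation, as you ultimately do, coincides with the paper's own treatment. Your sketch of the underlying mechanism---splicing aligned $M$-paths at the crossing, Cayley--Hamilton/Fricke-type trace and upper-right identities, unimodularity of type-1 and type-3 steps versus determinant $y_\tau$ for type-2 steps, path-independence via Lemma~\ref{lem:MWMPathDontCare}, and absolute values absorbing the signs---is precisely the content of that MW proof, hence essentially the same approach; the one imprecision is that $Y_1,Y_2$ are not literally $\det(B)$ (that factor is consumed in converting $B^{-1}$ into the matrix of the reversed path, since reversing a type-2 step gives $y_\tau$ times its inverse), but rather emerge when the spliced, generally non-taut paths are reduced to standard $M$-paths, which is the bookkeeping you correctly flag as the remaining work.
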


We can use the same skein relations from~\cite{MW} when we have pending arcs or, more generally, arcs which wind around orbifold points, by treating them as any arc on a surface.

\begin{Proposition}\label{Prop:OrbSkein}
Theorem {\rm \ref{MWSkein}} holds on an unpunctured orbifold~$\mathcal{O}$.
\end{Proposition}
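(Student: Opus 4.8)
The plan is to reduce the skein relation to an identity about products of $2\times 2$ matrices and then import the Musiker--Williams argument locally. By Theorem~\ref{Thm:ArcsAndGraphs} we have $\chi_{\gamma,T} = X_{\gamma,T}$ for every generalized arc and closed curve, so it suffices to establish $\chi_{C,T} = \pm Y_1 \chi_{C_1,T} \pm Y_2 \chi_{C_2,T}$ for the quantities $\chi$, which are defined through the $M$-path matrices $M(\kappa_\gamma)$. The advantage of working with $\chi$ rather than the snake graph is that smoothing is a purely local operation: the crossing or self-crossing point $x$ being resolved is a transverse double point, hence a regular point of $\mathcal{O}$ disjoint from $Q$, so a small disk around $x$ looks exactly like a neighborhood of a crossing on an ordinary surface.

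First I would fix representatives of the curves in the multicurve $C$ and choose $M$-paths that agree outside a small disk $D$ around $x$, differing only in how they traverse $D$; the smoothed multicurves $C_1$ and $C_2$ then inherit $M$-paths that coincide with these away from $D$. Writing each relevant product as $M(\kappa) = A\,E\,B$, where $E$ collects the elementary-step matrices taken inside $D$ and $A,B$ are the (shared) contributions from the remainder of the path, the skein relation becomes the assertion that the local factor $E$ can be rewritten as a signed combination of the corresponding local factors for $C_1$ and $C_2$. Inside $D$ only elementary steps of types~1, 2, and~3 occur, and these carry precisely the same matrices as in the surface setting; consequently the local matrix identities underlying Propositions~6.4--6.6 of~\cite{MW} apply verbatim, while $A$ and $B$ pass through as common factors that do not affect the relation. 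The monomials $Y_1,Y_2$ are read off from intersections with the elementary laminations exactly as before, with Proposition~\ref{prop:ShearCoordMutation} guaranteeing that our orbifold shear-coordinate conventions record these intersections correctly.

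The main obstacle is the case in which one or both strands being smoothed wind around an orbifold point, so that the $M$-paths carry the pending-arc matrices with entries involving $\lambda_p$ and Chebyshev polynomials. Here I would argue that all such winding is confined to the portions $A$ and $B$ of the path lying outside $D$, and therefore appears identically on both sides of the relation; by Lemma~\ref{lem:Mpathdontcare}, together with Lemmas~\ref{lem:ChebyPeriodic} and~\ref{lem:ChebyshevMatrices}, the winding matrices can moreover be normalized, up to an overall sign, so that they never interfere with the local factor $E$. Thus even a self-crossing produced by winding is resolved by the same surface-type identity, with the Chebyshev-coefficient matrices recognized as shared factors on either side via Lemma~\ref{lem:ChebyshevMatrices}. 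The equation $\chi_{C,T} = \pm Y_1\chi_{C_1,T} \pm Y_2\chi_{C_2,T}$ then follows. The only delicate bookkeeping is tracking the universal sign ambiguity in $\operatorname{ur}$ and $\operatorname{tr}$ through the two resolutions, but this ambiguity is identical to the surface case and is handled exactly as in~\cite{MW}.
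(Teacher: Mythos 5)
Your proposal is correct and takes essentially the same approach as the paper's proof, which likewise reduces the skein relation to the $M$-path matrix formulation and invokes the matrix identities of Musiker--Williams (\cite[Lemma~6.11]{MW}, the engine behind their Propositions~6.4--6.6), observing that smoothing takes place away from orbifold points, that the winding/Chebyshev matrices are just additional $\mathrm{SL}_2$ factors in the products, and that no loosened $M$-paths are needed since there are no punctures. One caution: your intermediate framing---that the relation localizes to the factor $E$ while the outer factors $A$ and $B$ ``pass through as common factors''---is not literally how the identities work, since smoothing recombines the outer segments across the curves of $C_1$ and $C_2$ (handling exactly this recombination, including the orientation reversals via adjugates, is the content of \cite[Lemma~6.11]{MW}); but because you ultimately invoke those identities verbatim, as the paper does, the argument stands.
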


\begin{proof}
The proofs of these propositions from Musiker Williams work just as well on an orbifold. The smoothing of $C$ in either case is the same, and we have shown a way to encode the expansion of the cluster algebra element associated to an arc or closed curve by a product of a sequence of matrices. Thus, we can use the same matrix equalities \cite[Lemma~6.11]{MW} which are the fundamental tool in their proofs. Since we do not consider punctures in our surface, we do not need to use their machinery of the loosened $M$-path.
\end{proof}

These skein relations show that our choice of cluster algebra element $\chi_{\gamma,T}$ associated to a~generalized arc or closed curve $\gamma$ is the right choice. In particular, we can decompose $\gamma$ into a~sum of products of ordinary arcs. By Theorem~\ref{Thm:GenSnake+}, we already know the correct cluster algebra elements to associate to the ordinary arcs. Proposition~\ref{Prop:OrbSkein} shows that the associated cluster algebra elements satisfy the same decomposition. Since $\chi_{\gamma,T} = X_{\gamma,T}$, we can conclude our expansion formula provides the right choice of cluster algebra element for arbitrary arcs and closed curves.

Standard skein relations resemble the binomial exchange relation in an ordinary cluster algebra. When two pending arcs intersect, we can use the standard skein relation twice to recover a~three-term relation which models the generalized exchanges in the generalized cluster algebras we consider.

\begin{Proposition}\label{prop:OrbSkeinThreeTerm}
Let $\gamma_1$, $\gamma_2$ be two distinct pending arcs to the same orbifold point in an unpunctured orbifold $\mathcal{O}$ with triangulation $T$. Choose an orientation for $\gamma_1$, and let $q_1,\ldots, q_{2\ell}$ be the intersections of $\gamma_1$ and $\gamma_2$, with order determined by the orientation of $\gamma_1$. Orient $\gamma_2$ so that it visits $q_\ell$ before $q_{\ell + 1}$. Let $\beta_1 = (\gamma_1, \gamma_2^- \vert s(\gamma_1), q_\ell, s(\gamma_2))$ and let $\beta_2 = (\gamma_1^-, \gamma_2 \vert t(\gamma_1), q_{\ell + 1}, t(\gamma_2))$. Then, \[
\chi_{\gamma_1,T} \chi_{\gamma_2,T} = Y_0 \chi_{\beta_1,T}^2 + Y_1 \lambda_p \chi_{\beta_1,T} \chi_{\beta_2,T} + Y_2 \chi_{\beta_2,T}^2.
\]
\end{Proposition}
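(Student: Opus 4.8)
The plan is to derive the three-term relation by applying the two-term skein relation of Proposition~\ref{Prop:OrbSkein} twice, at the two crossings $q_\ell$ and $q_{\ell+1}$ that sit on either side of the shared orbifold point, and then to identify the resulting multicurves and their $y$-monomial coefficients. Throughout I would use that $\chi_{\gamma,T}$ respects the skein relation (Proposition~\ref{Prop:OrbSkein}) and that a closed loop encircling a single orbifold point is evaluated by Proposition~\ref{prop:MSWChebyshev}.

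First I would smooth the intersection of $\gamma_1$ and $\gamma_2$ at $q_\ell$. By Proposition~\ref{Prop:OrbSkein} this expresses $\chi_{\gamma_1,T}\chi_{\gamma_2,T}$ as a signed sum $Y_a\chi_{C_1,T}+Y_b\chi_{C_2,T}$ over the two smoothings, where the coefficients are $y$-monomials read off from the elementary laminations exactly as in Theorem~\ref{MWSkein}. The resolution that reconnects the two ``initial halves'' produces $\beta_1=(\gamma_1,\gamma_2^-\vert s(\gamma_1),q_\ell,s(\gamma_2))$ together with a leftover arc $\eta$ assembled from the two ``terminal halves''; since $\eta$ reuses the segments of $\gamma_1$ and $\gamma_2$ that both still pass through $q_{\ell+1}$, it acquires a single self-intersection there. (This is the self-intersecting arc anticipated in the Remark preceding the statement.) The complementary smoothing $C_2$ is treated symmetrically.

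Next I would apply Proposition~\ref{Prop:OrbSkein} a second time, now to the self-intersection of $\eta$ at $q_{\ell+1}$ (and likewise inside $C_2$). The key geometric observation is that one of these second smoothings separates off the bigon cut out by $\gamma_1$ and $\gamma_2$ between $q_\ell$ and $q_{\ell+1}$: this bigon is a simple closed loop encircling the single shared orbifold point with no self-intersection, so by Proposition~\ref{prop:MSWChebyshev} with $k=0$ its associated element is $T_1(\lambda_p)=\lambda_p$. The remaining piece of that resolution is exactly $\beta_2=(\gamma_1^-,\gamma_2\vert t(\gamma_1),q_{\ell+1},t(\gamma_2))$, so the cross term carries the factor $\lambda_p\,\chi_{\beta_1,T}\chi_{\beta_2,T}$, while the other second-smoothings furnish the pure terms $\chi_{\beta_1,T}^2$ and $\chi_{\beta_2,T}^2$. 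Multiplying the $y$-monomials from the two applications of the skein relation and collecting terms then yields the claimed coefficients $Y_0,Y_1,Y_2$.

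The main obstacle will be the bookkeeping of the second step: verifying that the leftover arcs from the first smoothing recombine, after the second smoothing, into precisely $\beta_1$, $\beta_2$, and the orbifold-encircling loop rather than some other configuration, and confirming that the four signs produced by the two skein relations conspire to give the stated all-positive three-term expression. I expect the symmetry of two pending arcs about their common orbifold point to force $q_\ell$ and $q_{\ell+1}$ to play interchangeable roles, which should make the orientation and sign tracking manageable; the precise $y$-monomials $Y_0,Y_1,Y_2$ can then be pinned down from the shear-coordinate rules established in Proposition~\ref{prop:ShearCoordMutation}.
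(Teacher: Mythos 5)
Your strategy coincides with the paper's own proof: smooth the crossing of $\gamma_1$ and $\gamma_2$ at $q_\ell$, smooth the resulting self-intersection at $q_{\ell+1}$, and evaluate the closed loop that splits off around the orbifold point as $T_1(\lambda_p)=\lambda_p$ via Proposition~\ref{prop:MSWChebyshev}. The genuine gap is your assertion that the two resolutions at $q_\ell$ are symmetric, so that a second smoothing is also performed ``inside $C_2$'' and the pure terms $\chi_{\beta_1,T}^2$ and $\chi_{\beta_2,T}^2$ both arise from second smoothings. This is false, and it is exactly the point where the bookkeeping you flag as the main obstacle would break down. Smoothing $C=\{\gamma_1,\gamma_2\}$ at $q_\ell$ gives $C_1=\{\beta_1,\alpha\}$, where $\alpha=(\gamma_1^-,\gamma_2 \vert t(\gamma_1),q_\ell,t(\gamma_2))$ is the unique self-intersecting piece, but $C_2=\{\beta_2,\beta_2\}$: in that resolution neither resulting arc winds around the orbifold point, the crossing at $q_{\ell+1}$ is now a crossing between two \emph{distinct} arcs and is removable by isotopy, and both arcs are isotopic to $\beta_2$. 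Hence the term $Y_2\chi_{\beta_2,T}^2$ comes directly from the first smoothing; only $\alpha$ is smoothed again, producing $\{\xi,\beta_2\}$ and $\{\beta_1\}$, which give the terms $Y_1\lambda_p\chi_{\beta_1,T}\chi_{\beta_2,T}$ and $Y_0\chi_{\beta_1,T}^2$.

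This asymmetry is not a nuisance to be finessed by the symmetry of the configuration; it is the structural reason the relation has the stated shape. If $C_2$ really contained a self-intersecting arc $\eta'$, the skein relation applied to $\eta'$ would necessarily produce two terms, not the single pure term $\chi_{\beta_2,T}^2$, and one of them would be a second cross term carrying $\lambda_p\chi_{\beta_1,T}\chi_{\beta_2,T}$. After collecting terms, the coefficient of $\lambda_p\chi_{\beta_1,T}\chi_{\beta_2,T}$ would then be a sum of two $y$-monomials rather than the single monomial $Y_1$ of the statement, so the symmetric picture cannot reproduce the proposition as written. Once the first smoothing is corrected to $C_1=\{\beta_1,\alpha\}$ and $C_2=\{\beta_2,\beta_2\}$, the remainder of your outline --- the loop $\xi$ splitting off with $\chi_{\xi,T}=\lambda_p$, the residual arc of that resolution being $\beta_2$, and the other resolution of $\alpha$ giving $\beta_1$ --- is exactly the paper's argument.
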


\begin{proof}
Note that $\beta_1$ and $\beta_2$ form a bigon around the orbifold point incident to $\gamma_1$ and $\gamma_2$ such that these are the two pending arcs inside the bigon.

First, we use Proposition~6.4 of \cite{MW} to smooth $\gamma_1$ and $\gamma_2$ at $q_\ell$. In the vocabulary of Theorem~\ref{MWSkein}, if $C = \{\gamma_1, \gamma_2\}$, then $C_1 = \{\beta_1,\alpha\}$ and $C_2 = \{\beta_2,\beta_2\}$ where $\alpha = (\gamma_1^-, \gamma_2 \vert t(\gamma_1), q_\ell, t(\gamma_2))$. Note that $\alpha$ has one self-intersection. We can then use Proposition 6.6 of \cite{MW} to smooth $\alpha$. If $\bar{C} = \{\alpha\}$, then after smoothing we get the two multicurves $\bar{C}_1 = \{\xi, \beta_2\}$ and $\bar{C}_2 = \{\beta_1 \}$ where~$\xi$ is an essential loop around the orbifold point incident to~$\gamma_1$ and~$\gamma_2$. Then, we can decompose~$C_1$ to $C_{1,1} = \{\xi, \beta_1,\beta_2\}$ and $C_{1,2} = \{\beta_1,\beta_1\}$.

\begin{center}
\begin{tikzpicture}[scale=1.8]
\node[circle, fill = black, scale = 0.3] (A) at (-3,0) {$w$};
\node[circle, fill = black, scale = 0.3] (B) at (-1,0) {$v$};
\node[] (E) at (-2,0) {$\times$};
\node[](F) at (-2.2, 0){};
\node[](G) at (-1.8,0){};
\node[](G1) at (-2, -0.2){};
\node[](G2) at (-2,0.2){};
\draw (-3,0) to [out=60,in=120] node[auto]{}(-1,0);
\draw (-3,0) to [out=-60,in=-120] node[below]{}(-1,0);
\draw[red,thick](-1,0) to [out = 150, in = 90] node[above, right]{}(-2.2,0);
\draw[red,thick](-1,0) to [out = 210, in = 270] node[below]{$\gamma_2$}(-2.2,0);
\draw[red, thick](-3,0) to [out = 30, in = 90] node[above]{$\gamma_1$} (-1.8,0);
\draw[red, thick](-3,0) to [out = -30, in = 270] (-1.8,0);
\node[circle, fill = black, scale = 0.3] at (-3,0){};
\node[circle, fill = black, scale = 0.3] at (-1,0){};
\node[] at (-0.5,0){$=$};
\node[circle, fill = black, scale = 0.3] (A) at (0,0) {$w$};
\node[circle, fill = black, scale = 0.3] (B) at (2,0) {$v$};
\node[] (E) at (1,0) {$\times$};
\draw (A) to [out=60,in=120] node[auto]{}(B);
\draw[red, thick] (A) to [out=-60,in=-120] node[below]{$\beta_1$}(B);
\draw[red, thick](0,0) to [out = 30, in = 90] node[above]{} (1.2,0);
\draw[red, thick](1.2,0) to [out = 270, in = 0] (1,-0.2);
\draw[red, thick] (1,-0.2) to [out = 180, in = 180] (1,0.2);
\draw[red, thick] (1,0.2) to [out = 0, in = 150] (2,0);
\node[circle, fill = black, scale = 0.3] (A) at (0,0) {$w$};
\node[circle, fill = black, scale = 0.3] (B) at (2,0) {$v$};
\node[] at (2.5,0){$+$};
\node[circle, fill = black, scale = 0.3] (A) at (3,0) {$w$};
\node[circle, fill = black, scale = 0.3] (B) at (5,0) {$v$};
\node[] (E) at (4,0) {$\times$};
\draw[red, thick] (A) to [out=60,in=120] node[auto]{}(B);
\draw[red, thick] (3,0.1) to [out=60,in=120] node[auto]{$\beta_2^2$}(5,0.1);
\draw[] (A) to [out=-60,in=-120] node[below]{}(B);
\end{tikzpicture}
\begin{tikzpicture}[scale=1.8]
\node[] at (-3.2,0){$=$};
\node[circle, fill = black, scale = 0.3] (A) at (-3,0) {$w$};
\node[circle, fill = black, scale = 0.3] (B) at (-1,0) {$v$};
\node[] (E) at (-2,0) {$\times$};
\node[](F) at (-2.2, 0){};
\node[](G) at (-1.8,0){};
\node[](G1) at (-2, -0.2){};
\node[](G2) at (-2,0.2){};
\draw (-3,0) to [out=60,in=120] node[auto]{}(-1,0);
\draw[red,thick] (-3,0) to [out=-60,in=-120] node[below]{}(-1,0);
\draw[red,thick] (-3,-0.1) to [out=-60,in=-120] node[below]{$\beta_1^2$}(-1,-0.1);
\node[circle, fill = black, scale = 0.3] at (-3,0){};
\node[circle, fill = black, scale = 0.3] at (-1,0){};
\node[] at (-0.5,0){$+$};
\node[circle, fill = black, scale = 0.3] (A) at (0,0) {$w$};
\node[circle, fill = black, scale = 0.3] (B) at (2,0) {$v$};
\node[] (E) at (1,0) {$\times$};
\draw[red,thick] (A) to [out=60,in=120] node[auto]{$\beta_2$}(B);
\draw[red, thick] (A) to [out=-60,in=-120] node[below]{$\beta_1$}(B);
\draw[red, thick](1,0.2) to [out = 0, in = 0] (1,-0.2);
\draw[red, thick] (1,-0.2) to [out = 180, in = 180] (1,0.2);
\node[circle, fill = black, scale = 0.3] (A) at (0,0) {$w$};
\node[circle, fill = black, scale = 0.3] (B) at (2,0) {$v$};
\node[] at (2.5,0){$+$};
\node[circle, fill = black, scale = 0.3] (A) at (3,0) {$w$};
\node[circle, fill = black, scale = 0.3] (B) at (5,0) {$v$};
\node[] (E) at (4,0) {$\times$};
\draw[red, thick] (A) to [out=60,in=120] node[auto]{}(B);
\draw[red, thick] (3,0.1) to [out=60,in=120] node[auto]{$\beta_2$}(5,0.1);
\draw[] (A) to [out=-60,in=-120] node[below]{}(B);
\end{tikzpicture}
\end{center}

There are no crossings amongst the arcs in $C_{1,1}$, $C_{1,2}$, and $C_2$, and we have that $x_C = x_{C_{1,1}} + x_{C_{1,2}} + x_{C_2}$. By Proposition \ref{prop:MSWChebyshev}, $x_\xi = \lambda_p$ where $p$ is the order of this orbifold point. This brings us to the desired equality \[
\chi_{\gamma_1,T} \chi_{\gamma_2,T} = \widetilde{Y} \chi_{\beta_1,T} \chi_{\alpha,T} + Y_2 \chi_{\beta_2,T}^2 = Y_0 \chi_{\beta_1,T}^2 + Y_1 \chi_{\beta_1,T} \chi_{\beta_2,T} + Y_2 \chi_{\beta_2,T}^2.\tag*{\qed}
\]\renewcommand{\qed}{}
\end{proof}

\section{Relationship to punctures}\label{sec:RelateToPuncture}

Throughout the paper, we restricted our discussion to unpunctured orbifolds. Recall that a~puncture is a marked point which appears in the interior of a surface or orbifold. The original snake graph construction in~\cite{MSW} does handle surfaces with punctures. In this section, we give some interesting examples which illustrate how some results from~\cite{MSW} and~\cite{MW} concerning punctures can be recovered by treating the puncture as an orbifold point with infinite order.

As motivation, recall that an arc which winds $k$ times around an orbifold point of order $p$ is isotopic to an arc winding $k \pm np$ times for any integer~$n$ -- even if this means that the winding arc switches directions. This type of isotopy does not exist for arcs winding around punctures; thus, in some sense we could consider the puncture to have infinite order. Moreover, note that $\lim\limits_{p \to \infty} \lambda_p = \lim\limits_{p \to \infty} 2 \cos(\pi/p) = 2$. Thus, a loop which is contractible to an orbifold point of infinite order has the same weight as a loop contractible to a puncture.

We will specifically consider the case of a puncture inside a self-folded triangle, as this most closely resembles a pending arc, and compare the $x$-variables and $y$-variables in these situations. We note that by specializing $\lambda_\infty = 1 + y_r $, we can nearly recover the $F$-polynomials from these cluster algebra elements.

Previously, we discussed normalized Chebyshev polynomials of the second kind. Now, we introduce another formal variable to these Chebyshev polynomials.

\begin{Definition}\label{def:ChebyCoeff}
Let $\big\{U_k^Y(x)\big\}_k$ be a family of polynomials indexed by $k = -1,0,1,\ldots$ such that $U_{-1}^Y(x) = 0$, $U_0^Y(x) = 1$, and for $k \geq 1$, \[
U_k^Y(x) = x\cdot U_{k-1}^Y(x) - Y\cdot U_{k-1}^Y(x).
\]
\end{Definition}

For example, $U_1^Y(x) = x$, $U_2^Y(x) = x^2 - Y$, and $U_3^Y(x) = x^3 - 2Yx$.

We then record some results about our normalized Chebyshev polynomials, with and without coefficients, for later use.
\begin{Lemma}\samepage
Let $U_k$ and $U_k^Y$ be as in Definitions {\rm \ref{def:Chebyshev}} and~{\rm \ref{def:ChebyCoeff}}. Then for $k \geq 1$,
\begin{enumerate}\itemsep=0pt
\item[$1)$] $U_k(2) = k+1$,
\item[$2)$] $U_k^Y(1 + Y) = 1 + Y + \cdots + Y^k$.
\end{enumerate}
\end{Lemma}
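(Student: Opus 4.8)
The plan is to prove both identities by straightforward induction on $k$, using the defining recurrences from Definitions \ref{def:Chebyshev} and \ref{def:ChebyCoeff}. Both statements are evaluations of Chebyshev-type polynomials at a specific point, and the recurrences collapse into very clean numerical or geometric-series recurrences once the argument is fixed.

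For part 1, I would evaluate the recurrence $U_k(x) = x U_{k-1}(x) - U_{k-2}(x)$ at $x = 2$. The base cases are $U_0(2) = 1$ and $U_1(2) = 2$, matching $k+1$ for $k = 0, 1$. For the inductive step, assuming $U_{k-1}(2) = k$ and $U_{k-2}(2) = k-1$, I compute
\[
U_k(2) = 2 U_{k-1}(2) - U_{k-2}(2) = 2k - (k-1) = k+1,
\]
as desired. This is the familiar fact that $U_k(2) = k+1$ for the normalized Chebyshev polynomials of the second kind.

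For part 2, I would similarly evaluate the recurrence from Definition \ref{def:ChebyCoeff} at $x = 1+Y$. Here I should note that the recurrence as literally written in Definition \ref{def:ChebyCoeff} reads $U_k^Y(x) = x \cdot U_{k-1}^Y(x) - Y \cdot U_{k-1}^Y(x)$, which I will treat as the intended three-term recurrence $U_k^Y(x) = x \cdot U_{k-1}^Y(x) - Y \cdot U_{k-2}^Y(x)$ (consistent with the listed examples $U_2^Y(x) = x^2 - Y$ and $U_3^Y(x) = x^3 - 2Yx$). With base cases $U_0^Y(1+Y) = 1$ and $U_1^Y(1+Y) = 1 + Y$, the inductive step gives
\[
U_k^Y(1+Y) = (1+Y)\big(1 + Y + \cdots + Y^{k-1}\big) - Y\big(1 + Y + \cdots + Y^{k-2}\big).
\]
Expanding the first product as $(1 + Y + \cdots + Y^{k-1}) + (Y + Y^2 + \cdots + Y^k)$ and subtracting $Y + Y^2 + \cdots + Y^{k-1}$ leaves exactly $1 + Y + \cdots + Y^k$, completing the induction.

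I do not anticipate a genuine obstacle here, since both are elementary inductions; the only point requiring care is reconciling the typographical form of the recurrence in Definition \ref{def:ChebyCoeff} with the stated examples, so that I apply the correct three-term relation in part 2. I would verify the intended recurrence against $U_2^Y$ and $U_3^Y$ at the outset to make sure the induction is built on the right foundation, and then the geometric-series telescoping in part 2 follows automatically.
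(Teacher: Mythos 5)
Your proof is correct, and your induction for part 2 is exactly the paper's argument, including the same reading of the typo'd recurrence in Definition \ref{def:ChebyCoeff} as the three-term relation $U_k^Y(x) = x\cdot U_{k-1}^Y(x) - Y\cdot U_{k-2}^Y(x)$ (which the paper itself uses in its proof). The only difference is cosmetic: the paper obtains part 1 by setting $Y=1$ in part 2 (so that $1+Y = 2$ and $U_k^Y$ specializes to $U_k$), whereas you prove it by a separate direct induction at $x=2$; both are equally valid.
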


\begin{proof}The first statement follows from the second by setting $Y=1$, so we need only prove the second statement. We do so by induction. The statement clearly holds for $U_1^Y(x)$, and by definition $U_2^Y(1+Y) = (1+Y)^2 - Y = 1 + Y + Y^2$. Then, for $k \geq 3$,
\begin{align*}
 U_k^Y(1+Y) &= (1+Y)\cdot U_{k-1}^Y(1+Y) - Y\cdot U_{k-2}^Y(1+Y) \\
 &= (1+Y)\big(1+Y + \cdots + Y^{k-1}\big) - Y\big(1+Y + \cdots + Y^{k-2}\big) = 1 + Y + \cdots + Y^k
\end{align*}
as desired.
\end{proof}

\begin{figure}[h!]\centering
\begin{tabular}{|c|c|}
\hline
 \includegraphics[]{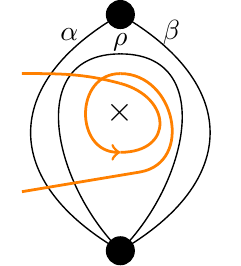} & \includegraphics[]{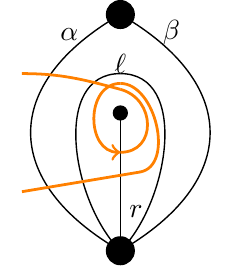} \\\hline
 \hline
 \includegraphics[valign=m, scale = 1.3]{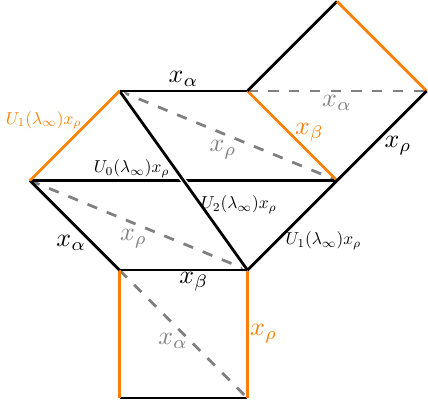} & $\begin{array}{c} \includegraphics[]{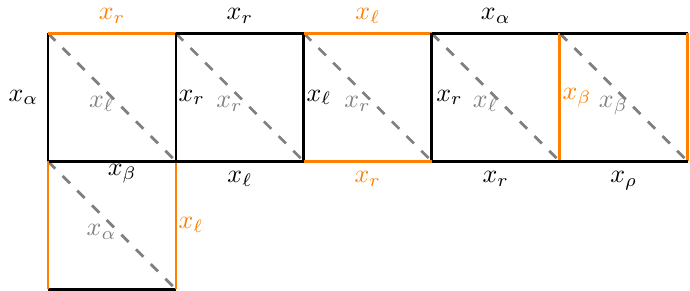} \\ \includegraphics[]{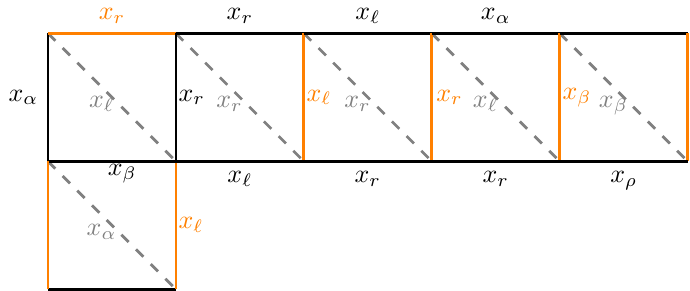} \end{array}$ \\ \hline
\end{tabular}
\caption{A comparison of the local snake graphs for arcs crossing pending arcs and self-folded triangles. The highlighted perfect matching of the generalized snake graph corresponds to the two highlighted perfect matchings in ordinary snake graph for the punctured surface case.}\label{Fig:OrbAndPuncture1}
\end{figure}

\begin{figure}[h!] \centering
\begin{tabular}{|c|c|}
\hline\includegraphics[scale = 1.3]{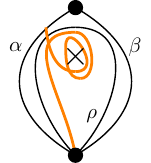} & \includegraphics[scale = 1.3]{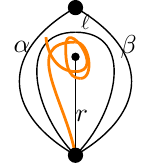}\\ \hline
\hline\includegraphics[]{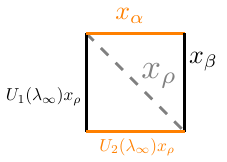} & \includegraphics[]{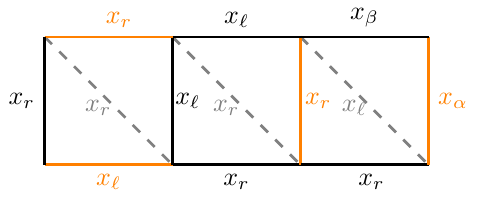}\\
& \includegraphics[]{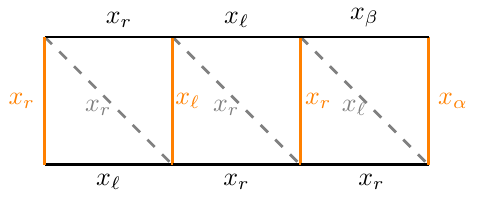}\\
& \includegraphics[]{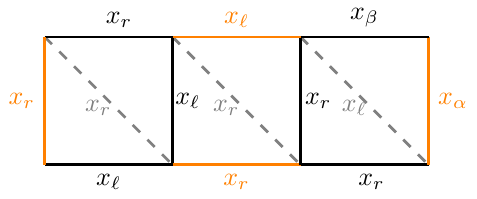}\\ \hline
\end{tabular}
\caption{Another comparison of generalized and ordinary snake graphs, for an example of arcs crossing the pending arc or loop a single time.} \label{Fig:OrbAndPuncture2}
\end{figure}

\looseness=-1 In Fig.~\ref{Fig:OrbAndPuncture1}, we compare the generalized snake graph from an arc that crosses a pending arc twice and has a single self-intersection to the ordinary snake graph for an analogous arc where the orbifold point has been replaced by a puncture and the pending arc by a self-folded triangle. If we set $\lambda_\infty = 2$, so that $U_0(\lambda_\infty) = 1$, $U_1(\lambda_\infty) = 2$ and $U_2(\lambda_\infty) = 3$, several edge labels on the generalized snake graph become positive integer multiples of cluster variables. A perfect matching which uses one of these edges corresponds to multiple perfect matchings in the snake graph from the surface case. The highlighted perfect matchings in Fig.~\ref{Fig:OrbAndPuncture1} show an example of this.

In the generalized snake graph, the perfect matching $P$ uses an edge labeled $U_1(\lambda_\infty)x_\rho = 2x_\rho$. Considering only the arcs drawn, $x(P) = 2 x_\beta x_\rho^2$. Recall that in the denominator of the cluster expansion formula, we have the crossing monomial $x_\rho^2$. However, a factor of $x_\rho$ also appears in each of the other terms in the numerator. Canceling this factor gives the reduced weight $2 x_\beta x_\rho$. In the ordinary snake graph, we show two matchings, which each have weight $x_\beta x_r^2 x_\ell^2$. Similar to the orbifold case, here the crossing monomial is $x_r^2 x_\ell^2$.
By observation, we see that every perfect matching will use at least two edges labeled $x_r$ and one edge labeled $x_\ell$; thus, we can cancel a~factor of $x_r^2x_\ell$ to obtain the reduced weight $x_\beta x_\ell$ for each perfect matching. Setting $\lambda_p = 1 + y_r$, so that $U_1(\lambda_p) = 1 + y_r$, also makes sense in this example, since the two highlighted matchings differ only by a twist at a tile with diagonal label $r$.

Fig.~\ref{Fig:OrbAndPuncture2} shows another example, where this time the arc only crosses the pending arc or loop a single time. The generalized snake graph perfect matching which uses the edge labeled $U_2(\lambda_p) x_\rho = \big(1 + y_r + y_r^2\big) x_\rho$ corresponds to three perfect matchings of the analogous ordinary snake graph. The terms associated to these matchings, with $y$-variables, are $\big(1 + y_r + y_r^2\big) x_r^2 x_\ell x_\alpha$.

A natural direction for future work would be to study generalized snake graphs in the punctured setting. If punctures can be completely understood as orbifold points of infinite order, then a pending arc incident to a orbifold point of infinite order should be understood as the product of the loop and radius arcs in a once-punctured monogon. Taking this viewpoint, our generalized snake graphs are still able to handle the once-punctured monogon case. It would be non-trivial, however, to extend our generalized snake graphs to arbitrary tagged triangulated orbifolds, where there may be multiple arcs incident to a single given puncture.

\subsection*{Acknowledgements} We would like to thank Gregg Musiker for many helpful discussions and his patient generosity in reading and providing feedback on many iterations of this paper. We would also like to thank the anonymous referees for their helpful comments.

\addcontentsline{toc}{section}{References}
\LastPageEnding

\end{document}